\newtheorem{theorem}{Theorem}[section]
\newtheorem{lemma}[theorem]{Lemma}
\newtheorem{proposition}[theorem]{Proposition}
\newtheorem{remark}[theorem]{Remark}
\newtheorem{corollary}[theorem]{Corollary}
\newtheorem{assumption}{Assumption}[section]
\newtheorem{definition}[theorem]{Definition}
\theoremstyle{definition}
\newcommand{\Rnum}[1]{\uppercase\expandafter{\romannumeral #1\relax}}
\numberwithin{equation}{section}
\newcommand{\abs}[1]{\left\vert#1\right\vert}
\newcommand{\norm}[1]{\left\Vert#1\right\Vert}
\newcommand{\norms}[1]{{\left\vert\kern-0.25ex\left\vert\kern-0.25ex\left\vert #1 
    \right\vert\kern-0.25ex\right\vert\kern-0.25ex\right\vert}}
\newcommand\laweq{\mathrel{\overset{\makebox[0pt]{\mbox{\normalfont\tiny\sffamily law}}}{=}}}
\newcommand\lawcon{\mathrel{\overset{\makebox[0pt]{\mbox{\normalfont\tiny\sffamily law}}}{\longrightarrow }}}
\newcommand\wcon{\mathrel{\overset{\makebox[0pt]{\mbox{\normalfont\tiny\sffamily w}}}{\longrightarrow }}}
\def\d{{\mathrm d}}
\def\p{{\mathfrak p}}
\def\N{{y:y\sim x}}
\def\L{{\mathfrak L}}
\def\X{{\mathcal X}}
\def\dF{{\mathbf{d}_{\Lambda}}}
\def\dI{{\mathbf{d}_{\rm I}}}
\def\SSF{{\mathbb{S}_{\Lambda}}}
\def\SSI{{\mathbb{S}_{\rm I}}}
\def\bd{{\mathbf{d}}}
\def\dL{{\mathbf{d}^*_{\rm L}}}
\def\WF{{\mathbf{W}_{2,{\Lambda}}}}
\def\WI{{\mathbf{W}_{2,{\rm I}}}}
\def\WFT{{\mathbf{W}^2_{2,{\Lambda}}}}
\def\WIT{{\mathbf{W}^2_{2,{\rm I}}}}
\def\dLF{{\mathbf{d}^*_{{\rm L},{\Lambda}}}}
\def\dLI{{\mathbf{d}^*_{{\rm L},{\rm I}}}}
\def\C{{\mathbf{C}}}
\def\O{{{1}}}
\newcommand{\muc}{\widetilde{\bm{\mu}}}
\begin{document}

\title{large $N$ limit of the Langevin dynamics for the spin $O(N)$ model}
\author{Wenjie Ye}
\address[W. Ye]{School of Mathematics and Statistics \& Key Laboratory of Analytical Mathematics and Applications (Ministry of Education), Fujian Normal University, Fuzhou 350117, China}
\email{yewenjie@fjnu.edu.cn}

\author{Rongchan Zhu}
\address[R. Zhu]{Department of Mathematics, Beijing Institute of Technology, Beijing 100081, China 
}
\email{zhurongchan@126.com}
\begin{abstract} 
In this paper, we prove that the large $N$ limit of the Langevin dynamics for the spin $O(N)$ model is given by a mean-field stochastic differential equation (SDE) in both finite and infinite volumes.  We establish uniform in $N$ bounds for the dynamics, which enable us to demonstrate convergence to the mean-field SDE with polynomial interactions. Furthermore, the mean-field SDE is shown to be globally well-posed for suitable initial distributions. We also prove the existence of stationary measures for the mean-field SDE. For small inverse temperatures, we characterize the large $N$ limit of the spin $O(N)$ model through stationary coupling. Additionally, we establish the uniqueness of the stationary measure for the mean-field SDE.
\end{abstract}

\subjclass[2020]{60H10, 82C22}
\keywords{Spin $O(N)$ model, propagation of chaos, mean-field limit, massive Gaussian free field}
\maketitle
\tableofcontents
\section{Introduction}
\subsection{Model and main results}
The spin $O(N)$ model is a fundamental model in statistical mechanics, first introduced by Stanley \cite{Stanley1968}. Let $\mathbb{T}^d$ denote the $d$-dimensional torus, $\mathbb{Z}^d$ the $d$-dimensional integer lattice, and $\Lambda_{\mathfrak{L}} := \mathbb{Z}^d \cap \mathfrak{L}\mathbb{T}^d$ the finite $d$-dimensional lattice with side length $\mathfrak{L}>0$ and unit lattice spacing. For notational simplicity, we sometimes write $\Lambda=\Lambda_\L$.  The spin  $O(N)$ model on $\Lambda$  is defined by a probability measure 
\begin{equation}\label{spin-O-N}
    \begin{aligned}
         & \d\mu_{\Lambda ,N,\kappa}(\Phi):=\frac{1}{Z_{\Lambda,N,\kappa}}\exp\left(2\kappa\sum_{x\sim y}\langle\Phi_x,\Phi_y \rangle\right)\prod_{z\in \Lambda}\d \lambda_{\mathbb S^{N-1}(\sqrt{N})}(\Phi_z),
    \end{aligned}
\end{equation}
where $\Phi=(\Phi_x)\in \mathcal{S}:=(\mathbb{S}^{N-1}(\sqrt{N}))^{\Lambda}$\footnote{ Here, $\mathbb{S}^{N-1}(\sqrt{N})$ is the $(N{-}1)$-dimensional sphere of radius $\sqrt{N}$ centered at the origin, embedded in the Euclidean space $\mathbb{R}^N$; $(\mathbb{S}^{N-1}(\sqrt{N}))^\Lambda$ denotes the set of all maps $\Phi \colon \Lambda \to \mathbb{S}^{N-1}(\sqrt{N})$. 
}; $\kappa>0$ is the inverse temperature; $\lambda_{\mathbb S^{N-1}(\sqrt{N})}$ denotes the uniform measure on the sphere $\mathbb S^{N-1}(\sqrt{N})$; $x \sim y$ means $x$ and $y$ are adjacent in the lattice, the sum over $x\sim y$ runs over all neighbouring points $x,y\in \Lambda$;  $\langle \cdot,\cdot\rangle$ denotes the inner product on the Euclidean space $\mathbb{R}^N$; and  $Z_{\Lambda,N,\kappa}$ is the normalization constant ensuring that $\mu_{\Lambda,N,\kappa}$ is a probability measure. An important feature of the model is its $O(N)$-invariance, i.e. if $A$ is an $N$-dimensional orthogonal matrix, then $\mu_{\Lambda, N, \kappa} \circ A^{-1} = \mu_{\Lambda, N, \kappa}$, where $A\Phi:=(A\Phi_x)$ for $\Phi\in \mathcal S$. The choice of the sphere's radius leads to a nontrivial limit as $N \to \infty$.  In this paper, we study the large $N$ limit of the related Langevin dynamics.

The Langevin dynamics for the spin $O(N)$ model \eqref{spin-O-N} is given by the following SDEs (see \cite[Section $3.2$]{Shen2024} for more details on the derivation\footnote{In this paper, we use a slightly different formulation from \cite[$(3.1)$]{Shen2024}
$\d\Phi = \frac{1}{2}\nabla \mathcal{V}(\Phi)\, \d t + \d \mathfrak{B}$
to derive \eqref{the-new-model-i}, where $\mathcal{V} := 2\kappa \sum_{x\sim y} \langle \Phi_x, \Phi_y \rangle$ and $\mathfrak{B} = (\mathfrak{B}_x)$ denotes a family of independent Brownian motions on $\mathbb{S}^{N-1}(\sqrt{N})$. Note that in \eqref{spin-O-N}, the sum over $x\sim y$ runs over all neighbouring points $x,y\in \Lambda$, which leads to the factor $2\kappa$ appearing in both \eqref{spin-O-N} and \eqref{the-new-model-i}.}): 
\begin{equation}\label{the-new-model-i}
    \left\{\begin{aligned}
         \d\Phi^i_x(t)= &2\kappa\sum_{\N}\left( \Phi_y^i(t)-\frac{\sum^N_{k=1} \Phi_x^k(t) \Phi_y^k(t)}{N} \Phi_x^i(t)\right)\,\d t-\frac{N-1}{2N} \Phi_x^i(t)\,\d t\\
&+\,\d W_x^i(t)-\frac{\sum^N_{k=1} \Phi_x^k(t)\,\d W_x^k(t) }{N}\Phi_x^i(t),\quad 0\le t\le T,\\
\Phi_x^i(0)=&\varphi_x^i,\quad  i=1,\cdots,N,
\end{aligned}\right.
\end{equation}
where $x\in \Lambda$, for each $x \in \Lambda$, $\bm{W}_x := (W_x^1, \cdots, W_x^N)$ is an $N$-dimensional standard Brownian motion, and for $x \neq y$, the processes $\bm{W}_x$ and $\bm{W}_y$ are independent; the sum over $\N$ runs over all points adjacent to $x$; and for $\varphi_x = (\varphi_x^1, \cdots, \varphi_x^N) \in \mathbb{S}^{N-1}(\sqrt{N})$ with $x \in \Lambda$, the solutions $\Phi(t)$, $0\le t\le T$ to the Langevin dynamics stay in $\mathcal{S}$  (see  Proposition \ref{invariance-SDE} below).  We note that $\Phi^i_x$ depends on $N$, but we omit this dependence for notational simplicity.   By replacing $\Lambda$ with $\mathbb{Z}^d$, the system \eqref{the-new-model-i} can be extended to the infinite lattice $\mathbb{Z}^d$.
Our focus in this paper is on the asymptotic behavior of the Langevin dynamics as $N\rightarrow \infty$. The limiting dynamics is governed by the following mean-field SDE:
\begin{equation}\label{mean-field-limit-n-i}
    \left\{\begin{aligned}
         \d\Psi_x(t)= &2\kappa\sum_{\N}\Big(  \Psi_y(t)-\mathbb{E}[ \Psi_x(t) \Psi_y(t)] \Psi_x(t)\Big)\,\d t-\frac{1}{2}\Psi_x(t)\,\d t+\d W_x(t),\quad 0\le t\le T,\\
\Psi_x(0)=& \psi_x, \quad x\in \mathfrak{A}, 
\end{aligned}\right.
\end{equation}
where $\mathfrak{A}=\Lambda \text{\ or\ } \mathbb{Z}^d$. 
On the formal level, \eqref{mean-field-limit-n-i} arises naturally:
if the initial data $\{ (\varphi_x^i) \}^N_{i=1}$ is symmetric\footnote{This means that for any permutation $\pi$, the sequences  $((\varphi_x^1),\cdots, (\varphi_x^N))$ and $((\varphi_x^{\pi(1)}),\cdots, (\varphi_x^{\pi(N)}))$ have the same joint probability distribution, where $(\varphi^i_x)$, $i=1,\ldots, N$, denotes the vector indexed by $x \in \mathfrak{A}$. }, then the solution components $\{ (\Phi^i_x) \}^N_{i=1}$ to the SDEs \eqref{the-new-model-i}
share the same laws. As $N$ approaches infinity, the interaction term $({1}/{N})\sum^N_{k=1}\Phi_x^k \Phi_y^k$ becomes weaker.
Consequently, the empirical average behavior $({1}/{N}) \sum^N_{k=1} \Phi_x^k \Phi_y^k$ tends to the expectation $\mathbb{E}[\Phi^1_x \Phi^1_y]$. Similarly, as $N \rightarrow \infty$, the term $(1/N)\sum_{k=1}^N \int_0^t\Phi_x^k\,\d W_x^k$ vanishes, since, by  It\^o's isometry, its quadratic variation tends to zero.   Our first main result is that the large $N$ limit of the Langevin dynamics for the spin $O(N)$ model \eqref{the-new-model-i} can be given by the mean-field SDE \eqref{mean-field-limit-n-i}. Specifically,  let $S^N := \frac{1}{N}\sum_{j=1}^N \delta_{\Phi^j}$ be the empirical measure of $(\Phi^1, \ldots, \Phi^N)$, where $\Phi^j := (\Phi^j_x)$ and $\delta_{\Phi^j}$ is the Dirac measure at $\Phi^j$. We have the following theorem:
\begin{theorem}\label{simple-1}
On a finite lattice $\Lambda$,  suppose that the initial data of the system \eqref{the-new-model-i} is symmetric, satisfies a uniform $\p$-th moment bound in $N$ for some $\p>2$, and its empirical measure converges weakly in probability to some measure $\mu_0$, then the empirical measure $S^N$ of the solutions to the system \eqref{the-new-model-i} converges in law to $S$. The limit  $S$ is the unique martingale solution to the mean-field SDE \eqref{mean-field-limit-n-i} with initial distribution $\mu_0$.  Moreover, we have 
$$
\int \abs{v_x}^2\,S(t)(\d v)= 1,\  \mathbb{P}\text{-\rm{a.s.}}, \ \forall\, x\in \Lambda, \, 0\le t\le T,$$ and
$$
\lim_{N\rightarrow \infty}\mathbb{E}\left[\WFT(S^N,S)\right]=0,
$$ 
where $\WF(S^N,S)$ denotes the $2$-Wasserstein distance between  $S^N$ and $S$.  Furthermore, the results still hold on the infinite lattice $\mathbb{Z}^d$.
\end{theorem}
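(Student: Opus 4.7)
The plan is to prove the convergence as a propagation-of-chaos statement via the empirical-measure / nonlinear-martingale-problem method in the spirit of Sznitman. Permutation symmetry of \eqref{the-new-model-i} under interchange of the upper index, together with exchangeability of the initial data, keeps $\Phi^1,\ldots,\Phi^N$ exchangeable at all times, so the problem reduces to two standard pieces: tightness of the sequence $\{\mathrm{Law}(S^N)\}_N$ and identification of any weak limit $S$ as the unique martingale solution to \eqref{mean-field-limit-n-i}, after which the asserted $\WFT$-convergence follows from uniform integrability.

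\textbf{Tightness.} I would combine the uniform-in-$N$ moment bounds advertised in the abstract with the deterministic bound $|\Phi^i_x|\le\sqrt N$ to control a single tagged component $\Phi^1_x$. The drift in \eqref{the-new-model-i} splits into a linear part $2\kappa\sum_{y\sim x}\Phi^1_y-\frac{N-1}{2N}\Phi^1_x$ and a quadratic correction whose coefficient $\frac{1}{N}\sum_{k}\Phi^k_x\Phi^k_y$ is pathwise bounded by $1$ via Cauchy--Schwarz together with the sphere constraint, while the stochastic correction term has predictable coefficients of order one. Combined with the uniform $\p$-th moment hypothesis on the initial data, this yields Kolmogorov-type time-increment estimates on $\Phi^1_x$ in $L^\p$ that are uniform in $N$. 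Tightness of $\{\mathrm{Law}(\Phi^1)\}_N$ on $C([0,T];\mathbb R^\Lambda)$, and hence of $\{\mathrm{Law}(S^N)\}_N$ on $\mathcal P(C([0,T];\mathbb R^\Lambda))$, follows by standard exchangeability arguments.

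\textbf{Identification.} For any subsequential limit $S$, I would verify the nonlinear martingale problem associated with \eqref{mean-field-limit-n-i}. The key substitution is
\begin{equation*}
\frac{1}{N}\sum_{k=1}^N\Phi^k_x(t)\Phi^k_y(t)=\int v_x(t)v_y(t)\,S^N(t,\d v),
\end{equation*}
which, by weak convergence of $S^N$ plus the uniform integrability afforded by $\p>2$, passes to $\int v_x(t)v_y(t)\,S(t,\d v)$; by exchangeability under $S$ this equals $\mathbb{E}_S[v_x(t)v_y(t)]$, reproducing the nonlinear drift in \eqref{mean-field-limit-n-i}. The residual stochastic integral $\frac{1}{N}\sum_k\int_0^t\Phi^k_x\,\d W^k_x$ has quadratic variation at most $t/N$ and vanishes in $L^2$, while $\frac{N-1}{2N}\to\frac{1}{2}$. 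The sphere constraint $\int|v_x|^2\,S(t,\d v)=1$ is inherited in the limit from the deterministic identity $\int|v_x|^2\,S^N(t,\d v)=1$. Global well-posedness of \eqref{mean-field-limit-n-i}, which is established separately in the paper, then pins down the martingale solution, upgrades subsequential to full weak convergence of $S^N\to S$, and combined with the $\p>2$ moment bound converts this into $\WFT$-convergence in expectation via a de la Vall\'ee-Poussin-type uniform-integrability argument.

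\textbf{Main obstacle.} The hardest part is the extension to the infinite lattice $\mathbb Z^d$, where the underlying path space is not locally compact. I would truncate the interaction to an increasing sequence of finite boxes $\Lambda_{\mathfrak L}$, apply the finite-volume theorem, and pass $\mathfrak L\to\infty$. Because the interaction in \eqref{the-new-model-i} is nearest-neighbour, the truncation error on any fixed finite window of sites is controlled uniformly in $N$, which should give convergence of the truncated martingale solutions in the weighted path topology underlying $\WI$ and $\dI$. Verifying that the infinite-volume nonlinear martingale problem is still characterised by cylindrical test functionals, and that the summability weights are preserved compatibly in the $N\to\infty$ and $\mathfrak L\to\infty$ limits, is the main technical point that remains to be pinned down.
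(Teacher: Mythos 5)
Your finite–volume argument is essentially the proof in the paper: uniform-in-$N$ moment bounds obtained from It\^o's formula and the pathwise bound $\bigl|\tfrac1N\sum_k\Phi^k_x\Phi^k_y\bigr|\le 1$ coming from the sphere constraint, tightness of the tagged particle and hence of $\{S^N\}$ (upgraded to $(\mathscr{P}_2(\SSF),\WF)$ using $\p>2$), identification of any limit point as a solution of the nonlinear martingale problem via the substitution $\tfrac1N\sum_k\Phi^k_x\Phi^k_y=\int v_xv_y\,S^N(t)(\d v)$ and the vanishing of the extra stochastic integral, inheritance of $\int|v_x|^2\,S(t)(\d v)=1$, and finally uniform integrability to pass from convergence in probability of $\WF(S^N,S)$ to convergence of $\mathbb{E}[\WFT(S^N,S)]$. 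One caveat on the uniqueness step: you invoke ``global well-posedness of \eqref{mean-field-limit-n-i}, which is established separately in the paper,'' but in the paper there is no stand-alone well-posedness result to lean on — existence is produced by exactly this particle approximation, and uniqueness is \emph{not} available for general initial laws because the drift $\mathbb{E}[\Psi_x\Psi_y]\Psi_x$ is non-dissipative. What the paper proves (Lemmas \ref{fixed-mu-le}–\ref{not-fix-mu} plus a Yamada–Watanabe argument) is pathwise uniqueness \emph{within the class} $\mathscr{P}_{\O,\Psi}(\SSF)$ of laws satisfying $\int|v_x|^2\,\mu_t(\d v)\le 1$, which is precisely where the limit lives thanks to the inherited sphere identity. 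You do note that the identity passes to the limit, so the ingredient is present, but your write-up should make explicit that uniqueness is proved only in this restricted class and that this restriction is what makes the coefficient $\bm h_{x,y}(t)=\int v_xv_y\,S(t)(\d v)$ bounded and the comparison argument Lipschitz.

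The genuine gap is the infinite-lattice extension. You propose truncating the interaction to boxes $\Lambda_{\mathfrak L}$, applying the finite-volume theorem, and interchanging the limits $N\to\infty$ and $\mathfrak L\to\infty$; you yourself flag the compatibility of the weights and the characterisation of the infinite-volume martingale problem as ``to be pinned down.'' The paper does not take this route at all: it works directly on $\mathbb{Z}^d$ in the weighted space $\mathbb{H}=\ell^2_a$ (exponential weights), reproves the uniform moment bounds and Kolmogorov increments with the weight $a^{-|x|}$ summed in (Proposition \ref{Lp-estimate-Zd}), establishes tightness in $(\mathscr{P}_2(\SSI),\WI)$ using the compact embedding $\ell^p_a\subset\ell^2_a$, formulates the martingale problem with compactly supported test vectors $\theta\in\ell_c$, and repeats the pathwise-uniqueness argument after multiplying by $a^{-|x|}$ and summing. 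This avoids any interchange of limits. If you insist on your truncation scheme you would need uniform-in-$(N,\mathfrak L)$ estimates and a control of the truncation error that propagates through the nonlinear term $\mathbb{E}[\Psi_x\Psi_y]$ (whose finite-volume and infinite-volume values need not coincide on a fixed window), none of which is sketched; as it stands this part of your proposal is not a proof, whereas the direct weighted-space argument closes it with only routine modifications of the finite-volume lemmas.
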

See Theorem \ref{Main-Theorem} and Theorem \ref{Main-Theorem-Z} below for precise statements.  We also refer to Remark \ref{typical-examples} and Remark \ref{typical-examples-Z} for examples of initial data.  

In general, to derive Theorem \ref{simple-1}, the global well-posedness of the mean-field SDE \eqref{mean-field-limit-n-i} is required. However, the nonlinear term $\mathbb{E}[\Psi_x\Psi_y]\Psi_x$ appearing in \eqref{mean-field-limit-n-i} is a polynomial function of $(\Psi_x(t))$, and does not satisfy the dissipative condition; specifically, the inequality $\mathbb{E}[\Psi_x\Psi_y]\Psi_x^2\ge 0$ does not always hold. Consequently, the general well-posedness results for mean-field SDEs (see, e.g., \cite{hong2023}) cannot be directly applied.  On the other hand, due to the presence of the non-dissipative term, it is not easy to derive a priori uniform estimates and construct global-in-time solutions to the mean-field SDE \eqref{mean-field-limit-n-i} for general initial data.

To overcome these difficulties and prove Theorem \ref{simple-1}, we instead reformulate the mean-field SDE \eqref{mean-field-limit-n-i} as a martingale problem, and construct solutions to the martingale problem based on the empirical measure $S^N$ induced by the Langevin dynamics \eqref{the-new-model-i}. We use the fact that the solution to \eqref{the-new-model-i} $\Phi_x \in \mathbb{S}^{N-1}(\sqrt{N})$ for all $x \in \Lambda$ to obtain a uniform in $N$ estimate for \eqref{the-new-model-i}. Then, using compactness arguments, we show that every tight limit of the sequence $\{S^N\}_{N \in \mathbb{N}}$ is a martingale solution to the mean-field SDE \eqref{mean-field-limit-n-i}, thereby establishing the existence of solutions to \eqref{mean-field-limit-n-i}.  Moreover, we establish the pathwise uniqueness of solutions for the mean-field SDE \eqref{mean-field-limit-n-i}, which implies the convergence of the entire sequence $\{S^N\}_{N \in \mathbb{N}}$ and subsequently the propagation of chaos.

To compare the Langevin dynamics \eqref{the-new-model-i} with the mean-field SDE  \eqref{mean-field-limit-n-i} directly, we rewrite \eqref{mean-field-limit-n-i} as
\begin{equation}\label{Psi-i}
    \left\{\begin{aligned}
         \d\Psi^i_x(t)= &2\kappa\sum_{\N}\Big(  \Psi_y^i(t)-\mathbb{E}[ \Psi_x^i(t) \Psi_y^i(t)] \Psi_x^i(t)\Big)\,\d t-\frac{1}{2}\Psi_x^i(t)\,\d t+\d W^i_x(t),\quad 0\le t\le T,\\
\Psi_x^i(0)=& \psi_x^i, \quad x\in \Lambda,\quad i\in \mathbb{N},
\end{aligned}\right.
\end{equation}
with $\{ W_x^i \}$ being the same as in \eqref{the-new-model-i}. 

By appropriately choosing the initial data $\psi^i$ such that $\psi^i \laweq \mu_0$\footnote{The symbol $\psi^i\laweq \mu_0$ means that the law of $\psi^i$ is $\mu_0$.} , where $\mu_0$ is taken from a certain class of measures (see Theorem~\ref{strong-solution} below for the precise conditions on $\mu_0$),  we are able to obtain the convergence rate.

\begin{theorem}\label{simple-2}
Suppose that the initial data of the system \eqref{the-new-model-i} and $\mu_0$ satisfy the assumptions given in Theorem \ref{simple-1}, $\psi^i\laweq \mu_0$, and that  $\lim_{N\rightarrow \infty}\sum_{x\in \Lambda}\mathbb{E}[|\varphi^i_x-\psi^i_x|^2]=0$. Then, we have
\begin{equation}\label{con-1}
    \begin{aligned}
         & \lim_{N\rightarrow \infty}\sum_{x\in \Lambda}\mathbb{E}\left[ \sup_{0\le t\le T} \abs{\Phi_x^i(t)-\Psi_x^i(t)}^2 \right]=0,
    \end{aligned}
\end{equation}
where $\Phi^i$ and $\Psi^i$ denote the solutions to \eqref{the-new-model-i} and \eqref{Psi-i} with initial data $\varphi^i$ and $\psi^i$, respectively.

If the initial data of the system \eqref{the-new-model-i} and $\mu_0$ satisfy the assumptions given in Theorem \ref{simple-1} with $\p=8$, $\psi^i\laweq \mu_0$, in addition,  the random variables $\{\psi^i\}_{i\in \mathbb{N}}$ are independent and identically distributed, and the sequence $\{(\varphi^i,\psi^i)\}^N_{i=1}$ is symmetric, then we have
\begin{equation}
    \begin{aligned}
         & \sup_{0\le t\le T}\sum_{x\in \Lambda}\mathbb{E}\left[  \abs{\Phi_x^i(t)-\Psi_x^i(t)}^2 \right]\lesssim  \sum_{x\in \Lambda}\mathbb{E}\left[\abs{\varphi^i_x-\psi^i_x}^2\right]+ \frac{1}{N},
    \end{aligned}
\end{equation}
where the implicit constant depends on $\abs{\Lambda}$\footnote{The cardinality of $\Lambda$ is denoted by $\abs{\Lambda}$.}, but is independent of $N$. 

Moreover, the results still hold on the infinite lattice $\mathbb{Z}^d$.
\end{theorem}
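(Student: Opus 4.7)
The plan is to couple the two systems \eqref{the-new-model-i} and \eqref{Psi-i} through the same family of Brownian motions $\{W^i_x\}$, apply It\^o's formula to $\sum_{x\in\Lambda}\abs{\Phi^i_x(t)-\Psi^i_x(t)}^2$, and close the estimate via Gronwall. Subtracting the two SDEs gives
\begin{equation*}
\begin{aligned}
\d(\Phi^i_x-\Psi^i_x) ={}& 2\kappa\sum_{y\sim x}\Big[(\Phi^i_y-\Psi^i_y)-\mathbb{E}[\Psi^i_x\Psi^i_y](\Phi^i_x-\Psi^i_x)-\mathcal{L}_{x,y}\Phi^i_x\Big]\d t\\
&-\tfrac{1}{2}(\Phi^i_x-\Psi^i_x)\,\d t+\tfrac{1}{2N}\Phi^i_x\,\d t-\tfrac{\Phi^i_x}{N}\sum_{k=1}^N\Phi^k_x\,\d W^k_x,
\end{aligned}
\end{equation*}
where the key discrepancy $\mathcal{L}_{x,y}:=\tfrac{1}{N}\sum_{k=1}^N\Phi^k_x\Phi^k_y-\mathbb{E}[\Psi^i_x\Psi^i_y]$ measures the gap between the empirical and mean-field interactions. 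The spherical constraint $\sum_k(\Phi^k_x)^2=N$ forces the quadratic variation of the last martingale term to be exactly $\abs{\Phi^i_x}^2/N\,\d t$, giving a total It\^o correction of size $O(\abs{\Lambda}/N)$ after summation in $x$ (since exchangeability in $i$ plus the sphere constraint yield $\mathbb{E}\abs{\Phi^i_x}^2=1$).

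Applying It\^o to $\abs{\Phi^i_x-\Psi^i_x}^2$, summing over $x$, then taking supremum over $[0,t]$ followed by expectation (using BDG together with Cauchy--Schwarz and Young's inequality) leads to an inequality of the form
\begin{equation*}
\mathbb{E}\Big[\sup_{0\le s\le t}\sum_x\abs{\Phi^i_x(s)-\Psi^i_x(s)}^2\Big]\lesssim\sum_x\mathbb{E}\abs{\varphi^i_x-\psi^i_x}^2+\int_0^t\mathbb{E}\Big[\sup_{0\le r\le s}\sum_x\abs{\Phi^i_x(r)-\Psi^i_x(r)}^2\Big]\d s+R_N(t),
\end{equation*}
where $R_N(t)$ collects the $O(1/N)$ drift and It\^o contributions together with the principal error $\int_0^t\sum_{x,\,y\sim x}\mathbb{E}\big[\abs{\mathcal{L}_{x,y}(s)}^2\abs{\Phi^i_x(s)}^2\big]\d s$. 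Writing $\mathcal{L}_{x,y}=\tfrac{1}{N}\sum_k(\Phi^k_x\Phi^k_y-\Psi^k_x\Psi^k_y)+\mathcal{E}_{x,y}$ with $\mathcal{E}_{x,y}:=\tfrac{1}{N}\sum_k\Psi^k_x\Psi^k_y-\mathbb{E}[\Psi^i_x\Psi^i_y]$, the first piece is, by the symmetry of $\{(\varphi^i,\psi^i)\}$ in $i$, bounded by the $i$-averaged coupling distance and absorbs into the Gronwall step, so the crux is to control $\mathcal{L}_{x,y}$ (equivalently $\mathcal{E}_{x,y}$).

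For the convergence \eqref{con-1} without rate, the principal error $\mathcal{L}_{x,y}(t)=\int v_x(t)v_y(t)\,S^N(\d v)-\int v_x(t)v_y(t)\,S(\d v)$ is controlled directly by Theorem \ref{simple-1}: the convergence $\mathbb{E}[\WFT(S^N,S)]\to 0$, combined with the uniform-in-$N$ moment bounds on $\Phi^i$ (which upgrade $L^2$-Wasserstein convergence into convergence of the quadratic functional $v_xv_y$), forces $\mathbb{E}\abs{\mathcal{L}_{x,y}}^2\to 0$. Cauchy--Schwarz against a uniform moment bound on $\Phi^i_x$ then gives $R_N\to 0$, and Gronwall produces \eqref{con-1}.

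For the quantitative $O(1/N)$ bound the extra ingredient is that, with $\{\psi^i\}$ i.i.d.\ and $\{W^i\}$ independent, the pathwise uniqueness for \eqref{Psi-i} established in the proof of Theorem \ref{simple-1} forces $\{\Psi^i\}_{i\ge 1}$ to be i.i.d. Then $\mathcal{E}_{x,y}$ is a genuine sample-mean fluctuation of i.i.d.\ variables, so the $\p=8$ moment hypothesis yields $\mathbb{E}\abs{\Psi^i_x}^8<\infty$ and hence $\mathbb{E}[\abs{\mathcal{E}_{x,y}}^4]=O(N^{-2})$; Cauchy--Schwarz against a uniform $L^4$ bound on $\Phi^i_x$ then gives $R_N=O(1/N)$, and Gronwall produces the claimed rate. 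The main obstacle is to quantify the LLN fluctuation $\mathcal{E}_{x,y}$ at the sharp $1/N$ scale, which is precisely what forces the stronger $\p=8$ hypothesis; the extension to $\mathbb{Z}^d$ follows the same blueprint once all sums $\sum_x$ are treated as absolutely convergent via the infinite-volume counterpart of Theorem \ref{simple-1}.
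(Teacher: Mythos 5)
Your proposal follows essentially the same route as the paper's proof of Theorem \ref{strong-solution}: synchronous coupling through the shared Brownian motions, It\^o's formula for $\abs{\Phi^i_x-\Psi^i_x}^2$, and Gronwall, with the qualitative convergence \eqref{con-1} obtained from the propagation-of-chaos input (the paper's Lemma \ref{sphere-SN-S}, i.e. $L^p$-convergence of $\int v_xv_y\,S^N(t)(\d v)$ to $\int v_xv_y\,S(t)(\d v)$, combined with the uniform moment bounds of Proposition \ref{Lp-estimate} and dominated convergence), and the rate obtained by the exchangeability reduction to the $i$-averaged quantity plus i.i.d.\ fluctuation estimates for $\{\Psi^i\}$ (whose i.i.d.-ness via strong uniqueness you justify correctly).

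One step in the quantitative part is asserted too quickly. After writing $\mathcal{L}_{x,y}=\tfrac1N\sum_k(\Phi^k_x\Phi^k_y-\Psi^k_x\Psi^k_y)+\mathcal{E}_{x,y}$, you claim the first piece ``absorbs into the Gronwall step'' by symmetry. It does not do so for free: estimating $\tfrac1N\sum_k\Psi^k_y(\Phi^k_x-\Psi^k_x)$ by Cauchy--Schwarz produces the random factor $\big(\tfrac1N\sum_k\abs{\Psi^k_y}^2\big)^{1/2}$, which, unlike the spherical factor $\tfrac1N\sum_k\abs{\Phi^k_x}^2=1$, is not deterministically bounded, and its product with the Gronwall quantity cannot be expected factor by factor. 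To preserve the $O(1/N)$ rate one must write $\tfrac1N\sum_k\abs{\Psi^k_y}^2=1+\tfrac1N\sum_k(\abs{\Psi^k_y}^2-1)$ and control the deviation by the same i.i.d.\ fourth-moment law-of-large-numbers estimate $\mathbb{E}\big[\abs{\tfrac1N\sum_k(\abs{\Psi^k_y}^2-1)}^4\big]\lesssim N^{-2}$ (hence $\p=8$) that you invoke only for $\mathcal{E}_{x,y}$, paired with a uniform bound on $\mathbb{E}\big[\big(\tfrac1N\sum_i\abs{\Phi^i_x-\Psi^i_x}^2\big)^2\big]$; a second-moment LLN alone would only give $N^{-1/2}$ there. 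This is exactly the paper's treatment of the terms $A_{22}$--$A_{24}$ in Step 3 of its proof, so the gap is repairable with tools you already deploy, but as written the statement that ``the crux is to control $\mathcal{E}_{x,y}$'' misplaces where part of the $\p=8$ hypothesis is genuinely needed. The remaining ingredients (the $O(1/N)$ It\^o corrections and martingale terms via BDG and the sphere constraint, and the weighted-$\ell^2$ extension to $\mathbb{Z}^d$) match the paper.
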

See Theorems \ref{strong-solution} and \ref{strong-solution-Z} below for precise statements.  We also refer to Remark \ref{Phi-Psi} and Remark \ref{Phi-Psi-Z} for examples of initial data.  

To prove Theorem \ref{simple-2}, we compare these two dynamics \eqref{the-new-model-i} and \eqref{Psi-i}  directly, which cancels the noise term $\d W_x^i$. By It\^o's isometry, the extra noise term $(1/N)\sum_{k=1}^N \Phi_x^k\,\d W_x^k \cdot \Phi_x^i$ is of order $1/\sqrt{N}$.  According to Theorem~\ref{simple-1}, for the initial distribution $\mu_0$ considered therein,
the solution to the mean-field SDE~\eqref{Psi-i} satisfies $\mathbb{E}[|\Psi^i_x(t)|^2] = 1$ for any $x \in \Lambda$ and $0 \le t \le T$. In particular, this implies that the term $\mathbb{E}[\Psi_x^i(t)\Psi_y^i(t)]$ in \eqref{Psi-i} is uniformly bounded, which, in turn, enables us to control the difference between the two dynamical systems.  Moreover, to obtain the convergence rate of $\mathbb{E}[|\Phi^i-\Psi^i|^2]$, by the symmetry of $\Phi^k-\Psi^k$ in $k$,  it suffices to estimate
\begin{equation*}
    \begin{aligned}
         & \frac{1}{N}\sum^{N}_{k=1}\mathbb{E}\left[\abs{\Phi^k-\Psi^k}^2\right].
    \end{aligned}
\end{equation*}

{The second part of this paper (Section \ref{Invariant measure}) is devoted to the stationary measures of the dynamics \eqref{the-new-model-i} and \eqref{mean-field-limit-n-i}, and their large $N$ limits.}  Our objective is to investigate the existence and uniqueness of the stationary measure to the mean-field SDE \eqref{mean-field-limit-n-i}, as well as to study the large $N$ behavior of the spin $O(N)$ model using a dynamical method. Note that the existence of stationary measures does not follow from the usual Krylov-Bogolyubov method, since the mean-field SDE \eqref{mean-field-limit-n-i} depends on the laws of the solutions, and the solutions themselves do not form Markov processes.

We first present the existence of stationary measures to the mean-field SDE \eqref{mean-field-limit-n-i} in the finite volume case. Define
\begin{equation}\label{Pconst-i}
    \begin{aligned}
         & \mathscr{P}_{\mathrm{const}}(\mathbb{R}^{\abs{\Lambda}}) := \left\{
            \mu \in \mathscr{P}(\mathbb{R}^{|\Lambda|})
            \;\middle|\;
            \text{there exists } u \in \mathbb{R} \text{ such that } 
            \int v_x v_y\, \mathrm{d}\mu(v) = u, \text{ for all } x \sim y
         \right\}.
    \end{aligned}
\end{equation}
Let $\mu^\L$ be a massive Gaussian free field scaled by ${1}/{(2\sqrt{\kappa})}$ with mass $m^2>0$, where $\kappa$ is the inverse temperature in \eqref{spin-O-N}, $m^2$ is the unique positive solution to 
\begin{equation*}
    \begin{aligned}
         & G_{\Lambda_\L,m^2}(x,x)=4\kappa.
    \end{aligned}
\end{equation*}
We refer to the definition of $G_{\Lambda_\L,m^2}$ in Section \ref{sec2.2} below.  By reducing the study of stationary solutions to the discrete stochastic heat equation and verifying the corresponding self-consistency condition (see \eqref{fixed-poit} below), we obtain the following result.

\begin{theorem}\label{GaFF}
The field $\mu^\L$ is the unique stationary measure in $\mathscr{P}_{\mathrm{const}}(\mathbb{R}^{\abs{\Lambda}}) $ to the mean-field SDE \eqref{mean-field-limit-n-i} in the finite volume case. 
\end{theorem}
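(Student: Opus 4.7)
The plan is to exploit the fact that for any stationary measure $\nu\in\mathscr{P}_{\mathrm{const}}(\mathbb{R}^{|\Lambda|})$ of \eqref{mean-field-limit-n-i}, the McKean--Vlasov coefficient $\mathbb{E}[\Psi_x(t)\Psi_y(t)] = \int v_x v_y\,\d\nu =: u$ is a single constant, independent of $t$ and of the edge $x\sim y$. Along the stationary trajectory, the mean-field SDE therefore collapses to a linear Ornstein--Uhlenbeck equation (a discrete stochastic heat equation with mass):
\begin{equation*}
\d\Psi_x(t) = \bigl(2\kappa\Delta\Psi_x(t) + (4\kappa d(1-u) - \tfrac{1}{2})\Psi_x(t)\bigr)\,\d t + \d W_x(t),
\end{equation*}
where $\Delta$ denotes the discrete Laplacian on $\Lambda_\L$. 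Solving the Lyapunov equation $-2A = C^{-1}$ for the drift operator $A = 2\kappa\Delta + (4\kappa d(1-u) - \tfrac{1}{2})I$ gives $C^{-1} = 4\kappa(-\Delta + m^2)$ with $m^2 = m^2(u) := (1 - 8\kappa d(1-u))/(4\kappa)$. Provided $m^2(u) > 0$, this linear SDE admits a unique Gaussian invariant measure, namely the scaled massive GFF with mass $m^2(u)$.

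\emph{Existence.} Starting from $\mu^\L$ with its prescribed mass $m^2$ (defined by $G_{\Lambda_\L, m^2}(x,x) = 4\kappa$), set $u := \int v_x v_y\,\d\mu^\L = \tfrac{1}{4\kappa} G_{\Lambda_\L, m^2}(x,y)$ for $x\sim y$ (well-defined thanks to translation invariance of the GFF on the torus). Expanding $\bigl[(-\Delta+m^2) G_{\Lambda_\L, m^2}(\cdot, x)\bigr](x) = 1$ and substituting the defining condition yields the algebraic identity $m^2 = (1 - 8\kappa d(1-u))/(4\kappa)$. Hence $\mu^\L$ satisfies the Lyapunov equation for the reduced linear SDE, so starting from $\Psi(0)\sim\mu^\L$ the law of $\Psi(t)$ remains $\mu^\L$ for all $t\ge 0$; thus $\mu^\L$ is stationary for \eqref{mean-field-limit-n-i}.

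\emph{Uniqueness.} Conversely, let $\nu\in\mathscr{P}_{\mathrm{const}}$ be stationary with parameter $u$. Along its trajectory \eqref{mean-field-limit-n-i} reduces to the linear SDE displayed above, and $\nu$ must be an invariant measure of that linear SDE. Such an invariant measure exists only when $m^2(u)>0$, in which case it is unique and coincides with the scaled GFF of mass $m^2(u)$; in particular $\nu$ is Gaussian with a constant variance $c := \int v_x^2\,\d\nu$. Applying It\^o's formula to $|\Psi_x|^2$ and imposing $\tfrac{\d}{\d t}\mathbb{E}[\Psi_x^2] = 0$ produces the self-consistency identity
\begin{equation*}
(1 - c)(8\kappa d\, u + 1) = 0.
\end{equation*}
The branch $u = -1/(8\kappa d)$ would force $m^2(u) = -2d < 0$, contradicting the existence of a Gaussian stationary measure for the reduced linear SDE. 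Hence $c = 1$, i.e.\ $G_{\Lambda_\L, m^2(u)}(x,x) = 4\kappa$. By hypothesis this equation determines $m^2(u) = m^2$ uniquely, whence $u = \tfrac{1}{4\kappa} G_{\Lambda_\L, m^2}(x,y)$ and $\nu = \mu^\L$.

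The principal subtlety will be organizing the self-consistency step carefully: I need to rule out the spurious branch $u = -1/(8\kappa d)$ on intrinsic grounds (non-existence of the reduced stationary law) rather than through extra a priori assumptions, and to confirm that the Lyapunov/Gaussian reduction applies throughout the admissible range $u > 1 - 1/(8\kappa d)$ where the reduced drift is strictly dissipative.
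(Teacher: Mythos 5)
Your proposal is correct, and its skeleton is the same as the paper's: freeze the constant $u=\int v_xv_y\,\d\nu$, reduce \eqref{mean-field-limit-n-i} to the linear SDE \eqref{rewrite-mean-limit} with mass $\mathfrak m=m^2(u)=2d(u-1)+1/(4\kappa)$, identify its invariant measure as the scaled massive GFF (requiring $\mathfrak m>0$), and then use the It\^o identity $(8d\kappa u+1)(1-\mathbb{E}[|\Psi_x|^2])=0$ to force $G_{\Lambda,\mathfrak m}(x,x)=4\kappa$ and hence $\nu=\mu^\L$; your exclusion of the branch $u=-1/(8\kappa d)$ via $m^2(u)=-2d<0$ is equivalent to the paper's direct observation that $8d\kappa u+1=4\kappa\mathfrak m+8\kappa d>0$ when $\mathfrak m>0$. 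The one genuinely different ingredient is the existence half: the paper does not check that $\mu^\L$ itself solves the self-consistency condition, but instead shows by a Fourier computation of $G_{\Lambda,\mathfrak m}(\bm 0,\bm 1)$ and an intermediate-value argument that \eqref{fixed-poit} has \emph{some} positive solution, and only the uniqueness half identifies that solution with the mass of $\mu^\L$; you instead verify directly that $\mu^\L$ satisfies the Lyapunov/self-consistency relation by evaluating $[(-\Delta_\Lambda+m^2)G_{\Lambda,m^2}(\cdot,x)](x)=1$ at the diagonal and inserting $G_{\Lambda,m^2}(x,x)=4\kappa$, which cleanly gives $m^2=m^2(u)$ and avoids the off-diagonal Green's function estimates altogether. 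The only thing you borrow rather than prove is the existence and uniqueness of the positive mass $m^2$ solving $G_{\Lambda,m^2}(x,x)=4\kappa$ (you cite it "by hypothesis"); this is indeed asserted in the setup, but for a self-contained argument you should include the paper's one-line justification that $\mathfrak m\mapsto G_{\Lambda,\mathfrak m}(x,x)$ is continuous, strictly decreasing on $(0,\infty)$, and tends to $+\infty$ and $0$ at the endpoints. With that added, your route is a complete and slightly more economical proof of the same result.
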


In what follows, we will establish the existence of stationary measures to the mean-field SDE \eqref{mean-field-limit-n-i} in the infinite volume case.  
We first extend Theorem \ref{GaFF} to the infinite volume case by verifying the self-consistency condition. Define 
\begin{equation*}
    \begin{aligned}
         & \mathscr{P}_{\rm const}(\mathbb{H}) := \left\{
            \mu \in \mathscr{P}(\mathbb{H})
            \;\middle|\;
            \text{there exists } u \in \mathbb{R} \text{ such that } 
            \int v_x v_y\, \mathrm{d}\mu(v) = u, \text{ for all } x \sim y
         \right\}.
    \end{aligned}
\end{equation*}
where $\mathbb{H}$ denotes a weighted $\ell^p$ space, which will be introduced in Section~\ref{sec4} below. In addition, we define  $$\kappa_c:=\frac{1}{4}G_{\mathbb{Z}^d,0}(\bm{0},\bm{0}),$$ 
where $G_{\mathbb{Z}^d,0}$ denotes the Green function of the Gaussian free field on $\mathbb{Z}^d$.  We refer to the definition of $G_{\mathbb{Z}^d,m^2}$ in Section \ref{sec2.2} below.   Let $\muc$ be a  massive Gaussian free field on $\mathbb{Z}^d$ scaled by ${1}/{(2\sqrt{\kappa})}$ with the mass $m^2$, which is the solution to 
\begin{equation*}
    \begin{aligned}
         & G_{\mathbb{Z}^d,m^2}(x,x)=4\kappa,\quad \text{for all\ } x\in \mathbb{Z}^d.
    \end{aligned}
\end{equation*}
\begin{theorem}
Suppose that $0<\kappa<\kappa_{c}$. Then, the field $\muc$ is the unique stationary measure in $\mathscr{P}_{\mathrm{const}}(\mathbb{H})$ to the mean-field SDE \eqref{mean-field-limit-n-i} in the infinite volume case. 
\end{theorem}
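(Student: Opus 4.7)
The plan is to parallel the argument of Theorem \ref{GaFF} in the finite-volume case: use the $\mathscr{P}_{\mathrm{const}}$ constraint to freeze the nonlinear term $\mathbb{E}[\Psi_x\Psi_y]\Psi_x$ in \eqref{mean-field-limit-n-i} and reduce the mean-field SDE to a linear discrete stochastic heat equation, then resolve the self-consistency via the discrete Green function identity on $\mathbb{Z}^d$. Fix any stationary $\mu \in \mathscr{P}_{\mathrm{const}}(\mathbb{H})$ and write $u := \int v_x v_y\, d\mu(v)$ for $x \sim y$. Under stationarity, $\mathbb{E}[\Psi_x(t)\Psi_y(t)] = u$ is constant in time, so \eqref{mean-field-limit-n-i} becomes the linear Ornstein--Uhlenbeck equation
\[
d\Psi_x = 2\kappa\Delta\Psi_x\,dt - 2\kappa m^2 \Psi_x\,dt + dW_x, \qquad m^2 := \tfrac{1}{4\kappa} - 2d(1-u).
\]

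The key intermediate step is to establish that $\int v_x^2\, d\mu(v) = 1$ for every $x \in \mathbb{Z}^d$. From It\^o's formula applied to $\Psi_x^2$ and the $\mathscr{P}_{\mathrm{const}}$ constraint, I compute
\[
\tfrac{d}{dt}\mathbb{E}[\Psi_x(t)^2] = (1 + 8d\kappa u)\bigl(1 - \mathbb{E}[\Psi_x(t)^2]\bigr),
\]
and stationarity then forces $\mathbb{E}[\Psi_x^2]\equiv 1$ (the alternative $u = -1/(8d\kappa)$ yields $m^2 = -2d < 0$, so $-\Delta + m^2$ fails to be positive and the reduced linear OU admits no stationary measure in $\mathbb{H}$). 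Once $\mathbb{E}[\Psi_x^2] = 1$ and $m^2>0$ are in hand, the reduced linear OU has a unique Gaussian stationary measure in $\mathbb{H}$ --- the centered Gaussian with covariance $\tfrac{1}{4\kappa}G_{\mathbb{Z}^d,m^2}$ --- and identification with $\mu$ follows from a pathwise contractivity argument (two solutions driven by the same Brownian motion differ by a deterministic process decaying under $-\Delta + m^2 > 0$). The constraint $\mathbb{E}[\Psi_x^2] = 1$ then forces $G_{\mathbb{Z}^d,m^2}(x,x) = 4\kappa$; by the strict monotonicity of $m^2 \mapsto G_{\mathbb{Z}^d, m^2}(\bm{0},\bm{0})$ (continuously decreasing from $4\kappa_c$ at $m^2 = 0^+$ to $0$ as $m^2 \to \infty$) and the hypothesis $\kappa < \kappa_c$, this admits a unique positive root, which is precisely the mass defining $\muc$. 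The off-diagonal self-consistency $u = \frac{1}{4\kappa} G_{\mathbb{Z}^d, m^2}(x,y)$ for $x\sim y$ is then automatic from the discrete Green function identity
\[
(2d + m^2)G_{\mathbb{Z}^d,m^2}(x,x) - 2d\, G_{\mathbb{Z}^d,m^2}(y,x) = 1,\qquad y\sim x,
\]
combined with $G(x,x) = 4\kappa$; hence $\mu = \muc$.

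Existence proceeds symmetrically: starting from $\muc$, the reduced linear OU preserves it, and because $\mathbb{E}[\Psi_x\Psi_y] = u$ and $\mathbb{E}[\Psi_x^2] = 1$ remain constant in time, the nonlinear term $\mathbb{E}[\Psi_x\Psi_y]\Psi_x$ in \eqref{mean-field-limit-n-i} coincides with $u\Psi_x$ throughout the evolution, making $\muc$ stationary for the full mean-field SDE itself. The principal obstacle I anticipate is the infinite-volume uniqueness step: on $\mathbb{Z}^d$, the reduced linear OU on the weighted $\ell^p$ space $\mathbb{H}$ may \emph{a priori} support non-Gaussian or translation-non-invariant stationary measures, and ruling them out requires carefully exploiting the integrability and growth encoded in $\mathbb{H}$ together with the $\mathscr{P}_{\mathrm{const}}$ constraint --- a substantially more delicate argument than the finite-volume counterpart of Theorem \ref{GaFF}, where Gaussian integration by parts (or the associated finite-dimensional Fokker--Planck equation) yields uniqueness directly.
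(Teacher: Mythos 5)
Your proposal is correct in substance and follows the same backbone as the paper's proof of Theorem \ref{Aru-cor-im-Z}: freeze the nonlinearity through the $\mathscr{P}_{\mathrm{const}}$ constraint so that the stationary solution also solves the linear OU equation \eqref{Zd-Phiu} with mass $\mathfrak{m}=2d(u-1)+\frac{1}{4\kappa}$, use the It\^o identity $(1+8d\kappa u)\bigl(1-\mathbb{E}[\Psi_x^2]\bigr)=0$ to force $\mathbb{E}[\Psi_x^2]=1$, and then pin down the mass through $G_{\mathbb{Z}^d,m^2}(x,x)=4\kappa$, using the monotone decrease of $m^2\mapsto G_{\mathbb{Z}^d,m^2}(x,x)$ from $4\kappa_c$ to $0$ together with $\kappa<\kappa_c$. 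Where you genuinely diverge is the existence/self-consistency step: the paper verifies the off-diagonal condition \eqref{self-Z} directly, by a limit analysis of $G_{\mathbb{Z}^d,\mathfrak{m}}(\bm{0},\bm{1})$ as $\mathfrak{m}\to 0^+$ and $\mathfrak{m}\to\infty$, with a case split between $d=1,2$ and $d\ge 3$ and the computation $\kappa_c-\kappa_{\C}=0$; you instead deduce the off-diagonal condition from the diagonal one via the resolvent identity $(2d+m^2)G_{\mathbb{Z}^d,m^2}(x,x)-2d\,G_{\mathbb{Z}^d,m^2}(y,x)=1$, which is equivalent and arguably cleaner, since it removes the separate analysis of $G(\bm{0},\bm{1})$. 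Two caveats. First, the It\^o dichotomy only excludes $u=-1/(8d\kappa)$, i.e.\ $m^2=-2d$; to reach ``$m^2>0$'' you must still dispose of $m^2\in(-2d,0]$ and $m^2<-2d$: for $m^2=0$ (and $d\ge 3$) the diagonal identity would force $\mathbb{E}[\Psi_x^2]=\kappa_c/\kappa>1$, a contradiction with $\kappa<\kappa_c$, while for $m^2<0$ one needs to show the non-dissipative linear OU admits no stationary law in $\mathbb{H}$ with the required second moments (expanding Fourier modes plus noise); the paper leaves this equally implicit by restricting attention to $\mathfrak{m}>0$ from the outset. Second, your pathwise-contraction argument identifying any stationary law of the frozen equation with the Gaussian $\frac{1}{2\sqrt{\kappa}}N(0,[-\Delta_{\mathbb{Z}^d}+m^2]^{-1})$ when $m^2>0$ is exactly the ergodicity input the paper uses tacitly, so flagging and supplying it is a welcome addition rather than a deviation.
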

For $\kappa\ge\kappa_c$, we can also find a stationary measure for the mean-field SDE \eqref{mean-field-limit-n-i}.  More precisely, let $\bm{\mu}$ be a field on $\mathbb{Z}^d$, defined as follows:
\begin{enumerate}
\item if $\kappa<\kappa_c$: let $\bm{\mu}=\muc$;

\item if $\kappa=\kappa_c$: let $\bm{\mu}$ be a  Gaussian free field on $\mathbb{Z}^d$ scaled by ${1}/{(2\sqrt{\kappa})}$;

\item if $\kappa>\kappa_c$: let $\bm{\mu}$ be a  Gaussian free field on $\mathbb{Z}^d$ scaled by ${1}/{(2\sqrt{\kappa})}$ plus an independent constant random drift $\sqrt{\frac{\kappa-\kappa_c}{\kappa}}\cdot \mathcal Z$ with $\mathcal Z$ being a standard normal random variable.
\end{enumerate}

\begin{theorem}\label{mu-sm-i}
The field $\bm{\mu}$ is a stationary measure to the mean-field SDE \eqref{mean-field-limit-n-i} in the infinite volume case. 
\end{theorem}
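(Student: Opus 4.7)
The plan is to verify stationarity by reducing the nonlinear mean-field SDE \eqref{mean-field-limit-n-i} to an auxiliary linear Ornstein--Uhlenbeck-type equation via the self-consistency condition, and then to check directly that $\bm{\mu}$ is preserved under this linear flow. The case $\kappa<\kappa_c$ is already covered by the preceding theorem, so the work concentrates on the critical ($\kappa=\kappa_c$) and supercritical ($\kappa>\kappa_c$) regimes.

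The first task is to compute the nearest-neighbour correlation $u:=\mathbb{E}[\bm{\mu}_x\bm{\mu}_y]$, which by translation invariance is a single constant over all $x\sim y$. The defining identity $-\Delta G_{\mathbb{Z}^d,0}(\cdot,\bm{0})=\delta_{\bm{0}}$ evaluated at the origin, combined with nearest-neighbour symmetry, gives $2d\,G_{\mathbb{Z}^d,0}(\bm{0},\bm{0})-2d\,G_{\mathbb{Z}^d,0}(\bm{0},e_1)=1$; together with $G_{\mathbb{Z}^d,0}(\bm{0},\bm{0})=4\kappa_c$, this produces a correlation contribution $\kappa_c/\kappa-1/(8\kappa d)$ from the GFF part of $\bm{\mu}$. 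Adding the variance $(\kappa-\kappa_c)/\kappa$ of the constant random drift in the supercritical case (and $0$ in the critical case) yields $u=1-1/(8\kappa d)$ in both regimes; incidentally, $\mathbb{E}[\bm{\mu}_x^2]=1$ in both cases. Using $\sum_{y\sim x}\Psi_y=(\Delta\Psi)_x+2d\Psi_x$, freezing the nonlinear term $\mathbb{E}[\Psi_x\Psi_y]\Psi_x$ in \eqref{mean-field-limit-n-i} at $u$ produces the linear equation
$$d\Psi_x = 2\kappa(\Delta\Psi)_x\,dt+c\,\Psi_x\,dt+dW_x,\qquad c:=4\kappa d(1-u)-\tfrac{1}{2},$$
and our choice of $u$ makes $c=0$.

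The next step is to solve this linear equation starting from $\Psi(0)\sim\bm{\mu}$ and show, by a direct Lyapunov-type covariance computation, that the law is preserved for every $t\ge0$. Using the mild formulation $\Psi(t)=e^{2\kappa t\Delta}\Psi(0)+\int_0^t e^{2\kappa(t-s)\Delta}\,dW(s)$, one obtains
$$\mathrm{Cov}(\Psi(t,x),\Psi(t,y))=(e^{4\kappa t\Delta}K_0)(x,y)+\int_0^t(e^{4\kappa s\Delta})(x,y)\,ds,$$
where $K_0$ is the covariance kernel of $\bm{\mu}$. For $\kappa=\kappa_c$, $K_0=\frac{1}{4\kappa_c}(-\Delta)^{-1}$ and the identity $(I-e^{4\kappa t\Delta})(-\Delta)^{-1}=4\kappa\int_0^t e^{4\kappa s\Delta}\,ds$ confirms preservation. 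For $\kappa>\kappa_c$, $K_0$ splits as $\frac{1}{4\kappa}(-\Delta)^{-1}$ plus the rank-one kernel $(\kappa-\kappa_c)/\kappa$ produced by the constant drift $\mathcal Z$; the first summand is preserved as above, while the rank-one constant summand lies in $\ker\Delta$ and so is fixed by the heat semigroup and independent of the noise. In either regime $\Psi(t)$ is centred Gaussian with the same covariance as $\bm{\mu}$, hence $\Psi(t)\laweq\bm{\mu}$ for every $t\ge0$. This closes the self-consistency loop: one then has $\mathbb{E}[\Psi_x(t)\Psi_y(t)]=u$ for all $t$ and all $x\sim y$, so the linear SDE coincides with \eqref{mean-field-limit-n-i} on this solution and $\bm{\mu}$ is a stationary measure.

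The principal obstacle is the absence of strict dissipativity in the critical and supercritical regimes: the frozen coefficient $c$ vanishes and $-\Delta$ has $0$ in its spectrum on $\mathbb{Z}^d$, so neither a Krylov--Bogolyubov argument nor a contraction-based uniqueness applies, and one must instead exploit the explicit Gaussian structure of $\bm{\mu}$ together with the existence of the Green's function on $\mathbb{Z}^d$ (requiring $d\ge 3$). The constant random drift in the supercritical case should be viewed as a parameterization of the zero-mode freedom, tuned precisely to maintain the sphere-inherited constraint $\mathbb{E}[\bm{\mu}_x^2]=1$ when no positive mass $m^2$ satisfies the self-consistency equation $G_{\mathbb{Z}^d,m^2}(x,x)=4\kappa$.
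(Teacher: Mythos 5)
Your argument is correct, but it takes a genuinely different route from the paper. The paper proves Theorem \ref{mu-sm-i} (Theorem \ref{mu-sm}) by a periodic approximation: it takes the periodic extension \eqref{mean-field-limit-L} with its stationary measure $\mu^\L$ from Theorem \ref{Aru-cor-im}, proves uniform-in-$\L$ moment bounds and tightness of the laws $\mathbf{P}^\L$ on $(\SSI,\dI)$, identifies every limit point as a martingale solution to \eqref{mean-field-limit-Z} started from $\bm{\mu}$ (using the convergence of the correlation functions of $\mu^\L$ to those of $\bm{\mu}$ from \cite[Section 4.2]{aru2024}), and then invokes uniqueness in $\mathscr{P}_{\O,\Psi}(\SSI)$ to conclude. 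You instead extend the self-consistency strategy of Theorems \ref{Aru-cor-im} and \ref{Aru-cor-im-Z} to the boundary case: your computation $u=\mathbb{E}[\bm{\mu}_x\bm{\mu}_y]=1-\tfrac{1}{8\kappa d}$ (and $\mathbb{E}[\bm{\mu}_x^2]=1$) in both the critical and supercritical regimes is correct, the frozen drift then reduces \eqref{mean-field-limit-n-i} to the massless discrete stochastic heat equation, and the covariance identity $\tfrac{d}{dt}\,e^{4\kappa t\Delta}K_0=-e^{4\kappa t\Delta}$ together with the zero-mode invariance of the constant drift indeed shows the Gaussian law is preserved, which closes the self-consistency loop. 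Your route is more explicit and self-contained, makes transparent why the drift variance $\tfrac{\kappa-\kappa_c}{\kappa}$ is exactly the right zero-mode compensation, and avoids the $\L\to\infty$ compactness machinery; the paper's route buys, in addition, the weak convergence $\mathbf{P}^\L\wcon\mathbf{P}$ of the whole dynamics and handles all three regimes uniformly without the explicit Green-function computation. Two points you leave implicit and should record to match the paper's framework: the massless linear SDE must be solved in $\mathbb{H}=\ell^2_a$ and its law verified to be a martingale solution in the sense of Definition \ref{martingale-solution-Z} (routine, since the drift is linear and bounded on $\mathbb{H}$ and $\mathbb{E}[|\Psi_x(t)|^2]=1$ uniformly), and the convergence of $e^{4\kappa t\Delta}$ applied to the Green kernel needs the boundedness of $G_{\mathbb{Z}^d,0}$, i.e. $d\ge 3$, which is automatic because $\kappa\ge\kappa_c$ is vacuous for $d=1,2$.
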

The field $\bm{\mu}$ arises as the infinite volume limit of $\mu^\L$ (see \cite{aru2024} for more details).  By combining the arguments from \cite[Section 4.2]{aru2024} with a periodic approximation of \eqref{mean-field-limit-n-i} (where $\mathfrak{A} = \mathbb{Z}^d$),  we obtain the stationary measure $\bm{\mu}$ to the mean-field SDE \eqref{mean-field-limit-n-i} via a family of periodic mean-field SDEs, as given in \eqref{mean-field-limit-L} below.

For small inverse temperatures, we derive the convergence rate of the spin $O(N)$ model by employing a dynamical method from \cite{Shen2022,Shen2023}. Let $\mathbb{V}^{(k)}: \mathfrak{X}^{N} \rightarrow \mathfrak{X}^{k}$ be the projection onto the first $k$ components, defined by $\mathbb{V}^{(k)}(\Phi)=(\Phi^1,\cdots,\Phi^k)$, where $\mathfrak{X}$ is a set, and $\mathfrak{X}^N$ denotes the $N$-fold Cartesian product of $\mathfrak{X}$. In the infinite volume case, by Proposition \ref{infinite-measure} below,  let $\mu_{N,\kappa}$ denote the invariant measure for the Langevin dynamics \eqref{the-new-model-i} on $\mathbb{Z}^d$. We define $\mu^{(k)}_{\Lambda,N,\kappa}:=\mu_{\Lambda,N,\kappa}\circ (\mathbb{V}^{(k)})^{-1}$ and $\mu^{(k)}_{N,\kappa}:=\mu_{N,\kappa}\circ (\mathbb{V}^{(k)})^{-1}$. Recall $\mu^\L$ and $\bm{\mu}$ from Theorems \ref{GaFF} and \ref{mu-sm-i}. Let $(\mu^\L)^{\otimes k}$ and $\bm{\mu}^{\otimes k}$ denote the $k$-fold product measures of $\mu^\L$ and $\bm{\mu}$, respectively.

\begin{theorem}\label{O-con}
Suppose $0 < \kappa < 1/(32d)$ and $k\in \mathbb{N}$ is fixed. Then $\mu^{(k)}_{\Lambda,N,\kappa}$ converges to $(\mu^{\L})^{\otimes k}$ at rate $N^{-1/2}$ in the $2$-Wasserstein distance. Furthermore, under the same condition, $\mu^{(k)}_{N,\kappa}$ converges to $\bm{\mu}^{\otimes k}$ at the same rate in the $2$-Wasserstein distance.  
\end{theorem}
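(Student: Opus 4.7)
The plan is to realise the convergence through a stationary coupling, in the spirit of \cite{Shen2022,Shen2023}. Since $\mu_{\Lambda,N,\kappa}$ is invariant for \eqref{the-new-model-i} (Proposition \ref{invariance-SDE}) and $\mu^{\L}$ is invariant for \eqref{mean-field-limit-n-i} (Theorem \ref{GaFF}), I would run both systems from their stationary distributions on a common probability space, with the Brownian motions $\{W^i_x\}$ shared as in \eqref{Psi-i}. Concretely, initialise $\Phi(0)\sim\mu_{\Lambda,N,\kappa}$ and, independently, sample $\Psi^i(0)$ i.i.d.\ with law $\mu^{\L}$; by $O(N)$-invariance of $\mu_{\Lambda,N,\kappa}$, the joint process $(\Phi(t),\Psi(t))$ is symmetric in the index $i$, so for every $t\ge 0$
\begin{equation*}
\WFT\!\bigl(\mu^{(k)}_{\Lambda,N,\kappa},(\mu^{\L})^{\otimes k}\bigr)\;\le\; k\sum_{x\in\Lambda}\mathbb{E}\bigl[|\Phi^1_x(t)-\Psi^1_x(t)|^2\bigr],
\end{equation*}
and the task reduces to bounding the right-hand side by a constant times $1/N$, uniformly in $t$.

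Setting $\delta^i_x:=\Phi^i_x-\Psi^i_x$, I would apply It\^o's formula to $|\delta^i_x|^2$, sum over $x$, and average over $i=1,\ldots,N$. The drift difference splits into three groups: (i) the Laplacian term $2\kappa\sum_{y\sim x}\delta^i_y$ paired with $\delta^i_x$, bounded via Young's inequality by $8\kappa d\sum_x|\delta^i_x|^2$; (ii) the mass term giving $-\sum_x|\delta^i_x|^2$ of dissipation, plus an $O(1/N)$ correction from $-(2N)^{-1}\Phi^i_x$ in \eqref{the-new-model-i}; and (iii) a cubic cross term comparing $N^{-1}\sum_k\Phi^k_x\Phi^k_y$ with $\mathbb{E}[\Psi^1_x\Psi^1_y]$. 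I would split (iii) into the cubic-in-$\delta$ piece $N^{-1}\sum_k(\Phi^k_x\Phi^k_y-\Psi^k_x\Psi^k_y)$, absorbed into the linear dissipation using the sphere constraint $\sum_i(\Phi^i_x)^2=N$ and the $L^4$-moments of the Gaussian field $\mu^{\L}$, and the law-of-large-numbers fluctuation $N^{-1}\sum_k\Psi^k_x\Psi^k_y-\mathbb{E}[\Psi^1_x\Psi^1_y]$, whose $L^2$-size is $O(1/\sqrt N)$ and which contributes $O(1/N)$ after a further Cauchy-Schwarz. The tangent-projection noise $N^{-1}\sum_k\Phi^k_x\,\d W^k_x\cdot\Phi^i_x$ also contributes $O(1/N)$ through It\^o's isometry. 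The hypothesis $\kappa<1/(32d)$ leaves a strictly positive net coefficient after all these absorptions, giving a differential inequality of the form
\begin{equation*}
\frac{\d}{\d t}\,\frac{1}{N}\sum_{i=1}^{N}\sum_{x\in\Lambda}\mathbb{E}[|\delta^i_x(t)|^2]\;\le\; -c\,\frac{1}{N}\sum_{i=1}^{N}\sum_{x\in\Lambda}\mathbb{E}[|\delta^i_x(t)|^2]+\frac{C}{N},
\end{equation*}
with $c,C>0$ depending only on $d,\kappa,|\Lambda|$. Gr\"onwall's inequality followed by $t\to\infty$ yields $\sum_x\mathbb{E}[|\delta^1_x(t)|^2]\le C/(cN)$, hence $\WFT\lesssim 1/N$ and therefore $\WF\le O(N^{-1/2})$.

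For the infinite-volume statement I would use the same construction with $\Phi(0)\sim\mu_{N,\kappa}$ (Proposition \ref{infinite-measure}) and $\Psi^i(0)$ i.i.d.\ with law $\bm{\mu}$ (Theorem \ref{mu-sm-i}; note that $\kappa<1/(32d)<\kappa_c$, so $\bm{\mu}=\muc$). The contraction computation above is local in $x\in\mathbb{Z}^d$ and its per-site constants are independent of the volume, so summing against the $\mathbb{H}$-weight and running the same Gr\"onwall argument transfers without structural change; alternatively one may approximate by the periodic dynamics on $\Lambda_{\L}$, apply the finite-volume bound with constants uniform in $\L$ for the weighted norms, and pass to the limit $\L\to\infty$ using the convergence $\mu^{\L}\to\bm{\mu}$ from \cite{aru2024}. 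The hard part of the whole argument is the absorption of the cubic cross-term in (iii): it is not sign-definite, and cancelling it against the $\kappa$-small linear dissipation requires the simultaneous use of the sphere constraint on $\Phi$, the Gaussian moments of $\mu^{\L}$, and the smallness of $\kappa$. The careful accounting of these three ingredients is what dictates the precise threshold $\kappa<1/(32d)$ and produces the $N^{-1/2}$ rate rather than anything faster.
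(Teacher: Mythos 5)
Your proposal is correct and its core is the same as the paper's: a synchronous coupling of the Langevin dynamics \eqref{the-new-model} with i.i.d.\ copies of the stationary mean-field dynamics \eqref{Psi-i-limit}, exchangeability in $i$ reducing the problem to $\frac{1}{N}\sum_{i}\mathbb{E}\bigl[\sum_x|\Phi^i_x-\Psi^i_x|^2\bigr]$, It\^o's formula, and the decomposition of the cross term into a law-of-large-numbers fluctuation (of $L^2$-size $N^{-1/2}$, using that the $\Psi^k$ are i.i.d.\ Gaussian) plus pieces absorbed via the sphere constraint, the identities $\mathbb{E}[|\Psi_x|^2]=1$ and $\mathbb{E}[\Psi_x\Psi_y]=u$, and the uniform-in-$N$ fourth moments of $\mu_{\Lambda,N,\kappa}$ from Lemma \ref{un-N}; the bookkeeping giving the threshold $32\kappa d(1+\varepsilon)<1$ is identical. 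The one genuine difference is how stationarity is exploited. The paper constructs a \emph{jointly} stationary pair $(\Phi,\Psi)$ (following \cite[Lemma 5.7]{Shen2022}), so that after taking expectations the time derivative of $\mathbb{E}[|\Phi^i_x-\Psi^i_x|^2]$ vanishes and one gets a time-independent algebraic inequality \eqref{un-es-stationary} that is closed by absorption — no Gr\"onwall argument and no limit $t\to\infty$. You instead couple the two stationary marginals independently at time $0$ and derive an exponential-contraction differential inequality, letting $t\to\infty$; this is legitimate because each marginal is separately stationary (so $\Phi(t)\laweq\mu_{\Lambda,N,\kappa}$, $\Psi^i(t)\laweq\mu^\L$ for all $t$, the moment constants in your inequality are uniform in $t$, and for every $t$ the pair of first $k$ components is a coupling of $\mu^{(k)}_{\Lambda,N,\kappa}$ and $(\mu^\L)^{\otimes k}$), and because the exchangeability you invoke does hold: permutation invariance of $\mu_{\Lambda,N,\kappa}$, i.i.d.\ $\psi^i$, i.i.d.\ noises, and permutation equivariance of both systems. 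Your route is slightly more self-contained (it avoids constructing the stationary coupling of the pair), at the price of the contraction constant $c>0$ and the limiting argument; the paper's route needs only the same absorption condition but relies on the existence of the jointly stationary coupling. In the infinite-volume step, be explicit that the weighted summation with $a^{-|x|}\le a\,a^{-|y|}$ for $x\sim y$ inflates the neighbour terms by a factor $a$, so to retain the stated threshold $\kappa<1/(32d)$ one should take the weight parameter $a>1$ arbitrarily close to $1$ (together with $\varepsilon$ small), exactly as is implicit in the paper's Theorem \ref{cov-GFF}.
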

We refer the reader to Section \ref{sec2} for the definition of the $2$-Wasserstein distance. See Theorems \ref{W2MU} and \ref{cov-GFF} below for precise statements.

The proof is similar in both the finite and infinite volume cases. We follow the coupling approach as in \cite[Lemma 5.7]{Shen2022} and construct a jointly stationary process $(\Phi, \Psi)$, whose components satisfy \eqref{the-new-model-i} and \eqref{Psi-i}, respectively. By the stationary coupling and the symmetry of $\Phi - \Psi$, we transform the problem to estimate $$\frac{1}{N}\sum_{i=1}^N \mathbb{E}\left[|\Phi^i - \Psi^i|^2\right].$$  This is similar to the approach used in Theorem \ref{simple-2}, where the rate of convergence is established.

In the following corollary, we consider the periodic extensions of $\mu_{\Lambda_\L,N,\kappa}$ and $\mu^\L$, which are still denoted as $\mu_{\Lambda_\L,N,\kappa}$ and $\mu^\L$ for simplicity.  Let $\mu^i_{\Lambda_\L,N,\kappa}$ denote the marginal distribution of $\mu_{\Lambda_\L,N,\kappa}$. Theorem \ref{O-con} allows us to derive an interesting conclusion: for small $\kappa$, the order of taking the limits of infinite spin-dimensionality and infinite volume can be interchanged. Specifically, 

\begin{corollary}\label{commutes}
If $0<\kappa<1/(32d)$, then the following diagram commutes:  
\begin{center}
      \begin{tikzcd}[row sep=3em, column sep=5em]
{\mu^{i}_{\Lambda_\L,N,\kappa}} \arrow[r, "\text{$N\rightarrow \infty$}"] \arrow[d, "\text{$\L\rightarrow \infty$}"'] & \mu^\L \arrow[d, "\text{$\L\rightarrow \infty$}"] \\
 \mu^i_{N,\kappa} \arrow[r, "\text{$N\rightarrow \infty$}"] & \bm{\mu},
\end{tikzcd}  
\end{center} 
where all limits are taken in the weak sense.  
\end{corollary}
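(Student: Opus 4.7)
The plan is to verify each of the four arrows of the diagram separately and then note that commutativity is automatic, since both paths terminate at $\bm{\mu}$. The top arrow $\mu^i_{\Lambda_\L,N,\kappa}\to\mu^\L$ and the bottom arrow $\mu^i_{N,\kappa}\to\bm{\mu}$ (both as $N\to\infty$) follow directly from Theorem~\ref{O-con} applied with $k=1$: both convergences hold in the $2$-Wasserstein distance at rate $N^{-1/2}$, hence in particular weakly. The right arrow $\mu^\L\to\bm{\mu}$ as $\L\to\infty$ is the infinite-volume limit of the self-consistent massive Gaussian free field, as discussed after Theorem~\ref{mu-sm-i} and established in \cite{aru2024}.

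The remaining ingredient is the left arrow: $\mu^i_{\Lambda_\L,N,\kappa}\to\mu^i_{N,\kappa}$ weakly as $\L\to\infty$, with $N$ fixed. I would proceed as follows. After periodic extension, the measures $\{\mu_{\Lambda_\L,N,\kappa}\}_\L$ all live on the compact product space $(\mathbb{S}^{N-1}(\sqrt{N}))^{\mathbb{Z}^d}$ with the product topology, so the family is automatically tight. Let $\mu^{\ast}$ be any weak subsequential limit. Because the interaction in \eqref{spin-O-N} is nearest-neighbour (hence finite range), the finite-volume DLR equations pass to the weak limit, so $\mu^{\ast}$ is a translation-invariant infinite-volume Gibbs measure for the spin $O(N)$ model and is in particular a stationary measure of the infinite-volume Langevin dynamics \eqref{the-new-model-i} on $\mathbb{Z}^d$ (cf.\ Proposition~\ref{infinite-measure}). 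For $\kappa<1/(32d)$, Dobrushin's uniqueness criterion guarantees a unique infinite-volume Gibbs state, so $\mu^{\ast}=\mu_{N,\kappa}$; since every subsequential limit is the same, the whole sequence converges, and taking the $i$-th marginal yields the left arrow.

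Once all four arrows are in hand, commutativity is immediate: the route top then right gives $\mu^i_{\Lambda_\L,N,\kappa}\to\mu^\L\to\bm{\mu}$, while left then bottom gives $\mu^i_{\Lambda_\L,N,\kappa}\to\mu^i_{N,\kappa}\to\bm{\mu}$, and both paths terminate at $\bm{\mu}$. The main obstacle will be the left arrow, specifically making sure the DLR (equivalently, Langevin-stationarity) property survives the weak limit on the infinite product space and that the resulting unique infinite-volume Gibbs measure coincides with the specific invariant measure $\mu_{N,\kappa}$ supplied by Proposition~\ref{infinite-measure}. If Dobrushin uniqueness is not convenient to invoke directly here, a natural alternative is to bypass it by reusing the stationary-coupling/dynamical strategy of Theorem~\ref{O-con}: couple periodic and infinite-volume stationary Langevin dynamics through the same Brownian noise and exploit the symmetry and uniform-in-$N$ bounds from Section~\ref{Invariant measure} to derive a $\L$-dependent $2$-Wasserstein estimate that vanishes as $\L\to\infty$, identifying the limit unambiguously.
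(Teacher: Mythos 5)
Your overall scheme --- verify the four arrows separately and observe that commutativity is then automatic because both routes terminate at $\bm{\mu}$ --- is exactly the structure of the paper's proof, and three of your four arrows are justified the same way the paper does: the top and bottom arrows via the $2$-Wasserstein convergence of Theorems \ref{W2MU} and \ref{cov-GFF} (i.e.\ Theorem \ref{O-con} with $k=1$), and the right arrow via the uniform convergence on finite boxes of the correlation functions of $\mu^\L$ to those of $\bm{\mu}$ from \cite[Section 4.2]{aru2024}.

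The genuine gap is in your left arrow. The paper obtains $\lim_{\L\to\infty}\mu^i_{\Lambda_\L,N,\kappa}=\mu^i_{N,\kappa}$ directly from Proposition \ref{infinite-measure}: tightness (compactness of the sphere), the fact that every tight limit of the periodically extended measures is an invariant measure of the infinite-volume Langevin dynamics \eqref{the-new-model-Z}, and uniqueness of that invariant measure, proved by a Bakry-\'Emery (curvature) argument, valid for $0<\kappa<(N-2)/(16dN)$, which contains $(0,1/(32d))$ as soon as $N\ge 4$. You instead pass the DLR equations to the limit (fine for bounded nearest-neighbour spins) and then assert that Dobrushin's uniqueness criterion holds whenever $\kappa<1/(32d)$. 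That assertion is unsubstantiated and, as stated, cannot work uniformly in $N$: writing $\Phi_x=\sqrt{N}\,\sigma_x$ with $\sigma_x$ a unit spin, the per-bond inverse temperature becomes $2\kappa N$, which diverges with $N$, so the naive Dobrushin condition degenerates; the $N$-uniform smallness threshold in this model comes from the curvature of $\mathbb{S}^{N-1}(\sqrt{N})$ (Bakry-\'Emery/log-Sobolev type arguments), not from Dobrushin. Since the diagram requires the left arrow at a fixed $\kappa<1/(32d)$ for all large $N$ (the bottom arrow subsequently sends $N\to\infty$), this step as written would fail. Your fallback (a stationary coupling of periodic and infinite-volume dynamics with an $\L$-dependent Wasserstein estimate) is not needed and would itself require nontrivial boundary-effect estimates; the simplest repair is to invoke Proposition \ref{infinite-measure} itself, which already supplies exactly the tightness, invariance-of-limits, and uniqueness statements your argument is trying to reconstruct.
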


Since the dynamics for $\Psi$ depends on the law of $\Psi$ itself  and the term $\mathbb{E}[\Psi_x\Psi_y]\Psi_x$ is nonlinear and non-dissipative, the uniqueness of the stationary measure for such equation is more involved. Fortunately, the mean-field SDE \eqref{mean-field-limit-n-i} contains a coercive term $-\frac{1}{2} \Psi_x$. Therefore, for small $\kappa$, we are able to establish the uniqueness of the stationary measure to the mean-field SDE \eqref{mean-field-limit-n-i}.

\begin{theorem}\label{unique-in-m}
If $0 < \kappa < 1/(32d)$,  then there exists at most one stationary measure to the mean-field SDE \eqref{mean-field-limit-n-i} for both the finite and infinite volume cases.
\end{theorem}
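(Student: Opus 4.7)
The plan is a synchronous coupling of two candidate stationary measures together with an $L^2$-contraction estimate built on the coercive drift $-\tfrac{1}{2}\Psi_x$ in \eqref{mean-field-limit-n-i}. Let $\mu_1,\mu_2$ be stationary measures. On a common probability space, drive two copies of \eqref{mean-field-limit-n-i} by the \emph{same} Brownian family $\{W_x\}_{x\in\mathfrak{A}}$, with initial coupling $\Psi(0)\sim\mu_1$, $\tilde\Psi(0)\sim\mu_2$ chosen arbitrarily (e.g.\ independently). Stationarity gives $\law(\Psi(t))=\mu_1$ and $\law(\tilde\Psi(t))=\mu_2$ for every $t\ge 0$, and the shared noise cancels in $D_x:=\Psi_x-\tilde\Psi_x$, yielding the pathwise ODE
\begin{equation*}
\frac{dD_x}{dt}=2\kappa\sum_{y\sim x}\Bigl(D_y-\bigl(\mathbb{E}[\Psi_x\Psi_y]\Psi_x-\mathbb{E}[\tilde\Psi_x\tilde\Psi_y]\tilde\Psi_x\bigr)\Bigr)-\tfrac{1}{2}D_x.
\end{equation*}

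A preparatory step is the a priori identity $\mathbb{E}[|\Psi_x|^2]=1$ (and likewise for $\tilde\Psi$). Applying It\^o's formula to $|\Psi_x|^2$ and using $\tfrac{d}{dt}\mathbb{E}[|\Psi_x|^2]=0$ in stationarity yields
\begin{equation*}
\bigl(1-\mathbb{E}[|\Psi_x|^2]\bigr)\Bigl(1+4\kappa\sum_{y\sim x}\mathbb{E}[\Psi_x\Psi_y]\Bigr)=0,
\end{equation*}
and a Cauchy--Schwarz bound $|\mathbb{E}[\Psi_x\Psi_y]|\le\sup_z\mathbb{E}[|\Psi_z|^2]$ evaluated at a site realizing $M:=\sup_x\mathbb{E}[|\Psi_x|^2]$ rules out the second factor for $\kappa<1/(8dM)$, forcing $M=1$ and hence $\mathbb{E}[|\Psi_x|^2]\equiv 1$. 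Then decompose the nonlinear drift as
\begin{equation*}
\mathbb{E}[\Psi_x\Psi_y]\Psi_x-\mathbb{E}[\tilde\Psi_x\tilde\Psi_y]\tilde\Psi_x=\mathbb{E}[\Psi_x\Psi_y]D_x+\bigl(\mathbb{E}[D_x\Psi_y]+\mathbb{E}[\tilde\Psi_x D_y]\bigr)\tilde\Psi_x,
\end{equation*}
apply It\^o to $|D_x|^2$, take expectations, and weight-sum by $\rho(x)$ (with $\rho\equiv 1$ in finite volume and a summable, slowly decaying weight compatible with $\mathbb{H}$ in infinite volume). Cauchy--Schwarz on each cross term, combined with $\mathbb{E}[|\Psi_x|^2]=\mathbb{E}[|\tilde\Psi_x|^2]=1$, gives
\begin{equation*}
\frac{d}{dt}\sum_x\rho(x)\,\mathbb{E}[|D_x(t)|^2]\le (c\kappa d-1)\sum_x\rho(x)\,\mathbb{E}[|D_x(t)|^2]
\end{equation*}
for an absolute constant $c$. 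For $\kappa<1/(32d)$ the prefactor is strictly negative, so Gronwall forces $\sum_x\rho(x)\,\mathbb{E}[|D_x(t)|^2]\to 0$ as $t\to\infty$.

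Since the finite-dimensional marginals $\mu_1^{(F)}$ and $\mu_2^{(F)}$ at any finite $F\subset\mathfrak{A}$ are preserved in time, the vanishing of $\mathbb{E}[|D_x(t)|^2]$ implies that any weak limit of the joint law of $(\Psi_F(t),\tilde\Psi_F(t))$ along a subsequence $t_n\to\infty$ is supported on the diagonal, forcing $\mu_1^{(F)}=\mu_2^{(F)}$ for every finite $F$ and hence $\mu_1=\mu_2$.

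The main obstacle is securing the uniform second-moment bound $M<\infty$ — and therefore $\mathbb{E}[|\Psi_x|^2]=1$ — for \emph{every} stationary measure in the infinite-volume case: this should be either built into the definition of $\mathbb{H}$-valued stationary measure or derived from a localized It\^o identity combined with a maximum-principle bootstrap over growing boxes. A minor secondary issue is the choice of weight $\rho$: the cross sums $\sum_{x\sim y}\rho(x)\mathbb{E}[D_xD_y]$ must be reabsorbed into $\sum_x\rho(x)\mathbb{E}[|D_x|^2]$ without destroying the contraction, which is standard for slowly decaying geometric weights $\rho(x)=(1+|x|)^{-N}$ (with $N$ large) compatible with the weighted $\ell^p$ topology of $\mathbb{H}$.
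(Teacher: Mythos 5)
Your coupling-and-contraction core is essentially the paper's argument: the precise versions of this statement (Theorems \ref{unique-in-m-c} and \ref{unique-in-m-c-2}) are proved by taking two stationary solutions driven by the same noise, running the same drift decomposition as in the pathwise-uniqueness Lemma \ref{not-fix-mu}, and applying Gronwall with the negative rate $32\kappa d-1$, then passing to the Wasserstein distance via invariance; your step with the weight $\rho$ and the diagonal-support conclusion is the same computation in slightly different clothing. The genuine difference is in scope, and that is where the gap sits. The paper does \emph{not} claim uniqueness among all stationary measures: it restricts to the class $\mathscr{P}_{\O}$ of measures with $\int|v_x|^2\,\d\mu\le 1$ (see \eqref{Pone-R}, \eqref{Pone-H}), so the bound $\abs{\mathbb{E}[\Psi_x\Psi_y]}\le1$ needed to close the Gronwall estimate is available by assumption. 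You instead try to prove the unrestricted statement by first deriving $\mathbb{E}[|\Psi_x|^2]=1$ for \emph{every} stationary measure from the identity $\bigl(1-\mathbb{E}[|\Psi_x|^2]\bigr)\bigl(1+4\kappa\sum_{y\sim x}\mathbb{E}[\Psi_x\Psi_y]\bigr)=0$.

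That preparatory step does not go through as written. At a site realizing $M:=\sup_x\mathbb{E}[|\Psi_x|^2]$, Cauchy--Schwarz only rules out the vanishing of the second factor when $8\kappa dM<1$; under the hypothesis $\kappa<1/(32d)$ this requires the a priori bound $M<1/(8\kappa d)$, which is exactly the kind of moment restriction the paper imposes by fiat (their remark makes the trade-off explicit: with second moments bounded by $C\ge1$ one needs $\kappa<1/(8d+24dC)$). So the dichotomy at the maximizer only yields ``$M=1$ or $M\ge 1/(8\kappa d)$'', and the second alternative is not excluded by smallness of $\kappa$ alone --- the argument is circular without an independent moment bound, which you acknowledge but do not supply (in infinite volume you would additionally need the supremum to be attained, or an approximation over boxes, and the stationary measure to have enough integrability for the It\^o identity itself). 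Either restrict the uniqueness class as the paper does, in which case your preparatory step is unnecessary and your proof coincides with theirs, or actually carry out the a priori estimate $\sup_x\mathbb{E}[|\Psi_x|^2]<1/(8\kappa d)$ for an arbitrary stationary measure; as it stands, the unrestricted claim is not proved.
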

See Theorems \ref{unique-in-m-c} and \ref{unique-in-m-c-2} below for precise statements.

\subsection{Related results, literature, and discussions} 
The spin $O(N)$ model was first introduced by Stanley \cite{Stanley1968} in 1968. Stanley generalized the Ising model, the XY model, and the classical Heisenberg model (see \cite{Lenz1920},\cite{Vaks1966} and \cite{Heller1934}) by allowing
spins to be on the sphere $\mathbb{S}^{N-1}(\sqrt{N})$ in dimension $N$.  Stanley also observed a connection between the spin $O(N)$ model and the spherical model in \cite{Stanley1968a}: as $N \to \infty$, the spin $O(N)$ model is considerably simplified, and the free energy of the spin $O(N)$ model approaches that of the sphere model. The spin $O(N)$ model also appears in \cite{Kupiainen1980c}, where asymptotic expansions in $1/N$ for correlation functions and the free energy at all temperatures above the critical temperature of the spherical model have been established. Recently, Aru and Korzhenkova revisited the relationship between the sphere model and the spin $O(N)$ model in the limit as $N \rightarrow \infty$, explaining how they are connected through the discrete Gaussian free field. It is obtained in  \cite[Theorem 4.1]{aru2024} that the spin $O(N)$ measures are chaotic.
Theorems \ref{simple-1} and \ref{simple-2} in our paper can be viewed as propagation of chaos results for the Langevin dynamics of the spin $O(N)$ model.

The Large $N$ limit (or "$1/N$ expansion") is widely used in models of quantum field theory and statistical physics.
Wilson first introduced this idea in quantum field theory \cite{Wilson1973} for $\Phi^4$-type and fermionic models, and it was soon popularized and extended to many other systems (see the review articles \cite{Witten1980}, \cite[Chapter 8]{Coleman1985}, and \cite{Moshe2003} for summaries of this progress). 

% The rigorous mathematical study of large $N$ limits was initiated by Kupiainen \cite{Kupiainen1980c,Kupiainen1980b,Kupiainen1980a}. 

Recently, employing dynamics to control fields and obtain qualitative results for quantum field theory models has become increasingly popular (see, for example, \cite{Albeverio2020,Gubinelli2021,Hairer2022,Shen2022,Shen2022a,Shen2023a,Shen2023,Shen2024,Shen2025}). 
In \cite{Shen2024}, the Langevin dynamics for the Yang-Mills-Higgs model were studied. Shen, Zhu, and the second-named author focused on ergodicity properties of the dynamics, functional inequalities for Yang-Mills-Higgs measures, and exponential decay of correlations. The Langevin dynamics for the spin $O(N)$ model can be seen as a specific case where Yang-Mills component $Q\equiv 1$, and the Higgs component is valued  in the sphere $\mathbb{S}^{N-1}(\sqrt{N})$.

As mentioned above,  Theorems \ref{simple-1} and \ref{simple-2} in our paper can be regarded as results on the propagation of chaos of interacting particle system. To the best of our knowledge, Kac, in his seminal article \cite{Kac1956}, provided the first rigorous mathematical definition of chaos and introduced the concept of the propagation of chaos. Shortly thereafter, McKean \cite{McKean1967} introduced a class of diffusion models satisfying Kac's propagation of chaos property. There is now an extensive literature on mean-field SDEs, and we do not attempt to list all relevant papers here. Interested readers are referred to the survey articles \cite{Sznitman1991,Jabin2017,Chaintron2022a,Chaintron2022b}. In what follows, we discuss only those results closely related to our work. Recall the Poincar\'e lemma (see, for instance, \cite{McKean1973}), which states that the uniform distributions on $\mathbb{S}^{N-1}(\sqrt{N})$ are chaotic, and that the marginals of these distributions in high dimensions converge to standard normal distributions. This well-known lemma 
can be seen as a special case of \cite[Theorem 4.1]{aru2024} where the inverse temperature $\beta=0$, and there is only a single spin on the sphere $\mathbb{S}^{N-1}(\sqrt{N})$. In \cite{Shen2022,Shen2022a,Shen2025}, the large $N$ limit of an $N$ interacting dynamical $\Phi^4$ equation and the related $O(N)$ linear sigma model over two-dimensional and three dimensional torus were studied, where the large $N$ limit is a mean-field singular SPDE with polynomial growth.  
Our work can be viewed as an extension of \cite{Shen2022,Shen2025} from the $\Phi^4$ field to the spin $O(N)$ model. Compared with \cite{Shen2022,Shen2025}, our model is defined on a lattice, so we do not need to handle the singularity from small scales. However, our model takes values in the sphere, which introduces new difficulties. For example, the well-posedness of the mean-field SDE \eqref{mean-field-limit-n-i} does not follow from uniform estimates as in \cite{Shen2022}, but instead requires approximation by the Langevin dynamics \eqref{the-new-model-i}.

\subsection{Structure of the paper}
This paper is structured as follows. In Section \ref{sec2}, we introduce notations used throughout the paper and provide the definition of the massive Gaussian free field. Section \ref{sec3} is devoted to the proof of Theorem \ref{simple-1} and Theorem \ref{simple-2} for the finite volume ($\mathfrak{A} = \Lambda$).  First, we give the definition of the martingale problem associated with the mean-field SDE \eqref{mean-field-limit-n-i} and state precisely Theorem \ref{simple-1} and Theorem \ref{simple-2}. Second, we provide uniform in $N$ bounds on the solutions to the dynamics \eqref{the-new-model-i} and prove the tightness of $\{S^N\}_{N\in \mathbb{N}}$ in $(\mathscr{P}_2(\SSF), \WF)$. Finally, we combine the compactness method, the Yamada-Watanabe theorem argument, and a coupling argument to complete the proofs of Theorem \ref{simple-1} and Theorem \ref{simple-2}. 
Section \ref{sec4} is devoted to the proof of Theorem \ref{simple-1} and Theorem \ref{simple-2} for the infinite volume ($\mathfrak{A}=\mathbb{Z}^d$),
it has exactly the same structure as Section \ref{sec3}, and the proofs are also very similar, so we will only present the essential details.  In Section \ref{Invariant measure},  we focus on the stationary measures to the mean-field SDE. First, we study the existence of stationary measures for the mean-field SDE \eqref{mean-field-limit-n-i}. Second, we investigate the convergence of the stationary measures using Langevin dynamics and It\^o's calculus. Finally,
we prove that there exists at most one stationary measure to the mean-field SDE \eqref{mean-field-limit-n-i} for both the finite and infinite volume cases. 

\section*{List of Notations}
Below we collect some frequently used notations for the readers' convenience.

\renewcommand{\arraystretch}{1.15}

\begin{longtable}{p{4cm} p{11cm}}
% \caption{List of Notations} \\
\hline
\textbf{Symbol} & \textbf{Description} \\
\hline
\endfirsthead

% \multicolumn{2}{c}%
% {{\bfseries \tablename\ \thetable{} -- continued from previous page}} \\
\hline \textbf{Symbol} & \textbf{Description} \\
\hline
\endhead

\hline \multicolumn{2}{r}{{Continued on next page}} \\
\endfoot

\hline
\endlastfoot

$\Lambda_\mathfrak{L}:=\mathbb{Z}^d\cap \mathfrak{L}\mathbb{T}^d$ or $\Lambda$ & a finite $d$-dimensional lattice with side length $\mathfrak{L}>0$ and unit lattice spacing. \\
$\mathbb S^{N-1}(\sqrt{N})$ & $(N-1)$-dimensional sphere of radius $\sqrt{N}$ centered at the origin. \\
$\mathcal{S}$ &  $(\mathbb{S}^{N-1}(\sqrt{N}))^{\Lambda}\subset (\mathbb{R}^{\abs{\Lambda}})^{N}$. \\
$\mathbb{R}^{\abs{\Lambda}}$ or $\mathbb{R}^{\Lambda}$ & $\L^d$-dimensional Euclidean space. \\
$\abs{\cdot}$ & Euclidean distance, graph distance, or the cardinality of a set. \\
$\sum_{x\sim y}$ & sum over all neighbouring points $x,y$. \\
$\sum_{\N}$ & sum over all points adjacent to $x$. \\
$\SSF=C([0,T];\mathbb{R}^{\abs{\Lambda}})$ & the space of all continuous functions on $[0,T]$ with values in $\mathbb{R}^{\abs{\Lambda}}$. \\
$\dF$ &   $\dF(w_1,w_2):=\sup_{0\le t\le T} \abs{w_1(t)-w_2(t)}$, for all $w_1,w_2\in \SSF$.\\
$\mathscr{P}(\SSF)$& the set of all probability measures on $\SSF$.\\
$\dLF$ & the dual-Lipschitz metric on $\mathscr{P}(\SSF)$. \\
$\mathscr{P}_2(\SSF)$   & the set of all $\mu\in \mathscr{P}({\SSF})$ s.t. $\int_{\SSF} \dF(x_0, x)^2\, \mu(\mathrm{d}x) < \infty$, for some $x_0 \in \SSF$.\\
$\WF$   & the $2$-Wasserstein distance on $\mathscr{P}_2(\SSF)$.  \\
$S^N$ & the empirical measure to the Langevin dynamics \eqref{the-new-model} or \eqref{the-new-model-Z}. \\
$v=(v_x)$ & the vector indexed by $x\in \Lambda$ or by $x\in \mathbb{Z}^d$. \\
$\ell^p$   & the space $\{v:\mathbb{Z}^d\rightarrow \mathbb{R}:\sum_{x\in \mathbb{Z}^d} \abs{v_x}^p<\infty \}$.    \\
$\ell^p_a$ & the space $\{ v:\mathbb{Z}^d\rightarrow \mathbb{R}: \sum_{x\in \mathbb{Z}^d}\frac{1}{a^{\abs{x}}}\cdot\abs{v_x}^p<\infty\}$, for $a>1$. \\
$\norm{\cdot}_p$ &  $\norm{v}_p:=\left(\sum_{x\in \mathbb{Z}^d} \frac{1}{a^{\abs{x}}}\abs{v_x}^p\right)^{\frac{1}{p}}$, for some fixed $a>1$, $v\in \ell^p_a$. \\
$\norm{\cdot}$  & $\norm{v}:=(\sum_{x\in \mathbb{Z}^d} \abs{v_x}^2)^{\frac{1}{2}}$, for $v\in \ell^2$. \\
$\langle\cdot,\cdot\rangle$ & the standard inner product on $\mathbb{R}^{\abs{\Lambda}}$ or the $\ell^2$ inner product.   \\
$\mathbb{H}$ & the Hilbert space $\ell^2_a$. \\
$\SSI=C([0,T];\mathbb{H})$ & the space of all continuous functions on $[0,T]$ with values in $\mathbb{H}$. \\
$\dI$ &   $\dI(w_1,w_2):=\sup_{0\le t\le T} \norm{w_1(t)-w_2(t)}_2$, for all $w_1,w_2\in \SSI$.\\
$\mathscr{P}(\SSI)$& the set of all probability measures on $\SSI$.\\
$\dLI$ & the dual-Lipschitz metric on $\mathscr{P}(\SSI)$. \\
$\mathscr{P}_2(\SSI)$   &  the set of all $\mu\in \mathscr{P}({\SSI})$ s.t. $\int_{\SSI} \dI(x_0, x)^2\, \mu(\mathrm{d}x) < \infty$, for some $x_0 \in \SSI$.\\
$\WI$   & the $2$-Wasserstein distance on $\mathscr{P}_2(\SSI)$. \\
$\mathcal Q$ & $\ell^p_a(\mathbb{Z}^d;\mathbb{S}^{N-1}(\sqrt N))\subset(\ell^p_a)^{N}$, for some fixed $a>1$. \\
$\Phi=(\Phi_x)$ & the solution to the Langevin dynamics \eqref{the-new-model} or \eqref{the-new-model-Z}. \\
$\Psi=(\Psi_x)$ & the solution to the mean-field SDE \eqref{mean-field-limit} or \eqref{mean-field-limit-Z}.\\
$\Psi^\mathfrak{L}=(\Psi_x^\mathfrak{L})$ & the solution to the mean-field SDE \eqref{mean-field-limit-L}. \\
$\mathbf{P}^\mathfrak{L}$ & the law of $\Psi^\mathfrak{L}$. \\
$\mathbf{P}$ or $S$ & the martingale solution to the mean-field SDE \eqref{mean-field-limit} or \eqref{mean-field-limit-Z}. \\

\end{longtable}

\section{Preliminaries} \label{sec2}
\subsection{Definitions and notation}\label{sec2.1}
We use the symbol $:=$ to denote definitions. Let $\mathbb{N}$ be the set of all positive integers. For any $a, b \in \mathbb{R}$, define $a \vee b := \max\{a, b\}$ and $a \wedge b := \min\{a, b\}$. We write $a \lesssim b$ to indicate that there exists a constant $C$ such that $a \leq Cb$. Let $\mathbb{T}^d$ denote the $d$-dimensional torus, $\mathbb{Z}^d$ the $d$-dimensional integer lattice, and $\Lambda_{\mathfrak{L}} := \mathbb{Z}^d \cap \mathfrak{L}\mathbb{T}^d$ the finite $d$-dimensional lattice with side length $\mathfrak{L}>0$ and unit lattice spacing. For notational simplicity, we sometimes write $\Lambda=\Lambda_\L$. Let $\mathfrak{X}$ be a set. We denote by $\mathfrak{X}^{\Lambda}$ (respectively, $\mathfrak{X}^{\mathbb{Z}^d}$) the set of all $\mathfrak{X}$-valued maps on $\Lambda$ (respectively, $\mathbb{Z}^d$).  For $u \in \mathfrak{X}^{\Lambda}$ or $u \in \mathfrak{X}^{\mathbb{Z}^d}$, we write $u=(u_x)$. The notation $\abs{\cdot}$ is used in several contexts: for $x,y\in\mathbb{Z}^d$, $\abs{x-y}$ denotes the graph distance; for $u,v\in\mathbb{R}^d$, $\abs{u-v}$ is the Euclidean distance; and for the finite set $\Lambda$, $\abs{\Lambda}$ denotes its cardinality.  For any $x, y \in \mathbb{Z}^d$, we write $x \sim y$ if $\abs{x-y} = 1$. The notation $\langle\cdot,\cdot\rangle$ denotes the Euclidean inner product or $\ell^2$ inner product.  The space of compactly supported continuous functions on $\mathbb{R}^{|\Lambda|}$ is denoted by $C_c(\mathbb{R}^{|\Lambda|}; \mathbb{R})$.  The symbol $\laweq$ is employed in two related contexts. First, for two random variables $Y_1$ and $Y_2$, the notation $Y_1 \laweq Y_2$ denotes that $Y_1$ and $Y_2$ have the same distribution. Second, for a random variable $Y$ and a probability measure $\mu$, the notation $Y \laweq \mu$ indicates that the law of $Y$ is $\mu$. We write $\mu_n \wcon \mu$ if the sequence $\{\mu_n\}_{n \in \mathbb{N}}$ converges weakly to $\mu$. Similarly, $Y_n \lawcon Y$ means that the sequence $\{Y_n\}_{n \in \mathbb{N}}$ converges in law to $Y$.

Let $\X$ be a Polish space equipped with a metric $\bd$. We denote $\mathscr{P}(\X)$ as the set of all probability measures on $\X$, and $BL(\X,\bd)$ as the set of all bounded, real-valued Lipschitz functions on $\X$, i.e.,
\[
BL(\X,\bd) := \left\{ f : \X \rightarrow \mathbb{R} : \norm{f}_{BL} < \infty \right\},
\]
where $\norm{f}_{BL} := \sup_{x \in \X} \abs{f(x)} + \sup_{x \neq y\in \X} \frac{\abs{f(x) - f(y)}}{\bd(x, y)}$. For any probability measures $\mu$ and $\nu$ on $\X$, we define
\[
\dL(\mu, \nu) := \sup_{\norm{f}_{BL} \le 1} \abs{\int_{\X} f\,\mu(\d x) - \int_{\X} f\,\nu(\d x)}.
\]
Then, $\mu_n \wcon \mu$ if and only if $\dL(\mu_n, \mu) \rightarrow 0$ (see \cite[Theorem 11.3.3]{Dudley2002}). The metric $\dL$ is called the dual-Lipschitz metric on $\mathscr{P}(\X)$.

For any $p\ge 1$, let
\[
\mathscr{P}_p(\X) := \left\{ \mu \in \mathscr{P}(\X) : \int_{\X} \bd(x_0,x)^p \, \mu(\d x) < \infty \text{ for some } x_0\in\X \right\}
\]
be the Wasserstein space of order $p$ (see \cite[Definition $6.4$]{Villani2009}). If $\mu, \nu \in \mathscr{P}(\X)$, let $\bm{\pi}(\mu, \nu)$ denote the set of probability measures $\tilde{\pi} \in \mathscr{P}(\X \times \X)$ whose marginals are $\mu$ and $\nu$. The $p$-Wasserstein distance between $\mu$ and $\nu$ is defined as
\[
\mathbf{W}_p(\mu, \nu) = \left( \inf_{\tilde{\pi} \in \bm{\pi}(\mu, \nu)} \int_{\X \times \X} \bd(x_1 , x_2)^p \,\tilde{\pi}(\d x_1 \d x_2) \right)^{\frac{1}{p}}.
\]
The space $\mathscr{P}_p(\X)$ is a Polish space when equipped with the $p$-Wasserstein distance.

\subsection{Massive Gaussian free field} \label{sec2.2}
In this section, we recall the definition of the massive Gaussian free field on $\Lambda$ and $\mathbb{Z}^d$ (see, for instance, \cite[Section 2.2]{aru2024}). Let $m^2 > 0$. A real-valued function on $\Lambda$, denoted by $\bm{\varphi} = (\varphi_x) \in \mathbb{R}^{\abs{\Lambda}}$, is called an $m^2$-massive Gaussian free field on $\Lambda$ if its law $\mathbb{P}_{\Lambda, m^2}$ is given by
\begin{equation*}\label{exp-GFF}
    \frac{1}{Z_{\Lambda, m^2}} \exp\left( -\frac{1}{4} \sum_{x \sim y} (\varphi_x - \varphi_y)^2 - \frac{m^2}{2} \sum_{x \in \Lambda} \varphi_x^2 \right) \prod_{x \in \Lambda} \d\varphi_x,
\end{equation*}
where the sum over $x\sim y$ runs over all neighbouring points $x,y\in \Lambda$, and $Z_{\Lambda, m^2}$ is the normalization constant that ensures $\mathbb{P}_{\Lambda, m^2}$ is a probability measure. Note that
\[
-\frac{1}{4} \sum_{x \sim y} (\varphi_x - \varphi_y)^2 - \frac{m^2}{2} \sum_{x \in \Lambda} \varphi_x^2
= -\frac{1}{2} \langle \bm{\varphi}, ( -\Delta_\Lambda + m^2) \bm{\varphi} \rangle,
\]
where $\Delta_\Lambda$ is the discrete Laplacian on $\Lambda$, defined by
\[
(\Delta_\Lambda f)(x) := \sum_{\N} \left( f(y) - f(x) \right), \quad \text{for any}\  x \in \Lambda,\, f: \Lambda \to \mathbb{R}.
\]
It is straightforward to verify that $(-\Delta_\Lambda + m^2)$ is invertible as a matrix in $\mathbb{R}^{\abs{\Lambda} \times \abs{\Lambda}}$. Therefore, the massive Gaussian free field can be identified with the Gaussian measure $N(0, (-\Delta_\Lambda + m^2)^{-1})$. We define $G_{\Lambda, m^2} := (-\Delta_\Lambda + m^2)^{-1}$ as the massive Green function on $\Lambda$.

Let $m^2 > 0$ if $d = 1, 2$, or $m^2 \geq 0$ otherwise. An $m^2$-massive Gaussian free field on $\mathbb{Z}^d$ is a centered Gaussian process indexed by $\mathbb{Z}^d$ with covariance function $(G_{\mathbb{Z}^d, m^2}(x, y))_{x, y \in \mathbb{Z}^d}$, where $G_{\mathbb{Z}^d, m^2}$ is the massive Green function on $\mathbb{Z}^d$. If $m^2=0$, we call this centered Gaussian process a Gaussian free field on $\mathbb{Z}^d$.

\section{Large $N$ limit of dynamics on finite lattice}\label{sec3}

In this section, we will prove Theorem \ref{simple-1} and Theorem \ref{simple-2} for the finite lattice $\Lambda$.  To prove Theorem \ref{simple-1}, the main idea is to formulate the mean-field SDE \eqref{mean-field-limit} as a martingale problem, and then to construct a solution to this martingale problem by employing the empirical measure $S^N$ associated with the Langevin dynamics \eqref{the-new-model}. In Section \ref{sec3.1},  we establish the well-posedness of the Langevin dynamics \eqref{the-new-model} and rewrite the SDEs \eqref{the-new-model} and \eqref{mean-field-limit} in terms of martingale problems.
In Sections \ref{sec3.3}, we provide bounds that are uniform in $N$ for the solutions to the SDEs \eqref{the-new-model}, which can help us establish the compactness of the empirical measure $S^N$.  In Section \ref{sec3.4}, we prove that the limit $S$ of the sequence $\{S^N\}_{N\in \mathbb{N}}$ is a martingale solution associated with the mean-field SDE \eqref{mean-field-limit}.   In Section~\ref{sec3.5}, in order to obtain a quantitative convergence rate in Theorem~\ref{strong-solution},  we employ a classical coupling argument and the following fact: for $p=1$ or $2$, if $U_1, \ldots, U_N$ are independent, identically distributed, mean-zero random variables with finite $2p$-th moment, then
$$
\mathbb{E}\left[\left| \frac{1}{N} \sum_{i=1}^N U_i \right|^{2p}\right] \lesssim \frac{1}{N^{p}},
$$
where the implicit constant is independent of $N$.

\subsection{Langevin dynamics} \label{sec3.1}
The Langevin dynamics for the spin $O(N)$ model is given by the following SDEs: 
\begin{equation}\label{the-new-model}
    \left\{\begin{aligned}
         \d\Phi^i_x(t)= &2\kappa\sum_{\N}\left( \Phi_y^i(t)-\frac{\sum^N_{k=1} \Phi_x^k(t) \Phi_y^k(t)}{N} \Phi_x^i(t)\right)\,\d t-\frac{N-1}{2N} \Phi_x^i(t)\,\d t\\
&+\,\d W_x^i(t)-\frac{\sum^N_{k=1} \Phi_x^k(t)\,\d W_x^k(t) }{N}\Phi_x^i(t),\quad 0\le t\le T,\\
\Phi_x^i(0)=&\varphi_x^i,\quad x\in \Lambda,\quad i=1,\cdots,N,
\end{aligned}\right.
\end{equation}
where $\kappa>0$;  for each $x \in \Lambda$, $\bm{W}_x := (W_x^1, \cdots, W_x^N)$ is an $N$-dimensional standard Brownian motion on the probability space $(\Omega, \mathscr{F}, \mathbb{P})$, and for $x \neq y$, the processes $\bm{W}_x$ and $\bm{W}_y$ are independent; the sum over $\N$ runs over all points adjacent to $x$ (see \cite[Section $3.2$]{Shen2024} for more details on the derivation).  We first consider the well-posedness of the SDEs \eqref{the-new-model}.  For convenience, in this section, we denote a vector indexed by $x \in \Lambda$ as $(v_x)$, which contains $\mathfrak L^d$ elements. Let $\mathbb{S}^{N-1}(\sqrt{N})$ be the $(N{-}1)$-dimensional sphere of radius $\sqrt{N}$ centered at the origin, embedded in the Euclidean space $\mathbb{R}^N$. The configuration space of our fields is given by the product of $\mathbb{S}^{N-1}(\sqrt{N})$ over the lattice vertices, denoted as $\mathcal{S} := (\mathbb{S}^{N-1}(\sqrt{N}))^{\Lambda} \subset (\mathbb{R}^{\abs{\Lambda}})^{N}$.  Note that $\mathcal S$ depends on $N$; however, we omit this dependence for notational simplicity. Furthermore, we define $C([0,T];\mathcal{S})$ as the space of all continuous functions from $[0,T]$ to $\mathcal{S}$.

We show in the next proposition that the system \eqref{the-new-model} is globally well-posed. 
\begin{proposition}\label{invariance-SDE}
  For fixed $N\in \mathbb{N}$, $\kappa>0$, and $T>0$, given any initial data $\Phi(0)=(\varphi_x)\in \mathcal{S}$, there exists a unique probabilistically strong solution $\Phi=(\Phi_x)\in C([0,T];\mathcal{S})$ to \eqref{the-new-model}. 
\end{proposition}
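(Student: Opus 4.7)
The plan is to combine a local existence result from classical SDE theory with an explicit Itô-formula computation showing that the sphere constraint is preserved by the dynamics. Viewed as functions on $(\mathbb R^N)^{\Lambda}$, the drift and diffusion coefficients of \eqref{the-new-model} are polynomials in $\Phi$ and hence locally Lipschitz, so standard SDE theory (via cut-off approximations) produces a unique probabilistically strong solution $\Phi$ on a maximal random interval $[0,\tau)$, where $\tau$ is an explosion time.

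The heart of the argument is to show that $Z_x(t):=\sum_{i=1}^N(\Phi_x^i(t))^2$ stays equal to $N$ on $[0,\tau)$ for every $x\in\Lambda$. The coefficient of $dW_x^{\ell}$ in $d\Phi_x^{i}$ is $\delta_{i\ell}-\Phi_x^{i}\Phi_x^{\ell}/N$, so applying Itô's formula to $Z_x$ yields a quadratic-variation contribution $N-2Z_x/N+Z_x^2/N^2$, which together with the radial drift $-(N-1)Z_x/N$ rearranges as $(N-Z_x)(1-Z_x/N^2)$. The interaction drift contributes $\bigl(4\kappa(N-Z_x)/N\bigr)\sum_{y\sim x}\sum_i\Phi_x^i\Phi_y^i$, and the martingale piece reorganises as $\bigl(2(N-Z_x)/N\bigr)\sum_i\Phi_x^i\,dW_x^i$. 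Each term thus carries the factor $(N-Z_x)$, so $Y_x:=N-Z_x$ satisfies a homogeneous linear SDE of the form
\begin{equation*}
dY_x=-Y_x\,a_x(t)\,dt-Y_x\,dM_x(t),
\end{equation*}
with $a_x$ continuous in $\Phi$ and $M_x$ a continuous local martingale. Since $Y_x(0)=0$, pathwise uniqueness for this linear SDE (valid locally, where the coefficients are bounded) forces $Y_x\equiv 0$ on $[0,\tau)$.

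Once this invariance is established, $\Phi(t)\in\mathcal S$ for all $t<\tau$, and the compactness of $\mathcal S$ implies that the coefficients of \eqref{the-new-model} are bounded along the solution; non-explosion then follows, giving $\tau=T$ almost surely, a global solution in $C([0,T];\mathcal S)$, with pathwise uniqueness inherited from the truncated system. The main obstacle is not the abstract SDE machinery but the precise bookkeeping of the Itô computation: the drift $-\tfrac{N-1}{2N}\Phi_x^i$ and the Itô correction arising from the projected noise $(\delta_{i\ell}-\Phi_x^i\Phi_x^\ell/N)\,dW_x^\ell$ must combine \emph{exactly} so that every component of $dZ_x$ carries the factor $N-Z_x$; once this cancellation is verified, the invariance of the sphere is an immediate consequence of the uniqueness of the zero solution to a homogeneous linear SDE.
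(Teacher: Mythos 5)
Your proposal is correct and follows essentially the same route as the paper: a unique local strong solution from the locally Lipschitz (polynomial) coefficients, preservation of the sphere constraint, and then non-explosion from the boundedness of $\mathcal S$. The only difference is that the paper obtains the invariance $\Phi(t)\in\mathcal S$ by citing the derivation in \cite[Section 3.2]{Shen2024} and the argument of \cite[Lemma 3.2]{Shen2024-0}, whereas you verify it directly via the It\^o computation for $Z_x=\sum_i(\Phi_x^i)^2$, whose bookkeeping (the factor $N-Z_x$ in every term) is indeed correct.
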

\begin{proof}
For a fixed $N$, the system \eqref{the-new-model} can be viewed as a finite-dimensional SDE with locally Lipschitz coefficients. By the classical theory of SDEs (see, for example, \cite{liu2015}),  there exists a local solution $\Phi=(\Phi_x)\in C([0,\tau];(\mathbb{R}^N)^\Lambda)$ to \eqref{the-new-model}, where $\tau$ is a stopping time. By the derivation of \eqref{the-new-model} in \cite[Section~3.2]{Shen2024} and the argument in \cite[Lemma~3.2]{Shen2024-0}, since $\Phi(0) = (\varphi_x) \in \mathcal{S}$, we obtain that $\Phi(t) \in \mathcal{S}$ for all $t$. Therefore, if $\Phi(0) = (\varphi_x) \in \mathcal{S}$, then $\tau=T$ a.s. and $\Phi=(\Phi_x)\in C([0,T];\mathcal S)$.
\end{proof}

As discussed in the introduction,  the system \eqref{the-new-model} formally converges to the following mean-field SDE:
\begin{equation}\label{mean-field-limit}
    \left\{\begin{aligned}
         \d\Psi_x(t)= &2\kappa\sum_{\N}\Big(  \Psi_y(t)-\mathbb{E}[ \Psi_x(t) \Psi_y(t)] \Psi_x(t)\Big)\,\d t-\frac{1}{2}\Psi_x(t)\,\d t+\d W_x(t),\quad 0\le t\le T,\\
\Psi_x(0)=& \psi_x, \quad x\in \Lambda,
\end{aligned}\right.
\end{equation}
where $(W_x)$ is an $\mathfrak{L}^d$-dimensional standard Brownian motion on the probability space $(\Omega, \mathscr{F}, \mathbb{P})$, and $(\psi_x)$ satisfies specific assumptions. Note that the term $-\mathbb{E}[ \Psi_x \Psi_y] \Psi_x$ in \eqref{mean-field-limit} is a polynomial function of $(\Psi_x)$, and it does not satisfy the usual coercivity condition. Therefore, we cannot directly apply results from the literature (see, e.g., \cite{hong2023}) to derive a priori estimate and obtain a global-in-time solution.  Instead, we transform \eqref{mean-field-limit} to a martingale problem.

We first write the Langevin dynamics \eqref{the-new-model} in terms of the following empirical measure:
\begin{equation*}
    \begin{aligned}
         &  S^N(t):=\frac{1}{N}\sum^N_{j=1} \delta_{\Phi^j(t)},\quad  0\le t\le T,
    \end{aligned}
\end{equation*}
where $\Phi^j(t)=(\Phi^j_x(t))$  and $\delta_{\Phi^j(t)}$ is the Dirac measure at $\Phi^j(t)$.  For any $x\in \Lambda$, $v=(v_x)\in \mathbb{R}^{\abs{\Lambda}}$ and $\nu\in \mathscr{P}(\mathbb{R}^{\abs{\Lambda}})$,  we define
\begin{equation}\label{vector-Lambda}
    \begin{aligned}
         & \mathbf{D}_x^N(v,\nu):=2\kappa\sum_{\N} v_y - 2\kappa \sum_{\N}\int  u_x u_y\,\nu(\d u) v_x -\frac{N-1}{2N} v_x.\\
    \end{aligned}
\end{equation}
For any $x\in \Lambda$ and $i=1,\cdots,N$, we define
\begin{equation}\label{vector-M}
    \begin{aligned}
         & \d \mathcal{M}_x^{N,i}(t):=\d W_x^i(t) -\frac{\sum^N_{k=1} \Phi_x^k(t)\,\d W_x^k(t) }{N} \Phi_x^i(t),\quad  0\le t\le T.
    \end{aligned}
\end{equation}
Then, \eqref{the-new-model} can be rewritten as
\begin{equation*}\label{the-new-model-rw}
    \left\{\begin{aligned}
         & \d\Phi_x^i(t)=\mathbf{D}^N_x(\Phi^i(t),S^N(t))\,\d t + \d\mathcal{M}_x^{N,i}(t),\quad 0\le t\le T,\\
& \Phi_x^i(0)=\varphi_x^i,\quad x\in \Lambda, \ i=1,\cdots, N.
\end{aligned}\right.
\end{equation*}
Similarly, for any $x\in \Lambda$, we define
\begin{equation}\label{vector-Lambda-limit}
    \begin{aligned}
         & \mathbf{D}_x(v,\nu):=2\kappa\sum_{\N} v_y - 2\kappa \sum_{\N} \int u_x u_y\,\nu(\d u) v_x - \frac{1}{2}v_x.\\
    \end{aligned}
\end{equation}
Equation \eqref{mean-field-limit} can be rewritten as
\begin{equation}\label{mean-field-limit-rw-2}
    \left\{\begin{aligned}
         & \d\Psi_x(t)=\mathbf{D}_x(\Psi(t),S(t))\,\d t + \d W_x(t),\quad  0\le t\le T,\\
& \Psi_x(0)=\psi_x,\quad x\in \Lambda,
\end{aligned}\right.
\end{equation}
where  $S(t)$ denotes the law of $\Psi(t)$. We then formulate the martingale problem to the mean-field SDE \eqref{mean-field-limit-rw-2}. To this end,
let $\Omega:=C([0,T];\mathbb{R}^{\abs{\Lambda}})=:\SSF$  be the space of all continuous functions on $[0,T]$ with values in $\mathbb{R}^{\abs{\Lambda}}$. For any $w_1,w_2 \in \SSF$, we define
\[
\dF(w_1,w_2):=\sup_{0\le t\le T} \abs{w_1(t)-w_2(t)}.
\]
Then, $(\SSF,\dF)$ is a Polish space. Let $\mathscr{P}(\SSF)$ denote the set of all probability measures on $\SSF$, and let $\mathscr{P}_2(\SSF)$ denote the Wasserstein space of order $2$, equipped with the $2$-Wasserstein distance $\WF$. Similarly, let $\mathscr{P}(\mathbb{R}^{\abs{\Lambda}})$ denote the set of all probability measures on $\mathbb{R}^{\abs{\Lambda}}$, and let $\mathscr{P}_2(\mathbb{R}^{\abs{\Lambda}})$ denote the Wasserstein space of order $2$, equipped with the $2$-Wasserstein distance $\mathbf{W}_{2,\mathbb{R}^{\abs{\Lambda}}}$. 
We use $w$ to denote a generic path in $\SSF$, and define the coordinate process by $\pi_t(w) = w_t$, $t\ge 0$. Let $\mathscr{F}_t:=\sigma\{ \pi_s: s\le t \}$, $t\ge 0$ denote the filtration generated by the coordinate process.

We now give the definition of the martingale problem associated with the mean-field SDE \eqref{mean-field-limit-rw-2}.

\begin{definition}\label{martingale-solution}
    Let $\mu_0 \in \mathscr{P}(\mathbb{R}^{\abs{\Lambda}})$. A probability measure  $\mathbf{P} \in \mathscr{P}(\SSF)$ is called a martingale solution to the mean-field SDE \eqref{mean-field-limit-rw-2} with initial distribution $\mu_0$ if the following conditions are satisfied:
\begin{enumerate}[(i)]
    \item $\mathbf{P} \circ \pi_0^{-1} = \mu_0$ and
    \begin{equation*}
        \mathbf{P}\left( w \in \SSF : \int_0^T \abs{\mathbf{D}(w_s, \mu_s)}\,\d s < \infty \right) = 1,
    \end{equation*}
    where $\mu_s := \mathbf{P} \circ \pi_s^{-1}$ and $\mathbf{D}(w_s, \mu_s) = (\mathbf{D}_x(w_s, \mu_s))$.
    \item For any $\theta \in \mathbb{R}^{\abs{\Lambda}}$, the process
    \begin{equation*}
        \mathscr{M}_\theta(t, w, \mu) := \langle w_t, \theta \rangle - \langle w_0, \theta \rangle - \int_0^t \langle \mathbf{D}(w_s, \mu_s), \theta \rangle\,\d s, \quad 0\le t\le T,
    \end{equation*}
    is a continuous square-integrable $(\mathscr{F}_t)$-martingale under $\mathbf{P}$, with quadratic variation process given by
    \begin{equation*}
        \langle \mathscr{M}_\theta \rangle (t, w, \mu) = \abs{\theta}^2 t, \quad 0\le t\le T.
    \end{equation*}
\end{enumerate}
\end{definition}

We state the assumptions imposed on the initial data.
\begin{assumption}\label{initial-value}
\begin{enumerate}[(1)]
    \item For any $N\in \mathbb{N}$,  the law of $$\Phi(0)=(\varphi^1,\cdots,\varphi^N)\in \mathcal S $$ is symmetric\footnote{This means that for any permutation $\pi$,  $(\varphi^1,\cdots,\varphi^N)   \laweq (\varphi^{\pi(1)},\cdots,\varphi^{\pi(N)})$. } on $\underbrace{\mathbb{R}^{\abs{\Lambda}}\times\cdots\times \mathbb{R}^{\abs{\Lambda}}}_{\text{$N$ in total}}$\,,  where $\varphi^i=(\varphi_x^i)$.       
    \item For the family of initial data $\{(\varphi^1,\cdots,\varphi^N)\}_{N\in \mathbb{N}}$,  the empirical measure
$$S^N_0:=\frac{1}{N} \sum^N_{j=1}\delta_{\varphi^{j}}$$ converges weakly in probability to some measure $\mu_0$ as $N\rightarrow \infty$.
\item There exist constants $C>0$ and $\p> 2$ such that
\[
\sup_N\left(\sum_{x\in \Lambda}\mathbb{E}\left[\abs{\varphi_x^{1}}^{\p}\right]\right)< C.
\]
\end{enumerate}
\end{assumption}
If there exists a family of random vectors $\{(\varphi^1, \dots, \varphi^N)\}_{N \in \mathbb{N}}$  and a measure $\mu_0$ satisfying Assumption~\ref{initial-value},  we call $\mu_0$ a measure determined by $\{(\varphi^1, \dots, \varphi^N)\}_{N \in \mathbb{N}}$. We denote by $\mathscr{P}_{\mathcal S, \p}$ the set of all measures that can be determined by some $\{(\varphi^1, \dots, \varphi^N)\}_{N \in \mathbb{N}}$, where the parameter $\p$ refers to the constant in Assumption~\ref{initial-value}$(3)$.

In the following, we show that the large $N$ limit of the system \eqref{the-new-model} with initial data given by Assumption \ref{initial-value} is \eqref{mean-field-limit}. More precisely, we have
\begin{theorem}\label{Main-Theorem}
Suppose that the initial data  $\{(\varphi^1,\cdots,\varphi^N)\}_{N\in \mathbb{N}}$ of the system \eqref{the-new-model} and $\mu_0$ satisfy Assumption \ref{initial-value}, then  the empirical measure  $S^N$ of  the solutions to the system \eqref{the-new-model} (viewed as a  $(\mathscr{P}_2(\SSF),\WF)$-valued random variable) converges in law to $S$, and the limit $S$ is the unique martingale solution to the mean-field SDE \eqref{mean-field-limit} with initial distribution $\mu_0$.   Furthermore, for the martingale solution, we have
\begin{equation}\label{Cov-Psi}
    \begin{aligned}
 \int \abs{v_x}^2\,S(t)(\d v)= 1,\quad \ \mathbb{P}\text{-\rm{a.s.}}, \quad \forall\, x\in \Lambda, \, 0\le t\le T,
    \end{aligned}
\end{equation}
and
\begin{equation}\label{POC}
    \begin{aligned}
         &  \lim_{N\rightarrow \infty} \mathbb{E}\left[\WFT(S^N,S) \right]=0.
    \end{aligned}
\end{equation}
\end{theorem}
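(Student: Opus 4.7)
The plan is to combine a compactness argument for $\{S^N\}_N$ in $(\mathscr{P}_2(\SSF),\WF)$ with the identification of every subsequential limit as the unique martingale solution $S$. First I would exploit the sphere constraint $(\Phi_x^1(t),\ldots,\Phi_x^N(t))\in\mathbb{S}^{N-1}(\sqrt N)$, which gives the deterministic identity
\begin{equation*}
\int\abs{v_x}^2\,S^N(t)(\d v)=\frac{1}{N}\sum_{i=1}^N\abs{\Phi_x^i(t)}^2=1
\end{equation*}
for every $x$, $t$, and $N$. Combining this with Assumption~\ref{initial-value}(3) and It\^o's formula for $\abs{\Phi_x^1}^{\p}$ applied to \eqref{the-new-model}, I would derive the uniform bound $\sup_N\mathbb{E}[\sup_{0\le t\le T}\abs{\Phi^1_x(t)}^{\p}]<\infty$. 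By symmetry $\mathbb{E}[S^N]=\law(\Phi^1)$, so Sznitman's criterion reduces tightness of $\{\law(S^N)\}$ in $\mathscr{P}(\mathscr{P}_2(\SSF))$ to tightness of $\{\law(\Phi^1)\}$ in $\mathscr{P}_2(\SSF)$; the latter follows from the preceding moment bound together with the Aldous criterion applied to the SDE increments, with the gain $\p>2$ supplying the uniform integrability needed to upgrade tightness from $\mathscr{P}(\SSF)$ to $\mathscr{P}_2(\SSF)$.

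Next, for any tight subsequential limit $S^{N_k}\lawcon S^{\infty}$, I would show $S^{\infty}$ is a.s.\ a martingale solution to \eqref{mean-field-limit}. For fixed $\theta\in\mathbb{R}^{\abs{\Lambda}}$, times $0\le s<t\le T$, and bounded $\mathscr{F}_s$-measurable continuous $g:\SSF\to\mathbb{R}$, define
\begin{equation*}
\Xi(\mu):=\int g(w)\Bigl[\langle w_t-w_s,\theta\rangle-\int_s^t\langle\mathbf{D}(w_r,\mu_r),\theta\rangle\,\d r\Bigr]\mu(\d w).
\end{equation*}
Expanding $\Xi(S^N)$ via \eqref{the-new-model} produces a sum of martingale increments whose pairwise cross-covariances are $O(1/N)$ (by \eqref{vector-M}) plus an $O(1/N)$ discrepancy between $\mathbf{D}^N$ and $\mathbf{D}$; symmetry in $i$ then yields $\mathbb{E}[\Xi(S^N)^2]\to 0$. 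Continuity of $\mu\mapsto\Xi(\mu)$ in $\WF$ on uniformly $L^2$-bounded measures (the only delicate factor being the polynomial $\mu\mapsto\int u_x u_y\,\mu(\d u)$, which is $\WF$-continuous on such sets) then forces $\mathbb{E}[\Xi(S^{\infty})^2]=0$, so $\mathscr{M}_\theta$ is a martingale under $S^{\infty}$. An analogous argument for $\mathscr{M}_\theta(t)^2-\abs{\theta}^2 t$ identifies the quadratic variation, and \eqref{Cov-Psi} passes from $S^N$ to $S^{\infty}$ by uniform integrability.

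For uniqueness, I would convert any martingale solution into a weak solution of the SDE (the noise is additive and constant, so this is a standard Stroock--Varadhan construction) and then prove pathwise uniqueness. The crucial point is that \eqref{Cov-Psi} together with Cauchy--Schwarz gives $\abs{\mathbb{E}[\Psi_x\Psi_y]}\le 1$ uniformly in $t$ for any martingale solution, so the apparently non-dissipative nonlinearity $-2\kappa\mathbb{E}[\Psi_x\Psi_y]\Psi_x$ behaves as a linear map with bounded time-dependent coefficient; a Gronwall estimate on $\sum_x\mathbb{E}[\abs{\Psi_x-\widetilde\Psi_x}^2]$ for two solutions coupled through the same Brownian motion then closes. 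Uniqueness forces the entire sequence $\{S^N\}$ to converge in law to the deterministic $S$; since the limit is deterministic, convergence in law in $(\mathscr{P}_2(\SSF),\WF)$ is equivalent to convergence in probability, and the uniform $\p>2$ moment bound supplies the uniform integrability that upgrades this to \eqref{POC}.

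The main obstacle I anticipate is the passage to the limit in the measure-dependent drift: the polynomial $\mu\mapsto\int u_x u_y\,\mu(\d u)$ is not weakly continuous, so working at the $\mathscr{P}_2$ level and crucially exploiting the sphere-induced identity $\int\abs{v_x}^2\,S^N(t)(\d v)=1$ is essential both for identifying the limit and for closing the Gronwall estimate in the uniqueness step.
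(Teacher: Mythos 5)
Your proposal follows essentially the same route as the paper: uniform in $N$ moments via It\^o's formula and the sphere constraint, tightness of $\{S^N\}$ upgraded to $(\mathscr{P}_2(\SSF),\WF)$ through the $\p>2$ moment, identification of subsequential limits through a martingale-problem functional whose martingale part and drift discrepancy $\mathbf{D}^N-\mathbf{D}$ are $O(1/N)$ with the nonlinearity controlled by $\int\abs{v_x}^2\,S^N(t)(\d v)=1$, pathwise uniqueness by Gronwall using $\abs{\mathbb{E}[\Psi_x\Psi_y]}\le 1$ combined with a Yamada--Watanabe-type passage from a martingale solution to a weak solution, and uniform integrability to upgrade convergence in probability to \eqref{POC}. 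The one point to tighten is your assertion that \eqref{Cov-Psi} holds for \emph{any} martingale solution: it is established only for limits of $S^N$, so, exactly as in the paper, the Gronwall uniqueness argument must be run in the restricted class of martingale solutions whose time marginals satisfy $\int\abs{v_x}^2\,\d\mu_t(v)\le 1$ (the paper's $\mathscr{P}_{\O,\Psi}(\SSF)$), to which all subsequential limits belong, and this restricted uniqueness is what forces the whole sequence to converge.
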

\begin{remark}
By the uniqueness of the martingale solution to the mean-field SDE \eqref{mean-field-limit}, the limit  $S$ of the sequence $\{S^N\}_{N\in \mathbb{N}}$ is $\mathbb{P}\text{-a.s.}$ constant in  $(\mathscr{P}_2(\SSF),\WF)$. 
\end{remark}
\begin{remark}\label{typical-examples}
By Proposition~\ref{mu0assum} below, a typical example satisfying Assumption \ref{initial-value} is given by taking $\varphi$ as a random vector with distribution $\mu_{\Lambda,N,\kappa}$, and $\mu_0$ as the field $\mu^{\L}$ as defined in Theorem~\ref{Aru-cor-im} (see Proposition \ref{mu0assum} below for more details).  For any $N \in \mathbb{N}$, another example satisfying Assumption \ref{initial-value} can be obtained by setting $\varphi_x^i = 1$ for all $x \in \Lambda$ and $i = 1, \ldots, N$, and taking $\mu_0 = \delta_{1}$.
\end{remark}

To compare the system \eqref{the-new-model} with the mean-field SDE  \eqref{mean-field-limit} directly,  for any fixed $i\in \mathbb{N}$, we consider the following mean-field SDE:   
\begin{equation}\label{Psi-i-limit}
    \left\{\begin{aligned}
         \d\Psi^i_x(t)= &2\kappa\sum_{\N}\Big(  \Psi_y^i(t)-\mathbb{E}[ \Psi_x^i(t) \Psi_y^i(t)] \Psi_x^i(t)\Big)\,\d t-\frac{1}{2}\Psi_x^i(t)\,\d t+\d W^i_x(t),\quad 0\le t\le T,\\
\Psi_x^i(0)=& \psi_x^i, \quad x\in \Lambda,
\end{aligned}\right.
\end{equation}
where $\{ W_x^i \}$ is the same as in \eqref{the-new-model}.

\begin{theorem}\label{strong-solution}
If the initial data $\psi$ of the mean-field SDE \eqref{mean-field-limit}  satisfies $\psi \laweq \mu_0 \in \mathscr{P}_{\mathcal S,\p}$, then there exists a unique probabilistically strong solution to equation \eqref{mean-field-limit} starting from $\psi$. 
Furthermore, suppose that the initial data $\{(\varphi^1,\cdots,\varphi^N)\}_{N\in \mathbb{N}}$ of the system \eqref{the-new-model} and $\mu_0$ satisfy Assumption \ref{initial-value}, $\psi^i\laweq \mu_0$, and that 
\begin{equation}\label{initial-data-i}
    \begin{aligned}
         & \lim_{N\rightarrow \infty} \sum_{x\in \Lambda} \mathbb{E}\left[|\varphi^i_x-\psi^i_x|^2\right]=0.
    \end{aligned}
\end{equation} Then, we have
\begin{equation}\label{Convergence-2}
    \begin{aligned}
         & \lim_{N\rightarrow \infty}\sum_{x\in \Lambda}\mathbb{E}\left[ \sup_{0\le t\le T} \abs{\Phi_x^i(t)-\Psi_x^i(t)}^2 \right]=0,
    \end{aligned}
\end{equation}
where $\Phi^i$ and $\Psi^i$ denote the solutions to \eqref{the-new-model} and \eqref{Psi-i-limit} with initial data $\varphi^i$ and $\psi^i$, respectively. 

If the initial data $\{(\varphi^1,\cdots,\varphi^N)\}_{N\in \mathbb{N}}$ of the system \eqref{the-new-model} and  $\mu_0$ satisfy Assumption \ref{initial-value} with $\p=8$, $\psi^i\laweq \mu_0$, in addition, the random variables $\{\psi^i\}_{i\in \mathbb{N}}$ are independent and identically distributed, and the sequence $\{(\varphi^i,\psi^i)\}^N_{i=1}$ is symmetric, then we have
\begin{equation}\label{rate}
    \begin{aligned}
         & \sup_{0\le t\le T}\sum_{x\in \Lambda}\mathbb{E}\left[  \abs{\Phi_x^i(t)-\Psi_x^i(t)}^2 \right]\lesssim  \sum_{x\in \Lambda}\mathbb{E}\left[\abs{\varphi^i_x-\psi^i_x}^2\right]+ \frac{1}{N},
    \end{aligned}
\end{equation}
where the implicit constant depends on $\abs{\Lambda}$, but is independent of $N$. 
\end{theorem}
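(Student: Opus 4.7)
The plan is to proceed in three steps, matching the three claims in the statement. For the \textbf{existence and uniqueness of a strong solution}, I would leverage Theorem~\ref{Main-Theorem}: since $\mu_0 \in \mathscr{P}_{\mathcal S,\p}$, there already exists a unique martingale solution $S$ to \eqref{mean-field-limit} with initial law $\mu_0$, and in particular the identity \eqref{Cov-Psi} yields the a priori $L^2$ bound $\int |v_x|^2\,S(t)(\d v) = 1$. Setting $\alpha_{xy}(t) := \int v_x v_y\,S(t)(\d v)$, which is uniformly bounded by $1$ via Cauchy--Schwarz, the mean-field SDE becomes a linear (time-inhomogeneous) SDE
\[
\d\tilde\Psi_x = 2\kappa \sum_{y \sim x}\bigl(\tilde\Psi_y - \alpha_{xy}(t)\tilde\Psi_x\bigr)\,\d t - \tfrac{1}{2}\tilde\Psi_x\,\d t + \d W_x,
\]
for which a unique probabilistically strong solution $\tilde\Psi$ exists for any initial condition $\psi \laweq \mu_0$. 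A direct computation shows that the law of $\tilde\Psi$ is a martingale solution to \eqref{mean-field-limit}; by uniqueness from Theorem~\ref{Main-Theorem} it must equal $S$, so $\mathbb{E}[\tilde\Psi_x \tilde\Psi_y] = \alpha_{xy}$ and $\tilde\Psi$ genuinely solves \eqref{mean-field-limit}. Strong uniqueness follows because any other strong solution has the same law $S$ and hence satisfies the same linear SDE with the same coefficients $\alpha_{xy}$.

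For the \textbf{qualitative convergence} \eqref{Convergence-2}, I would subtract \eqref{Psi-i-limit} from \eqref{the-new-model} and apply It\^o's formula to $|\Phi_x^i - \Psi_x^i|^2$ summed over $x \in \Lambda$. The noise cancels up to the residual $-(1/N)\sum_k \Phi_x^k\,\d W_x^k \cdot \Phi_x^i$, whose quadratic variation is $(|\Phi_x^i|^2/N)\,\d t$ since $\sum_k |\Phi_x^k|^2 = N$; by Burkholder--Davis--Gundy it contributes an $O(1/\sqrt{N})$ correction after taking expectation and supremum in time. The drift produces a dissipative $-|\Phi_x^i - \Psi_x^i|^2$ piece (absorbing the $-\tfrac{N-1}{2N}$ versus $-\tfrac{1}{2}$ mismatch at cost $O(1/N)$), a nearest-neighbor interaction $2\kappa\sum_{y\sim x}(\Phi_y^i - \Psi_y^i)(\Phi_x^i - \Psi_x^i)$ handled by Young's inequality and summed across $x$, and the genuine mean-field discrepancy
\[
-2\kappa\sum_{y \sim x}\Bigl(\tfrac{1}{N}\textstyle\sum_k \Phi_x^k\Phi_y^k\cdot\Phi_x^i - \mathbb{E}[\Psi_x^i\Psi_y^i]\Psi_x^i\Bigr)(\Phi_x^i - \Psi_x^i),
\]
which I would split by adding and subtracting $\mathbb{E}[\Psi_x^i\Psi_y^i]\Phi_x^i$: the part $\mathbb{E}[\Psi_x^i\Psi_y^i](\Phi_x^i - \Psi_x^i)$ is linear in the difference with coefficient bounded by $1$ thanks to \eqref{Cov-Psi}, while the remainder $\bigl[\tfrac{1}{N}\sum_k \Phi_x^k\Phi_y^k - \mathbb{E}[\Psi_x^i\Psi_y^i]\bigr]\Phi_x^i$ tends to $0$ in $L^2$ by the $\WFT$-convergence \eqref{POC} of the empirical measure paired against the uniformly $L^2$-bounded factor $\Phi_x^i$. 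Together with the initial-data condition \eqref{initial-data-i} and Gronwall's lemma applied to $\sum_x \mathbb{E}[\sup_{s \leq t}|\Phi_x^i(s) - \Psi_x^i(s)|^2]$, this yields \eqref{Convergence-2}.

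For the \textbf{quantitative rate} \eqref{rate}, the key observation is that the symmetry of $\{(\varphi^i,\psi^i)\}_{i=1}^N$, combined with the independence of the driving Brownian motions in $i$, makes the pairs $\{(\Phi^i,\Psi^i)\}_{i=1}^N$ exchangeable, so
\[
\mathbb{E}\bigl[|\Phi_x^i(t) - \Psi_x^i(t)|^2\bigr] = \frac{1}{N}\sum_{k=1}^N \mathbb{E}\bigl[|\Phi_x^k(t) - \Psi_x^k(t)|^2\bigr].
\]
Averaging the It\^o estimate from Step~2 over $i$, the offending mean-field error decomposes telescopically as
\[
\tfrac{1}{N}\sum_k \Phi_x^k\Phi_y^k - \mathbb{E}[\Psi_x^i\Psi_y^i] = \tfrac{1}{N}\sum_k\bigl(\Phi_x^k\Phi_y^k - \Psi_x^k\Psi_y^k\bigr) + \tfrac{1}{N}\sum_k\bigl(\Psi_x^k\Psi_y^k - \mathbb{E}[\Psi_x^k\Psi_y^k]\bigr).
\]
The first summand, after pairing with $\Phi_x^i(\Phi_x^i - \Psi_x^i)$ and averaging over $i$, is controlled by $(1/N)\sum_k \mathbb{E}[|\Phi^k - \Psi^k|^2]$ via Cauchy--Schwarz and the uniform moment bounds ($\mathbb{E}|\Phi_x^k|^2 = 1$ from $\Phi \in \mathcal S$, $\mathbb{E}|\Psi_x^k|^2 = 1$ from \eqref{Cov-Psi}), and is therefore absorbable into the Gronwall loop. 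The second is a genuine Monte Carlo error: the i.i.d.\ structure of $\{\Psi^k\}$ forces its variance to be $O(1/N)$, and to control its correlation with $\Phi_x^i(\Phi_x^i - \Psi_x^i)$ one needs $L^4$ moments of these objects, whose propagation in turn requires $L^8$ bounds on the initial data --- this is precisely where the hypothesis $\p = 8$ enters. The main obstacle is exactly this last Monte Carlo estimate: because the components of $\Phi$ are coupled through the normalization in \eqref{the-new-model}, independence is unavailable directly and must be recovered by first replacing $\Phi^k$ with the i.i.d.\ $\Psi^k$ inside the averaged sum, paying a cost that is reabsorbed by the Gronwall step; the $1/N$ term in \eqref{rate} then comes from the variance of the genuinely independent sum involving only the $\Psi^k$'s.
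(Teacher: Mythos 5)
Your proposal follows essentially the same route as the paper's proof: freezing the law to obtain a linear SDE with coefficients $\abs{\bm{h}_{x,y}}\le 1$ and deducing strong well-posedness (the paper's Lemmas \ref{fixed-mu-le} and \ref{not-fix-mu} combined with the Yamada--Watanabe lemma of \cite{Huang2021}), then the It\^o/Gronwall comparison of \eqref{the-new-model} and \eqref{Psi-i-limit} using the already-established convergence of $\int v_xv_y\,S^N(t)(\d v)$ from Lemma \ref{sphere-SN-S} for \eqref{Convergence-2}, and exchangeability plus the same telescoping decomposition and i.i.d.\ law-of-large-numbers bounds for \eqref{rate}. Two points to tighten, neither of which changes the approach: in Step 1 you must first identify $\law(\tilde\Psi)=S$ via weak uniqueness of the frozen linear SDE (as written, verifying that $\tilde\Psi$ is a martingale solution of \eqref{mean-field-limit} already presupposes this identification, so the argument is circular until that is inserted), and in Step 3 the hypothesis $\p=8$ is needed not only for the Monte Carlo term but also for the fourth moment of the centered empirical averages $\frac{1}{N}\sum_{i}\bigl(\abs{\Psi_x^i}^2-1\bigr)$ produced by the $\Psi$-factors in your first summand (the paper's $A_{23}$, $A_{24}$ estimates), since $\frac{1}{N}\sum_k\abs{\Psi_y^k}^2$ is random and cannot be replaced by its expectation without this centering argument.
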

\begin{remark}\label{Phi-Psi}
For any $N \in \mathbb{N}$,  a typical example that satisfies the assumptions in Theorem \ref{strong-solution} is given by setting $\varphi_x^i = 1$ and $\psi_x^i=1$ for all $x \in \Lambda$ and $i = 1, \ldots, N$.  For small $\kappa$, by taking $\varphi \laweq \mu_{\Lambda, N, \kappa}$ and $\psi^i \laweq \mu^{\L}$ (where the field $\mu^{\L}$ is defined in Theorem~\ref{Aru-cor-im} below), we can construct $\{(\varphi^i,\psi^i)\}^N_{i=1}$ satisfying the assumptions in the second part of Theorem~\ref{strong-solution} by stationary coupling.
\end{remark}

We begin with the proof of Theorem \ref{Main-Theorem}.
\subsection{Tightness of $\{S^N\}_{N\in \mathbb{N}}$ in $(\mathscr{P}_2(\SSF),\WF)$}\label{sec3.3}
In this section, we prove that the sequence $\{ S^N \}_{N\in \mathbb{N}}$ is tight in $(\mathscr{P}_2(\SSF),\WF)$.  First, we present a proposition that provides bounds that are uniform in $N$ for the solutions to the SDEs \eqref{the-new-model}. These bounds are crucial for the proofs of Theorems \ref{Main-Theorem} and \ref{strong-solution}.

\begin{proposition}\label{Lp-estimate}
Suppose that $\{ (\Phi^i_x), i=1,\cdots,N\}$ is a solution to the system \eqref{the-new-model}. Then, for any $i=1,\cdots,N$ and any $p\ge 2$,  we have
\begin{equation}\label{sup-1-p}
    \begin{aligned}
         & \sum_{x\in \Lambda}  \mathbb{E}\left[\sup_{0\le t\le T}\abs{\Phi_x^i(t)}^p\right]\le   C(d,p,\kappa,T,\abs{\Lambda}) \left( \sum_{x\in \Lambda} \mathbb{E}\left[\abs{\varphi_x^i}^{p}\right] + 1\right),
    \end{aligned}
\end{equation}
where $C(d,p,\kappa,T,\abs{\Lambda})$ is a constant  depending on $d$, $p$, $\kappa$, $T$, $\abs{\Lambda}$, but independent of $N$.
\end{proposition}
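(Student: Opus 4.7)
The plan is to apply It\^o's formula to $|\Phi_x^i|^p$, sum over $x \in \Lambda$, and close a Gronwall-type estimate by handling the martingale contribution with the Burkholder--Davis--Gundy (BDG) inequality. The crucial input that makes the resulting bound $N$-independent is the sphere constraint $\sum_{k=1}^N (\Phi_x^k)^2 = N$, which supplies two $N$-free estimates: by Cauchy--Schwarz,
\[
\Big|\frac{1}{N}\sum_{k=1}^N \Phi_x^k \Phi_y^k\Big| \le \frac{1}{N}|\Phi_x||\Phi_y| = 1,
\]
so the nonlinear interaction in \eqref{the-new-model} has coefficient bounded by $1$; and from \eqref{vector-M}, $\d\langle \mathcal{M}_x^{N,i}\rangle_t = (1-(\Phi_x^i)^2/N)\,\d t \le \d t$, so the driving martingale has quadratic variation bounded uniformly in $N$.

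Using these bounds together with Young's inequality $|\Phi_x^i|^{p-1}|\Phi_y^i| \le \frac{p-1}{p}|\Phi_x^i|^p+\frac{1}{p}|\Phi_y^i|^p$ on the cross terms, I expect It\^o's formula applied to $|\Phi_x^i|^p = ((\Phi_x^i)^2)^{p/2}$ to yield
\[
\d|\Phi_x^i|^p \le C\Big(|\Phi_x^i|^p + \sum_{y\sim x}|\Phi_y^i|^p + 1\Big)\d t + p|\Phi_x^i|^{p-2}\Phi_x^i\,\d\mathcal{M}_x^{N,i},
\]
with $C = C(d,p,\kappa)$ independent of $N$. The dissipative term $-\frac{p(N-1)}{2N}|\Phi_x^i|^p$ can be discarded since it has the favorable sign, and the It\^o correction $\frac{p(p-1)}{2}|\Phi_x^i|^{p-2}(1-(\Phi_x^i)^2/N)$ is absorbed into $C(|\Phi_x^i|^p+1)$ via Young.

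Taking $\sup_{s\le t}$ and then expectation, the drift contributes $C\int_0^t\sum_x\mathbb{E}[\sup_{r\le s}|\Phi_x^i(r)|^p]\,\d s + CT|\Lambda|$ after summation over $x$ (each vertex has $2d$ neighbors). For the stochastic integral $M_x(t):=p\int_0^t|\Phi_x^i|^{p-2}\Phi_x^i\,\d\mathcal{M}_x^{N,i}$, whose quadratic variation is bounded by $p^2\int_0^t|\Phi_x^i|^{2p-2}\d s \le p^2 t \sup_{s\le t}|\Phi_x^i(s)|^{2p-2}$, BDG together with Jensen's inequality gives
\[
\mathbb{E}\Big[\sup_{s\le t}|M_x(s)|\Big] \le C_p\sqrt{t}\,\mathbb{E}\Big[\sup_{s\le t}|\Phi_x^i(s)|^{p-1}\Big] \le C_p\sqrt{t}\,a_x^{(p-1)/p},
\]
where $a_x := \mathbb{E}[\sup_{s\le t}|\Phi_x^i(s)|^p]$. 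Summing over $x$ and applying the power-mean inequality $\sum_x a_x^{(p-1)/p} \le |\Lambda|^{1/p}(\sum_x a_x)^{(p-1)/p}$ followed by Young's inequality absorbs this term into $\frac{1}{2}\sum_x a_x + C(p,T,|\Lambda|)$; Gronwall's inequality then closes the estimate and delivers the claimed bound.

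The main obstacle is producing constants independent of $N$, which requires simultaneously (i) exploiting the sphere constraint to control both the nonlinearity and the driving martingale's quadratic variation by $N$-free quantities, and (ii) summing the BDG bound over $\Lambda$ \emph{before} applying Jensen and Young, so that the sub-linear $(p-1)/p$-power of $\sum_x a_x$ can be absorbed into the LHS rather than pointwise in $x$. The $|\Lambda|$-dependence enters only through the finite-volume power-mean step and is polynomial in $|\Lambda|$, which is acceptable in the finite-volume setting treated here.
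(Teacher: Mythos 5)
Your proposal is correct and follows essentially the same route as the paper: It\^o's formula for $|\Phi_x^i|^p$, the sphere constraint to get $N$-free bounds on both the interaction coefficient and the noise's quadratic variation, BDG plus Young for the martingale part, then summation over $\Lambda$ and Gronwall. The only (harmless) differences are that you treat the combined martingale $\mathcal{M}_x^{N,i}$ through its quadratic variation bound $\d\langle\mathcal{M}_x^{N,i}\rangle_t\le \d t$ instead of splitting it into the $\d W_x^i$ and $\tfrac1N\sum_k\Phi_x^k\,\d W_x^k$ pieces, and you absorb the BDG contribution after summing over $x$ via a power-mean/Young step (picking up an admissible $|\Lambda|$-dependent constant) rather than pointwise in $x$ as in the paper.
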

\begin{proof}

Applying It\^o's  formula to $\abs{\Phi_x^i(t)}^p$, we obtain that for any $0\le t\le T$,   
\begin{equation}\label{Ito-Lp}
    \begin{aligned}
        \d\abs{\Phi_x^i(t)}^p=  & 2\kappa p\abs{\Phi_x^i(t)}^{p-2} \left[ \sum_{\N}\left( \Phi_y^i(t)-\frac{\sum^N_{k=1} \Phi_x^k(t) \Phi_y^k(t)}{N} \Phi_x^i(t)\right) \Phi_x^i(t)\right]\,\d t\\
&-\frac{p(N+p-2)}{2N}\abs{\Phi_x^i(t)}^p\,\d t + \frac{p(p-1)}{2}\abs{\Phi_x^i(t)}^{p-2}\,\d t \\
&+p\abs{\Phi_x^i(t)}^{p-2}\Phi_x^i(t)\,\d W_x^i-p\abs{\Phi_x^i(t)}^p\frac{\sum^{N}_{k=1}\Phi_x^k(t)\,\d W_x^k}{N}.
    \end{aligned}
\end{equation}
Next, we estimate each term on the right-hand side of \eqref{Ito-Lp} separately. By Young's inequality,
\begin{equation*}\label{D-1}
    \begin{aligned}
         &  \mathbb{E}\left[ \sup_{0\le t\le T}\int_0^t \sum_{\N}\abs{\Phi^i_x(s)}^{p-2}\Phi_x^i(s)\Phi_y^i(s)\,\d s \right]\\
\leq&\,  2d \,\mathbb{E}\left[ \int^T_0 \abs{\Phi_x^i(s)}^p\,\d s \right]+  \sum_{\N}\mathbb{E}\left[ \int^T_0 \abs{\Phi_y^i(s)}^p\,\d s\right].
    \end{aligned}
\end{equation*}
By H\"older's inequality, we have
\begin{equation}\label{sphere-holder}
    \begin{aligned}
         & \abs{\frac{1}{N}{\sum^N_{k=1} \Phi_x^k(s) \Phi_y^k(s)}} \le \left( \frac{1}{N}\sum^N_{k=1} \abs{\Phi_x^k(s)}^2  \right)^\frac{1}{2} \left( \frac{1}{N}\sum^N_{k=1} \abs{\Phi_y^k(s)}^2  \right)^\frac{1}{2}= 1,\ \ \forall\, x,\,y\in \Lambda,\ 0\le s\le t\le T,
    \end{aligned}
\end{equation}
which implies that
\begin{equation*}\label{D-2}
    \begin{aligned}
         & \mathbb{E}\left[\sup_{0\le t \le T} \int^t_0 \sum_{\N}\frac{\sum^N_{k=1} \Phi_x^k(s) \Phi_y^k(s)}{N} \abs{\Phi_x^i(s)}^p \,\d s\right]
\le 2d\,\mathbb{E}\left[ \int^T_0 \abs{\Phi_x^i(s)}^p\,\d s \right].
    \end{aligned}
\end{equation*}
By Young's inequality, we obtain
\begin{equation*}\label{D-3}
    \begin{aligned}
         & \mathbb{E}\left[\sup_{0\le t\le T}\int^t_0 \abs{\Phi_x^i(s)}^{p-2}\,\d s  \right]\le \mathbb{E}\left[ \int^T_0 \abs{\Phi_x^i(s)}^p\,\d s \right] +T.
    \end{aligned}
\end{equation*}
For the martingale terms, by the Burkholder-Davis-Gundy inequality, we have
\begin{equation*}\label{M-1}
    \begin{aligned}
          &p\, \mathbb{E}\left[ \abs{\sup_{0\le t\le T}\int^t_0 \Phi_x^i(s)\abs{\Phi_x^i(s)}^{p-2}\, \d W_x^i(s) }\right]\le C\,p\, \mathbb{E}\left[ \left(\int^T_0 \abs{\Phi_x^i(s)}^{2p-2}\,\d s \right)^{\frac{1}{2}} \right]\\
\le &\frac{1}{4}\, \mathbb{E}\left[\sup_{0\le t\le T} \abs{\Phi_x^i(t)}^{p}\right] +  C^2p^2 \mathbb{E}\left[ \int^T_0 \abs{\Phi_x^i(s)}^p\,\d s \right]+C^2p^2 T,
    \end{aligned}
\end{equation*} 
and use $\Phi\in \mathcal S$ to derive 
\begin{equation*}\label{M-2}
    \begin{aligned}
         & p\,\mathbb{E}\left[ \abs{\sup_{0\le t\le T}\int^t_0 \abs{\Phi_x^i(s)}^{p} \frac{1}{N}\sum^N_{k=1}\Phi_x^k(s)\,\d W_x^k(s)\,\d s }\right]\le Cp\, \mathbb{E}\left[  \left( \frac{1}{N}\int^T_0 \abs{\Phi_x^i(s)}^{2p} \,\d s \right)^\frac{1}{2}  \right]\\
\le & \frac{1}{4}\, \mathbb{E}\left[\sup_{0\le t\le T} \abs{\Phi_x^i(t)}^{p}\right] +  C^2p^2 \mathbb{E}\left[ \int^T_0 \abs{\Phi_x^i(s)}^p\,\d s \right].
    \end{aligned}
\end{equation*} 
Combining \eqref{Ito-Lp},  we have
\begin{equation}\label{fix-x-estimate}
    \begin{aligned}
         \mathbb{E}\left[ \sup_{0\le s\le T}\abs{\Phi_x^i(s)}^{p} \right] \le& C(d,p,\kappa) \int^T_0 
          \mathbb{E}\left[ \sup_{0\le s\le t}\abs{\Phi_x^i(s)}^{p} \right]\,\d t +2\mathbb{E}\left[\abs{\varphi_x^i}^p\right]+C(p,T)\\
&+C(p,\kappa) \sum_{\N}\int^T_0 
          \mathbb{E}\left[ \sup_{0\le s\le t}\abs{\Phi_y^i(s)}^{p} \right]\,\d t,
    \end{aligned}
\end{equation}
where the constants $C(d, p, \kappa)$, $C(p, T)$ and $C(p,\kappa)$  are both independent of $\L$. Summing over $x$, it follows that
\begin{equation*}
    \begin{aligned}
         & \sum_{x\in \Lambda} \mathbb{E}\left[ \sup_{0\le s\le T}\abs{\Phi_x^i(s)}^{p} \right]\le C'(d,p,\kappa)\int^T_0 \sum_{x\in\Lambda} 
          \mathbb{E}\left[ \sup_{0\le s\le t}\abs{\Phi_x^i(s)}^{p} \right]\,\d t+2\sum_{x\in \Lambda}\mathbb{E}\left[\abs{\varphi_x^i}^p\right]+C(p,T)\abs{\Lambda}.
    \end{aligned}
\end{equation*}
Applying Gronwall's inequality yields \eqref{sup-1-p}.
\end{proof}

Using Proposition \ref{Lp-estimate}, we can prove the following lemma.

\begin{lemma}\label{tight-P-2}
       The sequence $\{ S^N \}_{N\in \mathbb{N}}$ is tight in $(\mathscr{P}_2(\SSF),\WF)$.
\end{lemma}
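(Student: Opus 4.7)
The plan is to prove tightness in two stages: first establish tightness of $\{S^N\}$ as random elements in $(\mathscr{P}(\SSF), \dLF)$, then upgrade to tightness in $(\mathscr{P}_2(\SSF), \WF)$ via a uniform moment bound of order strictly greater than $2$. This relies on the standard fact that a subset of $\mathscr{P}_p$ is relatively compact iff it is tight in $\mathscr{P}$ and has uniformly integrable $p$-th moments, which is in turn implied by a uniform bound on a moment of order $p+\varepsilon$.

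For the first stage, I would exploit exchangeability. By Assumption \ref{initial-value}(1) and the pathwise uniqueness established in Proposition \ref{invariance-SDE}, the joint law of $(\Phi^1, \ldots, \Phi^N)$ is exchangeable on $\SSF^N$, so for any Borel $A \subset \SSF$, $\mathbb{E}[S^N(A)] = \mathbb{P}(\Phi^1 \in A)$. Combined with Markov's inequality along a suitable sequence of compacts, tightness of $\{\mathrm{Law}(\Phi^1)\}_{N \in \mathbb{N}}$ in $\SSF$ therefore implies tightness of $\{S^N\}$ in $\mathscr{P}(\SSF)$ as $\mathscr{P}(\SSF)$-valued random elements. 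To verify the former via Kolmogorov's criterion, I would use the decomposition
$$
\Phi^1_x(t) - \Phi^1_x(s) = \int_s^t \mathbf{D}^N_x(\Phi^1(u), S^N(u)) \, \d u + \bigl(\mathcal{M}^{N,1}_x(t) - \mathcal{M}^{N,1}_x(s)\bigr).
$$
The sphere bound \eqref{sphere-holder} gives $|\mathbf{D}^N_x(\Phi^1, S^N)| \lesssim \sum_{y \sim x} |\Phi^1_y| + |\Phi^1_x|$, so Proposition \ref{Lp-estimate} yields $\mathbb{E}[|\int_s^t \mathbf{D}^N_x \, \d u|^\p] \lesssim |t - s|^\p$ uniformly in $N$. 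A direct It\^o computation using $\Phi(t) \in \mathcal{S}$ shows that $\d\langle \mathcal{M}^{N,1}_x\rangle_t = (1 - (\Phi^1_x(t))^2/N) \, \d t \le \d t$, so the Burkholder-Davis-Gundy inequality delivers $\mathbb{E}[|\mathcal{M}^{N,1}_x(t) - \mathcal{M}^{N,1}_x(s)|^\p] \lesssim |t - s|^{\p/2}$. Since $\p/2 > 1$, Kolmogorov-Chentsov yields tightness of $\{\mathrm{Law}(\Phi^1)\}$ in $\SSF$.

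For the second stage, by exchangeability and Proposition \ref{Lp-estimate},
$$
\sup_N \mathbb{E}\!\left[\int_\SSF \dF(0, w)^\p \, S^N(\d w)\right] = \sup_N \sum_{x \in \Lambda} \mathbb{E}\!\left[\sup_{0 \le t \le T} |\Phi^1_x(t)|^\p\right] < \infty.
$$
Taking $\varepsilon = \p - 2 > 0$ then supplies the uniformly bounded $(2+\varepsilon)$-th moment needed to promote the $\mathscr{P}(\SSF)$-tightness of $\{S^N\}$ to tightness in $(\mathscr{P}_2(\SSF), \WF)$.

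The main obstacle is ensuring that every one of the above increment and moment estimates is uniform in $N$. This relies critically on the sphere constraint $\Phi(t) \in \mathcal{S}$: it controls the nonlinear self-interaction $\frac{1}{N}\sum_k \Phi^k_x \Phi^k_y$ via \eqref{sphere-holder}, and produces the cancellation $\frac{1}{N^2}(\Phi^1_x)^2 \sum_k (\Phi^k_x)^2 = (\Phi^1_x)^2/N$ used to bound $\langle \mathcal{M}^{N,1}_x\rangle$ independently of $N$. Without these cancellations, the $N$-dependent terms in \eqref{the-new-model} could spoil the uniform estimates.
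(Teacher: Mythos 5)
Your proposal is correct and follows essentially the same route as the paper: tightness of $\mathrm{Law}(\Phi^1)$ in $(\SSF,\dF)$ via the drift/martingale decomposition, the sphere bound \eqref{sphere-holder}, Proposition \ref{Lp-estimate} and BDG-type increment estimates; then exchangeability plus Markov to get tightness of $\{S^N\}$ in $(\mathscr{P}(\SSF),\dLF)$; and finally the uniform $\p$-th moment bound ($\p>2$) to upgrade to $(\mathscr{P}_2(\SSF),\WF)$, exactly as the paper does via its sets $H_\varepsilon$ and \cite[Corollary 5.6]{Carmona2018}. The only cosmetic differences are that you compute the quadratic variation of the combined martingale $\mathcal{M}^{N,1}_x$ (correctly, $\d\langle\mathcal{M}^{N,1}_x\rangle_t=(1-(\Phi^1_x)^2/N)\,\d t$) instead of bounding the two noise terms separately, and you invoke the moment/uniform-integrability characterization of $\mathbf{W}_2$-compactness directly rather than constructing the truncated-second-moment sets explicitly; with the "$=$" in your moment identity read as "$\lesssim_{|\Lambda|}$", the argument matches the paper's.
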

\begin{proof}
{\bf Step $1$:} 
Recall that $\SSF = C([0,T]; \mathbb{R}^{\abs{\Lambda}})$ denotes the space of all continuous functions on $[0,T]$ with values in $\mathbb{R}^{\abs{\Lambda}}$, and for any $w_1, w_2 \in \SSF$, 
$$\dF(w_1, w_2) = \sup_{0 \le t \le T} \abs{w_1(t) - w_2(t)}.$$
 We prove that the sequence $\{(\Phi_x^1) \}_{N\in \mathbb{N}}$ is tight in $(\SSF,\dF)$. 

By \cite[Theorem $4.2$, Theorem $4.3$, Theorem $2.6$]{Ikeda1989}, it suffices to prove 
\begin{enumerate}[(i)]
    \item there exist constants $M>0$ and $\gamma>0$ such that
        $$ \sup_{N\in \mathbb{N}}\mathbb{E}\left[\abs{\varphi^1}^\gamma \right]\le M;
        $$
   \item there exist constants $\alpha_1>0$, $\alpha_2>0$, and $M>0$ such that for $s,t\in [0,T]$,
        $$\sup_{N\in \mathbb{N}}\mathbb{E}\left[ \abs{\Phi^1(t)-\Phi^1(s)}^{\alpha_1} \right]\le M \abs{t-s}^{1+\alpha_2}.
        $$
\end{enumerate}
Condition (i) is guaranteed by Assumption \ref{initial-value} (3). It remains to prove condition (ii). By \eqref{the-new-model}, it follows that for any $0\le s\le t\le T$, 
\begin{equation}\label{Phi1-x-p}
    \begin{aligned}
         &\mathbb{E}\left[ \abs{\Phi^1(t)-\Phi^1(s)}^p\right]\\
\lesssim& \sum_{x\in \Lambda}\mathbb{E}\left[ \abs{\Phi_x^1(t)-\Phi_x^1(s)}^p \right]\\
\lesssim& \sum_{x\in \Lambda}\mathbb{E}\left[ \abs{\int^t_s 2\kappa  \sum_{\N}  \left( \Phi_y^1 - \frac{\sum^N_{k=1}\Phi_x^k\Phi_y^k}{N} \Phi_x^1 \right)\,\d r  }^p \right]+\sum_{x\in \Lambda}\mathbb{E}\left[ \abs{\int^t_s \frac{N-1}{2N}\Phi_x^1\,\d r}^p \right]\\
&+\sum_{x\in \Lambda}\mathbb{E}\left[ \abs{\int^t_s \d W_x^1(r)}^p \right] + \sum_{x\in \Lambda}\mathbb{E}\left[\abs{\int^t_s\frac{\sum_{k=1}^N \Phi_x^k\,\d W_x^k(r)}{N}\Phi_x^1 }^p \right]\\
:=&J_1+J_2+J_3+J_4.
    \end{aligned}
\end{equation}
For $p=\p>2$, with $\p$ given in Assumption \ref{initial-value}, we will estimate each term on the right-hand side of \eqref{Phi1-x-p}  separately. By \eqref{sphere-holder}, Jensen's inequality, and Proposition \ref{Lp-estimate}, we have
\begin{equation*}
    \begin{aligned}
          J_1\lesssim & \abs{t-s}^{p-1}\int^t_s \sum_{x\in\Lambda}\sum_{\N}\mathbb{E}\left[ \abs{\Phi_y^1}^p \right]\,\d r+ \abs{t-s}^{p-1}\int^t_s \sum_{x\in\Lambda}\sum_{\N}\mathbb{E}\left[ \abs{\Phi_x^1}^p \right]\,\d r
\lesssim  \abs{t-s}^p.
    \end{aligned}
\end{equation*}
Similarly, by Jensen's inequality and Proposition \ref{Lp-estimate}, we have that $ {J_2} \lesssim \abs{t-s}^p$. By the Burkholder-Davis-Gundy inequality, it is evident that $J_3\lesssim \abs{t-s}^\frac{p}{2}$. Similarly, using the Burkholder-Davis-Gundy inequality again, we find that $J_4\lesssim \abs{t-s}^{\frac{p}{2}}$.  Together, the estimates for $J_1$, $J_2$, $J_3$, and $J_4$ imply condition (ii).

{\bf Step $2$:} Note that $\mathscr{P}(\SSF)$ denotes the set of all probability measures on $\SSF$, and  for any $\mu_1,\mu_2\in \mathscr{P}(\SSF)$, 
$$
\dLF(\mu_1,\mu_2)=\sup_{\norm{f}_{BL}\le 1} \abs{\int_{\SSF}f\mu_1(\d w) - \int_{\SSF}f\mu_2(\d w) },
$$
where $\norm{f}_{BL}:=\sup_{w\in \SSF} \abs{f(w)} + \sup_{w_1\neq w_2\in \SSF} \frac{\abs{f(w_1)-f(w_2)}}{\dF(w_1,w_2)}$. The metric $\dLF$ is the dual-Lipschitz metric on $\mathscr{P}(\SSF)$. We prove that the sequence $\{S^N \}_{N\in \mathbb{N}}$ is tight in $(\mathscr{P}({\SSF}),\dLF)$.  

Since the sequence $\{(\Phi_x^1) \}_{N\in \mathbb{N}}$ is tight in $(\SSF,\dF)$,  for any $\varepsilon > 0$, there exists a compact set $\mathscr{U}_\varepsilon$ in $\SSF$  such that 
\begin{equation*}
    \begin{aligned}
         & \mathbb{P}\left( (\Phi_x^1)\notin \mathscr{U}_\varepsilon\right)\le \varepsilon^2.
    \end{aligned}
\end{equation*}
By the symmetry of the law of $((\Phi_x^1),\cdots,(\Phi_x^N))$, we have
\begin{equation*}
    \begin{aligned}
         & \mathbb{E}\left[ S^N(\mathscr{U}^c_\varepsilon) \right]=\frac{1}{N}\sum^N_{j=1}\mathbb{P}\left( (\Phi_x^j) \notin \mathscr{U}_\varepsilon\right)\le \varepsilon^2.
    \end{aligned}
\end{equation*}
Let $\Theta_\varepsilon:=\cap^\infty_{m=1} \{ \Gamma\in \mathscr{P}(\SSF): \Gamma(\mathscr{U}_{\varepsilon 2^{-m}}) \ge 1-\varepsilon2^{-m}\} $. It is not difficult to see that $\Theta_\varepsilon$ is a compact set in $(\mathscr{P}(\SSF),\dLF)$. In fact, for any $\varepsilon'>0$, there exists a $M\in \mathbb{N}$ such that $\varepsilon 2^{-M}<\varepsilon'$. By the definition of $\Theta_\varepsilon$, for any $\Gamma\in \Theta_\varepsilon$, we have $\Gamma(\mathscr{U}_{\varepsilon 2^{-M}})\ge 1-\varepsilon 2^{-M}>1-\varepsilon'$. By \cite[Theorem $2.6$]{Ikeda1989},  we obtain that $\Theta_\varepsilon$ is relatively compact in $(\mathscr{P}(\SSF),\dLF)$.  We only need to further prove that $\Theta_\varepsilon$ is a closed subset of $(\mathscr{P}(\SSF),\dLF)$. If $\Gamma_n\in \Theta_\varepsilon$ and $\Gamma_n\wcon \Gamma$, by \cite[Proposition $2.4$ (iii)]{Ikeda1989} and $\mathscr{U}_{\varepsilon 2^{-m}}$ is also a closed set, we obtain that $\Gamma\in \Theta_\varepsilon$.

By the definition of $\Theta_\varepsilon$, we have
\begin{equation}\label{compact-S}
    \begin{aligned}
         \mathbb{P}\left( S^N\notin \Theta_\varepsilon \right)\le& \sum^{+\infty}_{m=1} \mathbb{P}\left( S^N(\mathscr{U}^c_{\varepsilon 2^{-m}}) > \varepsilon 2^{-m}\right)\\
\le& \sum^{+\infty}_{m=1} (\varepsilon 2^{-m})^{-1} \mathbb{E}\left[ S^N(\mathscr{U}^c_{\varepsilon 2^{-m}}) \right]\\
\le& \sum^{+\infty}_{m=1} \varepsilon 2^{-m}=\varepsilon.
    \end{aligned} 
\end{equation}
Thus, the sequence $\{ S^N \}_{N\in \mathbb{N}}$ is tight in $(\mathscr{P}(\SSF),\dLF)$.

{\bf Step $3$:} Let us recall that $\mathscr{P}_2(\SSF)$ denotes the Wasserstein space of order $2$, equipped with the $2$-Wasserstein distance
$$
\WF(\mu_1,\mu_2)=\left(\inf_{\tilde{\pi}\in \bm{\pi}(\mu_1,\mu_2)} \int_{\SSF\times \SSF} \dF(w_1,w_2)^2 \,\tilde{\pi}(\d w_1\d w_2) \right)^{\frac{1}{2}},\quad \mu_1,\mu_2\in \mathscr{P}_2(\SSF),
$$
where $\bm{\pi}(\mu_1,\mu_2)$ denotes the set of all probability measures $\tilde{\pi} \in \mathscr{P}(\SSF\times \SSF)$ whose marginals are $\mu_1$ and $\mu_2$. We prove that the sequence $\{S^N \}_{N\in \mathbb{N}}$ is tight in  $(\mathscr{P}_2(\SSF),\WF)$. 

Since $\{ S^N \}_{N\in \mathbb{N}}$ is tight in $(\mathscr{P}(\SSF),\dLF)$ by {\bf Step $2$}, and $\{ S^N \}_{N\in \mathbb{N}}\subset \mathscr{P}_2(\SSF),\ \mathbb{P}\text{-a.s.}$, by  Proposition \ref{Lp-estimate}, for every $\varepsilon>0$, we can choose a set $\mathcal{N}_\varepsilon \subset \mathscr{P}_2(\SSF)$ that is relatively compact in $(\mathscr{P}(\SSF),\dLF)$ such that 
\begin{equation}\label{N-e}
    \begin{aligned}
         & \mathbb{P}\left( S^N\notin \mathcal{N}_\varepsilon \right)\le \varepsilon.
    \end{aligned}
\end{equation}
In fact, due to $\{ S^N \}_{N\in \mathbb{N}}\subset \mathscr{P}_2(\SSF),\ \mathbb{P}\text{-a.s.}$, we have $$\mathbb{P}(S^N\in \Theta_\varepsilon)=\mathbb{P}(S^N\in \Theta_\varepsilon\cap \mathscr{P}_2(\SSF)).$$
Let $\mathcal{N}_\varepsilon:=\Theta_\varepsilon\cap \mathscr{P}_2(\SSF)=\cap^\infty_{m=1} \{ \Gamma\in\mathscr{P}_2(\SSF): \Gamma(\mathscr{U}_{\varepsilon 2^{-m}}) \ge 1-\varepsilon2^{-m}\}$. By following a similar approach to that used in {\bf Step $2$} for establishing the relative compactness of $\Theta_\varepsilon$ in $(\mathscr{P}(\SSF), \dLF)$ and the inequality $\mathbb{P}(S^N \notin \Theta_\varepsilon) \leq \varepsilon$ in \eqref{compact-S}, we can prove the desired conclusion.

For $p=\p>2$, with $\p$ given in Assumption \ref{initial-value}, let $a_m:=m^{\frac{1}{p-2}}2^{\frac{m}{p-2}}$, $b_m:=\frac{\varepsilon m}{C_p+1}$ with \begin{equation*}
    \begin{aligned}
         & C_p:=\sup_{N\in \mathbb{N}}\mathbb{E}\left[ \sup_{0\le t\le T} \abs{(\Phi_x^{1}(t))}^p \right] \lesssim \sup_{N\in \mathbb{N}}\sum_{x\in \Lambda}  \mathbb{E}\left[\sup_{0\le t\le T}\abs{\Phi_x^1(t)}^p\right]\lesssim 1.
    \end{aligned}
\end{equation*}
Note that $a_m,b_m\rightarrow \infty$ as $m\rightarrow \infty$.  Let
\[
H_\varepsilon:= \bigcap^{+\infty}_{m=1} \left\{  \nu\in \mathscr{P}_2(\SSF): \int \left(\sup_{0\le t\le T} \abs{w_t}\right)^2 \mathds{1}_{\left\{\sup_{0\le t\le T} \abs{w_t}\,\ge a_m\right\}}\nu(\d  w) <\frac{1}{b_m}   \right\},
\]
where $\mathds{1}_{\{\sup_{0\le t\le T} \abs{w_t}\,\ge a_m\}}$ is the indicator function of the set $\{w:\sup_{0\le t\le T} \abs{w_t}\,\ge a_m\}$.
By Proposition \ref{Lp-estimate}, we have
\begin{equation}\label{H-e}
    \begin{aligned}
         & \mathbb{P}\left( S^N\notin H_\varepsilon \right)\\
\le& \sum^{+\infty}_{m=1} \mathbb{P}\left( \frac{1}{N} \sum^N_{i=1} \left( \sup_{0\le t\le T} \abs{(\Phi_x^i)} \right)^2\mathds{1}_{\left\{ \sup_{0\le t\le T} \abs{(\Phi_x^i)}\,\ge a_m\right\}}\ge \frac{1}{b_m}\right)\\
\le& \sum^{+\infty}_{m=1} \frac{b_m}{N} \sum^N_{i=1} \mathbb{E}\left[ \left( \sup_{0\le t\le T} \abs{(\Phi_x^i)} \right)^2 \mathds{1}_{\left\{\sup_{0\le t\le T} \abs{(\Phi_x^i)} \ge a_m\right\}} \right]\\
\le& \sum^{+\infty}_{m=1} \frac{b_m}{a_m^{p-2}N}\sum^N_{i=1} \mathbb{E}\left[ \sup_{0\le t\le T} \abs{(\Phi_x^{i})}^p \right]\\
\le& \varepsilon,
    \end{aligned}
\end{equation}
where the third inequality follows from H\"older's inequality and Markov's inequality.  Combining \eqref{N-e} and \eqref{H-e}, we obtain
\[
\mathbb{P}\left( S^N\notin \mathcal{N}_\varepsilon\cap H_\varepsilon \right)\le 2\varepsilon.
\] 
By \cite[ Corollary $5.6$]{Carmona2018}, $\mathcal{N}_\varepsilon\cap H_\varepsilon$ is relatively compact in  $(\mathscr{P}_2(\SSF),\WF)$, which implies that  $\{ S^N \}_{N\in \mathbb{N}}$ is tight in $(\mathscr{P}_2(\SSF),\WF)$.

\end{proof}

\subsection{Proof of propagation of chaos}\label{sec3.4}
In this section, we give the proof of Theorem \ref{Main-Theorem}.  

Since $\{S^N\}_{N\in \mathbb{N}}$ is tight in $(\mathscr{P}_2(\SSF),\WF)$, we can extract a subsequence that converges in law to an accumulation point $S$, which is a $(\mathscr{P}_2(\SSF),\WF)$-valued random variable. First, we prove an important equality for $S$, namely $\int \abs{v_x}^2 S(t)(\d v)=1,\ \mathbb{P}\text{-a.s.}$, which shows that the most difficult term $\mathbb{E}[\Psi_x(t)\Psi_y(t)] $ in the mean-field SDE \eqref{mean-field-limit} is actually bounded  (see Lemma \ref{sphere-SN-S} below). 
Second, we show that for almost every $\omega \in \Omega$, $S(\omega)$ satisfies condition (ii) of Definition \ref{martingale-solution}.  The proof consists of two lemmas. Lemma \ref{Drift-Estimate} shows that $\mathscr{M}_\theta(t, w, S(\omega))$ is a continuous square-integrable martingale. Similarly, Lemma \ref{QV-Estimate} shows that the quadratic variation of $\mathscr{M}_\theta(t, w, S(\omega))$ is $\abs{\theta}^2 t$.

On the other hand, by using the bound $\abs{\mathbb{E}[\Psi_x(t)\Psi_y(t)]}\le 1 $, we establish the pathwise uniqueness of solutions for both the SDE \eqref{fix-mu} (given below) and the mean-field SDE \eqref{mean-field-limit}.  Combining this with the Yamada-Watanabe theorem (see \cite[Lemma 2.1]{Huang2021}), we obtain that the limit  $S$ of the sequence $\{S^N\}_{N\in \mathbb{N}}$ is $\mathbb{P}\text{-a.s.}$ constant in $(\mathscr{P}_2(\SSF),\WF)$. Finally, combining uniform in $N$ bounds from \eqref{sup-1-p}, we deduce that $S$ satisfies the condition (i) of Definition \ref{martingale-solution}, and  $\lim_{N\rightarrow \infty} \mathbb{E}\left[\WFT(S^N,S) \right]=0$.\\

\noindent{\bf Proof of Theorem \ref{Main-Theorem}}
By \cite[Definition $1.43$, Theorem $1.44$]{Pardoux2014} and Lemma \ref{tight-P-2},  we obtain that for the sequence $\{S^N \}_{N\in \mathbb{N}}$ (viewed as a family of $(\mathscr{P}_2(\SSF),\WF)$-valued random variables), there exists a subsequence, still denoted as $\{S^N\}_{N\in \mathbb{N}}$, such that $S^N$ converges in law; that is, there exists a $(\mathscr{P}_2(\SSF),\WF)$-valued random variable $S$ such that $S^N\lawcon S$, as $N\rightarrow \infty$.  In the following, we prove that $S$ is a martingale solution to \eqref{mean-field-limit} in the sense of Definition \ref{martingale-solution}. To this end, by Skorokhod's Theorem, we construct a probability space $(\hat \Omega,{\hat{\mathscr{F}}},\hat{\mathbb{P}})$, and random variables $\hat{S}^N$ and $\hat{S}$ such that $\hat S^N\laweq S^N$, $\hat{S}\laweq S$, and $\hat S^N\rightarrow\hat{S}, \,\mathbb{\hat P}\text{-a.s.}$ in $(\mathscr{P}_2(\SSF),\WF)$.

\begin{lemma}\label{sphere-SN-S}
    For any $x\in \Lambda$ and $0\le t\le T$, 
\begin{equation}\label{SN=1-L}
    \begin{aligned}
         & \int \abs{v_x}^2 S^N(t)(\d v)=1, \quad \mathbb{P}\text{-\rm a.s.},
    \end{aligned}
\end{equation}

\begin{equation}\label{S=1-L}
    \begin{aligned}
         &  \int \abs{v_x}^2 S(t)(\d v)=1,\quad \mathbb{P}\text{-\rm a.s.}.
    \end{aligned}
\end{equation}
Moreover, for any $p\ge 1$, $x,y\in \Lambda$, and $0\le t\le T$,
\begin{equation}\label{vxvy-alpha}
    \begin{aligned}
 & \mathbb{E}\left[ \abs{\int v_x v_yS^N(t)(\d v)-\int v_x v_yS(t)(\d v)}^{p} \right]\rightarrow 0.
    \end{aligned}
\end{equation}
\end{lemma}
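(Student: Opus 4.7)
The plan for part (1) is an immediate consequence of the sphere constraint: since $\Phi(t)\in \mathcal S=(\mathbb{S}^{N-1}(\sqrt N))^{\Lambda}$, for each $x\in \Lambda$ and each $0\le t\le T$ the vector $(\Phi^1_x(t),\ldots,\Phi^N_x(t))$ lies on $\mathbb{S}^{N-1}(\sqrt N)$, so $\sum_{k=1}^N |\Phi_x^k(t)|^2 = N$. Plugging this into the definition of $S^N(t)$ yields \eqref{SN=1-L} deterministically, so the statement holds $\mathbb{P}$-a.s.

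For parts (2) and (3), I would work on the Skorokhod probability space $(\hat\Omega,\hat{\mathscr F},\hat{\mathbb P})$ introduced just before the lemma, on which $\hat S^N\to \hat S$ almost surely in $(\mathscr{P}_2(\SSF),\WF)$. The time-$t$ coordinate map $\pi_t\colon \SSF\to \mathbb R^{\abs\Lambda}$ is $1$-Lipschitz from $(\SSF,\dF)$ to $(\mathbb R^{\abs\Lambda},|\cdot|)$, so the induced pushforward $\mu\mapsto \mu\circ \pi_t^{-1}$ is $1$-Lipschitz from $(\mathscr{P}_2(\SSF),\WF)$ to $(\mathscr{P}_2(\mathbb R^{\abs\Lambda}),\mathbf{W}_{2,\mathbb R^{\abs\Lambda}})$. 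Hence $\hat S^N(t)\to \hat S(t)$ in $\mathbf{W}_{2,\mathbb R^{\abs\Lambda}}$. Since convergence in the $2$-Wasserstein distance is equivalent to weak convergence together with convergence of second moments, the family $\{\hat S^N(t)\}_N$ is uniformly integrable for the observable $|v|^2$, and therefore also for the dominated functionals $|v_x|^2$ and $|v_xv_y|\le \tfrac12(|v_x|^2+|v_y|^2)$. It follows that
\begin{equation*}
\int |v_x|^2\,\hat S^N(t)(\d v)\longrightarrow \int |v_x|^2\,\hat S(t)(\d v), \qquad \int v_xv_y\,\hat S^N(t)(\d v)\longrightarrow \int v_xv_y\,\hat S(t)(\d v),
\end{equation*}
$\hat{\mathbb P}$-a.s. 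Combining the first limit with part (1) and using $\hat S\laweq S$ yields \eqref{S=1-L}.

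For the $L^p$ statement \eqref{vxvy-alpha}, I would exploit the deterministic Cauchy--Schwarz bound \eqref{sphere-holder}, which gives $\bigl|\int v_xv_y\,S^N(t)(\d v)\bigr|\le 1$ pointwise; transferring this bound by equality in law to $\hat S^N$ and passing to the a.s.\ limit yields the same bound for $\hat S$. Dominated convergence then gives
\begin{equation*}
\hat{\mathbb E}\left[\left|\int v_xv_y\,\hat S^N(t)(\d v)-\int v_xv_y\,\hat S(t)(\d v)\right|^p\right]\longrightarrow 0
\end{equation*}
for every $p\ge 1$. Since throughout the rest of the proof of Theorem~\ref{Main-Theorem} the pair $(S^N,S)$ is identified with its Skorokhod realization $(\hat S^N,\hat S)$, this is \eqref{vxvy-alpha}. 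The main subtlety is controlling the unbounded observables $|v_x|^2$ and $v_xv_y$ against the limiting measure; this is handled cleanly by the choice of $\WF$ as the $2$-Wasserstein distance, which builds in the required uniform integrability, so no further tightness argument at the level of single-time marginals is needed.
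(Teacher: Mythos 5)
Your proposal is correct and follows essentially the same route as the paper: part (1) from the sphere constraint, parts (2)--(3) via the Skorokhod realization, convergence of the single-time integrals of the quadratically growing observables $\abs{v_x}^2$ and $v_xv_y$, the uniform bound $\abs{\int v_xv_y\,S^N(t)(\d v)}\le 1$, and dominated convergence plus identification in law. The only cosmetic difference is that you justify the single-time convergence by pushing the pathwise $\WF$-convergence forward through the $1$-Lipschitz coordinate map $\pi_t$ and invoking the weak-convergence-plus-second-moments characterization, whereas the paper dominates $\abs{w_{t,x}}^2$ by $\sup_{0\le t\le T}\abs{w_t}^2$ and cites \cite[Theorem 7.12 (iv)]{Villani2003} directly; both are the same idea.
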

\begin{proof}
 The first equation follows from the fact that, for any $x \in \Lambda$, $0\le t\le T$,  we have ${1}/{N}\sum_{i=1}^{N} \abs{\Phi_x^i(t)}^2 = 1$. We now turn to the proof of the second equation \eqref{S=1-L}. Note that for any $0\le t\le T$, $w\in \SSF$, $\abs{w_{t,x}}^2\le \sup_{0\le t\le T}\abs{w_t}^2 $  and $\hat S^N\rightarrow \hat{S}, \mathbb{\hat P}\,\text{-a.s.}$ in $(\mathscr{P}_2(\SSF),\WF)$. By \cite[Theorem $7.12$ (iv)]{Villani2003}, we obtain that for any $0\le t\le T$,
\begin{equation}\label{con-hatP}
    \begin{aligned}
         & \int \abs{v_x}^2\,\hat S^N(t)(\d v)\rightarrow \int \abs{v_x}^2\,\hat S(t)(\d v),\quad \mathbb{\hat P}\text{-a.s.}.
    \end{aligned}
\end{equation}
By \eqref{SN=1-L} and $\hat S^N\laweq S^N$, for any $0\le t\le T$, we have
\begin{equation}\label{hatSN=1}
    \begin{aligned}
         &  \mathbb{P}\left(\int \abs{v_x}^2 S^N(t)(\d v)=1\right)=\mathbb{\hat P}\left(\int \abs{v_x}^2 \hat S^N(t)(\d v)=1\right)=1.
    \end{aligned}
\end{equation}
Combining \eqref{con-hatP} and \eqref{hatSN=1} with $\hat S \laweq S$, for any $0\le t\le T$, we obtain 
\begin{equation*}
    \begin{aligned}
         &  \mathbb{P}\left(\int \abs{v_x}^2 S(t)(\d v)=1\right)=\mathbb{\hat P}\left(\int \abs{v_x}^2 \hat S(t)(\d v)=1\right)=1.
    \end{aligned}
\end{equation*}
By \eqref{SN=1-L}, \eqref{S=1-L}, and  H\"older's inequality,  it is evident that for any $0\le t\le T$,
\begin{equation}\label{vxvy-1}
    \begin{aligned}
         & \abs{\int v_x v_yS^N(t)(\d v)}\le 1,\quad \abs{\int v_x v_yS(t)(\d v)}\le 1, \ \mathbb{P}\text{-a.s.}.
    \end{aligned}
\end{equation}
Since for any $0\le t\le T$, $w\in \SSF$, $\abs{w_{t,x} w_{t,y}}\le \sup_{0\le t\le T} \abs{w_t}^2$  and $\hat S^N\rightarrow \hat{S}, \mathbb{\hat P}\,\text{-a.s.}$ in $(\mathscr{P}_2(\SSF),\WF)$, by \cite[Theorem $7.12$ (iv)]{Villani2003}, we obtain that for any $0\le t\le T$,
\begin{equation*}
    \begin{aligned}
         & \int v_x v_y\hat S^N(t)(\d v)\rightarrow\int v_x v_y\hat S(t)(\d v), \ \mathbb{\hat P}\text{-a.s.}.
    \end{aligned}
\end{equation*}
By \eqref{vxvy-1} and the dominated convergence theorem,
for any $p\ge 1$, $0\le t\le T$,
\begin{equation*}
    \begin{aligned}
&\mathbb{E}\left[ \abs{\int v_x v_yS^N(t)(\d v)-\int v_x v_yS(t)(\d v)}^{p} \right]\\
=& \mathbb{\hat E}\left[ \abs{\int v_x v_y\hat S^N(t)(\d v)-\int v_x v_y\hat S(t)(\d v)}^{p} \right]\rightarrow 0.
    \end{aligned}
\end{equation*}
\end{proof}

In the following, we aim to prove that for almost every $\omega\in \Omega$, 
\begin{equation}\label{martingale-1}
    \begin{aligned}
         & \mathscr{M}_\theta(t,w,{S(\omega)})= \langle w_t,\theta \rangle - \langle w_0,\theta  \rangle-\int^t_0 \langle{\mathbf{D}(w_s, S(s)(\omega))},\theta\rangle\,\d s,\ \theta\in \mathbb{R}^{\abs{\Lambda}}, \,0\le t\le T,
    \end{aligned}
\end{equation}
is a continuous $(\mathscr F_t)$-martingale under $S(\omega)$. To this end,  for any $(w,\mu,\theta)\in \SSF\times \mathscr{P}_2(\SSF)\times \mathbb{R}^{\abs{\Lambda}}$, $0\le t\le T$, we define 
\[
G^N(t,w,\mu):= G^{(1)}(t,w) + G^{N,(2)}(t,w,\mu),
\]
where 
\begin{equation*}
    \begin{aligned}
          G^{(1)}(t,w):=&\langle w_t,\theta \rangle - \langle w_0,\theta  \rangle,\\
          G^{N,(2)}(t,w,\mu):=&-\int^t_0 \langle\mathbf{D}^N( w_s, \mu_s),\theta \rangle\,\d s.
    \end{aligned}
\end{equation*}
Similarly, for any $0\le t\le T$, we define
\[
G(t,w,\mu):= G^{(1)}(t,w) + G^{(2)}(t,w,\mu),
\]
where 
\begin{equation*}
    \begin{aligned}
          G^{(2)}(t,w,\mu):=&-\int^t_0 \langle\mathbf{D}(w_s, \mu_s),\theta \rangle\,\d s.
    \end{aligned}
\end{equation*}
For any $m\in \mathbb{N}$, $g_1,\cdots,g_m\in C_c(\mathbb{R}^{\abs{\Lambda}};\mathbb{R})$, $\mu\in \mathscr{P}_2(\SSF)$, $0\le s< t\le T$, and $0\le s_1<\cdots<s_m\le s$, we define 
\begin{equation}\label{Pi-N}
    \begin{aligned}
         & \Pi^N(\mu):=\int \left( G^N(t,w,\mu)- G^N(s,w,\mu) \right)g_1(w_{s_1})\cdots g_m(w_{s_m})\mu(\d w),
    \end{aligned}
\end{equation}
and
\begin{equation*}
    \begin{aligned}
         & \Pi(\mu):=\int\left( G(t,w,\mu) - G(s,w,\mu)\right)g_1(w_{s_1})\cdots g_m(w_{s_m})\mu(\d w).
    \end{aligned}
\end{equation*}
Once we have shown that $\mathbb{E}[\abs{\Pi(S(\cdot))}]=0$, it follows that, for almost every $\omega$,  $\mathscr M_\theta(t,w,S(\omega))$ is a continuous $(\mathscr F_t)$-martingale under $S(\omega)$.

Our next goal is to show that $\mathbb{E}\left[\abs{\Pi(S)}\right] = 0$. This will be guaranteed by the following lemma.
\begin{lemma}\label{Drift-Estimate}
 It holds that
\begin{equation}\label{lem-e1}
    \begin{aligned}
       &  \mathbb{E}\left[\abs{\Pi^N(S^N)-\Pi(S)}\right]\rightarrow 0\quad \text{as}\ N\rightarrow \infty,
    \end{aligned}
\end{equation}
and 
\begin{equation}\label{lem-e2}
    \begin{aligned}
         & \mathbb{E}\left[\abs{\Pi^N(S^N)}^2\right]\rightarrow 0 \quad \text{as}\ N\rightarrow \infty.
    \end{aligned}
\end{equation}
\end{lemma}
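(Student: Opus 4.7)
The plan is to prove \eqref{lem-e2} first via the martingale structure of the Langevin dynamics, and then to combine that bound with a Skorokhod representation argument to establish \eqref{lem-e1}. The crucial structural fact throughout will be the spherical constraint $\sum_{k=1}^N |\Phi_x^k|^2 = N$, or equivalently \eqref{SN=1-L}, which controls the otherwise problematic nonlinear terms.

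For \eqref{lem-e2}, I would use that each component path $\Phi^i$ solves the rewritten dynamics, so
\[
G^N(t,\Phi^i,S^N) - G^N(s,\Phi^i,S^N) = \langle \mathcal{M}^{N,i}(t) - \mathcal{M}^{N,i}(s),\, \theta\rangle.
\]
Substituting into \eqref{Pi-N} turns $\Pi^N(S^N)$ into the weighted average $\frac{1}{N}\sum_{i=1}^N X_i\,\langle\mathcal{M}^{N,i}(t)-\mathcal{M}^{N,i}(s),\theta\rangle$, where $X_i := g_1(\Phi^i_{s_1})\cdots g_m(\Phi^i_{s_m})$ is bounded and $\mathscr{F}_s$-measurable. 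A direct It\^o computation from \eqref{vector-M}, together with the independence of $\{\bm{W}_x\}_{x\in\Lambda}$ and $\sum_k|\Phi_x^k|^2=N$, yields
\[
d\langle\mathcal{M}_x^{N,i},\mathcal{M}_y^{N,j}\rangle_r = \delta_{xy}\bigl(\delta_{ij} - \Phi_x^i(r)\Phi_x^j(r)/N\bigr)\,dr.
\]
Squaring $\Pi^N(S^N)$ and conditioning on $\mathscr{F}_s$, the diagonal sum $i=j$ contributes $O(1/N)$ (it is a sum of $N$ uniformly bounded terms divided by $N^2$), and the off-diagonal contribution is bounded by $\frac{1}{N^3}\sum_{i,j}|\Phi_x^i\Phi_x^j|\le \frac{1}{N^3}\bigl(\sum_i|\Phi_x^i|\bigr)^2\le \frac{1}{N}$ using Cauchy--Schwarz and the spherical constraint. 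Hence $\mathbb{E}[|\Pi^N(S^N)|^2]\lesssim 1/N$.

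For \eqref{lem-e1}, I would decompose $\Pi^N(S^N) - \Pi(S) = [\Pi^N(S^N)-\Pi(S^N)] + [\Pi(S^N)-\Pi(S)]$. The first bracket is deterministically $O(1/N)$ since $\mathbf{D}_x^N - \mathbf{D}_x = v_x/(2N)$, and $\int|w_{r,x}|\,S^N(dw)\le 1$ by Cauchy--Schwarz combined with \eqref{SN=1-L}. For the second bracket, I would apply Skorokhod's representation to obtain $\hat S^N\to \hat S$ almost surely in $(\mathscr{P}_2(\SSF),\WF)$, and then show $\Pi$ is $\WF$-continuous at $\hat S$. Expanding $\Pi(\mu)$ via \eqref{vector-Lambda-limit} produces linear functionals of $\mu$ whose integrands have at most linear growth in $w$ (these pass to the limit by the definition of $\WF$-convergence and the uniform moment bound from Proposition \ref{Lp-estimate}), together with a nonlinear term of the product form $A_r(\mu)\,B_r(\mu)$, where $A_r(\mu):=\int u_xu_y\,\mu_r(du)$ and $B_r(\mu):=\int w_{r,x}g_1(w_{s_1})\cdots g_m(w_{s_m})\,\mu(dw)$. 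Both converge pointwise along $\hat S^N$; moreover $|A_r(\cdot)|\le 1$ almost surely by \eqref{vxvy-1}, and \eqref{vxvy-alpha} of Lemma \ref{sphere-SN-S} gives $L^p$-convergence of $A_r(\hat S^N)$ to $A_r(\hat S)$. Combining these with dominated convergence gives $\mathbb{E}[|(II)|]\to 0$.

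The main obstacle is the nonlinear product $A_r(\mu)B_r(\mu)$: $\WF$-convergence by itself controls $\mu$-integrals only of functions with at most quadratic growth, and the product of two $\mu$-dependent factors is not, in general, continuous under such convergence. The saving grace is precisely the spherical constraint $\int|v_x|^2\,S^N(t)(dv)=1$, which converts the otherwise quadratically growing functional $A_r$ into a bounded one, so that \eqref{vxvy-alpha} provides both the uniform bound and the strong convergence required to pass the product to the limit.
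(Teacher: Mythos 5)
Your proposal is correct and follows essentially the same route as the paper: the same splitting $\Pi^N(S^N)-\Pi(S)=[\Pi^N(S^N)-\Pi(S^N)]+[\Pi(S^N)-\Pi(S)]$ with the $O(1/N)$ drift discrepancy $\mathbf{D}^N_x-\mathbf{D}_x=\tfrac{1}{2N}v_x$, Skorokhod representation combined with the bound $\abs{\int v_xv_y\,S^N(t)(\d v)}\le 1$ and the $L^p$-convergence \eqref{vxvy-alpha} to pass the measure-dependent term to the limit, and the martingale quadratic-variation computation under the spherical constraint for \eqref{lem-e2}. The only differences are cosmetic: you compute the full cross-variation of $\mathcal{M}^{N,i}$ and condition on $\mathscr{F}_s$ where the paper splits $\mathscr{M}^{N,i}=\hat{\mathscr{M}}^{N,i}+\tilde{\mathscr{M}}^{N,i}$, and you treat the product $A_r(\mu)B_r(\mu)$ directly instead of the paper's $J_1$/$J_2$ add-and-subtract, with the same estimates in the end.
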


\begin{proof}
Note that 
\begin{equation*}
    \begin{aligned}
         &  \mathbb{E}\left[\abs{\Pi^N(S^N)-\Pi(S)}\right]\le \mathbb{E}\left[ \abs{\Pi^N(S^N)-\Pi(S^N)} \right]+\mathbb{E}\left[ \abs{\Pi(S^N)-\Pi(S)} \right].
    \end{aligned}
\end{equation*}
To prove \eqref{lem-e1}, it suffices to prove 
\begin{equation*}
    \begin{aligned}
         & I_1:=\mathbb{E}\left[ \abs{\Pi^N(S^N)-\Pi(S^N)} \right]\rightarrow 0, \quad \text{as}\ N\rightarrow \infty,
    \end{aligned}
\end{equation*}
and
\begin{equation}\label{I2conv}
    \begin{aligned}
         & I_2:=\mathbb{E}\left[ \abs{\Pi(S^N)-\Pi(S)} \right]\rightarrow 0, \quad \text{as}\ N\rightarrow \infty.
    \end{aligned}
\end{equation}

By the definitions of $\Pi^N$ and $\Pi$, we obtain
\begin{equation*}
    \begin{aligned}
         I_1
\lesssim&\sup_{0\le t\le T}\left( \frac{1}{N}\sum^N_{i=1} \mathbb{E}\left[ \abs{G^{(2)}(t,\Phi^i,S^N)-G^{N,(2)}(t,\Phi^i,S^N)} \right] \right)\\
\lesssim& \sup_{0\le t\le T} \left( \frac{1}{N}\sum^N_{i=1}\int^t_0 \frac{1}{N}\mathbb{E}\left[ \abs{\langle \Phi^i,\theta  \rangle} \right]\,\d s \right)\lesssim\frac{1}{N},
    \end{aligned}
\end{equation*}
where the last inequality follows from the Cauchy-Schwarz inequality and Proposition \ref{Lp-estimate}.

By the definition of $\Pi$, we have
\begin{equation*}
    \begin{aligned}
         & I_2\le J(t) +J(s), \quad 0\le s< t\le T,
    \end{aligned}
\end{equation*}
where for $0\le t\le T$,
\begin{equation*}
    \begin{aligned}
         & J(t):=\mathbb{E}\left[ \abs{\int G(t,w,S^N)g_1(w_{s_1}) \cdots g_m(w_{s_m})S^N(\d w)- \int G(t,w,S)g_1(w_{s_1}) \cdots g_m(w_{s_m})S(\d w)}\right].\\
    \end{aligned}
\end{equation*}
To prove \eqref{lem-e1}, it suffices to prove that for any $0\le t\le T$, $J(t)\rightarrow 0$ as $N\rightarrow \infty$. The proof is divided into two steps.

{\bf Step $1$:} We prove that for any $0\le t\le T$,
\begin{equation}\label{step1}
    \begin{aligned}
         & \mathbb{E}\left[\left|\int G^{(1)}(t,w)g_1(w_{s_1})\cdots g_m(w_{s_m}) S^N(\d w)-\int G^{(1)}(t,w)g_1(w_{s_1})\cdots g_m(w_{s_m}) S(\d w)\right|\right] \rightarrow 0.
    \end{aligned}
\end{equation}  
\noindent Since  $\hat S^N\rightarrow\hat{S}, \mathbb{\hat P}\text{-a.s.}$ in $(\mathscr{P}_2(\SSF),\WF)$, and for any $0\le t\le T$, $w\mapsto G^{(1)}(t,w) \cdot g_1(w_{s_1})\cdots g_m(w_{s_m})$ is a continuous function on $(\SSF,\dF)$, 
$$\abs{G^{(1)}(t,w) \cdot g_1(w_{s_1})\cdots g_m(w_{s_m})}\lesssim \sup_{0\le t\le T} \abs{w_t}^2+1,$$
by \cite[Theorem $7.12$ (iv)]{Villani2003}, we derive that for any $0\le t\le T$,
\begin{equation}\label{as-G-1}
    \begin{aligned}
      \int G^{(1)}(t,w)g_1(w_{s_1})\cdots g_m(w_{s_m}) \hat S^N(\d w)\rightarrow \int G^{(1)}(t,w)g_1(w_{s_1})\cdots g_m(w_{s_m}) \hat S(\d w), \quad \mathbb{\hat P}\text{-a.s.}.
    \end{aligned}
\end{equation}   
By Proposition \ref{Lp-estimate}, we obtain that for any $1<p\le \p$, with $\p$ given in Assumption \ref{initial-value}, $0\le t\le T$,
\begin{equation*}\label{as-G1-ui}
    \begin{aligned}
         &   \mathbb{\hat E}\left[\abs{\int G^{(1)}(t,w)g_1(w_{s_1})\cdots g_m(w_{s_m}) \hat S^N(\d w)}^{p}\right]\\
=&\mathbb{ E}\left[\abs{\int G^{(1)}(t,w)g_1(w_{s_1})\cdots g_m(w_{s_m})  S^N(\d w)}^{p}\right]<\infty.
    \end{aligned}
\end{equation*}
By uniform integrability, together with \eqref{as-G-1} and the facts that $\hat S^N\laweq S^N$ and $\hat S\laweq S$, we obtain \eqref{step1}.

{\bf Step $2$:} We prove that for any $0\le t\le T$,
\begin{equation*}
        \begin{aligned}
             & \mathbb{E}\left[\left|\int G^{(2)}(t,w,S^N)g_1(w_{s_1})\cdots g_m(w_{s_m}) S^N(\d w)-\int G^{(2)}(t,w,S)g_1(w_{s_1})\cdots g_m(w_{s_m}) S(\d w)\right|\right] \rightarrow 0.
        \end{aligned}
\end{equation*}
It suffices to prove that for $0\le t\le T$, $J_1(t)\rightarrow 0$ and $J_2(t)\rightarrow 0$ as $N\rightarrow \infty$, where 
\begin{equation*}\label{D-1-1}
    \begin{aligned}
         & J_1(t):=\mathbb{E}\left[ \abs{\int\int^t_0 \langle \mathbf{D}(w_s,S^N(s))-\mathbf{D}(w_s,S(s)),\theta\rangle \,\d s\, g_1(w_{s_1})\cdots g_m(w_{s_m}) S^N(\d w)}^2\right],
    \end{aligned}
\end{equation*}
and 
\begin{equation*}\label{D-2-1}
    \begin{aligned}
       J_2(t):=\mathbb{E}\bigg[\bigg| &\int\int^t_0 \langle \mathbf{D}(w_s,S(s)),\theta\rangle \,\d s\, g_1(w_{s_1})\cdots g_m(w_{s_m}) S^N(\d w)\\
&- \int\int^t_0 \langle \mathbf{D}(w_s,S(s)),\theta\rangle \,\d s\, g_1(w_{s_1})\cdots g_m(w_{s_m}) S(\d w)\bigg|\bigg].
    \end{aligned}
\end{equation*}
Note that for $0\le t\le T$,
\begin{equation}\label{D-bounded}
    \begin{aligned}
   J_1(t)\lesssim&\frac{1}{N}\sum^N_{i=1}\mathbb{E}\left[ \int^t_0 \sum_{x\in \Lambda} \abs{\mathbf{D}_x(\Phi^i(s),S^N(s))- \mathbf{D}_x(\Phi^i(s),S(s))}^2\,\d s\right]\\
\lesssim& \frac{1}{N}\sum^N_{i=1} \int^t_0 \left[\sum_{x\in \Lambda}  \mathbb{E}\left[\abs{\Phi_x^i}^{2p}\right]\right]^\frac{1}{p} \left[\sum_{x\in \Lambda}\mathbb{E}\left[\sum_{\N}\abs{\int v_xv_y  S^N(s)(\d v)-\int v_xv_y  S(s)(\d v)}^{2q}\right]\right]^\frac{1}{q}\,\d s, 
    \end{aligned}
\end{equation}
where $1/p+1/q=1$ with  $p=\frac{\p}{2}>1$, $\p$ given in Assumption \ref{initial-value}.
By Proposition \ref{Lp-estimate}, \eqref{vxvy-alpha}, and the dominated convergence theorem, we obtain that for any $0\le t\le T$,  $J_1(t)\rightarrow 0$ as $N\rightarrow \infty$.

Since $\hat S\laweq S$ and by \eqref{vxvy-1}, there exists a $\mathbb{\hat P}$-null set $\mathcal{N}_1$ such that  for any $0\le t\le T$, $\hat\omega\in \hat \Omega\backslash \mathcal N_1$,
\begin{equation*}
    \begin{aligned}
         & \abs{\int v_x v_y \hat S(\hat \omega)(t)(\d v)}\le 1.
    \end{aligned}
\end{equation*} 
Therefore, for any  $\hat\omega\in \hat \Omega\backslash \mathcal N_1$, $0\le t\le T$, the function
$w \mapsto\int^t_0 \langle \mathbf{D}(w_s,\hat S(\hat \omega)(s)), \theta\rangle\,\d s g_1(w_{s_1})\cdots g_m(w_{s_m})$ is continuous on $(\SSF,\dF)$, and
\begin{equation*}
    \begin{aligned}
         & \abs{\int^t_0 \langle \mathbf{D}(w_s,\hat S(\hat \omega)(s)), \theta\rangle\,\d s g_1(w_{s_1})\cdots g_m(w_{s_m})}\le C\cdot \left[\sup_{0\le t\le T}\abs{w_t}^2+1\right]. 
    \end{aligned}
\end{equation*}
Since $\hat S^N\rightarrow \hat S,\, \mathbb{\hat P}\text{-a.s.}$ in $(\mathscr{P}_2(\SSF),\WF)$,  there exists a $\mathbb{\hat P}$-null set $\mathcal{N}_2$ such that for any $\hat \omega\in \hat \Omega\backslash \mathcal N_2$, $\hat S^N(\hat \omega)\rightarrow\hat{S}(\hat \omega)$ in $(\mathscr{P}_2(\SSF),\WF)$. By \cite[Theorem $7.12$ (iv)]{Villani2003},  we obtain that for any $\hat \omega\in \hat\Omega\backslash (\mathcal N_1\cup \mathcal N_2)$, $0\le t\le T$,
\begin{equation*}
    \begin{aligned}
         & \int\int^t_0 \langle \mathbf{D}(w_s,\hat S(\hat \omega)(s)),\theta\rangle \,\d s\, g_1(w_{s_1})\cdots g_m(w_{s_m}) \hat S^N(\hat \omega)(\d w)\\
&\rightarrow  \int\int^t_0 \langle \mathbf{D}(w_s,\hat S(\hat\omega)(s)),\theta\rangle \,\d s\, g_1(w_{s_1})\cdots g_m(w_{s_m}) \hat S(\hat\omega)(\d w).  
    \end{aligned}
\end{equation*}
By Proposition \ref{Lp-estimate} and \eqref{vxvy-1}, we obtain that for any $1<p\le \p$, with $\p$ given in Assumption \ref{initial-value}, $0\le t\le T$,
\begin{equation*}
    \begin{aligned}
         & \mathbb{E}\left[  \abs{\int\int^t_0 \langle \mathbf{D}(w_s,S(s)),\theta\rangle \,\d s\, g_1(w_{s_1})\cdots g_m(w_{s_m})  S^N(\d w)}^{p}\right]<\infty.
    \end{aligned}
\end{equation*}
Therefore, for any $0\le t\le T$, $J_2(t)\rightarrow 0$ as $N\rightarrow \infty$. This completes {\bf Step $2$} and establishes \eqref{lem-e1}.

Finally, we prove  \eqref{lem-e2}.  We define $\mathscr{M}^{N,i}_t
:=\hat{\mathscr{M}}^{N,i}_t+ \tilde{\mathscr{M}}^{N,i}_t$, where for $0\le t\le T$,
\begin{equation}\label{Mhat}
    \begin{aligned}
         & \hat{\mathscr{M}}^{N,i}_t:=\sum_{x\in \Lambda}\int^t_0 \theta_x\,\d W_x^i(s),
    \end{aligned}
\end{equation} 
and
\begin{equation}\label{Mtilde}
    \begin{aligned}
         & \tilde{\mathscr{M}}^{N,i}_t:=-\sum_{x\in \Lambda}\int^t_0 \theta_x \Phi_x^i(s)\cdot\frac{\sum^N_{k=1} \Phi_x^k(s)\,\d W_x^k(s)}{N}.
    \end{aligned}
\end{equation}
Since $\Phi$ satisfies \eqref{the-new-model}, by the definition of $\Pi^N(\mu)$ in \eqref{Pi-N}, we have
\begin{equation*}
    \begin{aligned}
         & \Pi^N(S^N)=\frac{1}{N}\sum^N_{i=1} \left( \mathscr{M}^{N,i}_t-\mathscr{M}^{N,i}_s \right)g_1(\Phi^i(s_1))\cdots g_m(\Phi^i(s_m)).
    \end{aligned}
\end{equation*}
Note that $g_1,\cdots,g_m$ are bounded. Thus,  we have
\begin{equation*}
    \begin{aligned}
         \mathbb{E}\left[ \abs{\Pi^N(S^N)}^2\right]\lesssim & \sup_{0\le t\le T} \mathbb{E}\left[ \left(\frac{1}{N}\sum^N_{i=1} \mathscr{M}^{N,i}_t \right)^2\right]\\
\lesssim&\sup_{0\le t\le T} \mathbb{E}\left[ \left(\frac{1}{N}\sum^N_{i=1} \hat{\mathscr{M}}^{N,i}_t \right)^2\right]+\sup_{0\le t\le T} \mathbb{E}\left[ \left(\frac{1}{N}\sum^N_{i=1} \tilde{\mathscr{M}}^{N,i}_t \right)^2\right]\\
\lesssim&\sup_{0\le t\le T}\frac{1}{N^2}\sum^N_{i,j=1}\mathbb{E}\left[ \hat{\mathscr{M}}^{N,i}_t \hat{\mathscr{M}}^{N,j}_t \right]+\sup_{0\le t\le T}\frac{1}{N}\sum^N_{i=1}\mathbb{E}\left[\left( \tilde{\mathscr{M}}^{N,i}_t \right)^2\right]\\
\lesssim& \frac{1}{N},
    \end{aligned}
\end{equation*}
where the last inequality follows from
\begin{equation}\label{hat-M}
    \begin{aligned}
         \mathbb{E}\left[ \hat{\mathscr{M}}^{N,i}_t \hat{\mathscr{M}}^{N,j}_t\right]=\delta_{ij}\abs{\theta}^2 t, \quad 0\le t\le T,
    \end{aligned}
\end{equation}
and
\begin{equation}\label{tilde-M}
    \begin{aligned}
         \mathbb{E}\left[ \left(\tilde{\mathscr{M}}^{N,i}_t\right)^2 \right]=\mathbb{E}\left[\int^t_0 \frac{1}{N}\sum_{x\in \Lambda} \theta_x^2\,  \abs{\Phi_x^i(s)}^2  \,\d s\right]\mathrel{\overset{\text{\eqref{sup-1-p}}}{\lesssim}}\frac{1}{N}, \quad  0\le t\le T.
    \end{aligned}
\end{equation}
Therefore, we have \eqref{lem-e2}. This completes the proof.
\end{proof}

In the following, we aim to prove that for almost every $\omega\in \Omega$, the quadratic variation process of $\mathscr{M}_\theta(t,w,{S(\omega)})$ defined in \eqref{martingale-1} is given by $\abs{\theta}^2 t$. To this end, we introduce the following notation.  For any $(w,\mu,\theta)\in \SSF\times \mathscr{P}_2(\SSF)\times \mathbb{R}^{\abs{\Lambda}}$, $0\le t\le T$, we define
\[
\mathbf G^N(t,w,\mu):= \mathbf G^{N,(1)}(t,w,\mu) + \mathbf G^{N,(2)}(t,w,\mu),
\]
where 
\begin{equation*}
    \begin{aligned}
          \mathbf G^{N,(1)}(t,w,\mu):=&(G^N(t,w,\mu))^2,\\
         \mathbf G^{N,(2)}(t,w,\mu):=&-\abs{\theta}^2 t +\int^t_0 \frac{\sum_{x\in \Lambda} \abs{\theta_x}^2 \abs{w_{s,x}}^2}{N}\,\d s.
    \end{aligned}
\end{equation*}
Similarly, we define
\begin{equation*}
    \begin{aligned}
         & \mathbf{G}(t,w,\mu):= \mathbf G^{(1)}(t,w,\mu) + \mathbf G^{(2)}(t),
    \end{aligned}
\end{equation*}
where 
\[
\mathbf G^{(1)}(t,w,\mu):=(G(t,w,\mu))^2,\quad\mathbf G^{(2)}(t):=-\abs{\theta}^2 t.
\]
For any $m\in \mathbb{N}$,   $g_1,\cdots,g_m\in C_c(\mathbb{R}^{\abs{\Lambda}};\mathbb{R})$, $\mu\in \mathscr{P}_2(\SSF)$, $0\le s< t\le T$, and $0\le s_1<\cdots<s_m\le s$, we define 
\begin{equation*}\label{BPi-N}
    \begin{aligned}
         & \mathbf \Pi^N(\mu):=\int \left( \mathbf G^N(t,w,\mu)- \mathbf G^N(s,w,\mu) \right)g_1(w_{s_1})\cdots g_m(w_{s_m})\mu(\d w),
    \end{aligned}
\end{equation*}
and
\begin{equation*}
    \begin{aligned}
         & \mathbf \Pi(\mu):=\int\left( \mathbf G(t,w,\mu) - \mathbf G(s,w,\mu)\right)g_1(w_{s_1})\cdots g_m(w_{s_m})\mu(\d w).
    \end{aligned}
\end{equation*}

The proof of Lemma \ref{QV-Estimate} is similar to that of Lemma \ref{Drift-Estimate}. To prove that, for almost every $\omega\in \Omega$, the quadratic variation process of $\mathscr{M}_\theta(t,w,S(\omega))$ defined in \eqref{martingale-1} is $\abs{\theta}^2 t$, it suffices to show that  $\mathbb{E}[\abs{\mathbf \Pi(S)}]=0$.

\begin{lemma}\label{QV-Estimate}
    It holds that
\begin{equation}\label{lem-1-Pi}
    \begin{aligned}
         & \mathbb{E}\left[\abs{\mathbf \Pi^N(S^N)-\mathbf \Pi(S)}\right]\rightarrow 0\quad \text{as}\ N\rightarrow \infty,
    \end{aligned}
\end{equation} 
and 
\begin{equation}\label{lem-2-Pi}
    \begin{aligned}
         & \mathbb{E}\left[\abs{\mathbf \Pi^N(S^N)}^2\right]\rightarrow 0 \quad \text{as}\ N\rightarrow \infty.
    \end{aligned}
\end{equation}
\end{lemma}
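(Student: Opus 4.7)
The plan is to follow the three-step strategy already used in Lemma \ref{Drift-Estimate}, with one genuinely new input coming from the martingale structure of the squared process on the Langevin side. For \eqref{lem-1-Pi} I would decompose
$$
\mathbf{\Pi}^N(S^N)-\mathbf{\Pi}(S)=\bigl[\mathbf{\Pi}^N(S^N)-\mathbf{\Pi}(S^N)\bigr]+\bigl[\mathbf{\Pi}(S^N)-\mathbf{\Pi}(S)\bigr].
$$
The first bracket is $O(1/N)$ in $L^1$: indeed $\mathbf{G}^N-\mathbf{G}$ consists of $[(G^N)^2-G^2]$ plus the explicit term $\frac{1}{N}\int_0^t\sum_x\theta_x^2|w_{s,x}|^2\,\d s$, and since $\mathbf{D}^N_x-\mathbf{D}_x=v_x/(2N)$, Proposition \ref{Lp-estimate} provides the required moment control. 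For the second bracket I would pass to the Skorokhod representation space $(\hat\Omega,\hat{\mathscr{F}},\hat{\mathbb{P}})$ and argue exactly as in \textbf{Step 1} and \textbf{Step 2} of Lemma \ref{Drift-Estimate}: the continuity of $(w,\mu)\mapsto G(t,w,\mu)$, restricted to the set $\{\mu:\abs{\int v_xv_y\,\mu(\d v)}\le 1\}$, which by Lemma \ref{sphere-SN-S} contains $S^N(t)$ and $S(t)$ almost surely, together with \cite[Theorem 7.12 (iv)]{Villani2003} and the uniform integrability supplied by Proposition \ref{Lp-estimate}, yields $\mathbb{E}[\abs{\mathbf{\Pi}(S^N)-\mathbf{\Pi}(S)}]\to 0$.

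The heart of the proof is \eqref{lem-2-Pi}. The key observation is that when $w=\Phi^i$ and $\mu=S^N$, one has $G^N(t,\Phi^i,S^N)=\mathscr{M}^{N,i}_t$ (the martingale from \eqref{Mhat}--\eqref{Mtilde}), and an It\^o computation exploiting the sphere constraint $\sum_{k=1}^N(\Phi^k_x)^2=N$ yields
$$
\langle\mathscr{M}^{N,i}\rangle_t=\abs{\theta}^2 t-\int_0^t\frac{1}{N}\sum_{x\in\Lambda}\theta_x^2\abs{\Phi^i_x(s)}^2\,\d s.
$$
Comparing with the definition of $\mathbf{G}^{N,(2)}$, we see that $\mathbf{G}^N(t,\Phi^i,S^N)=(\mathscr{M}^{N,i}_t)^2-\langle\mathscr{M}^{N,i}\rangle_t=:Y^i_t$ is itself a continuous square-integrable martingale. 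Writing $\mathbf{\Pi}^N(S^N)=\frac{1}{N}\sum_{i=1}^N A_i$ with $A_i:=(Y^i_t-Y^i_s)g_1(\Phi^i(s_1))\cdots g_m(\Phi^i(s_m))$ and using the $\mathscr{F}_s$-measurability of the factors $g_j(\Phi^i(s_j))$, the martingale property gives $\mathbb{E}[A_i]=0$, and for $i\ne j$ the It\^o product formula yields
$$
\mathbb{E}[A_iA_j]=\mathbb{E}\bigl[g_1(\Phi^i(s_1))\cdots g_m(\Phi^i(s_m))\cdot g_1(\Phi^j(s_1))\cdots g_m(\Phi^j(s_m))\cdot(\langle Y^i,Y^j\rangle_t-\langle Y^i,Y^j\rangle_s)\bigr].
$$

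The crucial final ingredient is the cross quadratic covariation: a direct computation again using $\sum_{k}(\Phi^k_x)^2=N$ gives, for $i\ne j$, $\d\langle\mathscr{M}^{N,i},\mathscr{M}^{N,j}\rangle_r=-\frac{1}{N}\sum_x\theta_x^2\Phi^i_x(r)\Phi^j_x(r)\,\d r$, hence $\d\langle Y^i,Y^j\rangle_r=4\mathscr{M}^{N,i}_r\mathscr{M}^{N,j}_r\,\d\langle\mathscr{M}^{N,i},\mathscr{M}^{N,j}\rangle_r$. Combined with the Burkholder-Davis-Gundy inequality applied to $\mathscr{M}^{N,i}$ and the moment bounds of Proposition \ref{Lp-estimate}, this yields $\abs{\mathbb{E}[A_iA_j]}\lesssim 1/N$ for $i\ne j$, while $\mathbb{E}[A_i^2]\le C$ uniformly in $N$. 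Therefore
$$
\mathbb{E}\bigl[\abs{\mathbf{\Pi}^N(S^N)}^2\bigr]=\frac{1}{N^2}\sum_{i,j=1}^N\mathbb{E}[A_iA_j]\le\frac{1}{N^2}\Bigl(NC+N(N-1)\cdot\tfrac{C}{N}\Bigr)\lesssim\frac{1}{N}\to 0.
$$
The main obstacle is exactly this last step: since the double sum contains $N^2$ terms, one needs the off-diagonal covariation to decay like $1/N$, and this cancellation depends essentially on the sphere constraint together with the specific structure of $\tilde{\mathscr{M}}^{N,i}$ in \eqref{Mtilde}.
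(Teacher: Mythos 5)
Your proposal is correct and follows essentially the same route as the paper: \eqref{lem-1-Pi} is handled by the same decomposition plus the Skorokhod/uniform-integrability argument of Lemma \ref{Drift-Estimate}, and \eqref{lem-2-Pi} rests on the identity $\mathbf G^N(t,\Phi^i,S^N)=(\mathscr{M}^{N,i}_t)^2-\langle\mathscr{M}^{N,i}\rangle_t=2\int_0^t\mathscr{M}^{N,i}_r\,\d\mathscr{M}^{N,i}_r$, which is exactly the representation the paper uses. The only (cosmetic) difference is that you bound the off-diagonal terms pairwise through the cross-covariation $\d\langle\mathscr{M}^{N,i},\mathscr{M}^{N,j}\rangle_r=-\tfrac1N\sum_{x}\theta_x^2\Phi^i_x(r)\Phi^j_x(r)\,\d r$, whereas the paper aggregates them and applies Cauchy--Schwarz with the sphere constraint via $\abs{\sum_i\Phi^i_x(r)\mathscr{M}^{N,i}_r}^2\le N\sum_i\abs{\mathscr{M}^{N,i}_r}^2$; both exploit the same $1/N$ cancellation and give the $O(1/N)$ bound.
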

\begin{proof}
Note that 
\begin{equation*}
    \begin{aligned}
         & \mathbb{E}\left[\abs{\mathbf{\Pi}^N(S^N)-\mathbf{\Pi}(S)}\right]\le J_1 +J_2,
    \end{aligned}
\end{equation*}
where $J_1:=\mathbb{E}\left[ \abs{\mathbf{\Pi}^N(S^N)-\mathbf{\Pi}(S^N)} \right]$ and $J_2:=\mathbb{E}\left[ \abs{\mathbf{\Pi}(S^N)-\mathbf{\Pi}(S)} \right]$. To prove \eqref{lem-1-Pi}, it suffices to show that both $J_1$ and $J_2$ converge to zero as $N \to \infty$.

By the definitions of $\mathbf \Pi^N$ and $\mathbf \Pi$, we have 

\begin{equation}\label{I1+J1}
    \begin{aligned} 
J_1\lesssim& \sum^2_{j=1} \sup_{0\le t\le T}\left( \frac{1}{N}\sum^N_{i=1} \mathbb{E}\left[ \abs{\mathbf G^{N,(j)}(t,\Phi^i,S^N)-\mathbf G^{(j)}(t,\Phi^i,S^N)} \right] \right):=\sum^2_{j=1}  \sup_{0\le t\le T}I^j_t.
    \end{aligned}
\end{equation}

By the definitions of $\mathbf G^{N,(1)}$ and $\mathbf G^{(1)}$, we obtain that for any $0\le t\le T$,
\begin{equation}\label{I1N}
    \begin{aligned}
          I^1_t=&\frac{1}{N}\sum^N_{i=1}\mathbb{E}\left[ \abs{(G^N(t,\Phi^i,S^N) )^2-(G(t,\Phi^i,S^N))^2} \right]\\
\lesssim&\frac{1}{N}\sum^N_{i=1} \mathbb{E}\left[ (G^N-G)^2(t,\Phi^i,S^N) \right]^\frac{1}{2}\cdot \mathbb{E}\left[\left(\abs{G^N}^2+\abs{G}^2 \right)(t,\Phi^i,S^N) \right]^\frac{1}{2}\\
\lesssim& \frac{1}{N}\sum^N_{i=1} \left( \int^T_0\frac{1}{N^2}\abs{\theta}^2 \mathbb{E}\left[\abs{\Phi^i(s)}^2\right]\,\d s  \right)^\frac{1}{2}\\
&\quad\cdot \left(\abs{\theta}^2\mathbb{E}\left[\sup_{0\le t\le T}\abs{\Phi^i(t)}^2\right] +\int_0^T\abs{\theta}^2 \mathbb{E}\left[\left(\abs{\mathbf D^N}^2+\abs{\mathbf D}^2\right)(\Phi^i(s),S^N(s)) \right] \,\d s\right)^{\frac{1}{2}}\\
\lesssim& \frac{1}{N}.
    \end{aligned}
\end{equation}
where the last inequality follows from the definitions of $\mathbf D^N$, $\mathbf D$, \eqref{sphere-holder} and Proposition \ref{Lp-estimate}.   By the definitions of $\mathbf G^{N,(2)}$ and $\mathbf G^{(2)}$,  and the fact that $\Phi\in \mathcal S$, it follows that for any $0\le t\le T$,
\begin{equation}\label{J1N}
    \begin{aligned}
         & I^2_t= \frac{1}{N}\sum^N_{i=1}\mathbb{E}\left[\int^t_0 \frac{\sum_{x\in \Lambda} \abs{\theta_x}^2\abs{\Phi_x^i(s)}^2}{N} \,\d s\right]\lesssim \frac{1}{N}.\\
    \end{aligned}
\end{equation}
Combining \eqref{I1N}, \eqref{J1N}, and \eqref{I1+J1}, we obtain $J_1\rightarrow 0$ as $N\rightarrow \infty$.

Next, we prove that $J_2\rightarrow 0$ as $N\rightarrow \infty$. Similar to \eqref{I2conv} in the proof of Lemma \ref{Drift-Estimate}, we only need to prove that for any $0\le t\le T$,
\begin{equation}\label{BG1}
    \begin{aligned}
          \mathbb{E}\bigg[&\bigg|\int \mathbf{G}^{(1)}(t,w,S^N)g_1(w_{s_1})\cdots g_m(w_{s_m}) S^N(\d w)\\ 
&-\int \mathbf{G}^{(1)}(t,w,S)g_1(w_{s_1})\cdots g_m(w_{s_m}) S(\d w)\bigg|\bigg] \rightarrow 0,
    \end{aligned}
\end{equation}  
and 
\begin{equation}\label{G2-con}
    \begin{aligned}
         \mathbb{E}\bigg[\bigg| &\int \mathbf{G}^{(2)}(t)g_1(w_{s_1})\cdots g_m(w_{s_m}) S^N(\d w)\\
         &-\int \mathbf{G}^{(2)}(t)g_1(w_{s_1})\cdots g_m(w_{s_{m}}) S(\d w)\bigg|\bigg] \rightarrow 0.
    \end{aligned}
\end{equation}
To obtain \eqref{BG1}, it suffices to prove that for any $0\le t\le T$,
\begin{equation*}\label{D-1-2}
    \begin{aligned}
         & J_{21}(t):=\mathbb{E}\left[ \abs{\int\int^t_0\Big[ [G(t,w,S^N)]^2 -[G(t,w,S)]^2 \Big]\,\d s\, g_1(w_{s_1})\cdots g_m(w_{s_m}) S^N(\d w) }\right]\rightarrow 0,
    \end{aligned}
\end{equation*}
and
\begin{equation*}\label{D-2-2}
    \begin{aligned}
       J_{22}(t):=\mathbb{E}\bigg[ \bigg|    &\int [G(t,w,S)]^2\, g_1(w_{s_1})\cdots g_m(w_{s_m}) S^N(dw)\\
&-  \int[G(t,w,S)]^2\, g_1(w_{s_1})\cdots g_m(w_{s_m}) S(\d w)\bigg|\bigg]\rightarrow 0.
    \end{aligned}
\end{equation*}
 By Proposition \ref{Lp-estimate}, \eqref{vxvy-1} and \eqref{D-bounded}, we obtain that for any $0\le t\le T$,
\begin{equation*}\label{J_21}
    \begin{aligned}
         J_{21}(t)\lesssim&\frac{1}{N}\sum^N_{i=1} \mathbb{E}\left[ \abs{G(t,\Phi^i,S^N)+G(t,\Phi^i,S)}^2 \right]^\frac{1}{2}\cdot \mathbb{E}\left[ \abs{G(t,\Phi^i,S^N)-G(t,\Phi^i,S)}^2 \right]^\frac{1}{2}\\
\lesssim& \frac{1}{N}\sum^N_{i=1} \left[ \mathbb{E}\int^t_0 \sum_{x\in \Lambda} \abs{\mathbf{D}_x(\Phi^i(s),S^N(s))- \mathbf{D}_x(\Phi^i(s),S(s))}^2\,\d s\right]^\frac{1}{2}\rightarrow 0.
    \end{aligned}
\end{equation*}
Similar to the proof that $J_2\rightarrow 0$ as $N\rightarrow 0$ in Lemma \ref{Drift-Estimate}, by verifying that for any $0\le t\le T$, $w\mapsto [G(t,w,\hat S)]^2$ is a continuous function on  $(\SSF,\dF)$ and $[G(t,w,\hat S)]^2\le C\cdot (\sup_{0\le t\le T}\abs{w_t}^2 +1)$, we obtain that for any $0\le t\le T$,

\begin{equation*}
    \begin{aligned}
       & \int [G(t,w,\hat S)]^2\, g_1(w_{s_1})\cdots g_m(w_{s_m}) \hat S^N(\d w)\rightarrow  \int[G(t,w,\hat S)]^2\, g_1(w_{s_1})\cdots g_m(w_{s_m}) \hat S(\d w), \quad \mathbb{\hat P}\text{-a.s.},
    \end{aligned}
\end{equation*}
and for $p=\frac{\p}{2}>1$, with $\p$ given in Assumption \ref{initial-value},
\begin{equation*}
    \begin{aligned}
         & \mathbb{E}\left[  \abs{\int [G(t,w,S)]^2\, g_1(w_{s_1})\cdots g_m(w_{s_m}) S^N(\d w)}^{p} \right]<\infty.
    \end{aligned}
\end{equation*}
Moreover, we obtain that for any $0\le t\le T$, $J_{22}(t)\rightarrow 0$ as $N\rightarrow \infty$ since $S^N\laweq \hat S^N$ and  $S\laweq \hat S$. Combining $J_{22}(t)\rightarrow 0$ and $J_{21}(t)\rightarrow 0$ as $N\rightarrow \infty$, we obtain \eqref{BG1}. Since for any $0\le t\le T$, $w\mapsto \mathbf{G}^{(2)}(t) g_1(w_{s_1})\cdots g_m(w_{s_m})$ is a bounded continuous function on  $(\SSF,\dF)$,  $\hat S^N\rightarrow \hat S,\ \mathbb{\hat P}\text{-a.s.}$ in $(\mathscr{P}_2(\SSF),\WF)$, and $S^N\laweq \hat S^N$,  $S\laweq \hat S$, we obtain  \eqref{G2-con}.  This completes the proof of \eqref{lem-1-Pi}.

Finally, we prove \eqref{lem-2-Pi}. By the definitions of $\mathbf \Pi^N$ and $S^N$, we have
\begin{equation*}
    \begin{aligned}
          \mathbf \Pi^N(S^N)=& \frac{1}{N}\sum^N_{i=1} \Bigg[ 2\int^t_s {\mathscr{M}}^{N,i}_r \,\d{\mathscr{M}}^{N,i}_r 
\Bigg] g_1(\Phi^i(s_1))\cdots g_m(\Phi^i(s_m)).\\
    \end{aligned}
\end{equation*}
Recall the definitions of $\hat{\mathscr{M}}^{N,i}$ and $\tilde{\mathscr{M}}^{N,i}$ given in \eqref{Mhat} and \eqref{Mtilde}.

Note that $g_1,\cdots,g_m$ are bounded. Thus, we have
\begin{equation*}
    \begin{aligned}
        \mathbb{E}\left[ \abs{\mathbf \Pi^N(S^N) }^2\right]\lesssim& \sup_{0\le t\le T} \mathbb{E}\left[ \left(\frac{1}{N}\sum^N_{i=1} \int^t_0 {\mathscr{M}}^{N,i}_r \,\d{\mathscr{M}}^{N,i}_r \right)^2\right]\\
\le & \frac{\abs{\theta}^2}{N^2}\sum_{i=1}^N \int_0^T \mathbb{E}\left[\abs{\mathscr{M}^{N,i}_r}^2  \right]\,\d r\\
&+ \frac{3\abs{\theta}^2}{N^3}\int^T_0 \sum_{x\in \Lambda}\mathbb{E}\left[\abs{\sum_{i=1}^N \Phi_x^i(r) \mathscr{M}_r^{N,i}} \abs{\sum_{j=1}^N \Phi_x^j(r) \mathscr{M}_r^{N,j}}\right]\, \d r\\
&:=H_1 + H_2,
    \end{aligned}
\end{equation*}
where the second inequality follows from It\^o's isometry and
\begin{equation*}
    \begin{aligned}
        \d\,\langle\hat{\mathscr{M}}^{N,i}, \hat{\mathscr{M}}^{N,j}\rangle_t=\delta_{ij}\abs{\theta}^2 \d t,\quad 0\le t\le T,
    \end{aligned}
\end{equation*}
and
\begin{equation*}
    \begin{aligned}
     \d\,\langle\mathscr{\hat M}^{N,i}, \mathscr{\tilde M}^{N,j}\rangle_t=  \d\,\langle\mathscr{\tilde M}^{N,i}, \mathscr{\tilde M}^{N,j}\rangle_t=\frac{1}{N}\sum_{x\in \Lambda} \abs{\theta_x}^2 \Phi^i_x(t) \Phi_x^j(t) \,\d t, \quad 0\le t\le T.
    \end{aligned}
\end{equation*}
By \eqref{hat-M} and \eqref{tilde-M}, we have 
\begin{equation*}
    \begin{aligned}
         & \mathbb{E}\left[ \abs{\mathscr{M}^{N,i}_r}^2  \right]\lesssim \mathbb{E}\left[ \abs{\mathscr{\hat M}^{N,i}_r}^2  \right] +\mathbb{E}\left[ \abs{\mathscr{\tilde M}^{N,i}_r}^2  \right]\lesssim 1,\quad 0\le r\le T.
    \end{aligned}
\end{equation*}
Therefore, $H_1\lesssim 1/N$.  By H\"older's inequality, we obtain 
\begin{equation*}
    \begin{aligned}
         & \abs{\sum^N_{i=1} \Phi^i_x(r) \mathscr{M}^{N,i}_r}^2\le \sum^N_{i=1} \abs{\Phi^i_x(r)}^2 \cdot\sum^N_{i=1} \abs{\mathscr{M}^{N,i}_r}^2=N\cdot\sum^N_{i=1} \abs{\mathscr{M}^{N,i}_r}^2,\quad 0\le r\le T.
    \end{aligned}
\end{equation*}
Therefore, for $0\le r\le T$,
\begin{equation*}
    \begin{aligned}
         & H_2\lesssim\frac{1}{N^3}\cdot N\cdot \sum^N_{i=1} \mathbb{E}\left[\abs{\mathscr{M}^{N,i}_r}^2\right]\lesssim \frac{1}{N}. 
    \end{aligned}
\end{equation*}
This completes the proof of \eqref{lem-2-Pi}.
\end{proof}

Since $S^N_0$ converges in probability to $\mu_0$ , we have $S_0=\mu_0,\ \mathbb{P}\text{-a.s.}$. By Lemma \ref{Drift-Estimate} and Lemma \ref{QV-Estimate}, we have
\begin{equation*}
    \begin{aligned}
         & \quad \Pi(S)=0,\ \mathbb{P}\text{-a.s.}, \quad\mathbf \Pi(S)=0,\  \mathbb{P}\text{-a.s.}.
    \end{aligned}
\end{equation*}
Since  $C_c(\mathbb{R}^{\abs{\Lambda}};\mathbb{R})$ is separable, there exists a countable dense subset  $\mathscr{C}_0\subset C_c(\mathbb{R}^{\abs{\Lambda}};\mathbb{R})$.  Define 
\begin{equation*}
    \begin{aligned}
         & \widetilde{\Omega}:=\{\widetilde{\omega}\in {\Omega}:S(\widetilde{\omega})\in \mathscr{P}_2(\SSF),\  \  \Pi(S)(\widetilde{\omega})=0,\, 
 \text{and}\ \mathbf \Pi(S)(\widetilde{\omega})=0, \ \forall\, g_1,\cdots,g_m\in \mathscr{C}_0  \},
    \end{aligned}
\end{equation*}
Then, $\mathbb{P}(\widetilde{\Omega})=1$, and for all $\widetilde{\omega}\in \widetilde{\Omega}$, $0\le s< t\le T$, $m\in \mathbb{N}$, $0\le s_1<\cdots < s_m\le s$, $g_1,\cdots,g_m \in C_c(\mathbb{R}^{\abs{\Lambda}};\mathbb{R})$,
\begin{equation*}\label{Martingale-1}
    \begin{aligned}
         & \int\left( G(t,w,S(\widetilde{\omega})) - G(s,w,S(\widetilde{\omega}))\right)g_1(w_{s_1})\cdots g_m(w_{s_m})S(\widetilde{\omega})(\d w)=0,
    \end{aligned}
\end{equation*}
and 
\begin{equation*}\label{Cov-1}
    \begin{aligned}
         & \int\left( \mathbf G(t,w,S(\widetilde{\omega})) - \mathbf G(s,w,S(\widetilde{\omega}))\right)g_1(w_{s_1})\cdots g_m(w_{s_m})S(\widetilde{\omega})(\d w)=0,
    \end{aligned}
\end{equation*}
which imply that $\{\mathscr{M}_\theta(t,w,S(\widetilde{\omega}))\}_{0\le t\le T}$ is an $S(\widetilde{\omega})$-martingale and $\langle \mathscr{M}_\theta\rangle(t,w,S(\widetilde{\omega}))=\abs{\theta}^2 t$. Therefore, $S(\widetilde{\omega})$, $\widetilde{\omega}\in \widetilde{\Omega}$ satisfies Definition \ref{martingale-solution} $(ii)$.  By \cite[Theorem $7.12$ (iii)]{Villani2003} and Proposition \ref{Lp-estimate}, we have
\begin{equation*}\label{W2-con}
    \begin{aligned}
         & \int \big(\sup_{0\le t\le T} \abs{w_t}\big)^2\hat S^N(\d w)\rightarrow \int \big(\sup_{0\le t\le T} \abs{w_t}\big)^2\hat S(\d w),\ \mathbb{\hat P}\text{-a.s},
    \end{aligned}
\end{equation*}
and $\{\int (\sup_{0\le t\le T} \abs{w_t})^2S^N(\d w)\}_{N\in \mathbb{N}}$ is uniformly integrable, which implies
\begin{equation}\label{W2-S-bounded}
    \begin{aligned}
         & \mathbb{E}\left[\int \big(\sup_{0\le t\le T} \abs{w_t}\big)^2  S(\d w)\right]<\infty.
    \end{aligned}
\end{equation}
By \eqref{vxvy-1} and \eqref{W2-S-bounded}, we obtain that $S(\widetilde{\omega})$, $\widetilde{\omega}\in \widetilde{\Omega}$ satisfies Definition \ref{martingale-solution} $(i)$.

In the following, we introduce two sets of probability measures.  First, we define
\begin{equation}\label{Pone-R}
    \begin{aligned}
         &\mathscr{P}_{\O}(\mathbb{R}^{\abs{\Lambda}}):=\left\{ \mu\in \mathscr{P}(\mathbb{R}^{\abs{\Lambda}}) \Big| \int \abs{v_x}^2 \d\mu(v)\le 1,\ \text{for all}\ x\in \Lambda\right\}.
    \end{aligned}
\end{equation}
Furthermore, we set
\begin{equation}\label{Pone-Phi-F}
    \begin{aligned}
         &\mathscr{P}_{\O,\Psi}(\SSF):=\left\{ \mathbf{M}\in \mathscr{P}(\SSF) \Big|\ \mathbf{M}\circ \pi_t^{-1}\in \mathscr{P}_{\O}(\mathbb{R}^{\abs{\Lambda}}) ,\ \text{for all}\, 0\le t\le T\right\}.
    \end{aligned}
\end{equation}
By Theorem \ref{Main-Theorem}, it is clear that the martingale solution to the mean-field SDE \eqref{mean-field-limit} with initial distribution $\mu_0\in \mathscr{P}_{\mathcal S,\p}$ belongs to $\mathscr{P}_{1,\Psi}(\SSF)$.

Let $S$ denote a martingale solution to the mean-field SDE \eqref{mean-field-limit}.  To establish the uniqueness of the martingale solution as well as the existence and uniqueness of a probabilistically strong solution to \eqref{mean-field-limit} via the Yamada-Watanabe theorem (\cite[Lemma $2.1$]{Huang2021}), we introduce 
the following linear SDE :
\begin{equation}\label{fix-mu}
    \left\{\begin{aligned}
         d \widetilde \Psi_x(t)= &2\kappa\sum_{\N}\Big(  \widetilde \Psi_y(t)-\bm{h}_{x,y}(t)\widetilde {\Psi}_x(t)\Big)\,\d t-\frac{1}{2}\widetilde {\Psi}_x(t)\,\d t+\d\widetilde{W}_x(t),\quad  0\le t\le T,\\
\widetilde \Psi_x(0)=& \widetilde \psi_x, \quad x\in \Lambda, 
\end{aligned}\right.
\end{equation}
where $\widetilde{\psi} \laweq S(0)$, $\bm{h}_{x,y}(t):=\int v_x v_y \, S(t)(\d v)$, with $S(t)$ denoting the law at time $t$ of the solution $\Psi$ to \eqref{mean-field-limit}.

We show that the pathwise uniqueness holds for both the SDE \eqref{fix-mu} and the mean-field SDE \eqref{mean-field-limit}. 

\begin{lemma}\label{fixed-mu-le}
For any $S\in \mathscr{P}_{\O,\Psi}(\SSF)$, the pathwise uniqueness holds for the SDE \eqref{fix-mu}.
\end{lemma}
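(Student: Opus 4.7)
The plan is to argue by the standard pathwise uniqueness argument for linear SDEs, exploiting the key fact that the coefficients $\bm{h}_{x,y}(t)$ are uniformly bounded in $(t,x,y)$. Since $S\in\mathscr{P}_{\O,\Psi}(\SSF)$, the definition of $\mathscr{P}_{\O}(\mathbb{R}^{\abs{\Lambda}})$ in \eqref{Pone-R} gives $\int\abs{v_x}^2\,S(t)(\d v)\le 1$ for every $x\in\Lambda$ and $0\le t\le T$, hence by Cauchy--Schwarz
\begin{equation*}
    \abs{\bm{h}_{x,y}(t)}=\abs{\int v_x v_y\,S(t)(\d v)}\le 1,\qquad \forall\, x\sim y,\ 0\le t\le T.
\end{equation*}
This bound is really the only structural feature of $\bm{h}$ that we will need; beyond it, \eqref{fix-mu} is just a finite-dimensional SDE with bounded, deterministic, measurable drift coefficients and additive noise.

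Suppose $\widetilde{\Psi}^{(1)}$ and $\widetilde{\Psi}^{(2)}$ are two solutions to \eqref{fix-mu} on the same probability space, driven by the same $\widetilde{W}$, and starting from the same initial data $\widetilde{\psi}$. Set $Z_x(t):=\widetilde{\Psi}^{(1)}_x(t)-\widetilde{\Psi}^{(2)}_x(t)$. The noise term cancels, so $Z_x$ satisfies the pathwise (random) linear ODE
\begin{equation*}
    \frac{\d}{\d t}Z_x(t)=2\kappa\sum_{\N}\bigl(Z_y(t)-\bm{h}_{x,y}(t)\,Z_x(t)\bigr)-\tfrac{1}{2}Z_x(t),\qquad Z_x(0)=0.
\end{equation*}
I would then compute the time derivative of the (random) quantity $\Xi(t):=\sum_{x\in\Lambda}\abs{Z_x(t)}^2$, obtaining
\begin{equation*}
    \frac{\d}{\d t}\Xi(t)=4\kappa\sum_{x\sim y}Z_x(t)Z_y(t)-4\kappa\sum_{x}\sum_{\N}\bm{h}_{x,y}(t)\,\abs{Z_x(t)}^2-\sum_{x}\abs{Z_x(t)}^2.
\end{equation*}
Using $\abs{\bm{h}_{x,y}(t)}\le 1$ together with Young's inequality $\abs{Z_xZ_y}\le\tfrac{1}{2}(\abs{Z_x}^2+\abs{Z_y}^2)$, the right-hand side is bounded by $C(d,\kappa)\,\Xi(t)$ for a constant depending only on $d$ and $\kappa$.

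Grönwall's inequality then forces $\Xi(t)\equiv 0$ on $[0,T]$, which gives $\widetilde{\Psi}^{(1)}=\widetilde{\Psi}^{(2)}$ almost surely and yields pathwise uniqueness. I do not expect any real obstacle here: the only place the hypothesis $S\in\mathscr{P}_{\O,\Psi}(\SSF)$ enters is through the uniform bound on $\bm{h}_{x,y}$, and the finiteness of $\Lambda$ makes the manipulations purely elementary. The one minor point to be careful about is that $\bm{h}_{x,y}(\cdot)$ is only measurable in $t$, so the ODE for $Z$ is interpreted in the integrated sense; this however causes no difficulty because $\Xi$ is absolutely continuous and the derivative estimate above holds for a.e.\ $t$, which is all Grönwall needs.
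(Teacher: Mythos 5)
Your proposal is correct and follows essentially the same route as the paper: the key point in both is that $S\in\mathscr{P}_{\O,\Psi}(\SSF)$ gives $\abs{\bm{h}_{x,y}(t)}\le 1$, after which \eqref{fix-mu} is a linear SDE with bounded coefficients, hence Lipschitz drift. The paper simply invokes pathwise uniqueness for Lipschitz SDEs at that point, while you carry out the standard Gr\"onwall argument explicitly; this is only a difference in level of detail, not in method.
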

\begin{proof}
Since $S\in \mathscr{P}_{\O, \Psi}(\SSF)$, we have $\int \abs{v_x}^2S(t)(\d v)\le 1$, for all $x\in \Lambda$, $0\le t\le T$. By H\"older's inequality,  we obtain that for any $x,y\in \Lambda$, $0\le t\le T$, $\abs{\bm{h}_{x,y}(t)}\le 1$. Therefore, the SDE \eqref{fix-mu} satisfies the Lipschitz condition, and the pathwise uniqueness holds.  
\end{proof}

\begin{lemma}\label{not-fix-mu}
The pathwise uniqueness holds in $\mathscr{P}_{\O,\Psi}(\SSF)$ for the mean-field SDE \eqref{mean-field-limit}.
\end{lemma}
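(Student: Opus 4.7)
The plan is to take two solutions $\Psi^{(1)},\Psi^{(2)}$ of \eqref{mean-field-limit} defined on a common probability space, driven by the \emph{same} Brownian motion $(W_x)$ and starting from the same initial datum $\psi$, whose laws $\mathbf{P}^{(1)},\mathbf{P}^{(2)}$ both lie in $\mathscr{P}_{\O,\Psi}(\SSF)$, and to show that $D_x(t):=\Psi^{(1)}_x(t)-\Psi^{(2)}_x(t)$ vanishes for all $x\in\Lambda,\,0\le t\le T$. Since the noise is common to the two equations, the stochastic integrals cancel and $D_x$ satisfies a purely drift-driven equation
\begin{equation*}
dD_x=2\kappa\sum_{\N}\Big(D_y-\big[\mathbb{E}[\Psi^{(1)}_x\Psi^{(1)}_y]\Psi^{(1)}_x-\mathbb{E}[\Psi^{(2)}_x\Psi^{(2)}_y]\Psi^{(2)}_x\big]\Big)dt-\tfrac{1}{2}D_x\,dt,
\qquad D_x(0)=0.
\end{equation*}

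Next, I would decompose the nonlinear increment using the standard ``add and subtract'' trick,
\begin{equation*}
\mathbb{E}[\Psi^{(1)}_x\Psi^{(1)}_y]\Psi^{(1)}_x-\mathbb{E}[\Psi^{(2)}_x\Psi^{(2)}_y]\Psi^{(2)}_x
=\mathbb{E}[\Psi^{(1)}_x\Psi^{(1)}_y]\,D_x+\big(\mathbb{E}[\Psi^{(1)}_x\Psi^{(1)}_y]-\mathbb{E}[\Psi^{(2)}_x\Psi^{(2)}_y]\big)\,\Psi^{(2)}_x,
\end{equation*}
apply It\^o's formula to $|D_x|^2$, and take expectations. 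The first piece is absorbed into $\mathbb{E}|D_x|^2$ because, for $\mu_s=\mathbf{P}^{(j)}\circ\pi_s^{-1}\in\mathscr{P}_{\O}(\mathbb{R}^{\abs{\Lambda}})$, Cauchy-Schwarz yields $\abs{\int u_x u_y\,\mu_s(du)}\le 1$. For the second piece, applying Cauchy-Schwarz once more gives
\begin{equation*}
\big|\mathbb{E}[\Psi^{(1)}_x\Psi^{(1)}_y]-\mathbb{E}[\Psi^{(2)}_x\Psi^{(2)}_y]\big|\le\sqrt{\mathbb{E}|D_x|^2}+\sqrt{\mathbb{E}|D_y|^2},
\end{equation*}
and $\abs{\mathbb{E}[D_x\Psi^{(2)}_x]}\le\sqrt{\mathbb{E}|D_x|^2}$ (using $\mathbb{E}|\Psi^{(2)}_x|^2\le 1$), so this cross-term is dominated by $\mathbb{E}|D_x|^2+\mathbb{E}|D_y|^2$ as well.

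Setting $A_x(t):=\mathbb{E}[|D_x(t)|^2]$, collecting the estimates produces a differential inequality of the form $\frac{d}{dt}A_x(t)\le C\big(A_x(t)+\sum_{\N}A_y(t)\big)$, with a constant $C=C(\kappa,d)$. Summing over the finite lattice $\Lambda$ gives $\frac{d}{dt}\sum_{x\in\Lambda}A_x(t)\le C'\sum_{x\in\Lambda}A_x(t)$ with $\sum_{x\in\Lambda}A_x(0)=0$, and Gronwall's lemma forces $A_x\equiv 0$; by continuity of trajectories this upgrades to $\Psi^{(1)}=\Psi^{(2)}$ almost surely, establishing pathwise uniqueness.

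The main subtlety, compared with classical pathwise uniqueness for finite-dimensional SDEs, is that the drift depends on the \emph{law} of the solution, so $\Psi^{(1)}$ and $\Psi^{(2)}$ are not a priori solutions of the same SDE and the nonlinearity $\mathbb{E}[\Psi_x\Psi_y]\Psi_x$ is genuinely non-Lipschitz. The workaround is precisely the add-and-subtract decomposition above together with the uniform second-moment bound encoded in $\mathscr{P}_{\O,\Psi}(\SSF)$. This extends Lemma~\ref{fixed-mu-le} (which already handles the linearized equation with the covariance frozen) by also controlling the feedback of $\Psi^{(j)}$ on its own covariance; since that feedback is a polynomial nonlinearity, $L^2$ estimates suffice and no smallness assumption on $\kappa$ is required.
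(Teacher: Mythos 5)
Your proposal is correct and follows essentially the same route as the paper: cancel the common noise, apply the add-and-subtract decomposition of $\mathbb{E}[\Psi_x\Psi_y]\Psi_x$, control the covariance differences via Cauchy--Schwarz using the second-moment bound $\int|v_x|^2\,\mu(\d v)\le 1$ encoded in $\mathscr{P}_{\O,\Psi}(\SSF)$, and close with Gronwall over the finite lattice. The only cosmetic difference is that the paper invokes the Newton--Leibniz formula for $|\Psi^{(1)}_x-\Psi^{(2)}_x|^2$ (the difference being noise-free) where you cite It\^o's formula, which amounts to the same computation.
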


\begin{proof}
Let $\Psi^{(1)}$ and $\Psi^{(2)}$ be any two weak solutions to the mean-field SDE~\eqref{mean-field-limit}, defined on the same probability space and driven by the same Brownian motion, such that the laws of $\Psi^{(1)}$ and $\Psi^{(2)}$ belong to $\mathscr{P}_{\O,\Psi}(\SSF)$. 
By applying Newton-Leibniz formula to $|\Psi_x^{(1)}-\Psi_x^{(2)}|^2(t)$, we obtain that for any $0\le t\le T$, 
\begin{equation}\label{U-0}
    \begin{aligned}
         &\abs{\Psi_x^{(1)}-\Psi_x^{(2)}}^2(t)\\
\le & 4\kappa\sum_{\N} \int^t_0 \left[\Psi_y^{(1)}-\Psi_y^{(2)}\right](s)\left[\Psi_x^{(1)}-\Psi_x^{(2)}\right](s)\,\d s \\
&- 4\kappa\sum_{\N} \int^t_0 \left[ \mathbb{E}\left[ \Psi_x^{(1)}(s)\Psi_y^{(1)}(s) \right]\Psi_x^{(1)}(s) - \mathbb{E}\left[ \Psi_x^{(2)}(s)\Psi_y^{(2)}(s) \right]\Psi_x^{(2)}(s) \right]\left[ \Psi_x^{(1)}-\Psi_x^{(2)} \right](s)\,\d s\\
:=& 4\kappa U_1(t)-4\kappa\sum_{\N}U_2(t).
    \end{aligned}
\end{equation}
By Young's inequality, we have
\begin{equation}\label{U-1}
    \begin{aligned}
\abs{\mathbb{E}[U_1(t)]}\le& \frac{1}{2}\sum_{\N} \int^t_0 \mathbb{E}\left[ \abs{\Psi_y^{(1)}-\Psi_y^{(2)}}^2(s) \right]\,\d s+ d\int^t_0 \mathbb{E}\left[ \abs{\Psi_x^{(1)}-\Psi_x^{(2)}}^2(s) \right]\,\d s,\quad 0\le t\le T.
    \end{aligned}
\end{equation}
Since the laws of $\Psi^{(1)}$ and  $\Psi^{(2)}$  both belong to $\mathscr{P}_{1,\Psi}(\SSF)$,  we obtain that for any $x,y\in \Lambda$, $i=1,2$, and $0\le t\le T$,
\begin{equation}\label{Cov-Psi-r}
    \begin{aligned}
         & \mathbb{E}\left[\abs{\Psi_x^{(i)}(t)}^2\right]\le 1,\quad \abs{\mathbb{E}\left[ \Psi_x^{(i)}(t)\Psi_y^{(i)}(t) \right]}\le 1.
    \end{aligned}
\end{equation}
Using \eqref{Cov-Psi-r}, together with H\"older's and Young's inequalities,  we obtain that for any $0\le t\le T$,
\begin{equation}\label{U-2}
    \begin{aligned}
          \abs{\mathbb{E}\left[U_2(t)\right]}\le& \int^t_0 \abs{\mathbb{E}\left[ \Psi_x^{(1)}(s)\Psi_y^{(1)}(s) \right]} \mathbb{E}\left[\abs{\Psi_x^{(1)}-\Psi_x^{(2)}}^2(s)\right]\,\d s\\
&+\abs{\int^t_0 \mathbb{E}\left[ \left[\Psi_x^{(1)}(s)- \Psi_x^{(2)}(s)\right]\Psi_y^{(1)}(s)\right]\mathbb{E}\left[ \left[\Psi_x^{(1)}(s)- \Psi_x^{(2)}(s)\right]\Psi_x^{(2)}(s) \right]\,\d s}\\
&+ \abs{\int^t_0 \mathbb{E}\left[ \left[\Psi_y^{(1)}(s)- \Psi_y^{(2)}(s)\right]\Psi_x^{(2)}(s)\right]\mathbb{E}\left[ \left[\Psi_x^{(1)}(s)- \Psi_x^{(2)}(s)\right]\Psi_x^{(2)}(s) \right]\,\d s}\\
\lesssim& \int^t_0  \mathbb{E}\left[\abs{\Psi_x^{(1)}-\Psi_x^{(2)} }^2(s)\right]\,\d s   +  \int^t_0  \mathbb{E}\left[\abs{ \Psi_y^{(1)}-\Psi_y^{(2)} }^2(s)\right]\,\d s.
    \end{aligned}
\end{equation}
Combining \eqref{U-0}, \eqref{U-1} and \eqref{U-2}, it follows that for any $0\le t\le T$,
\begin{equation*}
    \begin{aligned}
           &\sum_{x\in \Lambda}\mathbb{E}\left[
\abs{\Psi_x^{(1)}-\Psi_x^{(2)}}^2(t)\right]\lesssim \int^t_0  \sum_{x\in \Lambda}\mathbb{E}\left[\abs{ \Psi_x^{(1)}-\Psi_x^{(2)} }^2(s)\right]\,\d s. 
    \end{aligned}
\end{equation*}
By Gronwall's inequality, we see that for any $0\le t\le T$,
\begin{equation}\label{P-U}
    \begin{aligned}
         & \sum_{x\in \Lambda}\mathbb{E}\left[\abs{\Psi_x^{(1)}-\Psi_x^{(2)}}^2(t)\right]=0.
    \end{aligned}
\end{equation}
By \eqref{P-U} and $\Psi^{(i)}(\cdot)\in C([0,T];\mathbb{R}^{\abs{\Lambda}})$ a.s., $i=1,2$, we obtain that $\Psi^{(1)}(t)=\Psi^{(2)}(t)$ for all $0\le t\le T$\  $\text{a.s.}$.
\end{proof}

\begin{remark}
It is straightforward to verify that the bound $1$ in the definition of the measure set $\mathscr{P}_{\O}(\mathbb{R}^{\abs{\Lambda}})$ can be replaced by any constant $C \geq 1$, and both Lemma~\ref{fixed-mu-le} and Lemma~\ref{not-fix-mu} remain valid. In the present work, we choose $C=1$, which suffices for our purposes, since the solution $S$ constructed in Theorem~\ref{Main-Theorem} satisfies $\int |v_x|^2\, S(t)(\mathrm{d}v) = 1$ for all $x\in \Lambda$, $0\le t\le T$.
\end{remark}

By the existence of a martingale solution to \eqref{mean-field-limit} and the martingale representation theorem, we obtain the existence of a weak solution to the mean-field SDE \eqref{mean-field-limit}. Combining Lemma \ref{fixed-mu-le} and Lemma \ref{not-fix-mu}, and applying the Yamada--Watanabe theorem (\cite[Lemma 2.1]{Huang2021}), we obtain the uniqueness of the weak solution to \eqref{mean-field-limit}. Moreover, it follows that the martingale solution to the mean-field SDE \eqref{mean-field-limit} is unique. More precisely, for any two martingale solutions $S_1, S_2 \in \mathscr{P}_{\O, \Psi}(\SSF)$ to the mean-field SDE~\eqref{mean-field-limit}, if $S_1(0) = S_2(0)$, then $S_1 = S_2$.
 Therefore, $S(\widetilde{\omega})$ is independent of $\widetilde{\omega}$, and we still denote it by $S$.
 We obtain that the entire sequence $\{S^N\}_{N\in \mathbb{N}}$, viewed as a family of $(\mathscr{P}_2(\SSF),\WF)$-valued random variables, converges in law to a $\mathbb{P}\text{-a.s.}$ constant random variable $S$ in $(\mathscr{P}_2(\SSF),\WF)$. By \cite[Lemma $5.7$]{Kallenberg2021}, we obtain that 
\begin{equation*}
    \begin{aligned}
         &  \WF(S^N,S) \rightarrow 0\quad \text{in probability}.
    \end{aligned}
\end{equation*}

To prove \eqref{POC}, we only need to verify that $\{ \WFT(S^N,S)\}_{N\in \mathbb{N}}$ is uniformly integrable. To this end, we will show that for $p=\frac{\p}{2}>1$, with $\p$ given in Assumption \ref{initial-value},
\begin{equation*}
    \begin{aligned}
         & \mathbb{E}\left[ \abs{\WFT(S^N,S)}^{p} \right]\lesssim 1.
    \end{aligned}
\end{equation*}
By Proposition \ref{Lp-estimate}, we  have
\begin{equation*}
    \begin{aligned}
         & \mathbb{E}\left[\left(\int \big(\sup_{0\le t\le T} \abs{w_t}\big)^2 S^N(\d w)\right)^{p}\right]<\infty.
    \end{aligned}
\end{equation*}
Note that 
\begin{equation*}
    \begin{aligned}
         & \mathbb{E}\left[ \abs{\WFT(S^N,S)}^{p} \right]\lesssim \mathbb{E}\left[ \left(\int \big(\sup_{0\le t\le T} \abs{w_t}\big)^2 S^N(\d w)\right)^{p}+ \left(\int \big(\sup_{0\le t\le T} \abs{w_t}\big)^2 S(\d w)\right)^{p}\right]<\infty.
    \end{aligned}
\end{equation*}
This completes the proof. \qed

\subsection{Convergence  rate}\label{sec3.5} In this section, we present the proof of Theorem \ref{strong-solution}.

First, we establish the existence and uniqueness of a probabilistically strong solution to \eqref{mean-field-limit}. Second, by using \eqref{vxvy-alpha} and directly comparing the two dynamics \eqref{the-new-model} and \eqref{Psi-i-limit}, we obtain the convergence stated in \eqref{Convergence-2}.  Finally, in order to obtain a convergence rate, we use a classical coupling argument and the following fact: for $p=1$ or $2$, if $U_1, \ldots, U_N$ are independent, identically distributed, mean-zero random variables with finite $2p$-th moment, then
$$
\mathbb{E}\left[\left| \frac{1}{N} \sum_{i=1}^N U_i \right|^{2p}\right] \lesssim \frac{1}{N^{p}},
$$
where the implicit constant is independent of $N$.

{\bf Step $1$:}
The existence of a martingale solution to \eqref{mean-field-limit}, together with the martingale representation theorem, implies the existence of a weak solution to \eqref{mean-field-limit}. Combining Lemma \ref{fixed-mu-le}, Lemma \ref{not-fix-mu}, and applying the Yamada-Watanabe theorem (\cite[Lemma 2.1]{Huang2021}), we obtain that there exists a unique probabilistically strong solution to 
the mean-field SDE \eqref{mean-field-limit}. 

{\bf Step $2$:} We prove  \eqref{Convergence-2}. By applying It\^o's  formula to $\abs{\Phi_x^i(t)-\Psi_x^i(t)}^2$, we obtain that for any $0\le t\le T$,
\begin{equation}\label{Ito-Phi-Psi}
    \begin{aligned}
         &\d\abs{\Phi_x^i(t)-\Psi_x^i(t)}^2\\
=& 4\kappa\sum_{\N}\left[ \Phi^i_y(t)-\Psi_y^i(t)\right][\Phi_x^i(t)-\Psi_x^i(t)]\,\d t\\
&-4\kappa \sum_{\N} \left[ \frac{\sum^N_{k=1} \Phi_x^k(t)\Phi_y^k(t)}{N}\Phi_x^i(t)-\mathbb{E}[\Psi_x^i(t)\Psi_y^i(t)] \Psi_x^i(t) \right][\Phi_x^i(t)-\Psi_x^i(t)]\,\d t \\
&+\left[\frac{\abs{\Phi_x^i(t)}^2}{N}-2\left[ \frac{N-1}{2N}\Phi_x^i(t)-\frac{1}{2}\Psi_x^i(t)\right] [\Phi_x^i(t)-\Psi_x^i(t)]\right]\,\d t\\
&-2 \frac{\sum^N_{k=1}\Phi_x^k(t)\,\d W^k_x(t) }{N} \Phi_x^i(t)[\Phi_x^i(t)-\Psi_x^i(t)]\\
:=&A_1(t)\,\d t -A_2(t)\,\d t -A_3(t)\,\d t-\d A_4(t).
    \end{aligned}
\end{equation}
Next, we estimate each term. By Young's inequality, we have
\begin{equation}\label{A-1}
    \begin{aligned}
          &\mathbb{E}\left[ \sup_{0\le t\le T} \int^t_0 \abs{A_1(s)}\,\d s \right] \\
\le& 2\kappa \sum_{\N} \mathbb{E}\int^T_0 \abs{\Phi^i_y(s)-\Psi_y^i(s)}^2\,\d s + 4\kappa d \mathbb{E}\int^T_0 \abs{\Phi_x^i(s)-\Psi_x^i(s)}^2\,\d s.
    \end{aligned}
\end{equation}
Similar to the proof of Proposition \ref{Lp-estimate}, by using \eqref{Cov-Psi}, we obtain that for any $p\ge 2$,
\begin{equation}\label{Psi-sup-1-p}
    \begin{aligned}
         & \sum_{x\in \Lambda}  \mathbb{E}\left[\sup_{0\le t\le T}\abs{\Psi_x(t)}^p\right]\le   C(d,p,\kappa,T,\abs{\Lambda})  \left( \sum_{x\in \Lambda} \mathbb{E}\left[\abs{\psi_x}^{p}\right] + 1\right).
    \end{aligned}
\end{equation}
By \eqref{vxvy-1}, H\"older's inequality,  we have
\begin{equation}\label{A-2}
    \begin{aligned}
         &\mathbb{E}\left[ \sup_{0\le t\le T} \int^t_0 \abs{A_2(s)}\,\d s \right] \\
\lesssim& \mathbb{E}\int^T_0 \abs{\Phi_x^i(t)-\Psi_x^i(t)}^2\,\d t\\
&+ \sum_{\N}\int^T_0 \mathbb{E}\left[\abs{\Phi^i_x(t)}^{2p}\right]^{\frac{1}{p}}\mathbb{E}\left[\abs{\frac{\sum^N_{k=1} \Phi_x^i(t)\Phi_y^i(t) }{N}-\mathbb{E}[\Psi_x^i(t)\Psi_y^i(t)] }^{2q}\right]^\frac{1}{q}\,\d t,
    \end{aligned}
\end{equation}
where $1/p+1/q=1$ with $p=\frac{\p}{2}>1$, $\p$ given in Assumption \ref{initial-value}.  By Proposition \ref{Lp-estimate} and \eqref{Psi-sup-1-p}, we have
\begin{equation}\label{A-3}
    \begin{aligned}
           &\mathbb{E}\left[ \sup_{0\le t\le T} \int^t_0 \abs{A_3(s)}\,\d s \right]\lesssim \mathbb{E}\int^T_0 \abs{\Phi_x^i(t)-\Psi_x^i(t)}^2\,\d t +\frac{1}{N}.\\
    \end{aligned}
\end{equation}
By the Burkholder-Davis-Gundy inequality and Young's inequality, we have
\begin{equation}\label{A-4}
    \begin{aligned}
         & \mathbb{E}\left[\abs{\sup_{0\le t\le T} A_4(t)}\right]\lesssim  \frac{1}{N}\,\mathbb{E}\left[\sup_{0\le t\le T}\abs{\Phi_x^i(t)}^2\right]+\mathbb{E}\int^T_0 \abs{\Phi_x^i(t)-\Psi_x^i(t)}^2\,\d t. 
    \end{aligned}
\end{equation}
Together, \eqref{Ito-Phi-Psi}, \eqref{A-1}, \eqref{A-2}, \eqref{A-3} and \eqref{A-4} imply for any $0\le t\le T$,
\begin{equation*}
    \begin{aligned}
         & \sum_{x\in \Lambda}\mathbb{E}\left[ \sup_{0\le s\le t} \abs{\Phi_x^i(s)-\Psi_x^i(s)}^2 \right]\\
\lesssim& \sum_{x\in \Lambda}\mathbb{E}\left[ \abs{\varphi_x^i-\psi_x^i}^2 \right] +  \frac{1}{N}+ \sum_{x\in \Lambda}\sum_{\N}\int^T_0 \mathbb{E}\left[\abs{\frac{\sum^N_{k=1} \Phi_x^i(t)\Phi_y^i(t) }{N}-\mathbb{E}[\Psi_x^i(t)\Psi_y^i(t)] }^{2q}\right]^\frac{1}{q}\,\d t \\
&+\int^t_0 \sum_{x\in \Lambda}\mathbb{E}\left[ \sup_{0\le r\le s} \abs{\Phi_x^i(r)-\Psi_x^i(r)}^2 \right]\,\d s,
    \end{aligned}
\end{equation*}
where the implicit constant depends on $\mathbb{E}[|\varphi^1|^{\p}]$.  Note that 
$(1/N)\sum^N_{k=1} \Phi_x^i(t)\Phi_y^i(t)=\int v_x v_y S^N(t)(\d v)$ and $\mathbb{E}[\Psi^i_x(t)\Psi_y^i(t)]=\int v_x v_y S(t)(\d v)$. By Gronwall's inequality,  \eqref{vxvy-alpha} and the dominated convergence theorem, we derive \eqref{Convergence-2}. This completes the proof of the first part of Theorem \ref{strong-solution}.

For the second part of Theorem \ref{strong-solution}, under the additional hypothesis that
$\{ (\varphi^i,\psi^i) \}^{N}_{i=1}$ is symmetric, we can conclude that for any $0\le t\le T$, the laws of $(\Phi^i-\Psi^i)(t)$ and $(\Phi^j-\Psi^j)(t)$, where $i\neq j$, are identical.  Therefore, in view of 
\begin{equation}\label{emp-com}
    \begin{aligned}
         & \mathbb{E}\left[ \abs{(\Phi^i-\Psi^i)(t)}^2 \right]=\frac{1}{N}\sum_{i=1}^N \mathbb{E}\left[ \abs{(\Phi^i-\Psi^i)(t)}^2 \right], \quad 0\le t\le T,
    \end{aligned}
\end{equation}
we only need to estimate the right-hand side of \eqref{emp-com}.  We also use the notation in \eqref{Ito-Phi-Psi} and estimate each term separately.

{\bf Step $3$:} By direct calculation, we derive that for any $0\le t\le T$,     
\begin{equation}\label{A1}
    \begin{aligned}
         & \mathbb{E}\left[\sum_{x\in \Lambda} \frac{1}{N}\sum^N_{i=1}\int^t_0 \abs{A_1(s)}\,\d s\right]\lesssim \int_0^t \mathbb{E}\left[\sum_{x\in \Lambda} \frac{1}{N}\sum^N_{i=1}\abs{\Phi_x^i-\Psi_x^i}^2(s)\right]\,\d s,
    \end{aligned}
\end{equation}
and
\begin{equation}\label{A3}
    \begin{aligned}
         & \mathbb{E}\left[\sum_{x\in \Lambda} \frac{1}{N}\sum^N_{i=1}\int^t_0 \abs{A_3(s)}\,\d s\right]\lesssim \frac{1}{N}+ \int^t_0 \mathbb{E}\left[\sum_{x\in \Lambda} \frac{1}{N}\sum^N_{i=1}\abs{\Phi_x^i-\Psi_x^i}^2(s)\right]\,\d s.
    \end{aligned}
\end{equation}
For the martingale part $A_4(t)$, we obtain that for any $0\le t\le T$,
\begin{equation}\label{A4}
    \begin{aligned}
        \sum_{x\in \Lambda} \mathbb{E}\left[\abs{\frac{1}{N}\sum^N_{i=1} A_4(t)}\right]\lesssim& \sum_{x\in \Lambda}\mathbb{E}\left[\abs{\int^t_0 \frac{1}{N}\sum^N_{k=1} \Phi_x^k(s)\,\d W_x^k(s)\cdot \frac{1}{N}\sum^N_{i=1}  \Phi_x^i(s)[\Phi_x^i(s)-\Psi_x^i(s)]}\right]\\
\lesssim&\sum_{x\in \Lambda}  \mathbb{E}\left[\frac{1}{\sqrt{N}}\cdot\abs{ \int^t_0  \Big[ \frac{1}{N}\sum^N_{i=1}  \Phi_x^i(s)[\Phi_x^i(s)-\Psi_x^i(s)] \Big]^2\,\d s}^\frac{1}{2} \right]\\
\lesssim& \frac{1}{N} + \int^t_0 \mathbb{E}\left[\sum_{x\in \Lambda} \frac{1}{N}\sum^N_{i=1}\abs{\Phi_x^i-\Psi_x^i}^2(s)\right]\,\d s,
    \end{aligned}
\end{equation}
where the second inequality follows from the Burkholder-Davis-Gundy inequality, and the third inequality follows from Young's inequality and $\Phi\in \mathcal S$. Unlike {\bf Step $2$}, we need to further decompose $A_2(t)$, for simplicity, we omit the time variable,
\begin{equation}\label{A2-decompose}
    \begin{aligned}
         & \left[\frac{1}{N}\sum^N_{k=1} \Phi_x^k\Phi_y^k\cdot \Phi_x^i- \mathbb{E}\left[ \Psi_x^i \Psi_y^i\right]\Psi_x^i\right]\cdot\left[ \Phi_x^i-\Psi_x^i \right]\\
=&\frac{1}{N}\sum^N_{k=1}\left[ \Psi_x^k\Psi_y^k -\mathbb{E}[\Psi_x^k\Psi_y^k ]\right]\Psi_x^i\cdot\left[ \Phi_x^i-\Psi_x^i \right]+\left[\frac{1}{N}\sum^N_{k=1} \Phi_x^k\Phi^k_y\right]\cdot\left[ \Phi_x^i-\Psi_x^i \right]^2\\
&+\frac{1}{N}\sum^N_{k=1} \Phi_x^k[\Phi_y^k-\Psi_y^k]\cdot\Psi_x^i\cdot\left[ \Phi_x^i-\Psi_x^i \right]+ \frac{1}{N}\sum^N_{k=1} \Psi_y^k[\Phi_x^k-\Psi_x^k]\cdot\Psi_x^i\cdot\left[ \Phi_x^i-\Psi_x^i \right]\\
:=& A_{21}+ A_{22}+A_{23}+A_{24}.
    \end{aligned}
\end{equation}
In the following, we estimate each term on the right-hand side of \eqref{A2-decompose} separately. Since for $x,y\in \Lambda$, $0\le t\le T$,  $\abs{{1}/{N}\sum^N_{k=1}\Phi_x^k(t)\Phi_y^k(t)}\le 1$, we obtain that for any $0\le t\le T$, 
\begin{equation}\label{A22}
    \begin{aligned}
         & \mathbb{E}\left[\sum_{x\in \Lambda} \sum_{\N}\frac{1}{N}\sum^N_{i=1}\int^t_0 \abs{A_{22}(s)}\,\d s\right]\lesssim \int^t_0 \mathbb{E}\left[\sum_{x\in \Lambda} \frac{1}{N}\sum^N_{i=1}\abs{\Phi_x^i-\Psi_x^i}^2(s)\right]\,\d s.
    \end{aligned}
\end{equation}
Note that for any $x\in \Lambda$, $0\le t\le T$, and $i\in \mathbb{N}$,   $(1/N)\cdot\sum^N_{k=1}{\abs{\Phi_x^k(t)}^2}=1$ and $\mathbb{E}[|\Psi_x^i(t)|^2]=1$,  by the Cauchy-Schwarz inequality, \eqref{Psi-sup-1-p} and Young's inequality, we have 
\begin{equation}\label{A23-1}
    \begin{aligned}
         \frac{1}{N}\sum^{N}_{i=1}\abs{A_{23}}\le& \left[\frac{1}{N}\sum^N_{k=1} \abs{\Phi^k_y-\Psi^k_y}^2\right]^\frac{1}{2} \left[\frac{1}{N}\sum^N_{i=1}\abs{\Psi^i_x}^2 \right]^\frac{1}{2} \left[\frac{1}{N}\sum^N_{i=1} \abs{\Phi^i_x-\Psi^i_x}^2\right]^\frac{1}{2}\\
\lesssim&  \left[\frac{1}{N}\sum^N_{k=1} \abs{\Phi^k_y-\Psi^k_y}^2\right]+\left[\frac{1}{N}\sum^N_{i=1}\left(\abs{\Psi^i_x}^2-\mathbb{E}\left[\abs{\Psi^i_x}^2\right]\right) \right]^2 \left[\frac{1}{N}\sum^N_{i=1} \abs{\Phi^i_x-\Psi^i_x}^2\right]\\
&+ \left[\frac{1}{N}\sum^N_{i=1} \abs{\Phi^i_x-\Psi^i_x}^2\right].
    \end{aligned}
\end{equation}
Since, for fixed time, $\{ \Psi^i \}_{i\in \mathbb{N}}$ are independent and identically distributed random vectors, define $U^i_x:= \abs{\Psi_x^i}^2-\mathbb{E}[|\Psi_x^i|^2]$. Then, we have
\begin{equation}\label{A23-2}
    \begin{aligned}
         & \mathbb{E}\left[ \abs{\frac{1}{N}\sum^N_{i=1} U^i_x }^4\right]=\frac{1}{N^3} \mathbb{E}\left[\abs{U_x^i}^4  \right]+\frac{3N(N-1)}{N^4} \mathbb{E}\left[\abs{U^i_x}^2  \right]\cdot \mathbb{E}\left[\abs{U^i_x}^2  \right]\lesssim \frac{1}{N^2},
    \end{aligned}
\end{equation}
where the last inequality follows from \eqref{Psi-sup-1-p} with $p=8$, together with Assumption~\ref{initial-value}(3) for $\p=8$, which ensures that $\mathbb{E}[|\psi|^8]<\infty$.
Similarly, by \eqref{sup-1-p} and \eqref{Psi-sup-1-p} with $p=4$, we have 
\begin{equation}\label{A23-3}
    \begin{aligned}
         & \mathbb{E}\left[\abs{\frac{1}{N}\sum^N_{i=1} \abs{\Phi^i_x-\Psi^i_x}^2}^2 \right]\lesssim 1. 
    \end{aligned}
\end{equation}
Together, \eqref{A23-1}, \eqref{A23-2} and \eqref{A23-3} imply for any $0\le t\le T$,
\begin{equation}\label{A23}
    \begin{aligned}
         & \mathbb{E}\left[\sum_{x\in \Lambda} \sum_{\N}  \frac{1}{N}\sum^{N}_{i=1}\int^t_0 \abs{A_{23}}(s)\,\d s \right]\lesssim \int^t_0 \mathbb{E}\left[\sum_{x\in \Lambda} \frac{1}{N}\sum^N_{i=1} \abs{\Phi^i_x-\Psi^i_x}^2(s)\right]\,\d s +\frac{1}{N}.
    \end{aligned}
\end{equation}

Since $\Psi_y^k[\Phi_x^k-\Psi^k_x]$ and $\Psi_x^i[\Phi_x^i-\Psi^i_x]$ in $A_{24}$ have analogous structures, we can apply the same argument as that for $\Psi_x^i[\Phi_x^i-\Psi^i_x]$ in $A_{23}$, and obtain that for any $0\le t\le T$,
\begin{equation}\label{A24}
    \begin{aligned}
         & \mathbb{E}\left[\sum_{x\in \Lambda} \sum_{\N}  \frac{1}{N}\sum^{N}_{i=1}\int^t_0 \abs{A_{24}}(s)\,\d s \right]\lesssim \int^t_0 \mathbb{E}\left[\sum_{x\in \Lambda} \frac{1}{N}\sum^N_{i=1} \abs{\Phi^i_x-\Psi^i_x}^2(s)\right]\,\d s +\frac{1}{N}.
    \end{aligned}
\end{equation} 
Define $U^k_{xy} := \Psi_x^k \Psi_y^k - \mathbb{E}[\Psi_x^k \Psi_y^k]$. Applying the same argument as that for $\Psi_x^i[\Phi_x^i-\Psi^i_x]$ in $A_{23}$, and noting that $\mathbb{E}[| ({1}/{N}) \sum_{k=1}^N U^k_{xy} |^2] \lesssim {1}/{N}$, it follows that for any $0\le t\le T$,
\begin{equation}\label{A21}
    \begin{aligned}
         & \mathbb{E}\left[\sum_{x\in \Lambda} \sum_{\N}  \frac{1}{N}\sum^{N}_{i=1}\int^t_0 \abs{A_{21}}(s)\,\d s \right]
\lesssim \int^t_0 \mathbb{E}\left[\sum_{x\in \Lambda} \frac{1}{N}\sum^N_{i=1} \abs{\Phi^i_x-\Psi^i_x}^2(s)\right]\,\d s +\frac{1}{N}.
    \end{aligned}
\end{equation}
Combining \eqref{A1}-\eqref{A4}, \eqref{A22}, \eqref{A23}-\eqref{A21}, and applying Gronwall's inequality, we obtain \eqref{rate}. This completes the proof.    
\qed

\section{Large $N$ limit of dynamics on $\mathbb{Z}^d$}\label{sec4}
In this section, we will prove Theorem \ref{simple-1} and Theorem \ref{simple-2} for the lattice $\mathbb{Z}^d$.  The main ideas and the structure are the same as in Section \ref{sec3}; we only need to extend the proofs of Theorem~\ref{Main-Theorem} and Theorem~\ref{strong-solution} to the weighted $\ell^p$ space.

First, we introduce new notation. In this section, we denote by $v=(v_x)$ a vector indexed by $x\in \mathbb{Z}^d$.  We define 
\begin{equation*}
    \begin{aligned}
         & \ell^p:=\Big\{ v:\mathbb{Z}^d\rightarrow \mathbb{R}, \sum_{x\in \mathbb{Z}^d}\abs{v_x}^p<\infty \Big\},\quad p\ge 2. 
    \end{aligned}
\end{equation*}
For $p\ge 2$ and $a>1$, we define a configuration space $\mathcal Q:=\ell^p_a(\mathbb{Z}^d;\mathbb{S}^{N-1}(\sqrt N))\subset(\ell^p_a(\mathbb{Z}^d;\mathbb{R}))^{N}$, where
\begin{equation}\label{lpa}
    \begin{aligned}
         & \ell^p_a:=\ell^p_a(\mathbb{Z}^d;\mathbb{R}):=\Big\{ v:\mathbb{Z}^d\rightarrow \mathbb{R}, \sum_{x\in \mathbb{Z}^d}\frac{1}{a^{\abs{x}}}\cdot\abs{v_x}^p<\infty \Big\} 
    \end{aligned}
\end{equation}
is a weighted $\ell^p$ space with exponential weights. The norm on $\ell_a^p$ is defined by
\begin{equation}\label{wlp}
    \begin{aligned}
         & \norm{v}_p:=\left(\sum_{x\in \mathbb{Z}^d} \frac{1}{a^{\abs{x}}}\abs{v_x}^p\right)^{\frac{1}{p}},
    \end{aligned}
\end{equation}
for all $v\in \ell_a^p$.
In particular, when $p=2$, we set $\mathbb{H} := \ell^2_a$. Note that $\mathcal Q$ depends on $N$; however, we omit this dependence for notational simplicity. 
Furthermore, we define $C([0,T];\mathcal{Q})$ as the space of all continuous functions from $[0,T]$ to $\mathcal{Q}$.

The standard inner product on $\ell^2$ is defined by
\begin{equation*}
    \begin{aligned}
         & \langle v^1, v^2 \rangle = \sum_{x\in \mathbb{Z}^d} v^1_x \cdot v^2_x,
    \end{aligned}
\end{equation*}
for all $v^1, v^2 \in \ell^2$.
The norm induced by the inner product is given by
\begin{equation*}
    \begin{aligned}
         & \norm{v} := \sqrt{\langle v, v \rangle} = \Big( \sum_{x\in \mathbb{Z}^d} |v_x|^2 \Big)^{1/2},
    \end{aligned}
\end{equation*}
for all $v \in \ell^2$. We say that a vector $v = (v_x)$, indexed by $\mathbb{Z}^d$, is a compactly supported vector if there exists a $\L\in \mathbb{N}$ such that $v_x = 0$ for all $x \in \mathbb{Z}^d\setminus\Lambda_\L$. We denote the set of all such vectors by $\ell_c$.

\subsection{Langevin dynamics} In the infinite volume case, the Langevin dynamics for the spin $O(N)$ model is given by the following SDEs: 
\begin{equation}\label{the-new-model-Z}
    \left\{\begin{aligned}
         \d\Phi^i_x(t)= &2\kappa\sum_{\N}\left( \Phi_y^i(t)-\frac{\sum^N_{k=1} \Phi_x^k(t) \Phi_y^k(t)}{N} \Phi_x^i(t)\right)\,\d t-\frac{N-1}{2N} \Phi_x^i(t)\,\d t\\
&+\,\d W_x^i(t)-\frac{\sum^N_{k=1} \Phi_x^k(t)\,\d W_x^k(t) }{N}\Phi_x^i(t), \quad 0\le t\le T,\\
\Phi_x^i(0)=&\varphi_x^i,\quad x\in \mathbb{Z}^d,\quad i=1,\cdots,N,
\end{aligned}\right.
\end{equation}
where, for any $x\in \mathbb{Z}^d$, $\bm{W}_x:=(W_x^1,\cdots,W^N_x)$ is an $N$-dimensional standard Brownian motion on the probability space $(\Omega,\mathscr{F},\mathbb{P})$, and for any $x\neq y\in \mathbb{Z}^d$, $\bm{W}_x$ and $\bm{W}_y$ are independent.  Since, for any $x \in \mathbb{Z}^d$, $\Phi_x \in \mathbb{S}^{N-1}(\sqrt{N})$, the existence and uniqueness of the solution to \eqref{the-new-model-Z} can be established by combining the proofs of \cite[Proposition 3.4]{Shen2023} and \cite[Proposition 3.6]{Shen2024}, we omit the details. 

\begin{proposition}\label{u-e-t-z}
    For fixed $N,\L\in \mathbb{N}$, $\kappa>0$, and $T>0$, and given any initial data $\Phi(0)\in \mathcal{Q}$, there exists a unique probabilistically strong solution $\Phi=(\Phi_x)\in C([0,T];\mathcal{Q})$ to \eqref{the-new-model-Z}.
\end{proposition}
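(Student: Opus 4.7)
The plan is to construct the solution on $\mathbb{Z}^d$ via a finite-volume approximation and pass to the limit in the weighted space $\mathcal{Q}$, systematically exploiting the sphere constraint $|\Phi_x(t)|=\sqrt{N}$ to bypass the usual a priori energy estimate. For each $M\in\mathbb{N}$ I would introduce a cutoff system obtained from \eqref{the-new-model-Z} by freezing $\Phi_y(t)\equiv\varphi_y$ for $y\notin\Lambda_M$, so that only finitely many components evolve. Proposition \ref{invariance-SDE} supplies a unique strong solution $\Phi^M$ on $\Lambda_M$; extending by $\varphi$ outside yields a process in $C([0,T];\mathcal{Q})$, and the sphere constraint gives the deterministic bound
$$\sup_{M,t}\,\norm{\Phi^M(t)}_p^p\le N^{p/2}\sum_{x\in\mathbb{Z}^d}a^{-|x|}<\infty$$
for every $p\ge 2$, so no further estimate is needed to stay in a bounded set of $\mathcal{Q}$.

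Next I would prove that $\{\Phi^M\}$ is Cauchy in $C([0,T];\mathbb{H})$. Applying It\^o's formula to $a^{-|x|}|\Phi_x^{M_1}-\Phi_x^{M_2}|^2$ and summing over $x\in\mathbb{Z}^d$, three effects need to be controlled: (i) the discrete Laplacian is bounded on $\ell^2_a$ since $a^{-|y|}/a^{-|x|}\le a$ for $y\sim x$, so the linear part of the drift contributes at most $C(a,d,\kappa)\norm{\Phi^{M_1}(t)-\Phi^{M_2}(t)}_2^2$; (ii) the nonlinear coefficient $N^{-1}\sum_{k}\Phi_x^k\Phi_y^k$ is bounded by $1$ uniformly by Cauchy-Schwarz on the sphere, so its contribution to the difference is again dominated by the weighted $\ell^2$ norm, modulo quadratic-variation corrections of the same form; (iii) the boundary mismatch between the two truncations only sees sites with $|x|\ge M_1\wedge M_2$, and the exponential weights render this tail vanishing as $M_1,M_2\to\infty$. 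Gronwall in expectation, combined with a Burkholder-Davis-Gundy bound to promote the estimate to the supremum in $t$, then yields the Cauchy property. The limit $\Phi$ retains the sphere constraint pointwise at each site, and passing to the limit in the fixed-site equation (which involves only finitely many neighbors) identifies $\Phi$ as a strong solution to \eqref{the-new-model-Z} in $C([0,T];\mathcal{Q})$.

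Uniqueness follows by the same weighted It\^o/Gronwall argument applied directly to the difference of any two solutions $\Phi^{(1)},\Phi^{(2)}\in C([0,T];\mathcal{Q})$: the sphere constraint keeps all nonlinear coefficients uniformly bounded by $1$, so the resulting linear inequality for $\mathbb{E}\norm{\Phi^{(1)}(t)-\Phi^{(2)}(t)}_2^2$ closes via Gronwall. I expect the main obstacle to be essentially an accounting one: the weight parameter $a>1$ must be chosen so that $\sum_x a^{-|x|}<\infty$ while the multiplicative factor $a$ incurred at each nearest-neighbor exchange, multiplied by $2\kappa\cdot 2d$ from the Laplacian, stays manageable. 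Since the target here is existence and uniqueness for each fixed $N,d,\kappa,a$ rather than any quantitative bound uniform in these parameters, any $a>1$ suffices, and the argument reduces to standard Gronwall bookkeeping in the weighted norm.
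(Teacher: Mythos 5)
Your proposal is correct and follows essentially the same route the paper takes: the paper omits the details and refers to the truncation/weighted-norm arguments of \cite[Proposition 3.4]{Shen2023} and \cite[Proposition 3.6]{Shen2024}, which—like your argument—combine a finite-volume approximation, the deterministic bounds coming from the sphere constraint, and a weighted $\ell^2_a$ It\^o--Gronwall estimate (with constants allowed to depend on $N$). The only small adjustment is that Proposition \ref{invariance-SDE} is stated for the periodic lattice, so for your frozen-boundary truncation you should rerun its (identical) sphere-invariance argument rather than cite it verbatim; otherwise the remaining steps are the standard bookkeeping you describe.
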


As discussed in the introduction, the system \eqref{the-new-model-Z} formally converges to the following mean-field SDE:
\begin{equation}\label{mean-field-limit-Z}
    \left\{\begin{aligned}
         \d\Psi_x(t)= &2\kappa\sum_{\N}\Big(  \Psi_y(t)-\mathbb{E}[ \Psi_x(t) \Psi_y(t)] \Psi_x(t)\Big)\,\d t-\frac{1}{2}\Psi_x(t)\,\d t+\d W_x(t),\quad 0\le t\le T,\\
\Psi_x(0)=& \psi_x, \quad x\in \mathbb{Z}^d,
\end{aligned}\right.
\end{equation}
where $W_x$ is a $1$-dimensional standard Brownian motion on the probability space $(\Omega,\mathscr{F},\mathbb{P})$, for $x\neq y$, the processes $W_x$ and $W_y$ are independent, and $(\psi_x)$ satisfies specific assumptions. Similar to the case of a finite lattice $\Lambda$, we construct a martingale solution to \eqref{mean-field-limit-Z} using the solution to the Langevin dynamics \eqref{the-new-model-Z}.

We first write the Langevin dynamics \eqref{the-new-model-Z} in terms of the following empirical measure:
\begin{equation*}
    \begin{aligned}
         & S^N(t):= \frac{1}{N}\sum^N_{j=1}\delta_{\Phi^j(t)},\ \  0\le t\le T,
    \end{aligned}
\end{equation*}
where $\Phi^j(t)=(\Phi_x^j(t))$ is a vector indexed by $x\in \mathbb{Z}^d$. For any $x\in \mathbb{Z}^d$, we also use \eqref{vector-Lambda} to define $\mathbf{D}_x^N(v,\nu)$, and  \eqref{vector-M} to define $\d\mathcal{M}_x^{N,i}$.  Then, \eqref{the-new-model-Z} can be rewritten as 
\begin{equation}\label{the-new-model-rw-2-Z}
    \left\{\begin{aligned}
         & \d\Phi_x^i(t)=\mathbf{D}^N_x(\Phi^i(t),S^N(t))\,\d t + \d\mathcal{M}_x^{N,i}(t),\quad 0\le t\le T,\\
& \Phi_x^i(0)=\varphi_x^i,\quad x\in \mathbb{Z}^d, \ i=1,\cdots, N.
\end{aligned}\right.
\end{equation}
Similarly, for any $x\in \mathbb{Z}^d$, we also use \eqref{vector-Lambda-limit} to define $\mathbf{D}_x(v,\nu)$. Then, \eqref{mean-field-limit-Z} can be expressed as
\begin{equation}\label{mean-field-limit-rw-2-Z}
    \left\{\begin{aligned}
         & \d\Psi_x(t)=\mathbf{D}_x(\Psi(t),S(t))\,\d t + \d W_x(t),\quad 0\le t\le T,\\
& \Psi_x(0)=\psi_x,\quad x\in \mathbb{Z}^d,
\end{aligned}\right.
\end{equation}
where  $S(t)$ denotes the law of $\Psi(t)$. 

We then formulate the martingale problem to the mean-field SDE \eqref{mean-field-limit-rw-2-Z}. To this end, let $\Omega := C([0, T]; \mathbb{H}) =: \SSI$ be the space of all continuous functions on $[0,T]$ with values in $\mathbb{H}$. For any $w_1,w_2\in \SSI$, define $\dI(w_1,w_2):=\sup_{0\le t\le T}\norm{w_1(t)-w_2(t)}_2$. Since $[0,T]$ is a compact subset of $\mathbb{R}$ and $\mathbb{H}$ is a Polish space, it follows from \cite[Theorem 4.19]{Kechris1995} that $(\SSI,\dI)$ is also a Polish space.  Let $\mathscr{P}(\SSI)$ denote the set of all probability measures on $\SSI$, we use $\dLI$ to denote the dual-Lipschitz distance on $\mathscr{P}(\SSI)$.  Let $\mathscr{P}_2(\SSI)$ denote the Wasserstein space of order $2$, equipped with the $2$-Wasserstein distance $\WI$. Similarly, let $\mathscr{P}(\mathbb{H})$ denote the set of all probability measures on $\mathbb{H}$, and let $\mathscr{P}_2(\mathbb{H})$ denote the Wasserstein space of order $2$, equipped with the $2$-Wasserstein distance $\mathbf{W}_{2,\mathbb{H}}$. We use $w$ to denote a generic path in $\SSI$, and define the coordinate process by $\pi_t(w) = w_t$, $t\ge 0$. Let $\mathscr{F}_t:=\sigma\{ \pi_s: s\le t \}$, $t\ge 0$ denote  the filtration generated by the coordinate process. 

We now provide the definition of the martingale problem associated with the mean-field SDE \eqref{mean-field-limit-rw-2-Z}.

\begin{definition}\label{martingale-solution-Z}
    Let $\mu_0\in \mathscr{P}(\mathbb{H})$. A probability measure $\mathbf P\in \mathscr{P}(\SSI)$ is called a martingale solution to the mean-field SDE \eqref{mean-field-limit-rw-2-Z} with initial distribution $\mu_0$ if the following conditions are satisfied: 
\begin{enumerate}[(i)]
    \item $\mathbf P\circ \pi_0^{-1}=\mu_0$ and
    \begin{equation*}
        \begin{aligned}
             & \mathbf{P}\left( w\in \SSI:\int^T_0 \norm{\mathbf D(w_s,\mu_s)}_2\,\d s  <\infty\right)=1,
        \end{aligned}
    \end{equation*}
where $\mu_s:=\mathbf P\circ \pi_s^{-1}$, $\mathbf{D}(w_s,\mu_s)=(\mathbf{D}_x(w_s,\mu_s))$.  
\item For any $\theta\in \ell_c$, the process 
\begin{equation*}
    \begin{aligned}
         & \mathscr{M}_\theta(t,w,\mu):= \langle w_t,\theta \rangle - \langle w_0,\theta  \rangle-\int^t_0 \langle\mathbf{D}(w_s, \mu_s),\theta\rangle\,\d s,\quad 0\le t\le T,
    \end{aligned}
\end{equation*}
is a continuous square-integrable $(\mathscr{F}_t)$-martingale under $\mathbf{P}$, with
quadratic variation process given by 
\begin{equation*}
    \begin{aligned}
         & \langle \mathscr{M}_\theta\rangle(t,w,\mu)= \norm{\theta}^2t ,\quad 0\le t\le T,
    \end{aligned}
\end{equation*} 
where $\langle\cdot, \cdot\rangle$ denotes the $\ell^2$ inner product and $\| \cdot \|$ is the corresponding norm. 
\end{enumerate}
\end{definition}

We proceed to state the assumptions imposed on the initial data.

\begin{assumption}\label{initial-value-Z}
\begin{enumerate}[(1)]
    \item For any $N\in \mathbb{N}$,  the law of $$\Phi(0)=(\varphi^1,\cdots,\varphi^N)\in \mathcal Q $$ is symmetric on $\underbrace{  \mathbb{H}\times\cdots\times \mathbb{H} }_{\text{$N$ in total}}$\,, where $\varphi^i=(\varphi^i_x)$.      
    \item For the family of initial data $\{(\varphi^1,\cdots,\varphi^N) \}_{N\in \mathbb{N}}$, the empirical measure
\[
S^N_0:=\frac{1}{N} \sum^N_{j=1}\delta_{\varphi^{j}}
\] 
converges weakly in probability to some measure $\mu_0$ as $N\rightarrow \infty$.  

\item There exist constants $C>0$ and $\p>2$ such that
\[
\sup_N \mathbb{E}\left[\norm{\varphi^{1}}_{\p}^{\p}\right]< C.
\]
\end{enumerate}
\end{assumption}

If there exists a family of random vectors $\{(\varphi^1, \dots, \varphi^N)\}_{N \in \mathbb{N}}$  and a measure $\mu_0$ satisfying Assumption~\ref{initial-value-Z},  we call $\mu_0$ a measure determined by $\{(\varphi^1, \dots, \varphi^N)\}_{N \in \mathbb{N}}$. We denote by $\mathscr{P}_{\mathcal Q, \p}$ the set of all measures that can be determined by some $\{(\varphi^1, \dots, \varphi^N)\}_{N \in \mathbb{N}}$, where the parameter $\p$ refers to the constant in Assumption~\ref{initial-value-Z}$(3)$.

In the following, we show that the large $N$ limit of the system \eqref{the-new-model-Z} with initial data given by Assumption \ref{initial-value-Z} is \eqref{mean-field-limit-Z}. More precisely, we have
\begin{theorem}\label{Main-Theorem-Z}
Suppose that the initial data $\{(\varphi^1,\cdots,\varphi^N)\}_{N\in \mathbb{N}}$  of the system \eqref{the-new-model-Z} and $\mu_0$ satisfy Assumption \ref{initial-value-Z}, then the empirical measure $S^N$ of the solutions to the system \eqref{the-new-model-Z} (viewed as a $(\mathscr{P}_2(\SSI),\WI)$-valued random variable)  converges in law to $S$, and the limit $S$ is the unique martingale solution to the mean-field SDE \eqref{mean-field-limit-Z} with initial distribution $\mu_0$.   Furthermore, for the martingale solution, we have
\begin{equation}\label{Cov-Psi-Z}
    \begin{aligned}
\int \abs{v_x}^2\,S(t)(\d v)= 1,\quad \ \mathbb{P}\text{-\rm a.s.}, \quad \forall\, x\in \mathbb{Z}^d, \ 0\le t\le T,
    \end{aligned}
\end{equation}
and
\begin{equation*}\label{POC-Z}
    \begin{aligned}
         & \lim_{N\rightarrow \infty} \mathbb{E}\left[\WIT(S^N,S) \right]=0.
    \end{aligned}
\end{equation*}
\end{theorem}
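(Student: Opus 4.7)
The plan is to mirror the structure used for the finite lattice in Section~\ref{sec3}, transferring each of the four ingredients (uniform moment bounds, tightness in $\mathscr{P}_2$, identification of the limit as a martingale solution, and pathwise uniqueness via Yamada--Watanabe) into the weighted Hilbert setting $\mathbb{H} = \ell^2_a$. The two facts that make the passage to $\mathbb{Z}^d$ tractable are: (i) the pointwise bound $\bigl|\tfrac1N \sum_k \Phi_x^k(t)\Phi_y^k(t)\bigr|\le 1$ from \eqref{sphere-holder}, which is volume-independent and yields (ii) $|\mathbb{E}[\Psi_x\Psi_y]|\le 1$ in the limit; and the summability of the weight $1/a^{|x|}$, which lets us absorb neighbour sums via $\sum_{x:\,x\sim y} a^{-|x|} \le 2d\, a\cdot a^{-|y|}$.

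First, I would establish the weighted analogue of Proposition~\ref{Lp-estimate}: for each $p\ge 2$,
\[
\sum_{x\in \mathbb{Z}^d} \frac{1}{a^{|x|}}\,\mathbb{E}\Bigl[\sup_{0\le t\le T}|\Phi_x^i(t)|^p\Bigr] \lesssim \sum_{x\in \mathbb{Z}^d}\frac{1}{a^{|x|}}\,\mathbb{E}\bigl[|\varphi_x^i|^p\bigr]+1,
\]
uniformly in $N$. The key point is that the fixed-site estimate \eqref{fix-x-estimate} has constants independent of the volume, so multiplying by $a^{-|x|}$, summing over $x\in\mathbb{Z}^d$, and applying the weight bound for neighbour sums reduces everything to Gronwall. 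This immediately gives $\sup_N \mathbb{E}[\|\Phi^1\|_\p^\p]<\infty$.

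Second, for the tightness of $\{S^N\}$ in $(\mathscr{P}_2(\SSI),\WI)$, I would follow the three-step argument of Lemma~\ref{tight-P-2}. Step~1: tightness of the law of $(\Phi_x^1)$ in $(\SSI,\dI)$ via Kolmogorov's criterion; the time-increment bound $\mathbb{E}[\|\Phi^1(t)-\Phi^1(s)\|_2^\p]\lesssim |t-s|^{\p/2}$ follows exactly as in \eqref{Phi1-x-p} after weighting by $a^{-|x|}$ and using the moment bound from the first step, together with $|\tfrac1N\sum_k \Phi_x^k\Phi_y^k|\le 1$. Step~2: tightness of $\{S^N\}$ in $(\mathscr{P}(\SSI),\dLI)$ via the symmetry of $\Phi(0)$ under permutations of the $i$-index and the same compact-sets construction as in \eqref{compact-S}. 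Step~3: uniform integrability under $S^N$ of $\dI(0,\cdot)^2$, delivered by the $\p$-th weighted moment with $\p>2$ exactly as in the finite case.

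Third, I would identify every tight limit $S$ as a martingale solution in the sense of Definition~\ref{martingale-solution-Z} by restricting the test vector $\theta$ to $\ell_c$. Since $\theta$ is compactly supported, all inner products $\langle w,\theta\rangle$ and $\langle\mathbf{D}(w,\mu),\theta\rangle$ reduce to finite sums supported on $\operatorname{supp}(\theta)$, so the proofs of Lemmas~\ref{Drift-Estimate} and \ref{QV-Estimate} transfer verbatim, with $\SSF$ replaced by $\SSI$ and $|\cdot|$ replaced by $\|\cdot\|_2$, using the weighted uniform moment bound and continuity of $\mu\mapsto\int v_xv_y\,\mu(\mathrm{d}v)$ along the convergent subsequence $\hat S^N\to\hat S$ in $(\mathscr{P}_2(\SSI),\WI)$ (as in \eqref{vxvy-alpha}). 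The identity \eqref{Cov-Psi-Z} follows site-by-site from \eqref{SN=1-L} and the same Skorokhod/uniform-integrability argument.

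Finally, I would establish pathwise uniqueness in the weighted class $\mathscr{P}_{\O,\Psi}(\SSI)$ of both the frozen linear equation analogous to \eqref{fix-mu} and the nonlinear equation \eqref{mean-field-limit-Z} by repeating the Itô expansion of Lemma~\ref{not-fix-mu} for $|\Psi_x^{(1)}-\Psi_x^{(2)}|^2$, weighting by $a^{-|x|}$, and absorbing the neighbour coupling via $\sum_{x\sim y} a^{-|x|}\le 2da\cdot a^{-|y|}$, so that Gronwall applied to $t\mapsto \sum_x a^{-|x|}\mathbb{E}[|\Psi_x^{(1)}-\Psi_x^{(2)}|^2(t)]$ closes. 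The pointwise bound $|\mathbb{E}[\Psi_x\Psi_y]|\le 1$ granted by \eqref{Cov-Psi-Z} is what makes this Gronwall estimate uniform. Combined with the Yamada--Watanabe argument as in \cite[Lemma 2.1]{Huang2021}, this forces $S$ to be a.s.\ constant in $(\mathscr{P}_2(\SSI),\WI)$, and convergence in $\WIT$-mean follows from uniform integrability of $\WIT(S^N,S)$ via the $\p/2$-th moment bound. The main technical obstacle is organising the weighted neighbour sums so that every estimate produces a constant that is genuinely independent of $N$ and of the ambient volume; this is manageable because all the problematic nonlinear quantities ($\tfrac1N\sum_k\Phi_x^k\Phi_y^k$, $\mathbb{E}[\Psi_x\Psi_y]$, $\tfrac1N\sum_k|\Phi_x^k|^2$) admit a priori pointwise-in-$x$ bounds that do not depend on the volume.
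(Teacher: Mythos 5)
Your proposal is correct and follows essentially the same route as the paper's proof: the weighted analogue of Proposition~\ref{Lp-estimate} obtained by multiplying \eqref{fix-x-estimate} by $a^{-|x|}$ and summing, tightness of $\{S^N\}$ in $(\mathscr{P}_2(\SSI),\WI)$, identification of tight limits via compactly supported test vectors $\theta\in\ell_c$, and pathwise uniqueness by a weighted Gronwall argument combined with Yamada--Watanabe (where, as in the paper, one should note that the infinite-dimensional version from R\"ockner--Schmuland--Zhang is combined with the argument of \cite[Lemma 2.1]{Huang2021}). The one point to make explicit is that in the tightness step a Kolmogorov-type increment bound in $\ell^2_a$ alone does not yield compactness in the infinite-dimensional state space; as in Lemma~\ref{tight-P-2-infty}, it must be combined with your uniform $\ell^p_a$ bound for $p=\p>2$ and the compact embedding $\ell^p_a\subset\ell^2_a$ (equivalently, compact containment), which your weighted moment estimate indeed provides.
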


\begin{remark}
By the uniqueness of the martingale solution to the mean-field SDE \eqref{mean-field-limit-Z}, the limit  $S$ of the sequence $\{S^N\}_{N\in \mathbb{N}}$ is $\mathbb{P}\text{-a.s.}$ constant in  $(\mathscr{P}_2(\SSI),\WI)$. 
\end{remark}

\begin{remark}\label{typical-examples-Z}
By Proposition~\ref{mu0assum-Z} below, a typical example satisfying Assumption \ref{initial-value-Z} is given by taking $\varphi$ as a random vector with distribution $\mu_{N,\kappa}$, and $\mu_0$ as the field $\bm{\mu}$ (see Proposition \ref{mu0assum-Z} below for more details).  For any $N \in \mathbb{N}$, another example  satisfying Assumption \ref{initial-value-Z} can be obtained by setting $\varphi_x^i = 1$ for all $x \in \mathbb{Z}^d$ and $i = 1, \ldots, N$, and taking $\mu_0 = \delta_{1}$.
\end{remark}

To compare the system \eqref{the-new-model-Z} with the mean-field SDE  \eqref{mean-field-limit-Z} directly,  for any fixed $i\in \mathbb{N}$, we consider the following mean-field SDE:   
\begin{equation}\label{Psi-i-limit-Z}
    \left\{\begin{aligned}
         \d\Psi^i_x(t)= &2\kappa\sum_{\N}\Big(  \Psi_y^i(t)-\mathbb{E}[ \Psi_x^i(t) \Psi_y^i(t)] \Psi_x^i(t)\Big)\,\d t-\frac{1}{2}\Psi_x^i(t)\,\d t+\d W^i_x(t),\quad 0\le t\le T,\\
\Psi_x^i(0)=& \psi_x^i, \quad x\in \mathbb{Z}^d,
\end{aligned}\right.
\end{equation}
where $\{ W_x^i \}$ is the same as in \eqref{the-new-model-Z}. 

\begin{theorem}\label{strong-solution-Z}
If the initial data $\psi$ of  the mean-field SDE \eqref{mean-field-limit-Z}   satisfies $\psi \laweq \mu_0 \in \mathscr{P}_{\mathcal Q,\p}$, then there exists a unique probabilistically strong solution to equation \eqref{mean-field-limit-Z} starting from $\psi$. Furthermore, suppose that the initial data $\{(\varphi^1,\cdots,\varphi^N)\}_{N\in \mathbb{N}}$ of the system \eqref{the-new-model-Z} and $\mu_0$ satisfy Assumption \ref{initial-value-Z}, $\psi^i \laweq \mu_0$, and that 
\begin{equation}\label{initial-value-i-Z}
    \begin{aligned}
         & \lim_{N\rightarrow \infty}\mathbb{E}\left[\norm{\varphi^i-\psi^i}_2^2\right]=0.
    \end{aligned}
\end{equation}
Then, we have
\begin{equation}\label{Convergence-2-Z}
    \begin{aligned}
         &  \lim_{N\rightarrow \infty}\mathbb{E}\left[ \sup_{0\le t\le T} \norm{\Phi^i(t)-\Psi^i(t)}_2^2 \right]=0,
    \end{aligned}
\end{equation}
where $\Phi^i$ and $\Psi^i$ denote the solutions to \eqref{the-new-model-Z} and \eqref{Psi-i-limit-Z} with initial data $\varphi^i$ and $\psi^i$, respectively.

If the initial data $\{(\varphi^1,\cdots,\varphi^N)\}_{N\in \mathbb{N}}$  of the system \eqref{the-new-model-Z} and $\mu_0$ satisfy Assumption \ref{initial-value-Z} with $\p=8$,  $\psi^i\laweq \mu_0$, in addition, the random variables $\{\psi^i\}_{i\in \mathbb{N}}$ are independent and identically distributed, and the sequence $\{(\varphi^i,\psi^i)\}^N_{i=1}$ is symmetric, then we have
\begin{equation}\label{rate-Z}
    \begin{aligned}
         & \sup_{0\le t\le T}\mathbb{E}\left[  \norm{\Phi^i(t)-\Psi^i(t)}_2^2 \right]\lesssim  \mathbb{E}\left[\norm{\varphi^i-\psi^i}_2^2\right]+\frac{1}{N}.
    \end{aligned}
\end{equation}
where the implicit constant is independent of $N$. 
\end{theorem}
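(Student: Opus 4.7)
The plan is to mirror the three-step proof of Theorem \ref{strong-solution}, replacing the finite sum $\sum_{x\in\Lambda}$ by the weighted sum $\sum_{x\in\mathbb{Z}^d} a^{-|x|}$ defining the norm on $\mathbb{H}=\ell^2_a$, and the Euclidean norm on $\mathbb{R}^{|\Lambda|}$ by $\|\cdot\|_2$. For the first part, I would combine the martingale solution constructed in Theorem \ref{Main-Theorem-Z} with the martingale representation theorem on the Polish space $\mathbb{H}$ to obtain a weak solution starting from $\psi$. Pathwise uniqueness is then proved by an analog of Lemmas \ref{fixed-mu-le} and \ref{not-fix-mu}: thanks to \eqref{Cov-Psi-Z} and the Cauchy--Schwarz inequality, the covariance kernel $\bm h_{x,y}(t):=\int v_x v_y\,S(t)(\d v)$ satisfies $|\bm h_{x,y}(t)|\le 1$ uniformly in $t,x,y$, so applying It\^o's formula to $|\Psi^{(1)}_x-\Psi^{(2)}_x|^2$, multiplying by $a^{-|x|}$, summing in $x\in\mathbb{Z}^d$ and using the elementary estimate $a^{-|y|}\le a\cdot a^{-|x|}$ for $y\sim x$, the neighbour interaction produces only a bounded multiplicative factor, and Gronwall's inequality gives uniqueness. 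The Yamada--Watanabe theorem then yields the unique probabilistically strong solution.

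For \eqref{Convergence-2-Z}, I would apply It\^o's formula to $|\Phi^i_x(t)-\Psi^i_x(t)|^2$ exactly as in \eqref{Ito-Phi-Psi}, multiply by $a^{-|x|}$ and sum over $x\in\mathbb{Z}^d$. Each of the four terms $A_1,\ldots,A_4$ is controlled as in Section \ref{sec3.5}, using (i) a weighted analog of Proposition \ref{Lp-estimate} on $\mathbb{Z}^d$ with constants independent of $N$ (whose proof proceeds as in the finite volume case, since the neighbour sum contributes only a factor $a$ after re-weighting); (ii) the uniform bound $|\bm h_{x,y}(t)|\le 1$ from \eqref{Cov-Psi-Z}, which controls the martingale--solution's covariance term; and (iii) the weighted analog of \eqref{vxvy-alpha}, namely convergence of $\int v_x v_y S^N(t)(\d v)$ to $\int v_x v_y S(t)(\d v)$ in $L^{2q}$ for some $q>1$, which follows from Theorem \ref{Main-Theorem-Z} and Skorokhod representation exactly as in Lemma \ref{sphere-SN-S}. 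Gronwall in $\sum_x a^{-|x|}\mathbb{E}[\sup_{s\le t}|\Phi^i_x-\Psi^i_x|^2(s)]$ plus the dominated convergence theorem (justified by the weighted uniform moment bound) yields \eqref{Convergence-2-Z}.

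For the rate \eqref{rate-Z}, I would exploit the assumed symmetry of $\{(\varphi^i,\psi^i)\}_{i=1}^N$ to write $\mathbb{E}[\|\Phi^i-\Psi^i\|_2^2]=(1/N)\sum_{k=1}^N\mathbb{E}[\|\Phi^k-\Psi^k\|_2^2]$, and then repeat the decomposition \eqref{A2-decompose} into $A_{21},A_{22},A_{23},A_{24}$, summed against the weight $a^{-|x|}$. The pointwise-in-$x$ arithmetic of Section \ref{sec3.5} carries over verbatim; the only additional book-keeping is that $\sum_{y\sim x}a^{-|x|}|\cdot|_y^2$ is handled by swapping $a^{-|x|}$ with $a\cdot a^{-|y|}$, so it produces terms of the same weighted-$\ell^2$ type. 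The core $1/N$ input comes, as in the finite volume case, from the i.i.d.\ estimate $\mathbb{E}[|(1/N)\sum_{k=1}^N U_k|^{2}]\lesssim 1/N$ applied to the centered variables $U^k_{xy}:=\Psi^k_x\Psi^k_y-\mathbb{E}[\Psi^k_x\Psi^k_y]$ and $U^k_x:=|\Psi^k_x|^2-\mathbb{E}[|\Psi^k_x|^2]$, whose fourth moments are finite by the weighted analog of \eqref{Psi-sup-1-p} together with Assumption \ref{initial-value-Z}$(3)$ with $\p=8$. Summability of $\sum_x a^{-|x|}\cdot(1/N)=O(1/N)$ is immediate since $\sum_x a^{-|x|}<\infty$. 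A final Gronwall in the weighted sum produces \eqref{rate-Z}. The main obstacle is bookkeeping the weight $a^{-|x|}$ through all the neighbour sums and the cross terms $\Psi^k_y[\Phi^k_x-\Psi^k_x]$ appearing in $A_{23},A_{24}$, and in particular establishing the weighted version of Proposition \ref{Lp-estimate} uniformly in $N$; once this uniform-in-$N$, $\ell^2_a$-valued moment bound is in hand, the rest of the argument is a direct transcription of Section \ref{sec3.5}.
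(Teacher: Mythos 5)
Your proposal is correct and follows essentially the same route as the paper: transfer the three-step finite-volume argument of Theorem \ref{strong-solution} to the weighted space $\ell^2_a$, using the weighted analogs of the uniform moment bounds (Proposition \ref{Lp-estimate-Zd}, \eqref{Psi-sup-1-p-Z}), the bound $|\bm h_{x,y}|\le 1$ and Lemma \ref{vxvy-con}, the infinite-volume Yamada--Watanabe argument via Lemmas \ref{fixed-mu-le-Zd} and \ref{not-fix-mu-Zd}, and the exchange $a^{-|x|}\le a\,a^{-|y|}$ for $x\sim y$ to absorb neighbour sums before Gronwall. The only point to state precisely is that the $A_{23}$-type terms require the fourth-moment i.i.d.\ estimate $\mathbb{E}\bigl[|\tfrac1N\sum_{i}U^i_x|^4\bigr]\lesssim N^{-2}$ (hence $\p=8$), combined with Cauchy--Schwarz in the weighted sum, exactly as the paper does in Section \ref{sec4}.
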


\begin{remark}\label{Phi-Psi-Z}
For any $N \in \mathbb{N}$, a typical example 
that satisfies the assumptions in Theorem \ref{strong-solution-Z} is given by setting $\varphi_x^i = 1$ and $\psi_x^i=1$ for all $x \in \mathbb{Z}^d$ and $i = 1, \ldots, N$. Similar to the finite volume case, for small $\kappa$, by taking $\varphi \laweq \mu_{N, \kappa}$ and $\psi^i \laweq \bm{\mu}$ (see Proposition \ref{mu0assum-Z} below for more details), we can construct $\{(\varphi^i,\psi^i)\}^N_{i=1}$ satisfying the assumptions in the second part of Theorem~\ref{strong-solution-Z} by stationary coupling.
\end{remark}

\subsection{Propagation of chaos}
By carefully reviewing the proofs of Theorem \ref{Main-Theorem} and Theorem \ref{strong-solution}, we observe that only a few estimates and definitions require appropriate modification, and the corresponding results remain valid.

First, we provide bounds that are uniform in $N$ on the dynamics \eqref{the-new-model-Z}. 

\begin{proposition}\label{Lp-estimate-Zd}
Suppose that $\{ (\Phi^i_x), i=1,\cdots,N\}$ is the solution to the system \eqref{the-new-model-Z}.  Then, for any $i=1,\cdots,N$, and any $p\ge 2$,  we have
\begin{equation}\label{sup-1-p-Zd}
    \begin{aligned}
         &  \mathbb{E}\left[\sup_{0\le t\le T}\norm{\Phi^i(t)}_p^p\right]\le \mathbb{E}\left[ \sum_{x\in \mathbb{Z}^d}\sup_{0\le s\le T}\frac{1}{a^{\abs{x}}}\abs{\Phi_x^i(s)}^{p} \right] \le  C(d,p,\kappa,T) \left( \mathbb{E}\left[\norm{\varphi^i}_p^{p}\right] + 1\right),
    \end{aligned}
\end{equation}
where  $C(d,p,\kappa,T)$ is a constant depending on $d$, $p$, $\kappa$, $T$,  but independent of $N$.
\end{proposition}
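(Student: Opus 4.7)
The proof adapts the argument of Proposition \ref{Lp-estimate} to the weighted $\ell^p_a$ norm on the infinite lattice. The first inequality in \eqref{sup-1-p-Zd} is immediate from Tonelli's theorem together with the elementary bound $\sup_{0\le t\le T} \sum_x \le \sum_x \sup_{0\le t\le T}$ for nonnegative summands, so the substance of the proof lies in the second inequality.

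I would begin by applying It\^o's formula to $|\Phi_x^i(t)|^p$ for each fixed $x \in \mathbb{Z}^d$, exactly as in \eqref{Ito-Lp}. Estimating each term with the same tools used in the proof of Proposition \ref{Lp-estimate}---Young's inequality for the drift, the sphere bound $|(1/N)\sum_{k=1}^N \Phi_x^k\Phi_y^k|\le 1$ from \eqref{sphere-holder}, and the Burkholder-Davis-Gundy inequality for the martingale pieces---one recovers the pointwise-in-$x$ estimate \eqref{fix-x-estimate}, whose constants depend only on $d,p,\kappa,T$ and are in particular independent of both $N$ and the spatial size. The crucial new step is to multiply this bound by $a^{-|x|}$ and sum over $x\in\mathbb{Z}^d$; for the nearest-neighbor term I would use the exchange identity
\begin{equation*}
\sum_{x\in \mathbb{Z}^d} \frac{1}{a^{|x|}} \sum_{y:y\sim x} h(y) \;=\; \sum_{y\in \mathbb{Z}^d} h(y) \sum_{x:x\sim y} \frac{1}{a^{|x|}} \;\le\; 2d\,a \sum_{y\in \mathbb{Z}^d} \frac{h(y)}{a^{|y|}},
\end{equation*}
which is valid since $||x|-|y||\le 1$ whenever $x\sim y$ (so that $a^{-|x|}\le a\cdot a^{-|y|}$ as $a>1$) and each vertex has exactly $2d$ neighbors. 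Setting $g(t) := \sum_{x\in \mathbb{Z}^d} a^{-|x|}\, \mathbb{E}[\sup_{0\le s\le t} |\Phi_x^i(s)|^p]$, this produces an inequality of the form
\begin{equation*}
g(T) \;\le\; C_1(d,p,\kappa,a)\int_0^T g(t)\,\d t \;+\; 2\,\mathbb{E}\!\left[\|\varphi^i\|_p^p\right] \;+\; C_2(d,p,T,a),
\end{equation*}
with $C_2<\infty$ because $\sum_{x\in\mathbb{Z}^d} a^{-|x|}<\infty$ for $a>1$.

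To close the argument by Gronwall's inequality, one must know a priori that $g(T)<\infty$; this is the main technical subtlety, since, unlike in the finite-volume case, the sum over $x$ is now infinite and finiteness is not automatic. I would handle it via the sphere constraint $|\Phi_x^i(s)|\le \sqrt{N}$, which yields the crude but sufficient deterministic bound $g(T)\le N^{p/2}\sum_{x} a^{-|x|}<\infty$. Crucially, this $N$-dependent estimate serves only to legitimize the application of Gronwall, while the resulting final bound is uniform in $N$, because both $C_1$ and $C_2$ above are independent of $N$.
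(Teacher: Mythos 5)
Your proposal is correct and follows essentially the same route as the paper: reuse the pointwise estimate \eqref{fix-x-estimate} from Proposition \ref{Lp-estimate}, multiply by $a^{-|x|}$, sum over $\mathbb{Z}^d$ using $a^{-|x|}\le a\,a^{-|y|}$ for $x\sim y$ and $\sum_x a^{-|x|}<\infty$, and conclude with Gronwall. Your additional remark that the a priori finiteness needed for Gronwall follows from the sphere constraint $|\Phi_x^i|\le\sqrt N$ is a sensible detail that the paper leaves implicit, but it does not change the argument.
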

\begin{proof}
   Recall the estimate \eqref{fix-x-estimate} in Proposition \ref{Lp-estimate}. For any $T\ge 0$, we have
\begin{equation}\label{Lp-w}
    \begin{aligned}
         \mathbb{E}\left[ \sup_{0\le s\le T}\abs{\Phi_x^i(s)}^{p} \right] \le& C(d,p,\kappa) \int^T_0 
          \mathbb{E}\left[ \sup_{0\le s\le t}\abs{\Phi_x^i(s)}^{p} \right]\,\d t +2\mathbb{E}\left[\abs{\varphi_x^i}^p\right]+C(p,T)\\
&+C(p,\kappa) \sum_{\N}\int^T_0 
          \mathbb{E}\left[ \sup_{0\le s\le t}\abs{\Phi_y^i(s)}^{p} \right]\,\d t,
    \end{aligned}
\end{equation}
where the constants $C(d, p, \kappa)$, $C(p, T)$ and $C(p,\kappa)$  are both independent of $\L$.  Multiplying both sides of \eqref{Lp-w} by $\frac{1}{a^{|x|}}$, summing over $x$, and using the estimates $\frac{1}{a^{|x|}} \leq a\, \frac{1}{a^{|y|}}$ for $x \sim y$ and $\sum_{x \in \mathbb{Z}^d} \frac{1}{a^{|x|}} < \infty$, together with Gronwall's inequality, we obtain \eqref{sup-1-p-Zd}.
\end{proof}
 
The proof of Lemma \ref{tight-P-2-infty} is similar to that of Lemma \ref{tight-P-2}.
\begin{lemma}\label{tight-P-2-infty}
       The sequence $\{ S^N \}_{N\in \mathbb{N}}$ is tight in $(\mathscr{P}_2(\SSI),\WI)$. 
\end{lemma}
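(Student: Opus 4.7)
The argument mirrors the three-step proof of Lemma \ref{tight-P-2}, the key new ingredient being the compact embedding $\ell^\p_a \hookrightarrow \mathbb{H}$ for the exponent $\p>2$ from Assumption \ref{initial-value-Z}$(3)$. To verify this embedding, any bounded sequence $\{v^n\}$ in $\ell^\p_a$ satisfies $|v^n_x| \lesssim a^{|x|/\p}$, so a diagonal argument extracts a pointwise convergent subsequence $v^n_x \to v_x$; the H\"older estimate
\begin{equation*}
\sum_{|x|>R} \frac{|v^n_x-v_x|^2}{a^{|x|}} \leq \Bigl(\sum_{|x|>R} a^{-|x|}\Bigr)^{1-2/\p} \|v^n-v\|_{\ell^\p_a}^2
\end{equation*}
then yields uniform tail vanishing as $R\to\infty$, upgrading pointwise to $\mathbb{H}$-convergence. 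Taking $R=0$ also shows $\|\cdot\|_{\mathbb{H}} \lesssim \|\cdot\|_{\ell^\p_a}$.

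In Step 1, I would prove that $\{\Phi^1\}_{N\in\mathbb{N}}$ is tight in $(\SSI,\dI)$. Tightness at each fixed time is immediate: Proposition \ref{Lp-estimate-Zd} with $p=\p$ gives $\sup_N \mathbb{E}\bigl[\sup_{t\in[0,T]} \|\Phi^1(t)\|_{\ell^\p_a}^\p\bigr]<\infty$, which combined with the compact embedding above confines $\Phi^1(t)$ to a compact subset of $\mathbb{H}$ with high probability. For temporal regularity, I would decompose $\Phi^1(t)-\Phi^1(s)$ term by term as in \eqref{Phi1-x-p}, sum each piece against the weight $a^{-|x|}$, and use the interaction $a^{-|x|} \leq a\,a^{-|y|}$ for $x\sim y$ (as in the proof of Proposition \ref{Lp-estimate-Zd}), the sphere constraint $|N^{-1}\sum_k \Phi_x^k\Phi_y^k|\leq 1$, the Burkholder--Davis--Gundy inequality, and the uniform $\ell^\p_a$-moment bound to obtain
\begin{equation*}
\sup_N \mathbb{E}\bigl[\|\Phi^1(t)-\Phi^1(s)\|_{\ell^\p_a}^\p\bigr] \lesssim |t-s|^{\p/2}.
\end{equation*}
Since $\p/2>1$, Kolmogorov's criterion yields a common H\"older modulus in $\mathbb{H}$; together with the fixed-time tightness and the Arzel\`a--Ascoli characterization of compact subsets of $C([0,T];\mathbb{H})$, this delivers tightness in $\SSI$.

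Steps 2 and 3 then follow the pattern of Lemma \ref{tight-P-2} with only cosmetic changes. The symmetry from Assumption \ref{initial-value-Z}$(1)$ converts tightness of $\Phi^1$ into tightness of $\{S^N\}$ in $(\mathscr{P}(\SSI),\dLI)$ via compact sets $\Theta_\varepsilon := \bigcap_m\{\Gamma\in\mathscr{P}(\SSI): \Gamma(\mathscr{U}_{\varepsilon 2^{-m}})\geq 1-\varepsilon 2^{-m}\}$, and \cite[Corollary 5.6]{Carmona2018}, combined with the uniform integrability $\sup_N \mathbb{E}\bigl[\int (\sup_t\|w_t\|_2)^\p\, S^N(\mathrm{d}w)\bigr] < \infty$ (a direct consequence of Proposition \ref{Lp-estimate-Zd}), upgrades this to tightness in $(\mathscr{P}_2(\SSI),\WI)$. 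The main obstacle lies in the temporal-regularity estimate of Step 1: each piece of the SDE \eqref{the-new-model-Z} must be controlled in weighted $\ell^\p_a$-norm with constants independent of $N$, which requires careful bookkeeping for the sphere-constraint drift and the orthogonal-projection martingale $N^{-1}\sum_k\Phi^k_x\,\mathrm{d}W^k_x\cdot\Phi^1_x$.
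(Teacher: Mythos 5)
Your proposal is correct and follows essentially the same route as the paper: uniform weighted moment bounds from Proposition \ref{Lp-estimate-Zd}, a Kolmogorov-type increment estimate in $\ell^{\p}_a$ combined with the compact embedding $\ell^{\p}_a \subset \mathbb{H}$ to get tightness of $\{\Phi^1\}_{N\in\mathbb{N}}$ in $(\SSI,\dI)$, and then the exact analogue of Steps 2--3 of Lemma \ref{tight-P-2} (the sets $\Theta_\varepsilon$, $H_\varepsilon$ and \cite[Corollary 5.6]{Carmona2018}) to upgrade to $(\mathscr{P}_2(\SSI),\WI)$. Your explicit verification of the compact embedding and of the Arzel\`a--Ascoli step merely fills in details the paper leaves implicit, not a different argument.
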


\begin{proof}
Recall that $\SSI=C([0,T];\mathbb{H})$ denotes the space of all continuous functions on $[0,T]$ with values in $\mathbb{H}$, and for any $w_1,w_2\in \SSI$, 
$$
\dI(w_1,w_2)=\sup_{0\le t\le T} \norm{w_1(t)-w_2(t)}_2.
$$
   
We prove that the sequence $\{(\Phi_x^1) \}_{N\in \mathbb{N}}$ is tight in $(\SSI,\dI)$. By compact embedding $\ell^p_a\subset \ell^2_a$ for $p>2$,  it suffices to prove that 
\begin{enumerate}[(i)]
    \item for some $p>2 $, there exist a positive constant $M$ such that 
\begin{equation}\label{(i)}
    \begin{aligned}
         & \sup_{N\in \mathbb{N}}\mathbb{E}\left[\sup_{0\le s\le T}\norm{\Phi^1(s)}_p^p \right]< M;
    \end{aligned}
\end{equation}
\item  for some $p> 2$ and $\alpha<1/2$, there exists a positive constants $M$ such that for $s,t\in [0,T]$,
\begin{equation*}
    \begin{aligned}
     \sup_{N\in \mathbb{N}}\mathbb{E}\left[ \sup_{s\neq t\in[0,T]} \frac{\norm{\Phi^1(t)-\Phi^1(s)}_p}{\abs{t-s}^\alpha} \right]<M.
    \end{aligned}
\end{equation*}
\end{enumerate}
Condition (i) follows from Proposition \ref{Lp-estimate-Zd} and Assumption \ref{initial-value-Z} (3). We only need to prove condition (ii). According to Kolmogorov's criterion, it suffices to establish that for some $p>2$ and any $0\le s\le t\le T$,
\begin{equation}\label{lp-t}
    \begin{aligned}
         & \sup_{N\in \mathbb{N}}\mathbb{E}\left[ \norm{\Phi^1(t)-\Phi^1(s)}_p^p \right]\le M\left(\abs{t-s}^p +\abs{t-s}^{\frac{p}{2}}  \right).
    \end{aligned}
\end{equation}
By using an argument similar to  \eqref{Phi1-x-p}, we obtain that for some $p>2$ and any $0\le s\le t\le T$,
\begin{equation*}
    \begin{aligned}
         & \mathbb{E}\left[ \abs{\Phi_x^1(t)-\Phi_x^1(s)}^p\right]\\
\lesssim& \abs{t-s}^{p-1}\int^t_s \sum_{\N} \mathbb{E}\left[ \abs{\Phi^1_y(r)}^p \right]\,\d r +\abs{t-s}^{p-1}\int^t_s \mathbb{E}\left[ \abs{\Phi^1_x(r)}^p \right] \,\d r\\
+& \abs{t-s}^{\frac{p}{2}}+\abs{t-s}^{\frac{p}{2}-1}\int^t_s \mathbb{E}\left[ \abs{\Phi_x^1(r)}^p\right]\,\d r. 
    \end{aligned}
\end{equation*}
Hence, multiplying both sides by $\frac{1}{a^{\lvert x \rvert}}$, summing over $x$, and applying \eqref{(i)}, we obtain \eqref{lp-t}.

Note that $\mathscr{P}(\SSI)$ denotes the set of all probability measures on $\SSI$, and for any $\mu_1,\mu_2\in \mathscr{P}(\SSI)$, 
$$
\dLI(\mu_1,\mu_2)=\sup_{\norm{f}_{BL}\le 1} \abs{\int_{\SSI}f\mu_1(\d w) - \int_{\SSI}f\mu_2(\d w) },
$$
where $\norm{f}_{BL}:=\sup_{w\in \SSI} \abs{f(w)} + \sup_{w_1\neq w_2\in \SSI} \frac{\abs{f(w_1)-f(w_2)}}{\dI(w_1,w_2)}$. The metric $\dLI$ is the dual-Lipschitz metric on $\mathscr{P}(\SSI)$. Since $(\SSI, \dI)$ is a Polish space, the proof that $\{S^N\}_{N\in \mathbb{N}}$ is tight in $(\mathscr{P}(\SSI),\dLI)$ is the same as {\bf Step $2$} in the proof of Lemma \ref{tight-P-2}.

Let us recall that $\mathscr{P}_2(\SSI)$ denotes the Wasserstein space of order $2$, equipped with the $2$-Wasserstein distance
$$
\WI(\mu_1,\mu_2)=\left(\inf_{\tilde{\pi}\in \bm{\pi}(\mu_1,\mu_2)} \int_{\SSI\times \SSI} \dI(w_1,w_2)^2 \,\tilde{\pi}(\d w_1\d w_2) \right)^{\frac{1}{2}},\quad \mu_1,\mu_2\in \mathscr{P}_2(\SSI),
$$
where $\bm{\pi}(\mu_1,\mu_2)$ denotes the set of probability measures $\tilde{\pi}\in \mathscr{P}(\SSI\times \SSI)$ whose marginals are $\mu_1$ and $\mu_2$. Furthermore, considering Proposition \ref{Lp-estimate-Zd}, the proof that $\{S^N\}_{N\in \mathbb{N}}$ is tight in $(\mathscr{P}_2(\SSI),\WI)$ is analogous to {\bf Step $3$} in the proof of Lemma \ref{tight-P-2}. In fact, we can find a set $\mathcal{N}_\varepsilon \subset \mathscr{P}_2(\SSI)$ that is relatively compact in $(\mathscr{P}(\SSI),\dLI)$ and satisfies $\mathbb{P}\left(S^N \notin \mathcal N_\varepsilon\right)\le \varepsilon$. 
For $p=\p>2$, with $\p$ given in Assumption \ref{initial-value-Z}, let $$C_p:=\sup_{N\in \mathbb{N}}\mathbb{E}\left[\sup_{0\le t\le T} \norm{\Phi^1(t)}_p^p  \right]\lesssim 1,$$ and
\[
H_\varepsilon= \bigcap^{+\infty}_{m=1} \left\{  \nu\in \mathscr{P}_2(\SSI): \int \left(\sup_{0\le t\le T} \norm{w_t}_2\right)^2 \mathds{1}_{\left\{\sup_{0\le t\le T} \norm{w_t}_2\,\ge a_m\right\}}\nu(\d w) <\frac{1}{b_m}   \right\},
\]
where $a_m=m^{\frac{1}{p-2}}2^{\frac{m}{p-2}}$, $b_m=\frac{\varepsilon m}{C_p+1}$. Similarly to \eqref{H-e}, by Proposition \ref{Lp-estimate-Zd}, we have $\mathbb{P}(S^N\notin H_\varepsilon)\le \varepsilon$. By \cite[ Corollary $5.6$]{Carmona2018}, $\mathcal{N}_\varepsilon\cap H_\varepsilon $ is relatively compact in $(\mathscr{P}(\SSI),\WI)$. Hence, $\{ S^N \}_{N\in \mathbb{N}}$ is tight in  $(\mathscr{P}_2(\SSI),\WI)$.
\end{proof}

By Lemma \ref{tight-P-2-infty}, there exists a subsequence  $\{S^{N_k}\}_{k\in \mathbb{N}}$ such that $S^{N_k}\lawcon S$, as $k\rightarrow \infty$ ( for simplicity, we still denote the subsequence by $\{S^{N}\}_{N\in \mathbb{N}}$ ). By  Skorokhod's Theorem,  we can construct a probability space $(\hat \Omega,{\hat{\mathscr{F}}},\hat{\mathbb{P}})$ and random variables $\hat{S}^N$ and $\hat{S}$ such that $\hat S^N\laweq S^N$, $\hat{S}\laweq S$, and $\hat S^N\rightarrow\hat{S},\, \mathbb{\hat P}\text{-a.s.}$ in $(\mathscr{P}_2(\SSI),\WI)$, as $N\rightarrow \infty$. Similar to Lemma \ref{sphere-SN-S}, we present the following crucial estimates.

\begin{lemma}\label{vxvy-con}
    For any $x\in \mathbb{Z}^d$ and $0\le t\le T$, 
\begin{equation*}\label{SN=1}
    \begin{aligned}
         & \int \abs{v_x}^2 S^N(t)(\d v)=1, \quad \mathbb{P}\text{-\rm a.s.},
    \end{aligned}
\end{equation*}
\begin{equation}\label{S=1}
    \begin{aligned}
         &  \int \abs{v_x}^2 S(t)(\d v)=1,\quad \mathbb{P}\text{-\rm a.s.}.
    \end{aligned}
\end{equation}
Moreover, for any $p\ge 1$, $x,y \in \mathbb{Z}^d$, and $0\le t\le T$,
\begin{equation*}\label{vxvy-alpha-Z}
    \begin{aligned}
 & \mathbb{E}\left[ \abs{\int v_x v_yS^N(t)(\d v)-\int v_x v_yS(t)(\d v)}^{p} \right]\rightarrow 0.
    \end{aligned}
\end{equation*}
\end{lemma}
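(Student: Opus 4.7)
The argument will closely follow that of Lemma \ref{sphere-SN-S}, with the only new ingredient being to verify that the evaluation functionals $w \mapsto w_{t,x}$ and $w \mapsto w_{t,x}w_{t,y}$ behave nicely with respect to the topology of $(\SSI, \dI)$ induced by the weighted norm $\|\cdot\|_2$ on $\mathbb{H} = \ell^2_a$. The key quantitative input is the pointwise bound
\begin{equation*}
|w_{t,x}|^2 \le a^{|x|} \|w_t\|_2^2 \le a^{|x|} \dI(0,w)^2,\qquad \forall\, w\in \SSI,\, x\in \mathbb{Z}^d,\, 0\le t\le T,
\end{equation*}
so that for each fixed $x,y\in \mathbb{Z}^d$ and $0\le t\le T$ the maps $w \mapsto |w_{t,x}|^2$ and $w\mapsto w_{t,x}w_{t,y}$ are continuous on $(\SSI,\dI)$ with at most quadratic growth (up to the constants $a^{|x|}$ and $a^{(|x|+|y|)/2}$).

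For the first identity, since the solution to \eqref{the-new-model-Z} satisfies $\Phi(t)\in \mathcal Q$, for any fixed $x\in \mathbb{Z}^d$ and $0\le t\le T$ we have $\sum_{k=1}^N |\Phi_x^k(t)|^2 = N$, which gives $\int |v_x|^2 S^N(t)(\d v) = \frac{1}{N}\sum_{j=1}^N |\Phi_x^j(t)|^2 = 1$ $\mathbb{P}$-a.s.  For the second identity, working on the Skorokhod space $(\hat\Omega,\hat{\mathscr F},\hat{\mathbb{P}})$ with $\hat{S}^N\laweq S^N$, $\hat{S}\laweq S$, and $\hat{S}^N\to \hat{S}$ in $(\mathscr{P}_2(\SSI),\WI)$ $\hat{\mathbb{P}}$-a.s., the continuity and quadratic-growth bound above together with \cite[Theorem 7.12 (iv)]{Villani2003} yield
\begin{equation*}
\int |v_x|^2\,\hat{S}^N(t)(\d v) \longrightarrow \int |v_x|^2\,\hat{S}(t)(\d v),\qquad \hat{\mathbb{P}}\text{-a.s.},
\end{equation*}
for each fixed $0\le t\le T$. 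Combining this with $\int |v_x|^2\,\hat{S}^N(t)(\d v)=1$ $\hat{\mathbb{P}}$-a.s.\ (inherited from the first identity via $\hat{S}^N\laweq S^N$), and then transferring back through $\hat{S}\laweq S$, we obtain \eqref{S=1}.

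For the third assertion, note first that the Cauchy–Schwarz inequality together with \eqref{S=1} yields
\begin{equation*}
\Bigl|\int v_x v_y\,S^N(t)(\d v)\Bigr|\le 1,\qquad \Bigl|\int v_x v_y\,S(t)(\d v)\Bigr|\le 1,\qquad \mathbb{P}\text{-a.s.},
\end{equation*}
for any $x,y\in \mathbb{Z}^d$ and $0\le t\le T$. Applying \cite[Theorem 7.12 (iv)]{Villani2003} once more to the continuous function $w\mapsto w_{t,x}w_{t,y}$, which is dominated by $\tfrac12 a^{|x|\vee |y|}(|w_{t,x}|^2+|w_{t,y}|^2)\lesssim \dI(0,w)^2$, we get the $\hat{\mathbb{P}}$-a.s.\ convergence $\int v_xv_y\,\hat{S}^N(t)(\d v)\to \int v_xv_y\,\hat{S}(t)(\d v)$. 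Since these quantities are uniformly bounded by $1$, the dominated convergence theorem upgrades this to $L^p(\hat{\mathbb{P}})$-convergence for any $p\ge 1$; transferring back via the laws $\hat{S}^N\laweq S^N$, $\hat{S}\laweq S$, we obtain the desired limit.

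The main ``obstacle'' is merely bookkeeping: one must verify that the evaluation functionals on $\mathbb{H}$ are continuous and satisfy a growth estimate controlled by $\dI(0,\cdot)^2$, so that the characterization of 2-Wasserstein convergence in \cite[Theorem 7.12 (iv)]{Villani2003} (convergence of integrals of continuous functions with at most quadratic growth) applies. Once this is in place, the rest of the proof is a direct transcription of Lemma \ref{sphere-SN-S}.
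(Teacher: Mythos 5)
Your proposal is correct and follows essentially the same route as the paper: the paper's proof simply invokes Lemma \ref{sphere-SN-S} and notes the weighted bounds $\abs{w_{t,x}}^2\le a^{\abs{x}}\sup_{0\le t\le T}\norm{w_t}_2^2$ and $\abs{w_{t,x}w_{t,y}}\le (a^{\abs{x}}\vee a^{\abs{y}})\sup_{0\le t\le T}\norm{w_t}_2^2$, which is exactly the bookkeeping you carry out. The only cosmetic slip is the intermediate bound $\tfrac12 a^{|x|\vee|y|}(|w_{t,x}|^2+|w_{t,y}|^2)$, which is not quite what Young's inequality gives, but the final domination by a constant times $\dI(0,w)^2$ is valid, so nothing is affected.
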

\begin{proof}
The proof follows the same line as that of Lemma \ref{sphere-SN-S}. The only difference is that,  for any $x,y\in \mathbb{Z}^d$, $w\in \SSI$, $\abs{w_{t,x}}^2\le a^{\abs{x}}\cdot\sup_{0\le t\le T}\norm{w_t}_2^2$, \, $\abs{w_{t,x} w_{t,y}}\le (a^{\abs{x}}\vee a^{\abs{y}})\cdot \sup_{0\le t\le T}\norm{w_t}_2^2 $.
\end{proof}

Thanks to Proposition~\ref{Lp-estimate-Zd} and Lemma~\ref{vxvy-con}, the argument for the existence of martingale solutions to the mean-field SDE~\eqref{mean-field-limit-Z} involves only minor technical differences from the finite volume case. Therefore, we omit the details.

We introduce two sets of probability measures. Specifically, we define
\begin{equation}\label{Pone-H}
    \begin{aligned}
         & \mathscr{P}_{\O}(\mathbb{H}):=\left\{ \mu\in \mathscr{P}(\mathbb{H}) \Big| \int \abs{v_x}^2 \d\mu(v)\le 1,\ \text{for all}\ x\in \mathbb{Z}^d\right\}.
    \end{aligned}
\end{equation}
We further define
\begin{equation}\label{Pone-Phi-I}
    \begin{aligned}
         &\mathscr{P}_{\O,\Psi}(\SSI):=\left\{ \mathbf{M}\in \mathscr{P}(\SSI) \Big|\ \mathbf{M}\circ \pi^{-1}_t\in \mathscr{P}_{\O}(\mathbb{H}),\ \text{for all}\  0\le t\le T\right\}.
    \end{aligned}
\end{equation}
By Theorem \ref{Main-Theorem-Z}, it is clear that the martingale solution to the mean-field SDE \eqref{mean-field-limit-Z} with initial distribution $\mu_0\in \mathscr{P}_{\mathcal Q,\p}$ belongs to $\mathscr{P}_{1,\Psi}(\SSI)$.

Let $S$ denote a martingale solution to the mean-field SDE \eqref{mean-field-limit-Z}.  To establish the uniqueness of the martingale solution as well as the existence and uniqueness of a probabilistically strong solution to the mean-field SDE \eqref{mean-field-limit-Z}, we introduce the following linear SDE :
\begin{equation}\label{fix-mu-Zd}
    \left\{\begin{aligned}
         d \widetilde \Psi_x(t)= &2\kappa\sum_{\N}\Big(  \widetilde \Psi_y(t)-\bm{h}_{x,y}(t)\widetilde {\Psi}_x(t)\Big)\,\d t-\frac{1}{2}\widetilde {\Psi}_x(t)\,\d t+\d\widetilde{W}_x(t),\quad 0\le t\le T,\\
\widetilde \Psi_x(0)=& \widetilde \psi_x, \quad x\in \mathbb{Z}^d, 
\end{aligned}\right.
\end{equation}
where $\widetilde{\psi}\laweq S(0)$, $\bm{h}_{x,y}(t):=\int v_x v_y S(t)(\d v)$, with $S(t)$ denoting the law at time $t$ of the solution $\Psi$ to \eqref{mean-field-limit-Z}.  

We show that the pathwise uniqueness holds for both the SDE \eqref{fix-mu-Zd} and the mean-field SDE \eqref{mean-field-limit-Z}. 
\begin{lemma}\label{fixed-mu-le-Zd}
For any $S\in \mathscr{P}_{\O,\Psi}(\SSI)$,  the pathwise uniqueness holds for the SDE \eqref{fix-mu-Zd}.
\end{lemma}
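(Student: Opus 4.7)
The plan is to exploit the fact that \eqref{fix-mu-Zd} is a \emph{linear} equation with uniformly bounded coefficients, so that pathwise uniqueness reduces to a Gronwall estimate in the weighted Hilbert space $\mathbb{H}=\ell^2_a$. The crucial input is the assumption $S\in \mathscr{P}_{\O,\Psi}(\SSI)$: by the definition \eqref{Pone-Phi-I}--\eqref{Pone-H}, $\int |v_x|^2\, S(t)(\d v)\le 1$ for every $x\in\mathbb{Z}^d$ and $0\le t\le T$, so the Cauchy--Schwarz inequality gives
\begin{equation*}
|\bm{h}_{x,y}(t)|=\left|\int v_x v_y\, S(t)(\d v)\right|\le 1, \qquad \forall\, x\sim y,\ 0\le t\le T.
\end{equation*}
Thus the coefficients of \eqref{fix-mu-Zd} are uniformly bounded, and the equation is essentially a discrete heat equation on $\mathbb{Z}^d$ with a bounded zero-order perturbation and additive noise.

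Next, take two solutions $\widetilde\Psi^{(1)}, \widetilde\Psi^{(2)}$ to \eqref{fix-mu-Zd} defined on the same probability space, driven by the same Brownian motions $(\widetilde W_x)_{x\in\mathbb{Z}^d}$ and starting from the same initial datum $\widetilde\psi$. Their difference $U_x(t):=\widetilde\Psi^{(1)}_x(t)-\widetilde\Psi^{(2)}_x(t)$ satisfies the \emph{deterministic} (pathwise) ODE
\begin{equation*}
\frac{\d}{\d t} U_x(t)=2\kappa\sum_{y:y\sim x}\bigl(U_y(t)-\bm{h}_{x,y}(t)U_x(t)\bigr)-\tfrac{1}{2}U_x(t),\qquad U_x(0)=0,
\end{equation*}
since the Brownian terms cancel. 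I would apply the Newton--Leibniz formula to $|U_x(t)|^2$, multiply by the weight $a^{-|x|}$, and sum over $x\in\mathbb{Z}^d$ to obtain, for every $0\le t\le T$,
\begin{equation*}
\|U(t)\|_2^2 = \int_0^t \sum_{x\in\mathbb{Z}^d}\frac{2}{a^{|x|}} U_x(s)\Bigl[2\kappa\sum_{y:y\sim x}(U_y(s)-\bm{h}_{x,y}(s)U_x(s))-\tfrac12 U_x(s)\Bigr]\,\d s.
\end{equation*}
Then I would bound each term by Young's inequality, $2|U_x U_y|\le |U_x|^2+|U_y|^2$, together with $|\bm{h}_{x,y}(s)|\le 1$. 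For the cross terms $U_x U_y$ with $y\sim x$, the standard trick $a^{-|x|}\le a\cdot a^{-|y|}$ and a change of summation variable give
\begin{equation*}
\sum_x\sum_{y:y\sim x}\frac{1}{a^{|x|}}|U_y|^2\ \le\ 2da\sum_{y\in\mathbb{Z}^d}\frac{1}{a^{|y|}}|U_y|^2 = 2da\,\|U\|_2^2,
\end{equation*}
so the right-hand side is bounded by $C(d,\kappa,a)\int_0^t\|U(s)\|_2^2\,\d s$. Gronwall's inequality then forces $\|U(t)\|_2=0$ for every $0\le t\le T$ almost surely, hence $\widetilde\Psi^{(1)}\equiv\widetilde\Psi^{(2)}$ in $C([0,T];\mathbb{H})$ by continuity.

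The only delicate point I anticipate is the justification of the interchange of sum and integral above, which requires $U(\cdot)\in C([0,T];\mathbb{H})$ to have moments allowing Fubini, i.e.\ uniform-in-time control of $\sum_x a^{-|x|}|U_x(s)|^2$. This follows from the fact that the solutions $\widetilde\Psi^{(i)}$ belong to $C([0,T];\mathbb{H})$ by hypothesis, combined with an a priori moment estimate analogous to Proposition~\ref{Lp-estimate-Zd} for the linear equation \eqref{fix-mu-Zd} (in fact simpler, since the coefficients are bounded). No novel difficulty arises beyond this routine technicality.
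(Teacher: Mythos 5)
Your proposal is correct and follows essentially the same route as the paper: the key point in both is that $S\in \mathscr{P}_{\O,\Psi}(\SSI)$ gives $\abs{\bm{h}_{x,y}(t)}\le 1$, so \eqref{fix-mu-Zd} is a linear SDE with additive noise and bounded (Lipschitz) coefficients on $\mathbb{H}$, and pathwise uniqueness follows; the paper simply invokes the Lipschitz condition, while you spell out the resulting weighted-$\ell^2_a$ Gronwall argument (exactly the computation the paper performs for Lemma \ref{not-fix-mu-Zd}), which is a valid and, if anything, more explicit justification.
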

\begin{proof}
Since $S\in \mathscr{P}_{\O,\Psi}(\SSI)$, we have $\int \abs{v_x}^2 S(t)(\d v)\le 1$, for all $x\in \mathbb{Z}^d$, $0\le t\le T$. By  H\"older's inequality, for any $x,y\in \mathbb{Z}^d$ and $0\le t\le T$, we have $\abs{\bm{h}_{x,y}(t)}\le 1$. Therefore, the SDE \eqref{fix-mu-Zd} satisfies the Lipschitz condition, and the pathwise uniqueness holds.  
\end{proof}
\begin{lemma}\label{not-fix-mu-Zd}
The pathwise uniqueness holds in $\mathscr{P}_{\O,\Psi}(\SSI)$ for the mean-field SDE \eqref{mean-field-limit-Z}.
\end{lemma}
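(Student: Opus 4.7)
The plan is to adapt the finite-volume argument of Lemma \ref{not-fix-mu} to the weighted $\ell^2$ space $\mathbb{H}=\ell_a^2$, with the new ingredient being the handling of the exponential weights $a^{-|x|}$ under sums over $\mathbb{Z}^d$. Let $\Psi^{(1)},\Psi^{(2)}$ be two weak solutions of \eqref{mean-field-limit-Z} defined on the same probability space, driven by the same Brownian motions $\{W_x\}_{x\in\mathbb{Z}^d}$ and starting from the same initial data, whose laws both lie in $\mathscr{P}_{\O,\Psi}(\SSI)$. Set $\Delta_x:=\Psi_x^{(1)}-\Psi_x^{(2)}$. Since the noise cancels in the difference, $\Delta$ satisfies a coordinatewise pathwise ODE, and the Newton--Leibniz formula applied to $|\Delta_x|^2$ yields, exactly as in \eqref{U-0}, a decomposition into a linear interaction term $U_1$ and a nonlinear mean-field term $U_2$ (plus the coercive $-\tfrac{1}{2}\Delta_x$ contribution).

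For $U_1$, Young's inequality as in \eqref{U-1} gives, after taking expectation, a bound by $\int_0^t\mathbb{E}[|\Delta_x|^2]\,\d s$ plus $\sum_{y\sim x}\int_0^t\mathbb{E}[|\Delta_y|^2]\,\d s$. For $U_2$, the hypothesis $\law(\Psi^{(i)})\in\mathscr{P}_{\O,\Psi}(\SSI)$ gives $\mathbb{E}[|\Psi_x^{(i)}(s)|^2]\le 1$ for all $x\in\mathbb{Z}^d$ and $0\le s\le T$, and hence, by Cauchy--Schwarz, $|\mathbb{E}[\Psi_x^{(i)}\Psi_y^{(i)}]|\le 1$ uniformly. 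Using the splitting
\[
\mathbb{E}[\Psi_x^{(1)}\Psi_y^{(1)}]\Psi_x^{(1)} - \mathbb{E}[\Psi_x^{(2)}\Psi_y^{(2)}]\Psi_x^{(2)}
= \mathbb{E}[\Psi_x^{(1)}\Psi_y^{(1)}]\Delta_x + \mathbb{E}\big[\Delta_x\Psi_y^{(1)}+\Psi_x^{(2)}\Delta_y\big]\Psi_x^{(2)}
\]
and Cauchy--Schwarz plus Young as in \eqref{U-2} bounds the expectation of $U_2$ by the same type of expression. The new step is to multiply the resulting pointwise-in-$x$ inequality by $a^{-|x|}$ and sum over $x\in\mathbb{Z}^d$. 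The nearest-neighbor sums are handled via the elementary estimate $a^{-|x|}\le a\cdot a^{-|y|}$ whenever $x\sim y$, which yields $\sum_x a^{-|x|}\sum_{y\sim x}\mathbb{E}[|\Delta_y|^2] \le 2da\sum_y a^{-|y|}\mathbb{E}[|\Delta_y|^2]$.

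Putting everything together produces
\[
\sum_{x\in\mathbb{Z}^d}\frac{1}{a^{|x|}}\mathbb{E}\big[|\Delta_x(t)|^2\big] \;\lesssim\; \int_0^t \sum_{x\in\mathbb{Z}^d}\frac{1}{a^{|x|}}\mathbb{E}\big[|\Delta_x(s)|^2\big]\,\d s, \qquad 0\le t\le T,
\]
with an implicit constant depending only on $d$, $\kappa$, $a$. The Fubini/dominated-convergence interchanges of $\sum_x$, $\mathbb{E}$, and $\int_0^t$ needed to derive this inequality are justified by the uniform bound $\mathbb{E}\bigl[\|\Delta(s)\|_2^2\bigr]\le 4\sum_x a^{-|x|}<\infty$, which is precisely the content of the $\mathscr{P}_{\O,\Psi}(\SSI)$ assumption. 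Gronwall's inequality then gives $\mathbb{E}\bigl[\|\Delta(t)\|_2^2\bigr]=0$ for every $t\in[0,T]$, and the continuity of $\Delta$ in $\mathbb{H}$ upgrades this to $\Psi^{(1)}=\Psi^{(2)}$, $\mathbb{P}$-a.s.\ in $\SSI$. The only genuine subtlety, relative to the finite-volume case, is to verify that the exponential weights propagate correctly through the nearest-neighbor sums; once this is in place, the Cauchy--Schwarz bounds afforded by $\mathscr{P}_{\O,\Psi}(\SSI)$ make the argument essentially parallel to Lemma \ref{not-fix-mu}.
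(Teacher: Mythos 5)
Your proposal is correct and follows essentially the same route as the paper: derive the coordinatewise inequality exactly as in the finite-volume Lemma \ref{not-fix-mu} (using $|\mathbb{E}[\Psi_x^{(i)}\Psi_y^{(i)}]|\le 1$ from the $\mathscr{P}_{\O,\Psi}(\SSI)$ hypothesis), multiply by $a^{-|x|}$, sum over $\mathbb{Z}^d$ via $a^{-|x|}\le a\,a^{-|y|}$ for $x\sim y$, apply Gronwall, and upgrade to pathwise equality by continuity in $\mathbb{H}$. The extra care you take with the summability/Fubini justification is sound but not a departure from the paper's argument.
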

\begin{proof}
Let $\Psi^{(1)}$ and $\Psi^{(2)}$ be any two weak solutions to the mean-field SDE \eqref{mean-field-limit-Z}, defined on the same probability space and driven by the same Brownian motion, such that the laws of $\Psi^{(1)}$ and $\Psi^{(2)}$ belong to $\mathscr{P}_{\O, \Psi}(\SSI)$.  Similar to the proof of Lemma \ref{not-fix-mu},  we conclude that for any $0\le t\le T$,
\begin{equation}\label{unique-path-Z}
    \begin{aligned}
           &\mathbb{E}\left[
\abs{\Psi_x^{(1)}-\Psi_x^{(2)}}^2(t)\right]\\
\lesssim&  \sum_{\N}\int^t_0  \mathbb{E}\left[\abs{ \Psi_y^{(1)}-\Psi_y^{(2)} }^2(s)\right]\,\d s+\int^t_0  \mathbb{E}\left[\abs{ \Psi_x^{(1)}-\Psi_x^{(2)} }^2(s)\right]\,\d s. 
    \end{aligned}
\end{equation}
Multiplying both sides of \eqref{unique-path-Z} by $\frac{1}{a^{\abs{x}}}$, summing over $x$, using $\frac{1}{a^{\abs{x}}}\le a \frac{1}{a^{\abs{y}}}$ for $x\sim y$ and Gronwall's inequality, we obtain that for any $0\le t\le T$,
\begin{equation}\label{Z-path-un}
    \begin{aligned}
         & \mathbb{E}\left[\norm{\Psi^{(1)}-\Psi^{(2)}}_2^2(t)\right]=0.
    \end{aligned}
\end{equation}
By \eqref{Z-path-un} and $\Psi^{(i)}(\cdot)\in C([0,T];\mathbb{H})$ a.s., $i=1,2$, we obtain that $\Psi^{(1)}(t)=\Psi^{(2)}(t)$ for all $0\le t\le T$\  $\text{a.s.}$.
\end{proof}
\begin{remark}
Similar to the finite volume case, the bound $1$ in the definition of the measure set $\mathscr{P}_{\O}(\mathbb{H})$ can in fact be replaced by any constant $C \geq 1$.
\end{remark}

By combining the Yamada-Watanabe theorem from \cite{Rockner2008} with the proof of \cite[Lemma 2.1]{Huang2021}, it follows that the conclusion of the Yamada-Watanabe theorem in \cite[Lemma 2.1]{Huang2021} remains valid in the infinite volume case. Applying the argument for the uniqueness of the martingale solution to \eqref{mean-field-limit}, we deduce that the martingale solution to the mean-field SDE \eqref{mean-field-limit-Z} is unique.  More precisely, for any two martingale solutions $S_1, S_2 \in \mathscr{P}_{\O, \Psi}(\SSI)$ to the mean-field SDE~\eqref{mean-field-limit-Z}, if $S_1(0) = S_2(0)$, then $S_1 = S_2$.  Therefore, $S(\widetilde{\omega})$ is independent of $\widetilde{\omega}$, and we still denote it by $S$.

The proof of $\lim_{N\rightarrow \infty} \mathbb{E}\left[\WIT(S^N, S)\right] = 0$ in the infinite volume case is analogous to that in the finite volume case, except that $\sup_{0\le t \le T}\abs{w_t}^2$ is replaced by $\sup_{0\le t \le T}\norm{w_t}_2^2$.

This completes the proof of Theorem \ref{Main-Theorem-Z}. \qed

\subsection{Convergence  rate}  The proof of Theorem \ref{strong-solution-Z} follows the same line of argument as that of Theorem \ref{strong-solution}. Therefore, we omit the details and only present the necessary estimates to highlight the differences arising in the current setting.

As the conclusion of the Yamada-Watanabe theorem in \cite[Lemma 2.1]{Huang2021} holds in the infinite volume case, by an argument analogous to that in the finite volume case, we deduce that there exists a unique probabilistically strong solution to the mean-field SDE \eqref{mean-field-limit-Z}.

Similar to the proof of Proposition \ref{Lp-estimate-Zd}, by combining \eqref{S=1}, we can obtain that for any $p\ge 2$,
\begin{equation}\label{Psi-sup-1-p-Z}
    \begin{aligned}
         & \mathbb{E}\left[\sup_{0\le t\le T}\norm{\Psi(t)}_p^p\right]\le \mathbb{E}\left[ \sum_{x\in \mathbb{Z}^d}\sup_{0\le s\le t}\frac{1}{a^{\abs{x}}}\abs{\Psi_x(s)}^{p} \right]  \le C(d,p,\kappa,T) \left(\mathbb{E}\left[\norm{\psi}_p^{p}\right] + 1\right).
    \end{aligned}
\end{equation}

In the following, we provide two estimates to show the differences arising in the current setting.  The first concerns the quantity $A_2$, as introduced in {\bf Step 2} of the proof of Theorem~\ref{strong-solution}.

By {\bf Step $2$} in the proof of  Theorem \ref{strong-solution}, we obtain that for any $0\le t\le T$,
\begin{equation}\label{SN-S-Zd}
    \begin{aligned}
         & \mathbb{E}\left[ \sup_{0\le s\le t} \abs{\Phi_x^i(s)-\Psi_x^i(s)}^2 \right]\\
\lesssim& \sum_{\N}\int^t_0 \mathbb{E}\left[\abs{\frac{\sum^N_{k=1} \Phi_x^i(s)\Phi_y^i(s) }{N}-\mathbb{E}[\Psi_x^i(s)\Psi_y^i(s)] }^2 \abs{\Phi_x^i(s)}^2\right]\,\d s\\
&+\mathbb{E}\left[ \abs{\varphi_x^i-\psi_x^i}^2 \right] +\frac{\mathbb{E}\left[\sup_{0\le s\le t} \abs{\Phi_x^i(s)}^2\right]}{N}+\int^t_0 \mathbb{E}\left[ \sup_{0\le r\le s} \abs{\Phi_x^i(r)-\Psi_x^i(r)}^2 \right]\,\d s.
    \end{aligned}
\end{equation}  

By multiplying both sides of \eqref{SN-S-Zd} by $\frac{1}{a^{\abs{x}}}$, summing over $x$, we will consider the following estimate:
\begin{equation*}
    \begin{aligned}
         & \sum_{x\in \mathbb{Z}^d}\frac{1}{a^{\abs{x}}}\mathbb{E}\left[\sum_{\N} \abs{\frac{\sum^N_{k=1} \Phi_x^i(s)\Phi_y^i(s) }{N}-\mathbb{E}[\Psi_x^i(s)\Psi_y^i(s)] }^2 \abs{\Phi_x^i(s)}^2\right]\\
\lesssim& \mathbb{E}\left[\sum_{x\in \mathbb{Z}^d}\frac{1}{a^{\abs{x}}} \sum_{\N} \abs{\frac{\sum^N_{k=1} \Phi_x^i(s)\Phi_y^i(s) }{N}-\mathbb{E}[\Psi_x^i(s)\Psi_y^i(s)] }^{2q} \right]^{\frac{1}{q}}\cdot\mathbb{E}\left[\norm{\Phi^i(s)}_{2p}^{2p}  \right]^{\frac{1}{p}}, \quad 0\le s\le t\le T,
    \end{aligned}
\end{equation*}
where $1/p+1/q=1$ with $p={\p}/{2}$, $\p$ given in Assumption \ref{initial-value-Z}. Note that 
$$\frac{1}{N}\sum^N_{k=1} \Phi_x^i(t)\Phi_y^i(t)=\int v_x v_y S^N(t)(\d v),\quad\mathbb{E}[\Psi^i_x(t)\Psi_y^i(t)]=\int v_x v_y S(t)(\d v).$$ Combining Gronwall's inequality, \eqref{Psi-sup-1-p-Z}, Proposition \ref{Lp-estimate-Zd}, and the dominated convergence theorem, we  will complete the proof of \eqref{Convergence-2-Z} in Theorem \ref{strong-solution-Z}.

To prove \eqref{rate-Z} in Theorem~\ref{strong-solution-Z}, we proceed as in {\bf Step~3} of the proof of Theorem~\ref{strong-solution}. In particular, it is necessary to further decompose $A_2$ as $A_2 = \sum_{k=1}^{4} A_{2k}$. In the following, we focus on the treatment of the term $A_{23}$. Note that
\begin{equation}\label{A23-sec4}
    \begin{aligned}
         \frac{1}{N}\sum^{N}_{i=1}\abs{A_{23}}
\lesssim&  \left[\frac{1}{N}\sum^N_{k=1} \abs{\Phi^k_y-\Psi^k_y}^2\right]+\left[\frac{1}{N}\sum^N_{i=1}\left(\abs{\Psi^i_x}^2-\mathbb{E}\left[\abs{\Psi^i_x}^2\right]\right) \right]^2 \left[\frac{1}{N}\sum^N_{i=1} \abs{\Phi^i_x-\Psi^i_x}^2\right]\\
&+ \left[\frac{1}{N}\sum^N_{i=1} \abs{\Phi^i_x-\Psi^i_x}^2\right].
    \end{aligned}
\end{equation} 
Multiplying both sides of \eqref{A23-sec4} by $\frac{1}{a^{|x|}}$ and then summing over $x$, for the second term on the right-hand side we have
\begin{equation*}
    \begin{aligned}
         & \sum_{x\in \mathbb{Z}^d}\frac{1}{a^{\abs{x}}}\mathbb{E}\left[ \abs{\frac{1}{N}\sum^N_{i=1}U_x^i}^2 \frac{1}{N}\sum^N_{i=1}\abs{\Phi_x^i-\Psi_x^i}^2  \right]\\
\lesssim& \left(\sum_{x\in \mathbb{Z}^d}\frac{1}{a^{\abs{x}}}  \mathbb{E}\left[\abs{\frac{1}{N}\sum^N_{i=1}U_x^i}^4 \right]\right)^{\frac{1}{2}} \left(\sum_{x\in \mathbb{Z}^d}\frac{1}{a^{\abs{x}}}  \mathbb{E}\left[ \abs{\frac{1}{N}\sum^N_{i=1}\abs{\Phi_x^i-\Psi_x^i}^2 }^2\right]\right)^{\frac{1}{2}}\\
\lesssim& \frac{1}{N}\cdot \left( \frac{1}{N} \sum_{i=1}^N \left(\norm{\Phi^i}_4^4+ \norm{\Psi^i}_4^4 \right)\right)^{\frac{1}{2}}\lesssim \frac{1}{N},
    \end{aligned}
\end{equation*}
where $U^i_x=\abs{\Psi_x^i}^2-\mathbb{E}[|\Psi_x^i|^2]$. The adjustments for $A_{21}$, $A_{22}$ and $A_{24}$ are analogous to those for $A_{23}$. Finally, we will complete the proof of \eqref{rate-Z} in Theorem \ref{strong-solution-Z}.

\section{Stationary measure}\label{Invariant measure}
In this section, we focus on the stationary measures to the mean-field SDEs \eqref{mean-field-limit} and \eqref{mean-field-limit-Z}. First, we study the existence of stationary measures for these mean-field SDEs by reducing the problem to the discrete stochastic heat equation and verifying the corresponding self-consistency condition. 
Second, we investigate the large $N$ behavior of the invariant measures for \eqref{the-new-model} and \eqref{the-new-model-Z} by coupling the respective Langevin dynamics with the mean-field SDEs \eqref{mean-field-limit} and \eqref{mean-field-limit-Z}, and employing It\^o's calculus. Finally, we prove that there exists at most one stationary measure to the mean-field SDEs \eqref{mean-field-limit} and \eqref{mean-field-limit-Z} by directly comparing any two stationary solutions.

\subsection{Existence of the stationary measures}
In this section, we study the existence of stationary measures for the mean-field SDE \eqref{mean-field-limit-n-i}. For the finite volume case, we consider the stationary measure in $\mathscr{P}_{\mathrm{const}}(\mathbb{R}^{\abs{\Lambda}})$ defined in \eqref{Pconst-i} and verify the corresponding self-consistency condition (see \eqref{fixed-poit} below). For the infinite volume case, when $\kappa < \kappa_c$, we carry out an analysis similar to the finite volume case. For $\kappa \ge \kappa_c$, we follow the argument presented in \cite[Proposition 3.5]{Shen2023} to establish the existence of stationary measures  for the mean-field SDE \eqref{mean-field-limit-Z}.

We first prove the following Lemma \ref{un-N}, which provides uniform in $N$ $p$-th moment estimates for the spin $O(N)$ measure defined by \eqref{spin-O-N}.  To this end, we provide a proposition that establishes the existence of invariant measures for the Langevin dynamics \eqref{the-new-model} and, since the proof is similar to that of \cite[Lemma~3.5 ($M = \mathbb{S}^{N-1}$)]{Shen2024}, we omit it here.
\begin{proposition}\label{invariant-measures}
    The spin $O(N)$ measure \eqref{spin-O-N} is invariant under the finite volume dynamics  \eqref{the-new-model}. 
\end{proposition}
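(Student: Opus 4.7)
The plan is to identify the SDE \eqref{the-new-model} as the intrinsic Langevin dynamics on the compact product manifold $\mathcal S = (\mathbb S^{N-1}(\sqrt N))^{\Lambda}$ associated with the Gibbs measure $\mu_{\Lambda,N,\kappa}$, and then invoke the standard fact that such dynamics leave their underlying Gibbs measure invariant through the symmetry of the generator in $L^2(\mu_{\Lambda,N,\kappa})$.

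First I would decompose the right-hand side of \eqref{the-new-model} into its ``noise'' and ``drift'' parts. For each $x \in \Lambda$ let $P(\Phi_x) := I_N - N^{-1}\Phi_x \Phi_x^{\top}$ denote the orthogonal projection of $\mathbb R^N$ onto the tangent space $T_{\Phi_x} \mathbb S^{N-1}(\sqrt N)$. A direct algebraic check shows that
\begin{equation*}
\d W_x^i - \frac{\sum_{k=1}^N \Phi_x^k\, \d W_x^k}{N}\Phi_x^i = \bigl[P(\Phi_x)\,\d W_x\bigr]_i,
\end{equation*}
and the Itô correction $-\tfrac{N-1}{2N}\Phi_x^i\,\d t$ is precisely the one required to convert $P(\Phi_x)\,\d W_x$ into a Stratonovich increment that drives a Brownian motion on $\mathbb S^{N-1}(\sqrt N)$, by the classical ambient-space construction of spherical Brownian motion.

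Next I would identify the remaining drift with half of the projected gradient of the interaction potential. Writing $\mathcal V(\Phi) := 2\kappa \sum_{x\sim y}\langle \Phi_x,\Phi_y\rangle$ as in the derivation following \eqref{the-new-model-i}, one has $\partial_{\Phi_x^i}\mathcal V = 4\kappa \sum_{y\sim x}\Phi_y^i$ (the extra factor $2$ coming from the convention that $\sum_{x\sim y}$ runs over all ordered neighbouring pairs), so that
\begin{equation*}
\tfrac12 \bigl[P(\Phi_x)\nabla_{\Phi_x}\mathcal V\bigr]_i = 2\kappa \sum_{y\sim x}\Bigl(\Phi_y^i - \frac{\langle \Phi_x,\Phi_y\rangle}{N}\Phi_x^i\Bigr),
\end{equation*}
which matches the first drift term on the right-hand side of \eqref{the-new-model}. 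Hence \eqref{the-new-model} is the It\^o formulation of the intrinsic Stratonovich equation $\d\Phi_x = \tfrac12 P(\Phi_x)\nabla_{\Phi_x}\mathcal V\,\d t + P(\Phi_x)\circ \d W_x$ on the Riemannian product manifold $\mathcal S$, and Proposition \ref{invariance-SDE} already guarantees that the solution stays on $\mathcal S$.

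Finally, applying It\^o's formula to $f \in C^2(\mathcal S)$ yields the generator
\begin{equation*}
\mathscr L f = \tfrac12 \Delta_{\mathcal S} f + \tfrac12 \langle \nabla_{\mathcal S}\mathcal V,\, \nabla_{\mathcal S} f\rangle_{\mathcal S},
\end{equation*}
where $\Delta_{\mathcal S}$ and $\nabla_{\mathcal S}$ denote the Laplace-Beltrami operator and the Riemannian gradient on $\mathcal S$. Integration by parts on the compact manifold $\mathcal S$ against the reference measure $\prod_z \d\lambda_{\mathbb S^{N-1}(\sqrt N)}$, combined with $\mu_{\Lambda,N,\kappa} \propto e^{\mathcal V}\prod_{z\in\Lambda}\d\lambda_{\mathbb S^{N-1}(\sqrt N)}(\Phi_z)$, gives $\int_{\mathcal S} \mathscr L f\,\d\mu_{\Lambda,N,\kappa} = 0$ for every smooth $f$, which by duality is exactly the invariance of $\mu_{\Lambda,N,\kappa}$ under the Markov semigroup of \eqref{the-new-model}. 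The main technical obstacle lies in the identification in the first step: making rigorous that the particular ambient It\^o equation with projection $P(\Phi_x)$ really produces intrinsic Brownian motion on each $\mathbb S^{N-1}(\sqrt N)$, including the correct Itô-Stratonovich correction. Once this standard identification is in hand, the invariance reduces to Riemannian integration by parts on a compact product of spheres, exactly along the lines of \cite[Lemma 3.5]{Shen2024}.
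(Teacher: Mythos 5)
Your proposal is correct and follows essentially the route the paper relies on: the paper omits the argument, citing \cite[Lemma 3.5]{Shen2024} (with $M=\mathbb{S}^{N-1}$), which is exactly the standard identification of \eqref{the-new-model} as the Riemannian Langevin dynamics for $\mu_{\Lambda,N,\kappa}$ — projected noise with the $\tfrac{N-1}{2N}$ It\^o correction giving spherical Brownian motion, drift equal to $\tfrac12 P(\Phi_x)\nabla_{\Phi_x}\mathcal V$ — followed by symmetry of the generator, i.e.\ integration by parts against $e^{\mathcal V}\prod_z\d\lambda_{\mathbb S^{N-1}(\sqrt N)}$ on the compact product of spheres. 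Your computations (the projection identity, the It\^o--Stratonovich correction for the radius-$\sqrt N$ sphere, and the factor bookkeeping for the ordered sum $\sum_{x\sim y}$) all check out.
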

Then, we have the following lemma.
\begin{lemma}\label{un-N}
If $\kappa > 0$ and $p > 2$ satisfy 
\begin{equation}\label{p-mon}
    \begin{aligned}
         & \kappa<\frac{p}{16d(p-1)^{1-\frac{1}{p}}+8dp},
    \end{aligned}
\end{equation}
then the field $\mu_{\Lambda,N,\kappa}$ defined by \eqref{spin-O-N}, satisfies
\begin{equation*}
    \begin{aligned}
         & \sum_{x\in \Lambda}\int \abs{\Phi^i_x}^p \d \mu_{\Lambda,N,\kappa} (\Phi)\lesssim 1,
    \end{aligned}
\end{equation*}
where the implicit constant depends on $\abs{\Lambda}$, but is independent of $N$.
\end{lemma}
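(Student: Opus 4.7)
\textbf{Proof plan for Lemma \ref{un-N}.} The key input is Proposition \ref{invariant-measures}, which says $\mu_{\Lambda,N,\kappa}$ is stationary for the Langevin SDE \eqref{the-new-model}. The plan is to start the dynamics from $\mu_{\Lambda,N,\kappa}$, apply It\^o's formula to $|\Phi^i_x|^p$, and exploit stationarity to turn the differential equality into a closed scalar inequality for $a:=\int |\Phi^i_x|^p\,\d\mu_{\Lambda,N,\kappa}$, which (by translation invariance of $\mu_{\Lambda,N,\kappa}$ on the torus $\Lambda_\L$ and by the $O(N)$-symmetry that makes $a$ independent of $i$) does not depend on $x$ or $i$.

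Concretely, I would reuse the It\^o expansion \eqref{Ito-Lp} already written out in the proof of Proposition \ref{Lp-estimate}. Taking $\mathbb{E}_{\mu_{\Lambda,N,\kappa}}$ of both sides, the martingale part drops out and, by stationarity, $\tfrac{d}{dt}\mathbb{E}[|\Phi^i_x|^p]=0$. Hence
\begin{equation*}
\frac{p(N+p-2)}{2N}\, a \;=\; 2\kappa p\sum_{\N}\mathbb{E}\!\left[|\Phi^i_x|^{p-2}\Phi^i_x\Phi^i_y\right] \;-\; 2\kappa p\sum_{\N}\mathbb{E}\!\left[\tfrac{\sum_k \Phi^k_x\Phi^k_y}{N}|\Phi^i_x|^p\right] \;+\; \frac{p(p-1)}{2}\,\mathbb{E}\!\left[|\Phi^i_x|^{p-2}\right].
\end{equation*}
Now I would bound each term by the target quantity $a$: for the first, H\"older gives $\mathbb{E}[|\Phi^i_x|^{p-2}\Phi^i_x\Phi^i_y]\le a^{(p-1)/p}a^{1/p}=a$; for the second, the sphere constraint \eqref{sphere-holder} yields $|\tfrac1N\sum_k \Phi^k_x\Phi^k_y|\le 1$, so it is bounded in absolute value by $a$; for the third, H\"older gives $\mathbb{E}[|\Phi^i_x|^{p-2}]\le a^{(p-2)/p}$. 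This collapses everything to the scalar inequality
\begin{equation*}
\left(\frac{N+p-2}{2N}-8\kappa d\right)a \;\le\; \frac{p-1}{2}\,a^{(p-2)/p}.
\end{equation*}

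The smallness assumption on $\kappa$ (which in particular ensures $8\kappa d<\tfrac12$) makes the parenthesis on the left strictly positive and bounded below uniformly in $N$. Dividing by $a^{(p-2)/p}$ (the case $a=0$ being trivial) and raising to the $p/2$-th power gives $a\lesssim 1$ with a constant independent of $N$, after which I multiply by $|\Lambda|$ to conclude. The main technical point is (i) justifying the stationarity interchange $\tfrac{d}{dt}\mathbb{E}[|\Phi^i_x|^p]=0$ — which follows from the global well-posedness in Proposition \ref{invariance-SDE} together with the fact that $|\Phi^i_x|\le\sqrt N$ makes all terms integrable — and (ii) extracting the precise numerical constants in the threshold on $\kappa$, which may require the quantitative Young's inequality $a^{(p-2)/p}\le \delta\,a + C_{p,\delta}$ with an optimized $\delta$ of order $1/(p-1)$ in order to reproduce the factor $(p-1)^{1-1/p}$ appearing in the hypothesis.
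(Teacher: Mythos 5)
Your proposal is correct and follows the paper's core strategy — start the dynamics \eqref{the-new-model} from the invariant measure (Proposition \ref{invariant-measures}), apply It\^o's formula to $|\Phi_x^i|^p$ as in \eqref{Ito-Lp}, kill the time derivative by stationarity, and control the interaction term through the sphere constraint \eqref{sphere-holder} — but it closes the estimate differently. The paper does not invoke translation invariance: it bounds the cross term $\mathbb{E}[|\Phi_x^i|^{p-1}|\Phi_y^i|]$ by a weighted Young inequality, which splits the mass between sites $x$ and $y$ and, after summing over $x\in\Lambda$, is exactly what produces the factor $(p-1)^{1-1/p}$ in the threshold \eqref{p-mon}; the small term $\mathbb{E}[|\Phi_x^i|^{p-2}]$ is absorbed via $\varepsilon$-Young rather than kept as $a^{(p-2)/p}$. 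You instead use translation invariance of $\mu_{\Lambda,N,\kappa}$ on the discrete torus (and exchangeability in $i$, which follows from invariance under coordinate permutations) to reduce everything to a single scalar $a$, and H\"older then yields the closed inequality $\bigl(\tfrac{N+p-2}{2N}-8\kappa d\bigr)a\le \tfrac{p-1}{2}a^{(p-2)/p}$. This buys a per-site bound independent of both $N$ and $|\Lambda|$ and requires only $\kappa<1/(16d)$; since $(p/2)^p\le (p-1)^{p-1}$ for $p\ge 2$, one checks that $\tfrac{p}{16d(p-1)^{1-1/p}+8dp}\le \tfrac{1}{16d}$, so \eqref{p-mon} indeed implies $8\kappa d<\tfrac12$ and your argument proves the lemma (in fact under a weaker hypothesis). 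Two small remarks: your assertion that \eqref{p-mon} "in particular ensures $8\kappa d<\tfrac12$" is the one point that needs the short verification above, and your closing item (ii) about optimizing Young's inequality to recover $(p-1)^{1-1/p}$ is unnecessary on your route — that factor only arises in the paper's Young-plus-summation argument, which is the price the paper pays for not using translation invariance.
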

\begin{proof}
Let $\Phi$ be a stationary solution to \eqref{the-new-model} with $\Phi(0)\laweq \mu_{\Lambda,N,\kappa}$.  Since the spin  $O(N)$ model on $\Lambda$ (see \eqref{spin-O-N}) is an invariant measure to the SDEs \eqref{the-new-model},  
we only need to prove that if $\kappa<{(p-2\varepsilon})/{(16d(p-1)^{1-\frac{1}{p}}+8dp)}$ with any fixed $\varepsilon>0$, then
\begin{equation}\label{ieq:1}
    \begin{aligned}
         & \sup_N\sum_{x\in \Lambda}\mathbb{E}\left[ \abs{{\Phi}_x^i}^p \right]\le \frac{2C(\varepsilon,p)\abs{\Lambda}}{p-2\varepsilon-16\kappa d(p-1)^{1-\frac{1}{p}}-8\kappa d p},
    \end{aligned}
\end{equation} 
where $C(\varepsilon,p)$ is a constant that depends only on $\varepsilon$ and $p$.

By applying It\^o's formula to the stationary solution, taking expectations, and using the stationary property to eliminate the initial data, we obtain
\begin{equation*}\label{Phi-p-es}
    \begin{aligned}
      &\frac{p(N+p-2)}{2N} \mathbb{E}\left[\abs{{\Phi}_x^i}^p\right]\\
=& 2\kappa p \,\mathbb{E}\left[ \sum_{\N}\left( \Phi_y^i \Phi_x^i \abs{\Phi_x^i}^{p-2} -\frac{1}{N}\sum^N_{k=1} \Phi_x^k \Phi_y^k \abs{\Phi_x^i}^p\right) \right]+ \frac{p(p-1)}{2}\mathbb{E}\left[\abs{\Phi_x^i}^{p-2}\right]\\
\le& 2\kappa p\, \mathbb{E}\left[\sum_{\N}  \abs{\Phi_y^i} \abs{\Phi_x^i}^{p-1}\right] +2\kappa p\, \mathbb{E}\left[\sum_{\N} \abs{\Phi^i_x}^p\right]+ \frac{p(p-1)}{2}\mathbb{E}\left[\abs{\Phi_x^i}^{p-2}\right]\\
:=&I_1+I_2+II.
    \end{aligned}
\end{equation*}
Then, by Young's inequality, for any $\varepsilon > 0$, we have
\begin{equation}\label{IIes}
II \leq \varepsilon\mathbb{E}\left[\abs{\Phi_x^i}^p\right] + C(\varepsilon, p).
\end{equation}
We next estimate $I_1$. Observe that
\begin{equation}\label{I1es}
\begin{aligned}
I_1\leq 2\kappa (p-1)^{1-1/p} \sum_{\N} \mathbb{E}\left[ \abs{\Phi_x^i}^p + \abs{\Phi_y^i}^p \right],
\end{aligned}
\end{equation}
where the inequality follows from Young's inequality,
$$
ab \leq \frac{a^p}{p} + \frac{b^{p/(p-1)}}{p/(p-1)},
$$
with
$$
a = (p-1)^{\frac{p-1}{p^2}} \abs{\Phi_y^i}, \quad
b = \left(\frac{1}{p-1}\right)^{\frac{p-1}{p^2}} \abs{\Phi_x^i}^{p-1}.
$$
Combining \eqref{IIes} and \eqref{I1es}, we obtain
\begin{equation}\label{finite-es}
    \begin{aligned}
          \frac{p(N+p-2)}{2N}  \mathbb{E}[|\Phi_x^i|^p]
\le & \left(\varepsilon + 4\kappa d (p-1)^{1-1/p} + 4\kappa p d \right) \mathbb{E}[|\Phi_x^i|^p]\\
& + 2\kappa  (p-1)^{1-1/p}\sum_{\N}\mathbb{E}[|\Phi_y^i|^p] +C(\varepsilon, p).
    \end{aligned}
\end{equation}
Note that $\frac{p}{2} \leq \frac{p(N+p-2)}{2N}$. Summing over $x \in \Lambda$ yields
\begin{equation*}
\frac{p}{2} \sum_{x \in \Lambda} \mathbb{E}[|\Phi_x^i|^p]
\leq \left(\varepsilon + 8\kappa d (p-1)^{1-1/p} + 4\kappa p d \right) \sum_{x\in \Lambda}\mathbb{E}[|\Phi_x^i|^p] + C(\varepsilon, p) |\Lambda|,
\end{equation*}
which implies inequality~\eqref{ieq:1}.
\end{proof}

We now present the definition of a stationary measure for the mean-field SDE~\eqref{mean-field-limit}. 
\begin{definition}\label{sta-F}
A probability measure $\mu$ is called a stationary measure for the mean-field SDE~\eqref{mean-field-limit} if there exists a martingale solution $\mathbf{P}$ to the mean-field SDE \eqref{mean-field-limit} with initial distribution $\mu$, and $\mathbf{P}\circ \pi^{-1}_t=\mu$ for all $0\le t\le T$.
\end{definition}

\noindent A martingale solution to the mean-field SDE \eqref{mean-field-limit} is called a stationary solution if its initial distribution is a stationary measure.

The following theorem provides the existence of stationary measures for the mean-field SDE \eqref{mean-field-limit}. Define
\begin{equation*}\label{scrP}
    \begin{aligned}
         &\mathscr{P}_{\mathrm{const}}(\mathbb{R}^{\abs{\Lambda}}) := \left\{
            \mu \in \mathscr{P}(\mathbb{R}^{|\Lambda|})
            \;\middle|\;
            \text{there exists } u \in \mathbb{R} \text{ such that } 
            \int v_x v_y\, \mathrm{d}\mu(v) = u, \text{ for all } x \sim y
         \right\}.
    \end{aligned}
\end{equation*}
 Let $\mu^\L$ be a massive Gaussian free field scaled by ${1}/{(2\sqrt{\kappa})}$ with mass $m^2>0$, where $m^2$ is the unique positive solution to 
\begin{equation*}\label{iv-G}
    \begin{aligned}
         & G_{\Lambda,m^2}(x,x)=4\kappa.
    \end{aligned}
\end{equation*}
\begin{theorem}\label{Aru-cor-im}
The field $\mu^\L$ is the unique stationary measure in $\mathscr{P}_{\mathrm{const}}(\mathbb{R}^{\abs{\Lambda}})$ to the mean-field SDE \eqref{mean-field-limit}.
\end{theorem}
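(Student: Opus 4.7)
The strategy is to reduce the nonlinear mean-field SDE \eqref{mean-field-limit} to a linear Ornstein--Uhlenbeck system by exploiting the constancy built into $\mathscr{P}_{\mathrm{const}}(\mathbb{R}^{|\Lambda|})$, and then to close a self-consistency relation via a short It\^o argument. If $\mu \in \mathscr{P}_{\mathrm{const}}$ is stationary with $\int v_x v_y\,d\mu = u$ for every $x \sim y$, then along the corresponding stationary martingale solution $\mathbb{E}[\Psi_x(t)\Psi_y(t)] = u$ for all $t$ and all $x\sim y$; rewriting $\sum_{y\sim x}\Psi_y = \Delta_\Lambda\Psi_x + 2d\,\Psi_x$ then turns \eqref{mean-field-limit} into the linear OU equation
\begin{equation*}
d\Psi_x = 2\kappa(\Delta_\Lambda - m^2)\Psi_x\,dt + dW_x, \qquad m^2 := 2d(u-1) + \tfrac{1}{4\kappa}.
\end{equation*}

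Applying It\^o's formula to $\Psi_x^2$, taking expectations under stationarity, and using $\mathbb{E}[\Psi_x\Psi_y] = u$ for $x\sim y$, I will obtain the algebraic identity $(8\kappa d u + 1)\bigl(1 - \mathbb{E}[\Psi_x^2]\bigr) = 0$, which forces $\mathbb{E}[\Psi_x^2] = 1$. The alternative $8\kappa du + 1 = 0$ corresponds to a vanishing coefficient $\alpha := 4\kappa d u + \tfrac{1}{2} = 0$, in which case the linearized SDE has a non-dissipative zero (constant) mode on the torus and admits no invariant probability measure with finite second moments; since the very definition of $\mathscr{P}_{\mathrm{const}}$ forces $\int v_x^2\,d\mu < \infty$, this case is excluded. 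The same dissipativity argument rules out $m^2 \le 0$: the operator $2\kappa(-\Delta_\Lambda + m^2)$ must be positive definite for a Gaussian stationary distribution of the OU system to exist.

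For existence, I will use the fact that $m^2 \mapsto G_{\Lambda,m^2}(x,x)$ is continuous and strictly decreasing on $(0,\infty)$, with limits $+\infty$ as $m^2\downarrow 0$ and $0$ as $m^2\to\infty$, to produce a unique $m^2 > 0$ with $G_{\Lambda,m^2}(x,x) = 4\kappa$. Setting $u := G_{\Lambda,m^2}(x,y)/(4\kappa)$ for $x\sim y$ (well-defined by translation invariance of $\Lambda_\L$), the Green function identity $[(-\Delta_\Lambda+m^2)G_{\Lambda,m^2}(\cdot,x)](x)=1$ expands, via the definition of $\Delta_\Lambda$, to $(2d+m^2)\,G_{\Lambda,m^2}(x,x) - 2d\,G_{\Lambda,m^2}(x,y) = 1$, which upon substituting $G_{\Lambda,m^2}(x,x) = 4\kappa$ and $G_{\Lambda,m^2}(x,y) = 4\kappa u$ automatically recovers $m^2 = 2d(u-1) + \tfrac{1}{4\kappa}$. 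Consequently, the mean-field SDE \eqref{mean-field-limit}, evaluated along $\mu^\L = N\bigl(0,(4\kappa)^{-1}G_{\Lambda,m^2}\bigr)$, agrees with the OU system above, which admits $\mu^\L$ as its unique invariant distribution; the associated stationary OU process then provides a martingale solution of \eqref{mean-field-limit} with time-$t$ marginal $\mu^\L$ for all $t$, so $\mu^\L$ is stationary in the sense of Definition \ref{sta-F}.

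Uniqueness then follows because any candidate stationary $\mu \in \mathscr{P}_{\mathrm{const}}$ yields, via the same reduction, an invariant probability measure with finite second moments for an OU process of strictly positive mass $m^2$, and must therefore coincide with the unique Gaussian invariant; the normalization $\mathbb{E}[\Psi_x^2] = 1$ from the It\^o step then pins down $m^2$ via $G_{\Lambda,m^2}(x,x) = 4\kappa$, so $\mu = \mu^\L$. I expect the main subtlety to lie in the careful exclusion of the degenerate regimes $\alpha \le 0$ and $m^2 \le 0$, which must be ruled out by combining the finiteness of the second moments of $\mu$ with the non-existence of finite-variance invariants for non-dissipative linear SDEs on the torus; once those are dispatched, the linearization and the identification of $\mu$ with the Gaussian OU invariant measure are essentially standard.
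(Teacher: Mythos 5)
Your proposal is correct and shares the paper's skeleton: use the constancy of $\int v_xv_y\,\d\mu$ to turn \eqref{mean-field-limit} into the linear OU system \eqref{rewrite-mean-limit} with mass $\mathfrak m=2d(u-1)+\tfrac1{4\kappa}$, identify the Gaussian invariant measure, and close with the It\^o identity $(8\kappa d u+1)(1-\mathbb{E}[|\Psi_x|^2])=0$ together with the strict monotonicity of $\mathfrak m\mapsto G_{\Lambda,\mathfrak m}(x,x)$. Where you genuinely differ is in how existence is closed: the paper first solves the off-diagonal self-consistency equation \eqref{fixed-poit} by an intermediate-value argument (computing $G_{\Lambda,\mathfrak m}(\bm 0,\bm 1)$ in Fourier and letting $\mathfrak m\to0^+$ and $\mathfrak m\to\infty$), and only afterwards uses It\^o plus monotonicity of the diagonal Green function to normalize the mass; you instead define $m^2$ directly by $G_{\Lambda,m^2}(x,x)=4\kappa$ (unique by monotonicity) and observe that the pointwise identity $(2d+m^2)G_{\Lambda,m^2}(x,x)-2d\,G_{\Lambda,m^2}(x,y)=1$ for $x\sim y$ makes the self-consistency relation hold automatically. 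This is a cleaner route to existence, avoids the separate analysis of the off-diagonal Green function, and matches the way $\mu^\L$ is defined in the statement. On uniqueness you are also more explicit than the paper about why a general stationary $\mu\in\mathscr{P}_{\mathrm{const}}(\mathbb{R}^{\abs{\Lambda}})$ must coincide with the Gaussian OU invariant and why $m^2\le 0$ and $8\kappa d u+1=0$ are impossible, points the paper treats tersely (it only notes $\mathfrak m>0\Rightarrow 8d\kappa u+1\neq0$). Two small corrections: your claim that membership in $\mathscr{P}_{\mathrm{const}}$ by itself forces $\int v_x^2\,\d\mu<\infty$ is not justified (finiteness of the neighbour cross-moments does not imply finite diagonal moments), but it is also unnecessary, since when $8\kappa du+1=0$ one has $\mathfrak m=-2d$, so the constant Fourier mode has drift coefficient $+4\kappa d$ and the OU system admits no invariant probability measure whatsoever; for the same reason it is tidier to identify $\mu$ with the Gaussian invariant (hence with finite moments) before running the It\^o computation rather than after. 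Likewise, that mode is the zero-frequency (constant) mode but it is strictly expanding, not a neutral ``zero mode''; the conclusion you draw from it is nevertheless the right one.
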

\begin{proof}
 Considering the following SDE:
\begin{equation}\label{rewrite-mean-limit}
    \begin{aligned}
        \d {\Psi}^u_x(t) = &2\kappa\sum_{\N} \left( \Psi^u_y(t)- u\Psi^u_x(t) \right)\,\d t -\frac{1}{2} \Psi^u_x(t)\,\d t+\d W_x(t)\\
=& (2\kappa\,\Delta_{\Lambda} -2\kappa \mathfrak{m}){\Psi}^u_x(t)\,  \d t +\,\d W_x(t), \quad  x\in \Lambda, \ 0\le t\le T,
    \end{aligned}
\end{equation}
where $\mathfrak{m}:=2d({u}-1)+1/(4\kappa)$.

If $\mathfrak{m}>0$, it is well known that there exists an invariant measure to \eqref{rewrite-mean-limit},  which is 
\begin{equation*}\label{mu-m}
    \begin{aligned}
         & N\left(0,\left[-2(2\kappa \Delta_{\Lambda} -2\kappa\mathfrak{m})\right]^{-1}\right)= \frac{1}{2\sqrt{\kappa}}N\left(0,\left[-\Delta_{\Lambda}+\mathfrak{m}\right]^{-1}\right):=\mu_{\mathfrak{m}},
    \end{aligned}
\end{equation*}
a massive Gaussian free field on $\Lambda$  scaled by $\frac{1}{2\sqrt{\kappa}}$ with mass $\mathfrak{m}$.   Since, for any massive Gaussian free field on $\Lambda$, the correlation function is invariant under translations and rotations by $\frac{\pi}{2}$, it follows that $\mu_{\mathfrak{m}}\in \mathscr{P}_{\mathrm{const}}(\mathbb{R}^{\abs{\Lambda}})$. 

Note that $u=\frac{1}{2d} \mathfrak{m}+1-\frac{1}{8\kappa d}$, it is easy to see that $\mu_{\mathfrak{m}} $ is a stationary measure to the mean-field SDE \eqref{mean-field-limit}   if and only if there exists a $\mathfrak{m}>0$ such that the following self-consistency condition holds:
\begin{equation}\label{fixed-poit}
    \begin{aligned}
         & \int v_{\bm{0}} v_{\bm{1}}\,\d \mu_{\mathfrak{m}}(v)= \frac{1}{4\kappa} G_{\Lambda,\mathfrak{m}}(\bm{0},\bm{1})=u=\frac{1}{2d} \mathfrak{m}+1-\frac{1}{8\kappa d},
    \end{aligned}
\end{equation}
where we choose 
\begin{equation*}
    \begin{aligned}
         & \Lambda=[0,\mathfrak{L})^d\cap \mathbb{Z}^d,\ \  \bm{0}:=(0,\cdots,0)\in \Lambda,\ \  \bm{1}:=(1,0,\cdots,0)\in \Lambda.
    \end{aligned}
\end{equation*}
In order to calculate the exact representation of $\int v_{\bm{0}} v_{\bm{1}}\,\d \mu_{\mathfrak{m}}(v)$ in terms of $\mathfrak{m}$.  We recall the standard Fourier basis on the discrete torus $\Lambda$, which is given by
\begin{equation}\label{elk}
    \begin{aligned}
         & e_k^{\L}(x) := \frac{1}{\L^{\frac{d}{2}}} \exp\left(2\pi i \frac{k\cdot x}{\L}\right),
    \end{aligned}
\end{equation}
where $x, k \in \Lambda$, and $k \cdot x := \sum_{j=1}^d k_j x_j$ denotes the standard inner product. The normalization factor $1/\L^{d/2}$ ensures that the family $\{e_k^{\L}\}_{k \in \Lambda}$ forms an orthonormal basis with respect to the inner product
\begin{equation}\label{inner-product-C}
    \begin{aligned}
         & \langle f, g \rangle_{\mathbb{C}} := \sum_{x \in \Lambda} f(x) \overline{g(x)}.
    \end{aligned}
\end{equation}
This basis allows any complex-valued function $f: \Lambda \to \mathbb{C}$ to be expanded as
\begin{equation*}
    \begin{aligned}
         & f(x) = \sum_{k \in \Lambda} \widehat{f}(k) e_k^{\L}(x),
    \end{aligned}
\end{equation*}
where the Fourier coefficient $\widehat{f}(k)$ is given by
\begin{equation*}
    \begin{aligned}
         & \widehat{f}(k) = \sum_{x \in \Lambda} f(x) \overline{e_k^{\L}(x)}.
    \end{aligned}
\end{equation*}
Note that for any $x\in \Lambda$, we have
\begin{equation}\label{spec-lambda}
    \begin{aligned}
        \Delta_{\Lambda} e^{\L}_k(x) & =\frac{1}{\L^{\frac{d}{2}}} e^{2\pi i \frac{k\cdot x}{\L}} \sum^d_{j=1} \left(  e^{2\pi i \frac{k_j}{\L}} +  e^{-2\pi i \frac{k_j}{\L}} -2\right)\\
& =2\sum_{j=1}^d \left( \cos\left(2\pi \frac{k_j}{\L}\right) -1\right) e^{\L}_k(x):=\lambda^{\L}_k e^{\L}_k(x).
    \end{aligned}
\end{equation}
By the definition of the Green's function $G_{\Lambda, \mathfrak{m}}(x,y)$, $(- \Delta_\Lambda + \mathfrak{m}) G_{\Lambda, \mathfrak{m}}(x, y) = \delta_{x,y}$, where $\delta_{x,y} = 1$ if $x = y$, and $\delta_{x,y} = 0$ otherwise. By direct calculation, we obtain that
\begin{equation}\label{F-correlation}
    \begin{aligned}
         & G_{\Lambda,\mathfrak{m}}(x, y) = \frac{1}{\mathfrak{L}^d} \sum_{k\in \Lambda} \frac{e^{2\pi i \frac{k \cdot (x-y)}{\mathfrak{L}}}} {-\lambda^\L_k +\mathfrak{m}}= \frac{1}{\mathfrak{L}^d} \sum_{k\in \Lambda} \frac{\cos\left(2\pi \frac{k\cdot (x-y)}{\L}\right)} {-\lambda^\L_k +\mathfrak{m}}, \quad x,\,y \in \Lambda.
    \end{aligned}
\end{equation}
In particular, we have
\begin{equation*}
    \begin{aligned}
         & G_{\Lambda,\mathfrak{m}}(\bm{0},\bm{1})=\frac{1}{\mathfrak{L}^d}\frac{1}{\mathfrak{m}} +\frac{1}{\mathfrak{L}^d}\sum_{k\in\Lambda\backslash \{\bm{0}\}} \frac{\cos\left( 2\pi\frac{ k_1}{\L}\right)}{-\lambda^\L_k +\mathfrak{m}}. 
    \end{aligned}
\end{equation*}
Note that 
\begin{equation*}
    \begin{aligned}
         & \lim_{\mathfrak{m}\rightarrow 0^+} G_{\Lambda,\mathfrak{m}}(\bm{0},\bm{1})=+\infty,\quad \lim_{\mathfrak{m}\rightarrow +\infty} G_{\Lambda,\mathfrak{m}}(\bm{0},\bm{1})=0,
    \end{aligned}
\end{equation*}
and $G_{\Lambda,\mathfrak{m}}(\bm{0},\bm{1})$ is continuous on $(0, +\infty)$ with respect to $\mathfrak{m}$. It is easy to see that equation \eqref{fixed-poit} admits at least one positive solution. Therefore, there exists a martingale solution $\mathbf{P}$ to the mean-field SDE~\eqref{mean-field-limit} such that $\mathbf{P}\circ\pi^{-1}_t=\mu_{\mathfrak{m}}$ for all $0\le t\le T$.

Let $\mathfrak{m}>0$ be any positive solution to equation \eqref{fixed-poit}. Since the martingale solution $\mathbf{P}$ starting from $\mu_{\mathfrak{m}}$ to the mean-field SDE~\eqref{mean-field-limit} satisfies $\mathbf{P}\circ \pi^{-1}_t=\mu_{\mathfrak{m}}$ for all $0\le t\le T$, applying It\^o's formula to $\abs{{\Psi_x}(t)}^2$, $x\in \Lambda$ and taking expectations, we have
\begin{equation*}
    \begin{aligned}
         & (8d\kappa{u}+1)\cdot(1-\mathbb{E}[\abs{{\Psi}_x}^2])=0.
    \end{aligned}
\end{equation*}
Since $\mathfrak{m}>0$, we obtain $8d\kappa {u} +1\neq 0$. Therefore,
$\mathbb{E}\left[ |\Psi_x|^2 \right]=1, x\in \Lambda$,  which implies that
\begin{equation*}\label{Green-kappa-1}
    \begin{aligned}
     & G_{\Lambda,\mathfrak{m}}(x,x)=\mathbb{E}\left[ 2\sqrt{\kappa}{\Psi}_x \cdot 2\sqrt{\kappa}{\Psi}_x \right]=4\kappa,
    \end{aligned}
\end{equation*}
or, using \eqref{F-correlation}, more explicitly,
\begin{equation}\label{Green-kappa-2}
    \begin{aligned}
     \frac{1}{\mathfrak{L}^d}\frac{1}{\mathfrak{m}} +\frac{1}{\mathfrak{L}^d}\sum_{k\in\Lambda\backslash \{\bm{0}\}} \frac{1}{-\lambda^\L_k +\mathfrak{m}}=4\kappa.
    \end{aligned}
\end{equation}
Note that
\begin{equation*}
    \begin{aligned}
         & \lim_{\mathfrak{m}\rightarrow 0^+} G_{\Lambda,\mathfrak{m}}(x,x)=+\infty,\quad \lim_{\mathfrak{m}\rightarrow +\infty} G_{\Lambda,\mathfrak{m}}(x,x)=0,
    \end{aligned}
\end{equation*}
and $G_{\Lambda,\mathfrak{m}}(x,x)$ is monotonically decreasing on $(0,+\infty)$ with respect to $\mathfrak{m}$. Therefore, there is a unique positive solution to \eqref{Green-kappa-2}. This completes the proof.
\end{proof}
Since the martingale solution $\mathbf{P}$ in the proof of Theorem~\ref{Aru-cor-im} belongs to $\mathscr{P}_{\O,\Psi}(\mathbb{R}^{|\Lambda|})$, by applying the Yamada-Watanabe argument as in the proof of Theorem~\ref{strong-solution}, we obtain the following corollary directly.
\begin{corollary}\label{muL-s}
Suppose the initial data $\psi \laweq \mu^\L$,  then there exists a unique probabilistically strong solution to the mean-field SDE \eqref{mean-field-limit} starting from $\psi$.    
\end{corollary}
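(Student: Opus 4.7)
The plan is to repeat the argument used in Step $1$ of the proof of Theorem \ref{strong-solution}, applied to the specific initial distribution $\mu^\L$. The key input is already available: Theorem \ref{Aru-cor-im} produces a martingale solution $\mathbf{P}$ to \eqref{mean-field-limit} with $\mathbf{P}\circ \pi_0^{-1}=\mu^\L$ (in fact $\mathbf{P}\circ\pi_t^{-1}=\mu^\L$ for every $0\le t\le T$).

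The first step I would verify is that $\mathbf{P}\in \mathscr{P}_{\O,\Psi}(\SSF)$, where $\mathscr{P}_{\O,\Psi}(\SSF)$ is defined in \eqref{Pone-Phi-F}. By stationarity $\mathbf{P}\circ\pi_t^{-1}=\mu^\L$, so it suffices to check that $\mu^\L\in \mathscr{P}_{\O}(\mathbb{R}^{\abs{\Lambda}})$ in the sense of \eqref{Pone-R}. This is exactly \eqref{Green-kappa-1}: since $\mu^\L$ is the massive GFF scaled by $1/(2\sqrt{\kappa})$ with mass $\mathfrak{m}$ satisfying $G_{\Lambda,\mathfrak{m}}(x,x)=4\kappa$, one has
\begin{equation*}
\int \abs{v_x}^2\,\mu^\L(\d v)=\frac{1}{4\kappa}G_{\Lambda,\mathfrak{m}}(x,x)=1,\qquad \forall\,x\in \Lambda,
\end{equation*}
so the marginal belongs to $\mathscr{P}_{\O}(\mathbb{R}^{\abs{\Lambda}})$.

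The second step is to upgrade this martingale solution to a weak solution of \eqref{mean-field-limit}. By Definition \ref{martingale-solution}, under $\mathbf{P}$ the process $\mathscr{M}_\theta$ is a continuous square-integrable martingale with quadratic variation $\abs{\theta}^2 t$ for every $\theta\in \mathbb{R}^{\abs{\Lambda}}$. The standard martingale representation theorem (on a possibly enlarged probability space) then produces an $\mathfrak{L}^d$-dimensional Brownian motion $(W_x)$ and a continuous process $\Psi$ with law $\mathbf{P}$ satisfying \eqref{mean-field-limit-rw-2} pathwise, with $\Psi(0)\laweq \mu^\L$. This gives a weak solution to \eqref{mean-field-limit} with initial distribution $\mu^\L$.

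The third step is pathwise uniqueness. Since $\mathbf{P}\in \mathscr{P}_{\O,\Psi}(\SSF)$, Lemma \ref{not-fix-mu} directly implies pathwise uniqueness within $\mathscr{P}_{\O,\Psi}(\SSF)$ for the mean-field SDE \eqref{mean-field-limit} started from $\mu^\L$. Combining weak existence with pathwise uniqueness and invoking the Yamada-Watanabe theorem (\cite[Lemma $2.1$]{Huang2021}) as in Step $1$ of the proof of Theorem \ref{strong-solution}, we obtain that there exists a unique probabilistically strong solution to \eqref{mean-field-limit} starting from any $\psi\laweq \mu^\L$. No step presents a genuine obstacle: everything reduces to assembling results already proved (Theorem \ref{Aru-cor-im}, identity \eqref{Green-kappa-1}, Lemma \ref{not-fix-mu}, and the Yamada-Watanabe theorem); the only point requiring brief attention is checking that $\mu^\L$ lies in $\mathscr{P}_{\O}(\mathbb{R}^{\abs{\Lambda}})$, which is the content of the self-consistency relation used to define $\mathfrak{m}$.
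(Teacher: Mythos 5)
Your proposal is correct and follows essentially the same route as the paper: the paper's proof simply notes that the martingale solution $\mathbf{P}$ from Theorem \ref{Aru-cor-im} lies in $\mathscr{P}_{\O,\Psi}(\SSF)$ (via $\int \abs{v_x}^2\,\d\mu^\L=\tfrac{1}{4\kappa}G_{\Lambda,\mathfrak{m}}(x,x)=1$) and then invokes the Yamada--Watanabe argument from the proof of Theorem \ref{strong-solution}, exactly as you do. Your write-up just spells out the intermediate steps (martingale representation for weak existence, Lemma \ref{not-fix-mu} for pathwise uniqueness) that the paper leaves implicit.
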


In the following, we present a proposition that provides an example of $\varphi$ and $\mu_0$ satisfying Assumption~\ref{initial-value}. Recall the classical definition of Kac's chaos. Let $\X$ be a Polish space, and let $\mu \in \mathscr{P}(\X)$ be a probability measure on $\X$. Consider a sequence $\{\mu^N\}_{N \in \mathbb{N}}$ of symmetric probability measures on the product space $\X^N$. We say that the sequence $\{\mu^N\}_{N \in \mathbb{N}}$ is $\mu$-chaotic if for any $1\le k\le N$, the marginal $\mu^{k,N} := \mu^N \circ (\mathbb{V}^{(k)})^{-1}$ converges weakly to $\mu^{\otimes k}$ as $N \to \infty$, where $\mathbb{V}^{(k)} : \X^{N} \rightarrow \X^{k}$ denotes the projection onto the first $k$ coordinates. 
\begin{proposition}\label{mu0assum}
Suppose that $0 < \kappa < 1/(16d)$, let $\varphi \laweq \mu_{\Lambda, N, \kappa}$, and set $\mu_0 = \mu^\L$ to be the massive Gaussian free field defined in Theorem~\ref{Aru-cor-im}. Then  $\varphi$ and $\mu_0$ satisfy Assumption~\ref{initial-value}. In particular, the empirical measure of the system \eqref{the-new-model} with initial data $\varphi$ converges in law to the unique martingale solution to the mean-field SDE \eqref{mean-field-limit} with initial distribution $\mu_0$.
\end{proposition}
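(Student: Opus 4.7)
My plan is to verify the three clauses of Assumption \ref{initial-value} in turn; the last sentence of the proposition then follows immediately from Theorem \ref{Main-Theorem} applied with this choice of initial data and $\mu_0$.

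Clause (1), symmetry, is essentially by inspection of the defining formula \eqref{spin-O-N}. Writing the interaction as $2\kappa\sum_{x\sim y}\langle\Phi_x,\Phi_y\rangle=2\kappa\sum_{x\sim y}\sum_{i=1}^N\Phi_x^i\Phi_y^i$, both the Hamiltonian and the product reference measure $\prod_{z\in\Lambda}\lambda_{\mathbb{S}^{N-1}(\sqrt N)}$ are invariant under any permutation of the spin coordinates $i\in\{1,\dots,N\}$, so the law of $(\varphi^1,\dots,\varphi^N)$ under $\mu_{\Lambda,N,\kappa}$ is symmetric. Clause (3), the uniform $\p$-th moment bound, will be read off from Lemma \ref{un-N}. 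Indeed, the function $f(p):=p/(16d(p-1)^{1-1/p}+8dp)$ is continuous on $(2,\infty)$ and satisfies $f(2)=1/(16d)$; since we are assuming $\kappa<1/(16d)$, I can pick some $\p>2$ close enough to $2$ so that $\kappa<f(\p)$, whence Lemma \ref{un-N} supplies the required uniform bound $\sup_N\sum_{x\in\Lambda}\mathbb{E}[|\varphi_x^1|^\p]<\infty$.

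The main content lies in clause (2), the convergence in probability of the empirical measure $S_0^N=\frac1N\sum_{j=1}^N\delta_{\varphi^j}$ to the deterministic limit $\mu^\L$. For this I will invoke the chaoticity statement for the spin $O(N)$ model, namely \cite[Theorem~4.1]{aru2024}: for each fixed $k\in\mathbb{N}$, the $k$-th marginal $\mu_{\Lambda,N,\kappa}\circ(\mathbb{V}^{(k)})^{-1}$ converges weakly to $(\mu^\L)^{\otimes k}$ as $N\to\infty$. Combined with the symmetry established above, the classical equivalence between Kac chaos and the convergence of empirical measures (see, e.g., \cite[Proposition~2.2]{Sznitman1991}) then yields that $S_0^N$ converges weakly in probability, as a $\mathscr{P}(\mathbb{R}^{|\Lambda|})$-valued random variable, to the constant measure $\mu^\L$. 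This is exactly clause (2).

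With Assumption \ref{initial-value} in hand, Theorem \ref{Main-Theorem} applied to $\mu_0=\mu^\L$ gives at once that $S^N$ converges in law to the unique martingale solution $S$ of \eqref{mean-field-limit} with initial distribution $\mu^\L$, completing the proof. The only place where the hypothesis $\kappa<1/(16d)$ is actually used is in selecting $\p>2$ for the moment bound; symmetry and Kac chaos are structural. The only non-routine input is the chaoticity result from \cite{aru2024}, which I will simply quote.
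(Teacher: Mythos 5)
Your proposal is correct and follows essentially the same route as the paper: chaoticity of the spin $O(N)$ measures from \cite[Theorem 4.1]{aru2024} together with the Sznitman equivalence gives Assumption \ref{initial-value}(2), the symmetry (1) is structural, the moment bound (3) comes from Lemma \ref{un-N} by choosing $\p>2$ close to $2$ using $f(2)=1/(16d)$ and continuity (the paper phrases this via continuity and monotone decrease of the bound in \eqref{p-mon}), and the conclusion follows from Theorem \ref{Main-Theorem}.
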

\begin{proof}

By \cite[Theorem 4.1]{aru2024}, the spin $O(N)$ measures are $\mu^\L$-chaotic, where $\mu^\L$ is the massive Gaussian free field defined in Theorem \ref{Aru-cor-im}. Since $\varphi=(\varphi^1,\cdots,\varphi^N)\laweq \mu_{\Lambda,N,\kappa}$, it is well known that $\{\mu_{\Lambda,N,\kappa}\}_{N\in \mathbb{N}}$ are $\mu^\L$-chaotic is equivalent to the statement that the empirical measure $\frac{1}{N}\sum^N_{i=1} \delta_{\varphi^i}$ converges weakly in probability to $\mu^\L$ as $N\rightarrow \infty$ (see \cite[Proposition $2.2$ \ i)]{Sznitman1991}). Therefore, setting $\mu_0=\mu^\L$, $\varphi$ and $\mu_0$ satisfy Assumptions \ref{initial-value} $(1)$ and $(2)$. Note that for $p\ge 2$, the right-hand side of \eqref{p-mon} is continuous and monotonically decreasing with respect to $p$. Therefore, by Lemma \ref{un-N}, if $0<\kappa< 1/(16d)$, it further follows that $\varphi$ satisfies Assumption \ref{initial-value} $(3)$ for some $\p>2$. We complete the proof by Theorem \ref{Main-Theorem}.
\end{proof}

We now turn to the study of the infinite volume case. Recalling the definition of $\mathcal{Q}$ from Section~\ref{sec4}.  In the following proposition, we consider the periodic extension of  ${\mu}_{\Lambda_\L,N,\kappa}$,  which is still denoted by ${\mu}_{\Lambda_\L,N,\kappa}$ for notational simplicity.   Then, we have
\begin{proposition}\label{infinite-measure}
  The fields $\{\mu_{\Lambda_{\L},N,\kappa}\}_{\L \in \mathbb{N}}$ in $\mathcal{Q}$ form a tight set, and every tight limit of $\{\mu_{\Lambda_{\L},N,\kappa}\}_{\L \in \mathbb{N}}$ is an invariant measure for the infinite volume dynamics  \eqref{the-new-model-Z}. Moreover, if $0<\kappa< (N-2)/(16dN)$, then the invariant measure for \eqref{the-new-model-Z} is unique.
\end{proposition}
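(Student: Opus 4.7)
The plan is first to observe that $\mathcal Q$ is itself a compact subset of $(\ell^p_a)^N$, which makes tightness of $\{\mu_{\Lambda_{\L},N,\kappa}\}_{\L\in\mathbb N}$ in $\mathcal Q$ automatic. Every $\Phi\in\mathcal Q$ satisfies the pointwise bound $|\Phi^i_x|\le\sqrt N$; given a sequence $\{\Phi^{(n)}\}\subset\mathcal Q$, a diagonal extraction produces a subsequence along which $\Phi^{(n_k),i}_x\to\Phi^i_x$ pointwise with the sphere constraint $|\Phi_x|=\sqrt N$ preserved in the limit, and dominated convergence with the summable dominator $(2\sqrt N)^p/a^{|x|}$ upgrades this to $(\ell^p_a)^N$-norm convergence. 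Hence $\mathcal Q$ is compact and every probability measure on it is automatically tight.

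Next, for any subsequential weak limit $\mu$ of $\{\mu_{\Lambda_{\L},N,\kappa}\}$, I would verify the infinitesimal invariance $\int\mathcal L f\,\d\mu=0$ on a class of local test functions, where $\mathcal L$ denotes the generator of \eqref{the-new-model-Z}. For a smooth bounded $f$ depending only on coordinates in a finite box $A\subset\mathbb Z^d$, and $\L$ large enough that the closed $1$-neighbourhood of $A$ lies strictly inside $\Lambda_{\L}$, the periodic generator $\mathcal L_{\L}$ coincides with $\mathcal L$ on $f$. Invariance of $\mu_{\Lambda_{\L},N,\kappa}$ under the periodic dynamics (Proposition~\ref{invariant-measures}) then gives $\int\mathcal L f\,\d\mu_{\Lambda_{\L},N,\kappa}=0$; since $\mathcal L f$ is bounded and continuous on the compact space $\mathcal Q$, passing to the weak limit gives $\int\mathcal L f\,\d\mu=0$. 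To upgrade this infinitesimal invariance to the semigroup statement $\int P_t f\,\d\mu=\int f\,\d\mu$, I would couple the periodic dynamics $\Phi^{\L}$ with the infinite-volume dynamics $\Phi$ from the same initial data, and use Proposition~\ref{u-e-t-z} together with a Gronwall-type argument (based on the finite range and locally Lipschitz structure of the drift on the sphere) to conclude that $P_t^{\L}f\to P_t f$ uniformly on $\mathcal Q$ for such $f$; the identity $\int P_t^{\L}f\,\d\mu_{\Lambda_{\L},N,\kappa}=\int f\,\d\mu_{\Lambda_{\L},N,\kappa}$ then passes to the limit.

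For uniqueness under $0<\kappa<(N-2)/(16dN)$, the plan is a synchronous coupling. Given two invariant measures $\mu^{(1)},\mu^{(2)}$, lift them to stationary solutions $\Phi^{(1)},\Phi^{(2)}$ of \eqref{the-new-model-Z} driven by the same Brownian family. Set $Z(t):=\sum_{x\in\mathbb Z^d}a^{-|x|}|\Phi^{(1)}_x(t)-\Phi^{(2)}_x(t)|^2$. Applying It\^o's formula to $|\Phi^{(1),i}_x-\Phi^{(2),i}_x|^2$ and summing over $i$, the dissipative drift contributes $-\frac{N-1}{N}|\Phi^{(1)}_x-\Phi^{(2)}_x|^2$; a direct computation using $|\Phi^{(j)}_x|^2=N$ gives the exact identity $\sum_i \d\langle\Phi^{(1),i}_x-\Phi^{(2),i}_x\rangle_t=\frac{|\Phi^{(1)}_x-\Phi^{(2)}_x|^2(N+\Phi^{(1)}_x\cdot\Phi^{(2)}_x)}{N^2}\,\d t$ for the noise quadratic variation, together with a non-positive quartic correction that combines favourably with the dissipation. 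The nonlinear projection drift $\frac{(\Phi_x\cdot\Phi_y)\Phi^i_x}{N}$ and the linear neighbour interaction are controlled by Young's and Cauchy--Schwarz inequalities together with the sphere constraint $|\Phi_x|=\sqrt N$; weighting by $a^{-|x|}$ (with $a>1$ chosen arbitrarily close to $1$) and summing converts neighbour-sums of $|\delta_y|^2$ back into constant multiples of $Z$. One obtains $\frac{\d}{\d t}\mathbb E[Z(t)]\le -\mathfrak c\,\mathbb E[Z(t)]$ with $\mathfrak c>0$ under the hypothesis $\kappa<(N-2)/(16dN)$. Stationarity of the coupled process forces $\mathbb E[Z(t)]$ to be constant in $t$, hence $\mathbb E[Z(t)]\equiv 0$, so $\Phi^{(1)}\equiv\Phi^{(2)}$ a.s.\ and $\mu^{(1)}=\mu^{(2)}$. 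The main obstacle I anticipate is the sharp constant tracking in this last step: the sphere identity, the exact form of the quadratic variation, and the cancellation between the projection drift and the linear drift must all be exploited simultaneously in order to land precisely at the advertised threshold $(N-2)/(16dN)$.
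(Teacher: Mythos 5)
The tightness claim and the identification of subsequential limits as invariant measures are fine, and your route (compactness of $\mathcal Q$ from the sphere constraint plus a local-generator/semigroup approximation of the periodic dynamics by the infinite-volume dynamics, using Propositions \ref{invariant-measures} and \ref{u-e-t-z}) is essentially the paper's, which simply invokes compactness of $\mathbb{S}^{N-1}(\sqrt N)$ and the approximation argument of Shen--Zhu--Zhu for the limit-invariance step.

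The uniqueness part, however, has a genuine gap. The paper obtains the threshold $\kappa<(N-2)/(16dN)$ by verifying the Bakry--\'Emery criterion (as in \cite[Corollary 4.13]{Shen2024}): the Ricci curvature of $\mathbb{S}^{N-1}(\sqrt N)$ is $(N-2)/N$ and the worst-case Hessian of the interaction $2\kappa\sum_{x\sim y}\langle\Phi_x,\Phi_y\rangle$ is $16\kappa d$, which is exactly where the constant comes from. Your synchronous coupling cannot reach this constant, and the decisive computation is the one you leave open. Writing $u=\Phi^{(1)}_x$, $v=\Phi^{(2)}_x$, $\delta_x=u-v$, the quadratic variation you identify is exact,
\begin{equation*}
\sum_i \mathrm{d}\langle \Phi^{(1),i}_x-\Phi^{(2),i}_x\rangle_t=\frac{|\delta_x|^2\,(N+u\cdot v)}{N^2}\,\mathrm{d}t,
\end{equation*}
and combined with the dissipation $-\frac{N-1}{N}|\delta_x|^2$ it gives the coefficient $-\frac{N-1}{N}+\frac{N+u\cdot v}{N^2}$, whose worst case (attained as $\delta_x\to 0$, i.e.\ $u\cdot v\to N$) is $-\frac{N-3}{N}$, not $-\frac{N-2}{N}$: the projection noise costs an extra $2/N$ that the intrinsic Bakry--\'Emery computation never pays. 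Moreover there is no hidden cancellation in the nonlinear drift to recover this loss: since $\delta_x\cdot(u+v)=|u|^2-|v|^2=0$, one has the exact identity $\delta_x\cdot\big[(u\cdot p)u-(v\cdot q)v\big]=\tfrac{1}{2}\big[(u\cdot p)+(v\cdot q)\big]\,|\delta_x|^2$ with $p=\Phi^{(1)}_y$, $q=\Phi^{(2)}_y$, and in the configuration where neighbouring spins are antipodal and the differences $\delta_x,\delta_y$ are aligned, the linear neighbour term plus this projection term contribute $+16\kappa d$ per unit of $\sum_x|\delta_x|^2$ (up to the additional loss from the weight $a>1$). Hence the best contraction constant your functional can produce is $\frac{N-3}{N}-16\kappa d$, i.e.\ the method only yields uniqueness for $\kappa<(N-3)/(16dN)$ -- which is strictly below the stated threshold and is empty for $N=3$, where the proposition still asserts uniqueness for $\kappa<1/(48d)$. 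So the key differential inequality $\frac{\mathrm{d}}{\mathrm{d}t}\mathbb{E}[Z]\le-\mathfrak c\,\mathbb{E}[Z]$ with $\mathfrak c>0$ on the full range $0<\kappa<(N-2)/(16dN)$ is not just unproven in your sketch; it fails for the synchronous coupling with the weighted $\ell^2$ functional, and reaching the advertised constant requires either the Bakry--\'Emery/functional-inequality route the paper uses or a coupling adapted to the sphere geometry (e.g.\ parallel transport). A secondary, repairable point: the synchronous coupling of two stationary solutions need not be jointly stationary, so you cannot argue that $\mathbb{E}[Z(t)]$ is constant; instead one integrates the (hoped-for) differential inequality and uses stationarity of each marginal to bound the Wasserstein distance between the two invariant measures by $\mathbb{E}[Z(t)]^{1/2}\to 0$.
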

\begin{proof}
    Since the sphere $\mathbb{S}^{N-1}(\sqrt{N})$ is a compact set, it follows that the fields $\{\mu_{\Lambda_{\L},N,\kappa}\}_{\L \in \mathbb{N}}$ in $\mathcal{Q}$ form a tight set. By the same argument as in the proof of \cite[Proposition 3.5]{Shen2023}, we obtain that every tight limit of $\{\mu_{\Lambda_{\L},N,\kappa}\}_{\L \in \mathbb{N}}$ is an invariant measure for \eqref{the-new-model-Z}. Similar to the argument of \cite[Corollary $4.13$]{Shen2024}, by directly verifying the Bakry-\'Emery criterion, the invariant measure for the infinite volume dynamics  \eqref{the-new-model-Z} is unique.
\end{proof}
\begin{remark}
    The constant $(N-2)/(16dN)$ obtained in Proposition \ref{infinite-measure} is not optimal; indeed, using the method of \cite{Bauerschmidt2019}, it is possible to achieve a sharper constant. Nevertheless, such an improvement does not enhance $\kappa$ such that the diagram in Corollary \ref{commutes} commutes.
\end{remark}

By the first part of Proposition \ref{infinite-measure}, there exists an invariant measure to \eqref{the-new-model-Z},  denoted by $\mu_{N,\kappa}$.   Similar to the proof of Lemma~\ref{un-N}, we multiply both sides of~\eqref{finite-es} by $\frac{1}{a^{|x|}}$, sum over $x$, and use the estimates $\frac{1}{a^{|x|}} \leq a\,\frac{1}{a^{|y|}}$ for $x \sim y$ and $\sum_{x\in \mathbb{Z}^d} \frac{1}{a^{\abs{x}}}<\infty$. We then obtain the following result.
\begin{lemma}\label{un-N-Z}
If $\kappa > 0$ and $p > 2$ satisfy 
\begin{equation}\label{p-mon-Z}
    \begin{aligned}
         & \kappa<\frac{p}{8(a+1)d(p-1)^{1-\frac{1}{p}}+8dp},
    \end{aligned}
\end{equation}
then the field $\mu_{N,\kappa}$ satisfies
\begin{equation*}
    \begin{aligned}
         & \sum_{x\in \mathbb{Z}^d} \int \frac{1}{a^{\abs{x}}}\abs{\Phi^{i}_x}^p\,\d \mu_{N,\kappa}( \Phi)\lesssim 1,
    \end{aligned}
\end{equation*} 
where the implicit constant is independent of $N$. 
\end{lemma}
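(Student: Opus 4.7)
The plan is to follow verbatim the first half of the proof of Lemma \ref{un-N}, which produces a pointwise (in $x \in \mathbb{Z}^d$) bound, and then replace the unweighted summation over a finite lattice with a weighted summation over $\mathbb{Z}^d$, as suggested in the paragraph preceding the statement. By Proposition \ref{infinite-measure}, $\mu_{N,\kappa}$ is invariant under the infinite volume dynamics \eqref{the-new-model-Z}, so we may choose a stationary solution $\Phi$ with $\Phi(0) \laweq \mu_{N,\kappa}$. Applying It\^o's formula to $|\Phi_x^i|^p$ (which is legitimate pointwise in $x$ even on $\mathbb{Z}^d$), taking expectations, and using stationarity to discard the time increment, we obtain for each fixed $x \in \mathbb{Z}^d$ the same estimate as \eqref{finite-es}: for any $\varepsilon > 0$,
\begin{equation*}
\tfrac{p(N+p-2)}{2N}\,\mathbb{E}\left[|\Phi_x^i|^p\right]
\le \left(\varepsilon + 4\kappa d(p-1)^{1-1/p} + 4\kappa p d\right)\mathbb{E}\left[|\Phi_x^i|^p\right]
+ 2\kappa(p-1)^{1-1/p}\sum_{\N}\mathbb{E}\left[|\Phi_y^i|^p\right] + C(\varepsilon,p).
\end{equation*}

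Next, I multiply both sides by $\tfrac{1}{a^{|x|}}$ and sum over $x \in \mathbb{Z}^d$. The essential point is the elementary weight-swap $\tfrac{1}{a^{|x|}} \le a \cdot \tfrac{1}{a^{|y|}}$ whenever $x \sim y$, which together with Fubini gives
\begin{equation*}
\sum_{x \in \mathbb{Z}^d} \tfrac{1}{a^{|x|}} \sum_{\N} \mathbb{E}\left[|\Phi_y^i|^p\right]
\le 2 d a \sum_{y \in \mathbb{Z}^d} \tfrac{1}{a^{|y|}}\, \mathbb{E}\left[|\Phi_y^i|^p\right].
\end{equation*}
Using also $\tfrac{p(N+p-2)}{2N} \ge \tfrac{p}{2}$ and $\sum_{x \in \mathbb{Z}^d} \tfrac{1}{a^{|x|}} < \infty$, we arrive at
\begin{equation*}
\left(\tfrac{p}{2} - \varepsilon - 4\kappa d(a+1)(p-1)^{1-1/p} - 4\kappa p d\right)
\sum_{x \in \mathbb{Z}^d} \tfrac{1}{a^{|x|}}\, \mathbb{E}\left[|\Phi_x^i|^p\right]
\lesssim C(\varepsilon,p).
\end{equation*}

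Under the hypothesis \eqref{p-mon-Z}, the bracketed coefficient on the left-hand side is strictly positive once $\varepsilon$ is chosen sufficiently small, so we may solve for the weighted $p$-th moment and obtain the desired bound, with implicit constant independent of $N$ (the factor $\tfrac{p(N+p-2)}{2N}$ was discarded in favour of $\tfrac{p}{2}$, and no other $N$-dependent constant enters). There is no serious obstacle: the only novelty compared with Lemma \ref{un-N} is the appearance of the extra factor $a$ in front of the neighbor sum (hence $(a+1)$ in place of $2$ in the denominator of \eqref{p-mon-Z}), which is exactly the price paid for exchanging finite-volume summation for a summable exponential weight on $\mathbb{Z}^d$. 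The rest is routine bookkeeping already performed in the finite-volume proof.
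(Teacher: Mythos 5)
Your proposal is correct and follows essentially the same route as the paper: the paper obtains Lemma \ref{un-N-Z} precisely by taking the pointwise stationary estimate \eqref{finite-es}, multiplying by $\frac{1}{a^{|x|}}$, summing over $x\in\mathbb{Z}^d$, and using $\frac{1}{a^{|x|}}\le a\,\frac{1}{a^{|y|}}$ for $x\sim y$ together with $\sum_{x}\frac{1}{a^{|x|}}<\infty$. Your bookkeeping of the resulting coefficient, yielding the factor $(a+1)$ in \eqref{p-mon-Z}, matches the paper's statement.
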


Similar to the finite volume case,  we can establish the existence of stationary measures for the mean-field SDE \eqref{mean-field-limit-Z} by verifying the self-consistency condition.  We present the definition of a stationary measure for the mean-field SDE~\eqref{mean-field-limit-Z}.  
\begin{definition}
A probability measure $\mu$ is called a stationary measure for the mean-field SDE~\eqref{mean-field-limit-Z} if there exists a martingale solution $\mathbf{P}$ to the mean-field SDE \eqref{mean-field-limit-Z} with initial distribution $\mu$, and $\mathbf{P}\circ \pi^{-1}_t=\mu$ for all $0\le t\le T$.
\end{definition}
\noindent A martingale solution to the mean-field SDE \eqref{mean-field-limit-Z} is called a stationary solution if its initial distribution is a stationary measure.

We define
\begin{equation*}
    \begin{aligned}
         & \mathscr{P}_{\rm const}(\mathbb{H}) := \left\{
            \mu \in \mathscr{P}(\mathbb{H})
            \;\middle|\;
            \text{there exists } u \in \mathbb{R} \text{ such that } 
            \int v_x v_y\, \mathrm{d}\mu(v) = u, \text{ for all } x \sim y
         \right\}.
    \end{aligned}
\end{equation*}
Furthermore, we define
$$\kappa_c:=\frac{1}{4}G_{\mathbb{Z}^d,0}(\bm{0},\bm{0}),$$ 
where $G_{\mathbb{Z}^d,0}$ denotes the Green function of the Gaussian free field on $\mathbb{Z}^d$.  Let $\muc$ be a  massive Gaussian free field on $\mathbb{Z}^d$ scaled by ${1}/{(2\sqrt{\kappa})}$ with the mass $m^2$, which is the solution to 
\begin{equation*}
    \begin{aligned}
         & G_{\mathbb{Z}^d,m^2}(x,x)=4\kappa,\quad \text{for all\ } x\in \mathbb{Z}^d.
    \end{aligned}
\end{equation*}
\begin{theorem}\label{Aru-cor-im-Z}
Suppose that $0<\kappa<\kappa_{c}$. Then, the field $\muc$ is the unique stationary measure in $\mathscr{P}_{\mathrm{const}}(\mathbb{H})$ to the mean-field SDE \eqref{mean-field-limit-Z}.
\end{theorem}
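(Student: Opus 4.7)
The argument parallels the proof of Theorem \ref{Aru-cor-im} in the finite volume case, with additional care needed in infinite volume. Fix any $\mu\in\mathscr{P}_{\mathrm{const}}(\mathbb{H})$ with $\int v_x v_y\,\d\mu(v)=u$ for every $x\sim y$. If $\mu$ is a stationary measure for \eqref{mean-field-limit-Z}, the nonlinear term $\mathbb{E}[\Psi_x(t)\Psi_y(t)]$ is constantly equal to $u$, and the stationary solution $\Psi$ solves the linear SDE
\begin{equation*}
\d\Psi_x(t)=\bigl(2\kappa\Delta_{\mathbb{Z}^d}-2\kappa\mathfrak{m}\bigr)\Psi_x(t)\,\d t+\d W_x(t),\qquad x\in\mathbb{Z}^d,
\end{equation*}
with $\mathfrak{m}=2d(u-1)+1/(4\kappa)$. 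For $\mathfrak{m}>0$ the natural Gaussian invariant measure is the scaled massive Gaussian free field $\mu_{\mathfrak{m}}=(2\sqrt{\kappa})^{-1}\cdot N(0,(-\Delta_{\mathbb{Z}^d}+\mathfrak{m})^{-1})$, which lies in $\mathscr{P}_{\mathrm{const}}(\mathbb{H})$ by translation invariance of $G_{\mathbb{Z}^d,\mathfrak{m}}$.

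For existence, $\mu_{\mathfrak{m}}$ is a stationary measure of \eqref{mean-field-limit-Z} precisely when the self-consistency relation $\frac{1}{4\kappa}G_{\mathbb{Z}^d,\mathfrak{m}}(\mathbf{0},\mathbf{1})=u=\frac{\mathfrak{m}}{2d}+1-\frac{1}{8\kappa d}$ holds. Using the identity $(-\Delta_{\mathbb{Z}^d}+\mathfrak{m})G_{\mathbb{Z}^d,\mathfrak{m}}(\mathbf{0},\cdot)\bigr|_{\mathbf{0}}=1$ together with the symmetry $G_{\mathbb{Z}^d,\mathfrak{m}}(\mathbf{0},y)=G_{\mathbb{Z}^d,\mathfrak{m}}(\mathbf{0},\mathbf{1})$ for every $y\sim\mathbf{0}$, this simplifies to the single equation $G_{\mathbb{Z}^d,\mathfrak{m}}(\mathbf{0},\mathbf{0})=4\kappa$. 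The map $\mathfrak{m}\mapsto G_{\mathbb{Z}^d,\mathfrak{m}}(\mathbf{0},\mathbf{0})$ is continuous and strictly decreasing on $(0,\infty)$ (via the representation $G_{\mathbb{Z}^d,\mathfrak{m}}(\mathbf{0},\mathbf{0})=\int_0^\infty e^{-\mathfrak{m}t}p_t(\mathbf{0},\mathbf{0})\,\d t$, where $p_t$ is the discrete heat kernel), with limit $4\kappa_c$ (equal to $+\infty$ when $d=1,2$) as $\mathfrak{m}\to 0^+$ and limit $0$ as $\mathfrak{m}\to+\infty$. Since $\kappa<\kappa_c$, there exists a unique $m^2>0$ solving this equation, identifying $\muc$ as a stationary measure.

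For uniqueness, applying It\^o's formula to $|\Psi_x(t)|^2$ and taking expectations under stationarity produces $(8\kappa d u+1)(1-\mathbb{E}[|\Psi_x|^2])=0$ at every $x$, exactly as in the finite volume case. The alternative $8\kappa d u+1=0$ forces $\mathfrak{m}=-2d<0$; but then $2\kappa\Delta_{\mathbb{Z}^d}-2\kappa\mathfrak{m}$ has strictly positive part of its spectrum, the linear SDE is non-dissipative, and no stationary measure on $\mathbb{H}$ with finite second moments can exist, contradicting $\mu\in\mathscr{P}_{\mathrm{const}}(\mathbb{H})$. Hence $\mathbb{E}[|\Psi_x|^2]=1$, so $G_{\mathbb{Z}^d,\mathfrak{m}}(x,x)=4\kappa$, and the strict monotonicity above pins down $\mathfrak{m}=m^2$. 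To conclude $\mu=\muc$, I will couple a stationary solution $\Psi^{(1)}$ of law $\mu$ with an independent stationary solution $\Psi^{(2)}$ of law $\muc$ driven by the same Brownian motion; the difference evolves deterministically as $V(t):=\Psi^{(1)}(t)-\Psi^{(2)}(t)=e^{t(2\kappa\Delta_{\mathbb{Z}^d}-2\kappa m^2)}V(0)$. The uniform covariance bound $|\mathbb{E}[V_{y_1}(0)V_{y_2}(0)]|\le 2$, following from Cauchy-Schwarz and $\int v_x^2\,\d\mu(v)=\int v_x^2\,\d\muc(v)=1$, combined with $\sum_{y}p_{2\kappa t}(x,y)=1$, yields $\mathbb{E}[|V_x(t)|^2]\le 2\,e^{-4\kappa m^2 t}\to 0$ for every fixed $x$. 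Testing bounded Lipschitz functions of finitely many coordinates against the marginals, which are time-invariant by stationarity of each $\Psi^{(i)}$, then forces equality of all cylinder marginals and hence $\mu=\muc$.

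The main obstacle lies in this last step: because $e^{t(2\kappa\Delta_{\mathbb{Z}^d}-2\kappa m^2)}$ is \emph{not} a contraction on the weighted space $\mathbb{H}=\ell_a^2$, one cannot obtain $V(t)\to 0$ in $\mathbb{H}$ directly. The argument must instead proceed coordinatewise, carefully balancing the exponential decay $e^{-2\kappa m^2 t}$ from the mass term against the diffusive spreading of the discrete heat kernel, and leveraging the uniform second-moment bound supplied by $\mu\in\mathscr{P}_{\mathrm{const}}(\mathbb{H})$ to pass from site-by-site $L^2$-convergence to equality of the joint laws on arbitrary finite subsets of $\mathbb{Z}^d$.
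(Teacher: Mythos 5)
Your overall strategy is the paper's: freeze the edge correlation $u$, reduce \eqref{mean-field-limit-Z} to the linear (OU-type) SDE with mass $\mathfrak{m}=2d(u-1)+1/(4\kappa)$, take the scaled massive Gaussian free field as the candidate stationary measure, impose the self-consistency relation, and pin down $\mathfrak{m}$ through the second-moment identity $\mathbb{E}[|\Psi_x|^2]=1$ together with monotonicity of $\mathfrak{m}\mapsto G_{\mathbb{Z}^d,\mathfrak{m}}(x,x)$. Your existence step is in fact cleaner than the paper's: the algebraic observation that, via $(-\Delta_{\mathbb{Z}^d}+\mathfrak{m})G_{\mathbb{Z}^d,\mathfrak{m}}(\bm{0},\cdot)|_{\bm{0}}=1$ and the symmetry among neighbours, the self-consistency \eqref{self-Z} is equivalent (for $\mathfrak{m}>0$) to $G_{\mathbb{Z}^d,\mathfrak{m}}(\bm{0},\bm{0})=4\kappa$ is correct, and it bypasses the paper's Fourier computation of $\lim_{\mathfrak{m}\to0^+}G_{\mathbb{Z}^d,\mathfrak{m}}(\bm{0},\bm{1})=\C$ and the verification $\kappa_c=\frac14\C+\frac{1}{8d}$; the condition $\kappa<\kappa_c$ then enters exactly as $4\kappa<\lim_{\mathfrak{m}\to 0^+}G_{\mathbb{Z}^d,\mathfrak{m}}(\bm{0},\bm{0})$.

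The uniqueness part, however, has a genuine gap as written, precisely in the place where you try to go beyond the paper (which argues within the Gaussian family $\widetilde{\mu}_{\mathfrak m}$, $\mathfrak m>0$). First, the step ``$\mathbb{E}[|\Psi_x|^2]=1$, so $G_{\mathbb{Z}^d,\mathfrak{m}}(x,x)=4\kappa$'' silently uses that the stationary measure has the site variances of $\widetilde{\mu}_{\mathfrak m}$, i.e.\ that $\mu$ already coincides with the Gaussian invariant measure of the linear SDE with its own mass; but that identification is exactly what your final coupling is supposed to deliver, and the coupling in turn needs $\mathfrak{m}=m^2$ so that the two drifts agree and the noise cancellation leaves a pure semigroup evolution — so the argument is circular in its stated order. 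The fix is to run the Duhamel/synchronous-coupling decay with the measure's \emph{own} mass $\mathfrak{m}$ (using the uniform bound $\mathbb{E}[|\Psi_x|^2]=1$ from the It\^o identity, the factorization $e^{t(2\kappa\Delta_{\mathbb{Z}^d}-2\kappa\mathfrak{m})}=e^{-2\kappa\mathfrak{m}t}e^{2\kappa t\Delta_{\mathbb{Z}^d}}$ and the stochasticity of the heat kernel), which identifies $\mu=\widetilde{\mu}_{\mathfrak m}$ first; only then does monotonicity of $G_{\mathbb{Z}^d,\mathfrak{m}}(x,x)$ give $\mathfrak{m}=m^2$ and $\mu=\muc$. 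Second, your dismissal of the degenerate case $8\kappa d u+1=0$ (i.e.\ $\mathfrak{m}=-2d$) is an assertion, not a proof: the drift operator $2\kappa\Delta_{\mathbb{Z}^d}+4\kappa d$ has purely continuous spectrum covering both signs, so ``non-dissipative, hence no stationary measure with finite second moments'' needs an actual argument (e.g.\ testing against near-eigenfunction wave packets and a growth estimate), and membership in $\mathscr{P}_{\mathrm{const}}(\mathbb{H})$ does not by itself supply the finite second moments you invoke. Third, if you really allow arbitrary $u$, the range $\mathfrak{m}\le 0$, $\mathfrak{m}\neq-2d$ is not addressed at all: there $G_{\mathbb{Z}^d,\mathfrak{m}}$ is not defined (or, for $\mathfrak{m}=0$, $d\ge3$, gives $4\kappa_c\neq4\kappa$), and both the pinning step and the decay of the semigroup fail, so these values must be excluded separately — or you must restrict, as the paper effectively does, to the linear-Gaussian framework, in which case your argument collapses to the paper's and the coupling step becomes unnecessary.
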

\begin{proof}
Following the approach used for the finite volume case, we consider the following SDE:
\begin{equation}\label{Zd-Phiu}
    \begin{aligned}
        \d {\Psi}^u_x(t) = &2\kappa\sum_{\N} \left( \Psi^u_y(t)- u\Psi^u_x(t) \right)\,\d t -\frac{1}{2} \Psi^u_x(t)\,\d t+\d W_x(t)\\
=& (2\kappa\,\Delta_{\mathbb{Z}^d} -2\kappa \mathfrak{m}){\Psi}^u_x(t)\,\d t  +\,\d W_x(t), \quad  x\in \mathbb{Z}^d, \ 0\le t\le T,
    \end{aligned}
\end{equation}
where $\mathfrak{m}:=2d({u}-1)+1/(4\kappa)$.  

If $\mathfrak{m}>0$, it is well known that there exists an invariant measure to \eqref{Zd-Phiu},  which is 
\begin{equation*}
    \begin{aligned}
         & N\left(0,\left[-2(2\kappa \Delta_{\mathbb{Z}^d} -2\kappa\mathfrak{m})\right]^{-1}\right)= \frac{1}{2\sqrt{\kappa}}N\left(0,\left[-\Delta_{\mathbb{Z}^d}+\mathfrak{m}\right]^{-1}\right):=\widetilde{\mu}_{\mathfrak{m}},
    \end{aligned}
\end{equation*}
a massive Gaussian free field on $\mathbb{Z}^d$  scaled by $\frac{1}{2\sqrt{\kappa}}$ with mass $\mathfrak{m}$.   Since, for any massive Gaussian free field on $\mathbb{Z}^d$, the correlation function is invariant under translations and rotations by $\frac{\pi}{2}$, it follows that $\widetilde{\mu}_{\mathfrak{m}}\in \mathscr{P}_{\mathrm{const}}(\mathbb{H})$. 

Note that $u=\frac{1}{2d} \mathfrak{m}+1-\frac{1}{8\kappa d}$, it is easy to see that $\widetilde{\mu}_{\mathfrak{m}} $ is a stationary measure to the mean-field SDE \eqref{mean-field-limit-Z}   if and only if there exists a $\mathfrak{m}>0$ such that the following self-consistency condition holds:
\begin{equation}\label{self-Z}
    \begin{aligned}
         & \int v_{\bm{0}} v_{\bm{1}}\,\d \widetilde{\mu}_{\mathfrak{m}}(v)= \frac{1}{4\kappa}G_{\mathbb{Z}^d,\mathfrak{m}}(\bm{0},\bm{1})=u=\frac{1}{2d} \mathfrak{m}+1-\frac{1}{8\kappa d},
    \end{aligned}
\end{equation}
where 
\begin{equation*}
    \begin{aligned}
         & \bm{0}:=(0,\cdots,0)\in \mathbb{Z}^d,\ \  \bm{1}:=(1,0,\cdots,0)\in \mathbb{Z}^d.
    \end{aligned}
\end{equation*}
Define $e_k(x):=e^{2\pi i x\cdot k}$, where $x\in \mathbb{Z}^d$, $k\in [0,1)^d$, note that 
\begin{equation*}
    \begin{aligned}
         & \Delta_{\mathbb{Z}^d} e_k(x)=e^{2\pi i x\cdot k}\sum^d_{j=1} \left( e^{2\pi i k_j}+ e^{-2\pi i k_j}-2 \right)=\Bigg(2\sum^d_{j=1}\big[\cos(2\pi k_j)-1  \big] \Bigg) e^{2\pi i x\cdot k}:=\lambda_k\, e^{2\pi i x\cdot k}.
    \end{aligned}
\end{equation*}
Similar to the finite-volume case, the Green's function $G_{\mathbb{Z}^d, \mathfrak{m}}(x, y)$ satisfies
$$(-\Delta_{\mathbb{Z}^d} + \mathfrak{m}) G_{\mathbb{Z}^d, \mathfrak{m}}(x, y) = \delta_{x, y},\quad x,\, y\in \mathbb{Z}^d,$$
where $\delta_{x, y}$ is defined as $1$ for $x = y$ and $0$ otherwise. By direct computation, we deduce that
\begin{equation}\label{I-correlation}
    \begin{aligned}
         & G_{\mathbb{Z}^d,\mathfrak{m}}(x,y)= \int_{[0,1)^d} \frac{e^{2\pi ik\cdot (x-y)}}{-\lambda_k + \mathfrak{m}}{\rm{d}}k=\int_{[0,1)^d} \frac{\cos\left( 2\pi  k\cdot (x-y) \right)}{-\lambda_k + \mathfrak{m}}\, \d k.
    \end{aligned}
\end{equation}
In particular, we have
\begin{equation*}
    \begin{aligned}
         & G_{\mathbb{Z}^d,\mathfrak{m}}(\bm{0},\bm{1})=\int_{[0,1)^d} \frac{\cos( 2\pi k_1 )}{-\lambda_k + \mathfrak{m}}\, \d k.
    \end{aligned}
\end{equation*}
In the following, we study the limit $\lim_{\mathfrak{m}\rightarrow 0^+} G_{\mathbb{Z}^d,\mathfrak{m}}(\bm{0},\bm{1})$. To this end, it suffices to analyze the behavior of 
\begin{equation}\label{behavior-0-1}
    \begin{aligned}
         & \int_{[0,\varepsilon]^d}\frac{\cos( 2\pi k_1 )}{-\lambda_k+\mathfrak{m} }\, \d k \quad\text{and}\quad \int_{[1-\varepsilon,1)^d}\frac{\cos( 2\pi k_1 )}{-\lambda_k+\mathfrak{m} }\, \d k,
    \end{aligned}
\end{equation}
as $\mathfrak{m} \rightarrow 0^+$ in the regime where $\varepsilon$ is sufficiently small. We focus on the first term in \eqref{behavior-0-1}, as the second term can be analyzed in a similar manner. Note that for $\varepsilon$ small enough, 
\begin{equation*}
    \begin{aligned}
         & \pi^2 k_i^2\le 1-\cos(2\pi k_i)\le 2\pi^2 k_i^2, 
    \end{aligned}
\end{equation*}
where $0\le k_i\le \varepsilon, \ i=1,\cdots,d$. Consequently, we have 
\begin{equation*}
    \begin{aligned}
\int^\varepsilon_0 \frac{r^{d-1}}{\mathfrak{m} +4\pi^2r^2}\, \d r\lesssim \int_{[0,\varepsilon]^d}\frac{\cos( 2\pi k_1 )}{-\lambda_k+\mathfrak{m} }\, \d k\lesssim \int^\varepsilon_0 \frac{r^{d-1}}{\mathfrak{m} +2\pi^2r^2}\, \d r.
    \end{aligned}
\end{equation*}
Moreover, we obtain that for $d = 1, 2$,
$$\lim_{\mathfrak{m} \to 0^+} G_{\mathbb{Z}^d, \mathfrak{m}}(\mathbf{0}, \mathbf{1}) = +\infty,$$
and for $d \geq 3$,
$$\lim_{\mathfrak{m} \to 0^+} G_{\mathbb{Z}^d, \mathfrak{m}}(\mathbf{0}, \mathbf{1}) = \C < +\infty,$$
where 
\begin{equation*}
    \begin{aligned}
         & \C:=\int_{[0,1)^d} \frac{\cos( 2\pi k_1 )}{2\sum^d_{i=1} \left[1-\cos\left( 2\pi k_i \right)  \right]}\, \d k.
    \end{aligned}
\end{equation*}
Note that $\lim_{\mathfrak{m}\rightarrow +\infty} G_{\mathbb{Z}^d,\mathfrak{m}}(\bm{0},\bm{1})=0$, and $G_{\mathbb{Z}^d,\mathfrak{m}}(\bm{0},\bm{1})$ is continuous on $(0,+\infty)$ with respect to $\mathfrak{m}$. For $d = 1, 2$, equation \eqref{self-Z} admits at least one positive solution.  Similarly, for $d \geq 3$, let $\kappa_{\C}:=\frac{1}{4}\C+\frac{1}{8d}$.  Observe that
\begin{equation*}
    \begin{aligned}
         & \kappa_c-\kappa_\C=\frac{1}{4} \int_{[0,1)^d} \frac{1-\cos( 2\pi k_1 )}{2\sum^d_{i=1} \left[1-\cos\left( 2\pi k_i \right)  \right]} \, \d k-\frac{1}{8d}=0.
    \end{aligned}
\end{equation*}
Since $0 < \kappa < \kappa_{c}$, it follows that
\[
\frac{1}{4\kappa} \lim_{\mathfrak{m}\rightarrow 0^+}G_{\mathbb{Z}^d, \mathfrak{m}}(\mathbf{0}, \mathbf{1}) =\frac{1}{4\kappa}\C>1-\frac{1}{8\kappa d}.
\]
This implies that equation \eqref{self-Z} admits at least one positive solution. Therefore, there exists a martingale solution $\mathbf{P}$ to the mean-field SDE~\eqref{mean-field-limit-Z} such that $\mathbf{P}\circ\pi^{-1}_t=\widetilde{\mu}_{\mathfrak{m}}$ for all $0\le t\le T$.

Let $\mathfrak{m}>0$ be any positive solution to equation \eqref{self-Z}. 
Since the martingale solution $\mathbf{P}$ starting from $\widetilde{\mu}_{\mathfrak{m}}$ to the mean-field SDE~\eqref{mean-field-limit-Z} satisfies $\mathbf{P}\circ \pi^{-1}_t=\widetilde{\mu}_{\mathfrak{m}}$ for all $0\le t\le T$, applying It\^o's formula to $\abs{{\Psi_x}(t)}^2$, $x\in \mathbb{Z}^d$ and taking expectations, we have
\begin{equation*}
    \begin{aligned}
         & (8d\kappa{u}+1)\cdot(1-\mathbb{E}[\abs{{\Psi}_x}^2])=0.
    \end{aligned}
\end{equation*}
Since $\mathfrak{m}>0$, we obtain $8d\kappa {u} +1\neq 0$. Therefore,
$\mathbb{E}\left[ |\Psi_x|^2 \right]=1, x\in \mathbb{Z}^d$, which implies that
\begin{equation*}
    \begin{aligned}
     & G_{\mathbb{Z}^d,\mathfrak{m}}(x,x)=\mathbb{E}\left[ 2\sqrt{\kappa}{\Psi}_x \cdot 2\sqrt{\kappa}{\Psi}_x \right]=4\kappa,
    \end{aligned}
\end{equation*}
or, using \eqref{I-correlation}, more explicitly,
\begin{equation}\label{uni-pos}
    \begin{aligned}
    \int_{[0,1)^d}\frac{1}{-\lambda_k + \mathfrak{m}}\,\d k=4\kappa.
    \end{aligned}
\end{equation}
Note that $G_{\mathbb{Z}^d,\mathfrak{m}}(x,x)$ is monotonically decreasing on $(0,+\infty)$ with respect to $\mathfrak{m}$. Therefore, there is a unique positive solution to \eqref{uni-pos}. This completes the proof.
\end{proof}

For $\kappa\ge\kappa_c$, we can also find a stationary measure for the mean-field SDE \eqref{mean-field-limit-Z}. We follow the argument of \cite[Proposition 3.5]{Shen2023}. We recall a relevant result from \cite[Section 4.2]{aru2024}. Recall that $\frac{1}{4\kappa}\,G_{\Lambda_\L,\mathfrak{m}^\L}(\cdot,\cdot)$ denote the correlation function of the Gaussian field $\mu^\L$, as defined in Theorem~\ref{Aru-cor-im}. We set $\kappa_c = \frac{1}{4} G_{\mathbb{Z}^d,0}(\bm{0}, \bm{0})$ throughout, where $G_{\mathbb{Z}^d,0}$ denotes the Green function of the Gaussian free field on $\mathbb{Z}^d$.  It was shown in \cite[Section 4.2]{aru2024} that if $\kappa\le \kappa_c$, for any $x,y\in\mathbb{Z}^d$, 
\begin{equation*}
    \begin{aligned}
         & \frac{1}{4\kappa}\,G_{\Lambda_\L,\mathfrak{m}^\L}(x,y)\rightarrow \frac{1}{4\kappa}\,G_{\mathbb{Z}^d,\mathfrak{m}}(x,y),\quad \text{as}\ \  \L\rightarrow \infty,
    \end{aligned}
\end{equation*}
where $\mathfrak{m}=\lim_{\L\rightarrow \infty} \mathfrak{m}^\L$; if $\kappa>\kappa_c$, for any $x,y\in\mathbb{Z}^d$,
\begin{equation}\label{phase-tran}
    \begin{aligned}
         & \frac{1}{4\kappa}\,G_{\Lambda_\L,\mathfrak{m}^\L}(x,y)\rightarrow  \frac{1}{4\kappa}\,G_{\mathbb{Z}^d,0}(x,y) + \frac{\kappa-\kappa_c}{\kappa},\quad \text{as}\ \  \L\rightarrow \infty.
    \end{aligned}
\end{equation}
Moreover, for any fixed finite box $U \subseteq \mathbb{Z}^d$, which is properly contained in $\Lambda_\L$ for all sufficiently large $\L$,  the above convergence is uniform for all $x, y \in U$.

\begin{remark}
Note that there exists a family of independent Gaussian random variables 
\begin{equation*}
    \begin{aligned}
         & \psi^{\L,k}\laweq\frac{1}{2\sqrt{\kappa}} N\left(0, \left[-\lambda_k^{\L}+\mathfrak{m}^\L\right]^{-1}\right),\ \ k\in \Lambda_\L,
    \end{aligned}
\end{equation*}
such that 
$\sum_{k\in \Lambda_\L} \psi^{\L,k}\, e^\L_k\laweq \mu^\L$, where $e^\L_k$ and $\lambda^\L_k$ are defined in \eqref{elk} and \eqref{spec-lambda}, respectively. According to \cite[Section 4.2]{aru2024}, if $\kappa > \kappa_c$, then 
$$\lim_{\L\rightarrow \infty} \mathfrak{m}^\L \L^d=\frac{1}{4(\kappa-\kappa_c)}.$$
Based on these two facts, it can be observed by direct computation that the constant term on the right-hand side of \eqref{phase-tran} originates from the zero-frequency component $\psi^{\L, \bm{0}} e^\L_{\bm{0}}$.
\end{remark}

In the following, we use a periodic approximation of \eqref{mean-field-limit-Z} to construct the stationary measure for the mean-field SDE \eqref{mean-field-limit-Z}. Considering the periodic extension of the mean-field SDE~\eqref{mean-field-limit}:
\begin{equation}\label{mean-field-limit-L}
    \left\{\begin{aligned}
         \d\Psi^\L_x(t)= &2\kappa\sum_{\N}\Big(  \Psi^\L_y(t)-\mathbb{E}[ \Psi^\L_x(t) \Psi^\L_y(t)] \Psi^\L_x(t)\Big)\,\d t-\frac{1}{2}\Psi^\L_x(t)\,\d t+\d W_x(t),\quad 0\le t\le T,\\
\Psi^\L_x(0)=& \psi^\L_x, \quad x \in\mathbb{Z}^d,\, \L\in \mathbb{N},
\end{aligned}\right.
\end{equation}
where the superscript $\L$  emphasizes the dependence of the equation on $\L$.  Since \eqref{mean-field-limit-L} is a periodic extension to the mean-field SDE \eqref{mean-field-limit}, by Theorem \ref{Aru-cor-im}, there exists a stationary measure to the mean-field SDE \eqref{mean-field-limit-L}, which is a periodic extension of $\mu^\L$,  and is still denoted by $\mu^\L$ for simplicity.

Let $\psi^\L\laweq \mu^\L$ 
be a fixed initial data of the mean-field SDE \eqref{mean-field-limit-L}, and let $\mathbf{P}^\L$ be the law of $\Psi^\L$. Recall the notation introduced in Section~\ref{sec4}. We define $\SSI := C([0, T]; \mathbb{H})$ with $\mathbb{H} := \ell^2_a$, where $C([0, T]; \mathbb{H})$ denotes the space of all continuous functions on $[0, T]$ with values in $\mathbb{H}$. For any $w_1,w_2\in \SSI$, we define the metric
$$
\dI(w_1, w_2) := \sup_{0 \leq t \leq T} \|w_1(t) - w_2(t)\|_2.
$$
Equipped with this metric, $(\SSI,\dI)$ is a Polish space. 

Let $\bm{\mu}$ be a field on $\mathbb{Z}^d$, defined as follows:
\begin{enumerate}
\item if $\kappa<\kappa_c$: let $\bm{\mu}=\muc$, where $\muc$ is introduced in Theorem \ref{Aru-cor-im-Z};

\item if $\kappa=\kappa_c$: let $\bm{\mu}$ be a  Gaussian free field on $\mathbb{Z}^d$ scaled by ${1}/{(2\sqrt{\kappa})}$;

\item if $\kappa>\kappa_c$: let $\bm{\mu}$ be a  Gaussian free field on $\mathbb{Z}^d$ scaled by ${1}/{(2\sqrt{\kappa})}$ plus an independent constant random drift $\sqrt{\frac{\kappa-\kappa_c}{\kappa}}\cdot \mathcal Z$ with $\mathcal Z$ being a standard normal random variable.
\end{enumerate}
\begin{theorem}\label{mu-sm}
The field $\bm{\mu}$ is a stationary measure for the mean-field SDE  \eqref{mean-field-limit-Z}.
\end{theorem}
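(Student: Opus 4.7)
The plan is to realize $\bm{\mu}$ as the time-marginal of a martingale solution obtained by a periodic approximation. The case $\kappa<\kappa_c$ is already covered by Theorem \ref{Aru-cor-im-Z}, so the real content concerns $\kappa\ge\kappa_c$. For each $\L\in\mathbb{N}$, the periodic mean-field SDE \eqref{mean-field-limit-L} admits a stationary solution $\Psi^\L$ with initial law $\mu^\L$ (this follows from Theorem \ref{Aru-cor-im} and Corollary \ref{muL-s} applied to the periodic extension). Denoting its law on $\SSI$ by $\mathbf{P}^\L$, we have $\mathbf{P}^\L\circ\pi_t^{-1}=\mu^\L$ for every $0\le t\le T$.

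First I would establish uniform-in-$\L$ bounds. Since the argument in the proof of Theorem \ref{Aru-cor-im} shows $\int |v_x|^2\,\mu^\L(\d v)=1$, stationarity gives $\mathbb{E}[|\Psi_x^\L(t)|^2]=1$ for every $x\in\mathbb{Z}^d$ and every $t$, hence $\sup_{\L,t}\mathbb{E}[\|\Psi^\L(t)\|_2^2]<\infty$ in the weighted norm \eqref{wlp}. Combined with the a priori bound $|\mathbb{E}[\Psi_x^\L(t)\Psi_y^\L(t)]|\le 1$ from Cauchy--Schwarz and $L^p$-estimates analogous to Proposition \ref{Lp-estimate-Zd}, a Kolmogorov-type time-regularity estimate modeled on Lemma \ref{tight-P-2-infty} yields tightness of $\{\mathbf{P}^\L\}_{\L\in\mathbb{N}}$ in $\mathscr{P}_2(\SSI)$. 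Passing to a subsequence and applying Skorokhod's theorem produces a limit $\mathbf{P}\in\mathscr{P}_2(\SSI)$.

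Next I would verify that $\mathbf{P}$ is a martingale solution of \eqref{mean-field-limit-Z} by following the scheme of Lemma \ref{Drift-Estimate} and Lemma \ref{QV-Estimate}, adapted to the infinite volume setting. The crucial input is the convergence of the two-point functions recalled from \cite[Section 4.2]{aru2024}: $\frac{1}{4\kappa}G_{\Lambda_\L,\mathfrak{m}^\L}(x,y)\to\frac{1}{4\kappa}G_{\mathbb{Z}^d,0}(x,y)+\frac{\kappa-\kappa_c}{\kappa}$ uniformly on any finite box, for $\kappa>\kappa_c$ (and analogously with $\mathfrak{m}=0$ when $\kappa=\kappa_c$). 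Using compactly supported test vectors $\theta\in\ell_c$ in Definition \ref{martingale-solution-Z}, the nonlinear drift term $\mathbb{E}[\Psi_x^\L(s)\Psi_y^\L(s)]\Psi_x^\L(s)$ only probes these two-point functions at finitely many pairs $(x,y)$, so the required passage to the limit reduces to the already-available uniform convergence on bounded sets. Similarly, by \cite[Section 4.2]{aru2024}, the finite-dimensional distributions of $\mu^\L$ converge to those of $\bm{\mu}$, so $\mathbf{P}\circ\pi_t^{-1}=\bm{\mu}$, which is exactly stationarity.

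The main obstacle, which also dictates the structure of $\bm{\mu}$, is the regime $\kappa>\kappa_c$: the result $\mathfrak{m}^\L\L^d\to 1/(4(\kappa-\kappa_c))$ from \cite[Section 4.2]{aru2024} shows that the zero-frequency Fourier mode $\psi^{\L,\bm{0}}$ does not vanish in the limit but instead converges to an independent Gaussian with variance $(\kappa-\kappa_c)/\kappa$, producing the extra random drift $\sqrt{(\kappa-\kappa_c)/\kappa}\,\mathcal Z$. One must check that this random mode is compatible with the drift in \eqref{mean-field-limit-Z}: on the constant field $u\mathds{1}$, the Laplacian term vanishes and the nonlinear self-interaction cancels the linear term exactly when $\mathbb{E}[u^2]=(\kappa-\kappa_c)/\kappa$ (combined with the Gaussian-free-field contribution to the variance $\int|v_x|^2\bm{\mu}(\d v)=1$), so $\bm{\mu}$ satisfies the required invariance. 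This algebraic matching---implicit in the choice of $\mathfrak{m}^\L$ through the self-consistency condition \eqref{self-Z}---is the most delicate point of the proof, and handling it cleanly is precisely what the periodic approximation is designed to bypass: the invariance of $\mu^\L$ under \eqref{mean-field-limit-L} is transferred to $\bm{\mu}$ under \eqref{mean-field-limit-Z} by the weak-convergence argument outlined above.
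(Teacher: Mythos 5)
Your proposal is correct and follows essentially the same route as the paper: a periodic approximation by \eqref{mean-field-limit-L} with stationary law $\mu^\L$, uniform-in-$\L$ moment bounds giving tightness of $\{\mathbf{P}^\L\}$ on $(\SSI,\dI)$, identification of a (subsequential) limit as a martingale solution to \eqref{mean-field-limit-Z}, and transfer of stationarity via the convergence of the correlation functions of $\mu^\L$ to those of $\bm{\mu}$ from \cite[Section 4.2]{aru2024}. The only cosmetic differences (treating $\kappa<\kappa_c$ separately via Theorem \ref{Aru-cor-im-Z}, and not spelling out that the uniform weighted $\ell^p$ bound on the Gaussian initial data comes from hypercontractivity) do not affect the argument.
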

\begin{proof}
Since \eqref{mean-field-limit-L} is a periodic extension of the mean-field SDE \eqref{mean-field-limit}, and $\psi^{\L} \laweq \mu^\L$, it follows from Theorem \ref{Aru-cor-im} that for the solution $\Psi^\L$ to \eqref{mean-field-limit-L}, we have
\begin{equation}\label{un-1-L}
    \begin{aligned}
         & \mathbb{E}\left[ \abs{\Psi^\L_x}^2(t) \right]=1, \quad \forall\, x\in \mathbb{Z}^d,\, \L\in \mathbb{N},\, t\in [0,T].
    \end{aligned}
\end{equation}    

We first prove that the sequence $\{\mathbf{P}^\L\}_{\L\in\mathbb{N}}$ forms a tight set on $(\SSI,\dI)$. The proof is divided into two steps.

{\bf Step $1$:} 
We prove that the periodic Gaussian field $\mu^\L$ satisfies
\begin{equation}\label{un-L-phi}
    \begin{aligned}
         & \sum_{x\in \mathbb{Z}^d} \int \frac{1}{a^{\abs{x}}}\abs{\Psi_x}^p\,\d \mu^\L( \Psi)\lesssim 1,
    \end{aligned}
\end{equation} 
for some $p>2$ and $a>1$, where the implicit constant is independent of $\L$. 

Since $\mu^\L$ is a Gaussian field satisfying
$$
\int |\Psi_x|^2 \, \mathrm{d} \mu^\L(\Psi) = 1,\quad \forall\, x\in \mathbb{Z}^d, \L\in \mathbb{N},
$$
by Gaussian hypercontractivity, there exist constants $p > 2$ and $a > 1$ such that
$$
\sum_{x \in \mathbb{Z}^d} \int \frac{1}{a^{|x|}} |\Psi_x|^p \, \mathrm{d} \mu^\L(\Psi)
\lesssim \sum_{x \in \mathbb{Z}^d}\frac{1}{a^{|x|}}   \Bigg(\int |\Psi_x|^2 \, \mathrm{d} \mu^\L(\Psi)\Bigg)^{\frac{p}{2}}  
\lesssim 1,
$$
where the implicit constants are independent of $\L$.
This completes the proof of {\bf Step $1$}.

{\bf Step $2$:} We prove that for some $p>2$, 
\begin{equation}\label{Psi-L-1}
    \begin{aligned}
         & \mathbb{E}\left[\sup_{0\le t\le T}\norm{\Psi^\L(t)}_p^p\right] \le C(d,p,\kappa,T) \left(\mathbb{E}\left[\norm{\psi^\L}_p^{p}\right] + 1\right)\lesssim 1,
    \end{aligned}
\end{equation}
and
\begin{equation}\label{Psi-L-2}
    \begin{aligned}
         & \mathbb{E}\left[ \norm{\Psi^\L(t)-\Psi^\L(s)}_p^p \right]\lesssim \left(\abs{t-s}^p +\abs{t-s}^{\frac{p}{2}}  \right),\quad 0\le s\le t\le T,
    \end{aligned}
\end{equation}
where the right-hand sides of \eqref{Psi-L-1} and \eqref{Psi-L-2} are both independent of $\L$.    

By \eqref{un-1-L} and {\bf Step $1$}, the proof of {\bf Step $2$} is similar to those of \eqref{sup-1-p-Zd} and \eqref{lp-t}, so we omit the details.  By the same argument as Lemma \ref{tight-P-2-infty}, we obtain that the sequence $\{\mathbf{P}^\L\}_{\L\in\mathbb{N}}$ forms a tight set on $(\SSI,\dI)$.

We next prove that every tight limit $\mathbf{P}$ of $\{\mathbf{P}^\L\}_{\L\in\mathbb{N}}$ is a martingale solution to \eqref{mean-field-limit-Z} with initial distribution $\bm{\mu}$. Since $\{\mathbf{P}^\L\}_{\L\in\mathbb{N}}$ is tight on $(\SSI,\dI)$, for the sequence $\{\mathbf{P}^\L\}_{\L\in\mathbb{N}}$, there exists a subsequence $\{{\mathbf{P}}^{\L_k}\}_{k\in \mathbb{N}}$ (for simplicity, we still denote the subsequence by $\{\mathbf{P}^\L\}_{\L\in\mathbb{N}}$) such that $\mathbf{P}^\L$ converges weakly to $\mathbf{P}$.  By Skorokhod's  theorem, we can construct a  probability space $(\hat{\Omega},\hat{\mathscr{F}},\hat{\mathbb{P}})$, and random variables $\hat{\Psi}^\L$ and $\hat{\Psi}$ on $(\hat{\Omega},\hat{\mathscr{F}},\hat{\mathbb{P}})$ such that $\hat{\Psi}^\L\laweq \mathbf{P}^\L$, $\hat{\Psi}\laweq \mathbf{P}$, and $\hat{\Psi}^\L\rightarrow \hat{\Psi}$, $\hat{\mathbb{P}}$-\text{a.s.} in $(\SSI,\dI)$, as $\L\rightarrow \infty$.

Since the correlation functions of the Gaussian field defined in Theorem \ref{Aru-cor-im} converge uniformly to those of $\bm{\mu}$ on every finite box that is properly contained in $\Lambda_\L$ for all sufficiently large $\L$, it follows that $\mu^\L \wcon \bm{\mu}$ on $\mathbb{H}$ as $\L\to\infty$. 
Therefore, the proof that $\mathbf{P}$ is a martingale solution to \eqref{mean-field-limit-Z} with initial distribution $\bm{\mu}$ is similar to that of Theorem \ref{Main-Theorem-Z}; in fact, it is even easier.  The main step is to prove that
\begin{equation*}
\begin{aligned}
& \abs{\Pi(\mathbf{P}^\L)-\Pi(\mathbf{P})}\rightarrow 0,\quad
\abs{\bm{\Pi}(\mathbf{P}^\L)-\bm{\Pi}(\mathbf{P})}\rightarrow 0,
\end{aligned}
\end{equation*}
as $\L\rightarrow \infty$, where $\Pi$ and $\bm{\Pi}$ are two functionals defined in Section \ref{sec4}. The details are omitted for brevity.

Given the above preparations, we can now establish the existence of stationary measures for the mean-field SDE \eqref{mean-field-limit-Z}. By $\hat{\Psi}^\L\laweq \mathbf{P}^\L$, $\hat{\Psi}\laweq \mathbf{P}$, and \eqref{un-L-phi},  we obtain that for any $0\le t\le T$, $x\in \mathbb{Z}^d$ and some $p>2$,
\begin{equation*}
    \begin{aligned}
         & \hat{\mathbb{E}}\left[ \abs{\hat{\Psi}^\L_x}^2(t) \right]=\mathbb{E}\left[ \abs{\Psi^\L_x}^2(t) \right]=1,\quad \hat{\mathbb{E}}\left[ \abs{\hat{\Psi}^\L_x}^p(t) \right]\lesssim 1,\quad \hat{\mathbb{E}}\left[ \abs{\hat{\Psi}_x}^2(t) \right]=\mathbb{E}\left[ \abs{\Psi_x}^2(t) \right],
    \end{aligned}
\end{equation*} 
where the implicit constant is independent of $\L$.
Since $\hat{\Psi}^\L\rightarrow \hat{\Psi}$, $\hat{\mathbb{P}}$-\text{a.s.} in $(\SSI,\dI)$, as $\L\rightarrow \infty$, we obtain that $\mathbb{E}[|\Psi_x|^2(t)]=1$ for any $0\le t\le T$, $x\in \mathbb{Z}^d$. Therefore, we obtain $\mathbf{P}\in \mathscr{P}_{\O,\Psi}(\SSI)$. By Lemma \ref{fixed-mu-le-Zd}, Lemma \ref{not-fix-mu-Zd}, and the Yamada-Watanabe argument in the proof of Theorem \ref{Main-Theorem-Z}, we deduce that the martingale solution $\mathbf{P}$ is unique in $\mathscr{P}_{\O,\Psi}(\SSI)$. Moreover, We obtain that $\mathbf{P}^{\L} \wcon \mathbf{P}$ on $(\SSI, \dI)$ without passing to a subsequence. Note that $\mu^\L$ is a stationary measure for \eqref{mean-field-limit-L}. Therefore, the desired result follows.
\end{proof}

Recall the definition of $\mathscr{P}_{\O,\Psi}(\SSI)$ in \eqref{Pone-Phi-I}.   Since the martingale solution $\mathbf{P}$ in the proof of Theorem~\ref{mu-sm} belongs to $\mathscr{P}_{\O,\Psi}(\SSI)$, by applying the Yamada-Watanabe argument as in the proof of Theorem~\ref{strong-solution-Z}, we obtain the following corollary directly.
\begin{corollary}\label{muL-s-Z}
Suppose the initial data $\psi \laweq \bm{\mu}$, then there exists a unique probabilistically strong solution to the mean-field SDE \eqref{mean-field-limit-Z} starting from $\psi$.    
\end{corollary}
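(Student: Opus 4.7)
\textbf{Proof plan for Corollary \ref{muL-s-Z}.} The plan is to combine the existence of the stationary martingale solution constructed in Theorem~\ref{mu-sm} with the pathwise uniqueness established in Lemma~\ref{not-fix-mu-Zd}, and then invoke the Yamada-Watanabe theorem exactly as in the second half of the proof of Theorem~\ref{strong-solution-Z}. The only subtle point is verifying that the martingale solution starting from $\bm{\mu}$ in fact lies in the class $\mathscr{P}_{\O,\Psi}(\SSI)$, which is the class in which pathwise uniqueness is available.

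First, I would recall from the proof of Theorem~\ref{mu-sm} that the martingale solution $\mathbf{P}$ with initial distribution $\bm{\mu}$ is obtained as a tight limit of the periodic solutions $\mathbf{P}^{\L}$ to \eqref{mean-field-limit-L}, where each $\mathbf{P}^{\L}$ is the stationary solution started from the periodic extension of $\mu^{\L}$. For the finite volume case Theorem~\ref{Aru-cor-im} gives $\int |v_x|^2\, \mu^{\L}(\mathrm{d}v) = 1$ for every $x$, so by stationarity $\mathbb{E}[|\Psi^{\L}_x(t)|^2]=1$ for all $t\in[0,T]$ and all $x\in\mathbb{Z}^d$. Passing to the Skorokhod representation $\hat\Psi^{\L}\to\hat\Psi$ a.s.\ in $(\SSI,\dI)$ and using the uniform bound $\hat{\mathbb{E}}[|\hat\Psi^{\L}_x|^p(t)]\lesssim 1$ for some $p>2$ (established in Step~1 of the proof of Theorem~\ref{mu-sm}), uniform integrability yields $\mathbb{E}[|\Psi_x(t)|^2]=1$ for all $x$ and $t$. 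Hence $\mathbf{P}\circ\pi_t^{-1}\in\mathscr{P}_{\O}(\mathbb{H})$ for every $t$, that is, $\mathbf{P}\in\mathscr{P}_{\O,\Psi}(\SSI)$ as defined in \eqref{Pone-Phi-I}.

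Next, I would produce a weak solution to \eqref{mean-field-limit-Z} with initial law $\bm{\mu}$ by applying the martingale representation theorem to the martingale $\mathscr{M}_\theta$ of Definition~\ref{martingale-solution-Z} under $\mathbf{P}$; this yields, on a possibly enlarged filtered probability space, a Brownian motion driving the SDE~\eqref{mean-field-limit-Z}, with solution whose law is $\mathbf{P}$. Combining this weak existence with Lemma~\ref{not-fix-mu-Zd}, which asserts pathwise uniqueness inside $\mathscr{P}_{\O,\Psi}(\SSI)$, the Yamada-Watanabe theorem in the form used for Theorem~\ref{strong-solution-Z} upgrades weak existence plus pathwise uniqueness to the existence of a unique probabilistically strong solution starting from any $\psi\laweq \bm{\mu}$.

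The only step requiring some care is the verification that the candidate uniqueness class $\mathscr{P}_{\O,\Psi}(\SSI)$ is genuinely available in the Yamada-Watanabe step: pathwise uniqueness in Lemma~\ref{not-fix-mu-Zd} is stated for two weak solutions whose laws both lie in $\mathscr{P}_{\O,\Psi}(\SSI)$, so I must make sure that any second strong solution constructed via the Yamada-Watanabe procedure automatically inherits the bound $\int|v_x|^2\,\mathbf{P}(t)(\mathrm{d}v)\le 1$. This follows by applying It\^o's formula to $|\Psi_x(t)|^2$ and taking expectations: using the initial condition $\mathbb{E}[|\psi_x|^2]=1$ from $\bm{\mu}$ and the uniform in~$t$ control inherited from $\bm{\mu}$'s marginals, a Gronwall argument (identical to the one leading to \eqref{Cov-Psi-Z} in the proof of Theorem~\ref{Main-Theorem-Z}) confines the second moment to the value $1$ for all $t\in[0,T]$. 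With this observation, the Yamada-Watanabe conclusion applies verbatim, and the corollary follows.
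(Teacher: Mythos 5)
Your proposal matches the paper's own argument: the paper likewise observes that the martingale solution constructed in Theorem~\ref{mu-sm} lies in $\mathscr{P}_{\O,\Psi}(\SSI)$ (via the identity $\mathbb{E}[\abs{\Psi_x(t)}^2]=1$ obtained through the Skorokhod representation and uniform integrability) and then applies the Yamada--Watanabe argument of Theorem~\ref{strong-solution-Z}, using the pathwise uniqueness of Lemmas~\ref{fixed-mu-le-Zd} and~\ref{not-fix-mu-Zd}. Your closing concern about the uniqueness class is already accounted for in the paper's formulation, since uniqueness is asserted within $\mathscr{P}_{\O,\Psi}(\SSI)$, exactly as in the finite-volume Corollary~\ref{muL-s}.
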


\subsection{Convergence of the stationary measures}
In this section, we study the convergence of the spin $O(N)$ model. For convenience, we introduce some notations, which will be employed in Theorem \ref{W2MU} and Theorem \ref{cov-GFF} below. We use $\mathbb{R}^{|\Lambda|k}$ and $\mathbb{H}^k$ to denote the $k$-fold product spaces of $\mathbb{R}^{|\Lambda|}$ and $\mathbb{H}$, respectively. Since both $\mathbb{R}^{|\Lambda|k}$ and $\mathbb{H}^k$ are Polish spaces, we consider the Wasserstein spaces $\mathscr{P}_2(\mathbb{R}^{|\Lambda|k})$ and $\mathscr{P}_2(\mathbb{H}^k)$, equipped with the $2$-Wasserstein distances $ \mathbf{W}_{2,\mathbb{R}^{|\Lambda|k}}$ and $\mathbf{W}_{2,\mathbb{H}^k}$, respectively.

First, we consider the case of finite volume.  By Proposition \ref{invariant-measures}, there exists an invariant measure $\mu_{\Lambda,N,\kappa}$ to the system \eqref{the-new-model}. In the following, we aim to study the large $N$ behavior of $\mu_{\Lambda,N,\kappa}$ using Langevin dynamics and It\^o's calculus. Let $\mathbb{V}^{(k)}: \mathfrak{X}^{N} \rightarrow \mathfrak{X}^{k}$ be the projection onto the first $k$ components, where $\mathfrak{X}$ is a set. We define $\mu^{(k)}_{\Lambda,N,\kappa}:=\mu_{\Lambda,N,\kappa}\circ (\mathbb{V}^{(k)})^{-1}$.  For small $\kappa$, we demonstrate that as $N \rightarrow \infty$, the marginal distribution of the $O(N)$ measure converges to a massive Gaussian free field defined in Theorem~\ref{Aru-cor-im} with a convergence rate of $N^{-\frac{1}{2}}$. Let $\mu_{\Lambda, N, \kappa}^i$ denote the marginal distribution of $\mu_{\Lambda, N, \kappa}$. We recall that $\mu^\L$ is the massive Gaussian free field defined in Theorem~\ref{Aru-cor-im}, and we denote by $(\mu^\L)^{\otimes k}$ its $k$-fold product measure.

Then, we have the following theorem.
\begin{theorem}\label{W2MU}
Suppose $0<\kappa < 1/(32d)$. Then, for fixed $k\in \mathbb{N}$,
\begin{equation}\label{W2-k}
\mathbf{W}_{2,\mathbb{R}^{\abs{\Lambda}k}}(\mu^{(k)}_{\Lambda,N,\kappa},(\mu^\L)^{\otimes k})\lesssim N^{-1/2},
\end{equation}
where the implicit constant depends on $\abs{\Lambda}$, but is independent of $N$.
\end{theorem}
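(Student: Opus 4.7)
The plan is to follow the stationary coupling approach of \cite[Lemma~5.7]{Shen2022}, upgrading the \emph{finite-horizon} quantitative comparison of Theorem~\ref{strong-solution} to a \emph{time-uniform} bound via the small-$\kappa$ assumption, which produces a contraction in $L^2$.

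First I would construct a jointly stationary coupling $(\Phi,\Psi)$ of the Langevin dynamics \eqref{the-new-model} and a countable family of copies of \eqref{Psi-i-limit}, all driven by the same Brownian motions $\{W_x^i\}_{i\ge 1,\,x\in\Lambda}$. Take $\varphi=(\varphi^1,\ldots,\varphi^N)\laweq \mu_{\Lambda,N,\kappa}$, which is symmetric and stationary under \eqref{the-new-model}. Independently draw $\{\psi^i\}_{i\in\mathbb{N}}$ i.i.d.\ from $\mu^\L$, so by Corollary~\ref{muL-s} each $\Psi^i$ is the unique strong solution to \eqref{Psi-i-limit} and remains stationary with law $\mu^\L$. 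Arrange the joint law so that $\{(\varphi^i,\psi^i)\}_{i=1}^N$ is symmetric in $i$; this symmetry is propagated by \eqref{the-new-model}--\eqref{Psi-i-limit}.

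Next I would apply It\^o's formula to $\sum_{x\in\Lambda}|\Phi_x^i-\Psi_x^i|^2(t)$, average over $i=1,\ldots,N$, and take expectation, as in \textbf{Step 3} of the proof of Theorem~\ref{strong-solution}. The decomposition $A_2=\sum_{k=1}^4 A_{2k}$ is used; the sphere constraint $(1/N)\sum_k|\Phi_x^k|^2=1$ controls $A_{22}$; the identity $\mathbb{E}|\Psi_x^i|^2=1$ together with the $O(1/N)$ variance bound $\mathbb{E}\big|(1/N)\sum_k(|\Psi_x^k|^2-\mathbb{E}|\Psi_x^k|^2)\big|^4\lesssim 1/N^2$ and the analogous control on $(1/N)\sum_k U_{xy}^k$ handle $A_{21},A_{23},A_{24}$; and the martingale correction $A_4$ contributes $O(1/N)$. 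Under the hypothesis $0<\kappa<1/(32d)$, I expect the contributions of the interaction terms (of total size at most $c_0\kappa d$ per unit) to be strictly dominated by the dissipation $-\tfrac12$ coming from the coercive drift, yielding a differential inequality
\begin{equation*}
\frac{d}{dt}\,E(t) \;\le\; -c\,E(t) \;+\; \frac{C}{N},\qquad
E(t):=\frac{1}{N}\sum_{i=1}^N\sum_{x\in\Lambda}\mathbb{E}|\Phi_x^i(t)-\Psi_x^i(t)|^2,
\end{equation*}
with some $c>0$ depending only on $d,\kappa,|\Lambda|$. Stationarity of the coupling forces $E(t)\equiv E(0)$, so $\frac{d}{dt}E(t)=0$ and one obtains $E\lesssim 1/N$ uniformly in $t$.

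Finally, the symmetry in $i$ gives $\mathbb{E}\sum_x|\Phi_x^i-\Psi_x^i|^2=E\lesssim 1/N$ for every $i$, and the coupling of the first $k$ components yields
\begin{equation*}
\mathbf{W}_{2,\mathbb{R}^{|\Lambda|k}}^2\bigl(\mu^{(k)}_{\Lambda,N,\kappa},(\mu^\L)^{\otimes k}\bigr)
\;\le\;\sum_{i=1}^k \mathbb{E}\sum_{x\in\Lambda}|\Phi_x^i-\Psi_x^i|^2 \;\lesssim\; \frac{k}{N},
\end{equation*}
which is \eqref{W2-k} after taking square roots. The main obstacle will be the accurate tracking of constants in the It\^o computation: one must verify that after summing over $x\in\Lambda$ and over neighbors, the total coefficient of $E(t)$ coming from $A_1,A_{21},\ldots,A_{24},A_3$ is strictly less than the dissipation, so that $c>0$. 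This is precisely where the explicit threshold $\kappa<1/(32d)$ should enter, and replacing the Gronwall argument of Theorem~\ref{strong-solution} by this contraction is what removes the exponential-in-$T$ factor and permits passage to the stationary regime.
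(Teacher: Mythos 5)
Your proposal is correct and takes essentially the same route as the paper: a jointly stationary coupling constructed as in \cite[Lemma 5.7]{Shen2022}, symmetry in $i$ to reduce to the averaged quantity $\frac{1}{N}\sum_{i}\mathbb{E}\sum_{x}|\Phi_x^i-\Psi_x^i|^2$, It\^o's formula with the same $A_1$, $A_{21},\dots,A_{24}$, $A_3$ and martingale decomposition, the identities $\mathbb{E}[\Psi^k_x\Psi^k_y]=u$ and $\mathbb{E}|\Psi^i_x|^2=1$ together with uniform fourth-moment bounds from Lemma \ref{un-N}, absorption of the interaction terms under $\kappa<1/(32d)$, and the coupling bound on the $k$-marginal Wasserstein distance. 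The only caveat is that your literal initialization (independent $\varphi\laweq\mu_{\Lambda,N,\kappa}$ and i.i.d.\ $\psi^i\laweq\mu^\L$ with common noise) gives marginally but not automatically jointly stationary $(\Phi,\Psi)$, so you should either invoke the limiting construction of the cited lemma, as the paper does, or integrate your differential inequality and let $t\to\infty$, which uses only marginal stationarity and the uniform-in-$N$ bound on $E(0)$.
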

\begin{proof}

In the proof, we fix $N$. Similar to \cite[Lemma 5.7]{Shen2022}, we can construct a stationary coupling $({\Phi}^i, {\Psi}^i)$ of $\mu^i_{\Lambda,N,\kappa}$ and $\mu^\L$ whose components satisfy \eqref{the-new-model} and \eqref{Psi-i-limit}. The stationary of the joint law of $({\Phi}^i, {\Psi}^i)$ implies that ${\Phi}^i-{\Psi}^i$ is stationary. In the following, we prove that 
\begin{equation*}
    \begin{aligned}
          \mathbb{E}\left[\sum_{x\in \Lambda}\abs{{\Phi}_x^i-{\Psi}_x^i}^2\right]\lesssim \frac{1}{N},
    \end{aligned}
\end{equation*}
which, by the definition of the Wasserstein distance, implies \eqref{W2-k}.  

Note that the laws of ${\Phi}^i-{\Psi}^i$ and ${\Phi}^j-{\Psi}^j$, where $i\neq j$, are identical. Therefore, we have
\begin{equation}\label{average}
    \begin{aligned}
          \mathbb{E}\left[\sum_{x\in \Lambda}\abs{{\Phi}_x^i-{\Psi}_x^i}^2\right]= \frac{1}{N}\sum^{N}_{i=1}\mathbb{E}\left[\sum_{x\in \Lambda}\abs{{\Phi}_x^i-{\Psi}_x^i}^2\right].
    \end{aligned}
\end{equation}
Next, we estimate the right-hand side of \eqref{average}.  The idea is similar to that of {\bf Step $3$} in the proof of Theorem \ref{strong-solution}, and we adopt the same notation as in that theorem. By applying It\^o's formula to $({1}/{N})\sum_{i=1}^N|{\Phi}_x^i-{\Psi}^i_x|^2$, we obtain that
\begin{equation*}
    \begin{aligned}
         & \d \frac{1}{N}\sum_{i=1}^N|{\Phi}_x^i-{\Psi}^i_x|^2(t)=\frac{1}{N}\sum_{i=1}^{N} A_1\,\d t-\frac{1}{N}\sum_{i=1}^{N} A_2\,\d t -\frac{1}{N}\sum_{i=1}^{N} A_3\,\d t +\mathbf{Martingale},
    \end{aligned}
\end{equation*}
where
\begin{equation*}
    \begin{aligned}
       \frac{1}{N}\sum_{i=1}^{N} A_1\le&  2\kappa \sum_{\N} \frac{1}{N}\sum_{i=1}^{N} \left( \abs{\Phi^i_x-\Psi^i_x}^2+\abs{\Phi^i_y-\Psi^i_y}^2\right),\\
 -\frac{1}{N}\sum_{i=1}^{N} A_3\le&  -\frac{1}{N}\sum_{i=1}^{N}\abs{\Phi^i_x-\Psi^i_x}^2+\frac{2}{N} +\frac{1}{N}\left(\frac{1}{N}\sum_{i=1}^{N}\abs{\Psi^i_x}^2\right)^{\frac{1}{2}},
    \end{aligned}
\end{equation*} 
and
\begin{equation*}
    \begin{aligned}
         & \frac{1}{N}\sum_{i=1}^{N} A_2=4\kappa \sum^4_{k=1} \sum_{\N} \frac{1}{N}\sum_{i=1}^{N} A_{2k}.
    \end{aligned}
\end{equation*}
In the following, we only provide the estimate for $4\kappa \sum_{\N}\frac{1}{N}\sum_{i=1}^{N} A_{21}$. Estimates for $A_{22}, A_{23}$, and $A_{24}$ are similar, and the details are omitted.  Note that $\mathbb{E}\left[ \Psi^k_x \Psi^k_y \right] = u$ for all $x \sim y$, where $u$ is a constant, and $\mathbb{E}\left[ \abs{\Psi^i_x}^2 \right] = 1$ for all $x \in \Lambda$, we obtain that
\begin{equation*}
    \begin{aligned}
          &4\kappa \sum_{\N}\frac{1}{N}\sum_{i=1}^{N} A_{21}=4\kappa \sum_{\N} \frac{1}{N} \sum_{k=1}^N \left(\Psi^k_x\Psi^k_y- u\right)\cdot \frac{1}{N}\sum^N_{i=1} \Psi^i_x\left(\Phi^i_x -\Psi_x^i\right)\\
\le& 4\kappa \sum_{\N}\left[ \frac{1}{4\varepsilon}\abs{ \frac{1}{N} \sum_{k=1}^N \left(\Psi^k_x\Psi^k_y- u\right)}^2 + \varepsilon \left( \frac{1}{N} \sum^N_{i=1}\abs{\Psi^i_x}^2 \right) \left(\frac{1}{N} \sum^N_{i=1} \abs{\Phi^i_x-\Psi^i_x}^2\right)\right]\\
\le& 4\kappa \sum_{\N}\Bigg[ \frac{1}{4\varepsilon}\abs{ \frac{1}{N} \sum_{k=1}^N \left(\Psi^k_x\Psi^k_y- u\right)}^2 + \varepsilon \left(\frac{1}{N} \sum^N_{i=1} \abs{\Phi^i_x-\Psi^i_x}^2\right)\\
&\quad\quad\quad\quad\quad+\varepsilon \abs{\frac{1}{N} \sum^N_{i=1}\abs{\Psi^i_x}^2 -1} \left(\frac{1}{N} \sum^N_{i=1} \abs{\Phi^i_x-\Psi^i_x}^2\right) \Bigg]\\
\le& 4\kappa \sum_{\N}\Bigg[ \frac{1}{4\varepsilon}\abs{ \frac{1}{N} \sum_{k=1}^N \left(\Psi^k_x\Psi^k_y- u\right)}^2 + 2\varepsilon \left(\frac{1}{N} \sum^N_{i=1} \abs{\Phi^i_x-\Psi^i_x}^2\right)\\
&\quad\quad\quad\quad\quad+\frac{\varepsilon}{4} \abs{ \frac{1}{N} \sum^N_{i=1}\left(\abs{\Psi^i_x}^2 -1\right)}^2 \left(\frac{1}{N} \sum^N_{i=1} \abs{\Phi^i_x-\Psi^i_x}^2\right) \Bigg],\\
    \end{aligned}
\end{equation*}
where the first and third inequalities are due to the basic inequality: for $\varepsilon >0$, $ab \leq \varepsilon a^2 + \frac{1}{4\varepsilon} b^2$.
Taking expectations, and using the stationary property to eliminate the initial data, we obtain
\begin{equation}\label{un-es-stationary}
    \begin{aligned}
        \frac{1}{N}\sum^{N}_{i=1}\mathbb{E}\left[\abs{{\Phi}_x^i-{\Psi}_x^i}^2\right]
\le &  \frac{4\kappa(4\varepsilon +3)}{N} \sum^N_{i=1}\sum_{\N} \mathbb{E}\left[ \abs{\Phi^i_x-\Psi^i_x}^2 \right]\\
 &+ \frac{4\kappa}{N} \sum^N_{i=1}\sum_{\N} \mathbb{E}\left[ \abs{\Phi^i_y-\Psi^i_y}^2 \right]+ \frac{C(\varepsilon,\mathbb{E}[|\Phi_x^i|^4])}{N},
    \end{aligned}
\end{equation}
where $C(\varepsilon,\mathbb{E}[|\Phi_x^i|^4])$ is a constant that depends on $\varepsilon$ and $\mathbb{E}[{|\Phi}_x^i|^4]$.  Summing over $x$, we conclude that
\begin{equation*}
    \begin{aligned}
         & \frac{1}{N}\sum^{N}_{i=1}\mathbb{E}\left[\sum_{x\in \Lambda}\abs{{\Phi}_x^i-{\Psi}_x^i}^2\right]\\
\le& \frac{32\kappa d(\varepsilon+1)}{N}\sum^{N}_{i=1}\mathbb{E}\left[\sum_{x\in \Lambda}\abs{{\Phi}_x^i-{\Psi}_x^i}^2\right]+\frac{C(\varepsilon,\mathbb{E}[|\Phi^i|^4])}{N},
    \end{aligned}
\end{equation*}
where $C(\varepsilon,\mathbb{E}[|\Phi^i|^4])$ is a constant that depends on $\varepsilon$ and $\mathbb{E}[{|\Phi}^i|^4]$.  By choosing $\kappa$ small enough such that
\begin{equation*}
0<\kappa < \frac{1}{32d(\varepsilon+1)},
\end{equation*}
and noting that for $0<\kappa < 1/(32d(\varepsilon+1))$, $\mathbb{E}[|{\Phi}^i|^4]$ is bounded by Lemma \ref{un-N}. We complete the proof.
\end{proof}

By Proposition \ref{infinite-measure}, there exists an invariant measure to the system \eqref{the-new-model-Z}, denoted by $\mu_{N,\kappa}$. Let $\mu^i_{N,\kappa}$ be the marginal distribution of the invariant measure $\mu_{N,\kappa}$. Define $\mu^{(k)}_{N,\kappa} := \mu_{N,\kappa} \circ (\mathbb{V}^{(k)})^{-1}$.  We denote by  $\bm{\mu}$ the stationary measure for \eqref{mean-field-limit-Z}, as defined by Theorem \ref{mu-sm}, and by $\bm{\mu}^{\otimes k}$ its $k$-fold product measure. Similar to the proof of Theorem \ref{W2MU}, we construct a stationary coupling of a pair of stationary measures. By multiplying both sides of \eqref{un-es-stationary} by $\frac{1}{a^{|x|}}$, summing over $x$, and using the estimates $\frac{1}{a^{|x|}} \leq a\,\frac{1}{a^{|y|}}$ for $x \sim y$ and $\sum_{x\in \mathbb{Z}^d} \frac{1}{a^{\abs{x}}}<\infty$, we obtain the following theorem.
\begin{theorem}\label{cov-GFF}
Suppose $0<\kappa < 1/(32d)$. Then, for fixed $k\in \mathbb{N}$,
\begin{equation*}
\mathbf{W}_{2,\mathbb{H}^k}(\mu^{(k)}_{N,\kappa},\bm{\mu}^{\otimes k})\lesssim N^{-1/2},
\end{equation*}
where the implicit constant is independent of $N$.
\end{theorem}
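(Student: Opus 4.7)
The plan is to mimic the argument of Theorem~\ref{W2MU}, but now carry it out in the weighted space $\mathbb{H}=\ell^2_a$. First, for each fixed $N$, I construct a stationary coupling $(\Phi,\Psi)=((\Phi^i),(\Psi^i))_{i=1}^N$ such that $\Phi$ is the stationary solution of the Langevin dynamics~\eqref{the-new-model-Z} with $\Phi(0)\laweq \mu_{N,\kappa}$ (which exists by Proposition~\ref{infinite-measure}), while $\Psi^i$ solves~\eqref{Psi-i-limit-Z} with $\Psi^i(0)\laweq\bm{\mu}$ i.i.d., driven by the same Brownian motions $\{W_x^i\}$. By Theorem~\ref{mu-sm} and Corollary~\ref{muL-s-Z} this joint process is stationary in time, and one can arrange the construction so that $\{(\Phi^i,\Psi^i)\}_{i=1}^N$ is exchangeable, which gives the identity
\begin{equation*}
\mathbb{E}\left[\|\Phi^i-\Psi^i\|_2^2\right]=\frac{1}{N}\sum_{i=1}^N \mathbb{E}\left[\|\Phi^i-\Psi^i\|_2^2\right].
\end{equation*}

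Next I repeat the It\^o calculation from the proof of Theorem~\ref{W2MU} pointwise in $x\in\mathbb{Z}^d$: applying It\^o's formula to $\frac{1}{N}\sum_{i=1}^{N}|\Phi_x^i-\Psi_x^i|^2(t)$, taking expectations, and using stationarity to kill the time derivative, I reproduce the per-site estimate~\eqref{un-es-stationary}. The ingredients are unchanged: the sphere constraint $\frac{1}{N}\sum_k|\Phi_x^k|^2=1$, the normalization $\mathbb{E}[|\Psi_x^i|^2]=1$, Gaussian concentration of $\frac{1}{N}\sum_i|\Psi_x^i|^2$ around $1$ at rate $1/N$, and the decomposition $A_2=\sum_{k=1}^4 A_{2k}$. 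The uniform moment bounds needed for the $1/N$ remainder come from Lemma~\ref{un-N-Z} (applied with some $p>2$ admissible under $0<\kappa<1/(32d)$ and $a$ close to $1$) and from the Gaussian tails of $\bm{\mu}$.

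Then I multiply~\eqref{un-es-stationary} by $1/a^{|x|}$ and sum over $x\in\mathbb{Z}^d$. Swapping the order of summation and using $\sum_{x:x\sim y}a^{-|x|}\le 2da\cdot a^{-|y|}$, together with the diagonal bound $\sum_{x}a^{-|x|}\sum_{y\sim x}=2d\sum_{x}a^{-|x|}$, yields
\begin{equation*}
\frac{1}{N}\sum_{i=1}^N\mathbb{E}\|\Phi^i-\Psi^i\|_2^2
\le 8d\kappa(4\varepsilon+3+a)\cdot \frac{1}{N}\sum_{i=1}^N\mathbb{E}\|\Phi^i-\Psi^i\|_2^2+\frac{C(\varepsilon)}{N}.
\end{equation*}
Choosing $\varepsilon>0$ small and (if necessary) the weight parameter $a>1$ sufficiently close to $1$, the condition $0<\kappa<1/(32d)$ ensures $8d\kappa(4\varepsilon+3+a)<1$, so the prefactor can be absorbed into the left-hand side, giving $\frac{1}{N}\sum_{i=1}^N\mathbb{E}\|\Phi^i-\Psi^i\|_2^2\lesssim 1/N$, hence $\mathbb{E}\|\Phi^i-\Psi^i\|_2^2\lesssim 1/N$.

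Finally, for fixed $k$, the coupling $((\Phi^1,\ldots,\Phi^k),(\Psi^1,\ldots,\Psi^k))$ has marginals $\mu_{N,\kappa}^{(k)}$ and $\bm{\mu}^{\otimes k}$, so
\begin{equation*}
\mathbf{W}_{2,\mathbb{H}^k}^2\!\left(\mu^{(k)}_{N,\kappa},\bm{\mu}^{\otimes k}\right)
\le \mathbb{E}\sum_{i=1}^k\|\Phi^i-\Psi^i\|_2^2 = k\cdot \mathbb{E}\|\Phi^1-\Psi^1\|_2^2\lesssim \frac{1}{N},
\end{equation*}
which is exactly the claim. The main obstacle is the bookkeeping with the weight: the factor $a$ appearing in the neighbor-sum estimate makes the absorption threshold worse than in the finite-volume case, so the compatibility of the assumption $\kappa<1/(32d)$ with the choice of $a>1$ and $\varepsilon>0$ must be tracked carefully. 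Secondary care is needed to verify, via Lemma~\ref{un-N-Z} and a Gaussian moment estimate for $\bm{\mu}$, that the $C/N$ remainder is truly uniform in $N$ under the same smallness condition on $\kappa$.
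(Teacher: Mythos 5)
Your proposal is correct and follows essentially the same route as the paper: a stationary coupling of $\mu_{N,\kappa}$ with $\bm{\mu}$ as in Theorem~\ref{W2MU}, the per-site estimate \eqref{un-es-stationary}, and then multiplication by $a^{-|x|}$, summation over $x\in\mathbb{Z}^d$ with the neighbor bound $a^{-|x|}\le a\,a^{-|y|}$, and absorption under $\kappa<1/(32d)$. Your explicit tracking of the absorption constant $8d\kappa(4\varepsilon+3+a)$ and of the admissibility of $p=4$, $a$ close to $1$ in Lemma~\ref{un-N-Z} is exactly the bookkeeping the paper leaves implicit.
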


In the following corollary, we continue to use $\mu_{\Lambda_\L,N,\kappa}$ and $\mu^\L$ to denote the periodic extensions of $\mu_{\Lambda_\L,N,\kappa}$ and $\mu^\L$, respectively. Let $\mu^i_{\Lambda_\L,N,\kappa}$ denote the marginal distribution of $\mu_{\Lambda_\L,N,\kappa}$.   By combining Proposition \ref{infinite-measure}, Theorem \ref{W2MU}, and Theorem \ref{cov-GFF}, we have
\begin{corollary}
If $0<\kappa<1/(32d)$, then the following diagram commutes: 
\begin{center}
      \begin{tikzcd}[row sep=3em, column sep=5em]
{\mu^{i}_{\Lambda_\L,N,\kappa}} \arrow[r, "\text{$N\rightarrow \infty$}"] \arrow[d, "\text{$\L\rightarrow \infty$}"'] & \mu^\L \arrow[d, "\text{$\L\rightarrow \infty$}"] \\
 \mu^i_{N,\kappa} \arrow[r, "\text{$N\rightarrow \infty$}"] & \bm{\mu},
\end{tikzcd}  
\end{center}  
where all limits are taken in the weak sense.  
\end{corollary}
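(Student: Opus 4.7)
The plan is to verify each of the four arrows in the diagram individually; once this is done, commutativity follows immediately since both iterated limits reduce to $\bm{\mu}$. Before starting, observe that under the hypothesis $0<\kappa<1/(32d)$ we are in the high-temperature regime $\kappa<\kappa_c$: for $d\leq 2$ this is trivial since $\kappa_c=+\infty$, while for $d\geq 3$ the elementary bound $-\lambda_k\leq 4d$ gives $G_{\mathbb Z^d,0}(\bm 0,\bm 0)\geq 1/(4d)$, hence $\kappa_c\geq 1/(16d)>1/(32d)$. Consequently $\bm\mu=\muc$ is the massive Gaussian free field on $\mathbb Z^d$ scaled by $1/(2\sqrt\kappa)$, with no random constant drift to worry about.

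First I would dispose of the two horizontal arrows, which are the quantitative convergence-in-$N$ ingredients already in place. The top arrow follows from Theorem~\ref{W2MU} with $k=1$: the bound $\mathbf W_{2,\mathbb R^{|\Lambda_\L|}}(\mu^i_{\Lambda_\L,N,\kappa},\mu^\L)\to 0$ yields weak convergence on $\mathbb R^{|\Lambda_\L|}$, and since the $\Lambda_\L$-periodic extension is a continuous linear embedding of $\mathbb R^{|\Lambda_\L|}$ into $\mathbb H$, weak convergence lifts to the infinite-volume setting. The bottom arrow is immediate from Theorem~\ref{cov-GFF} with $k=1$, where the $\mathbf W_{2,\mathbb H}$-convergence already takes place in the desired space.

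The right arrow $\mu^\L\wcon\bm\mu$ is the content of Section~\ref{Invariant measure} (building on \cite[Section 4.2]{aru2024}): in the subcritical regime, the covariances $\frac{1}{4\kappa}G_{\Lambda_\L,\mathfrak m^\L}$ converge uniformly on every finite box to $\frac{1}{4\kappa}G_{\mathbb Z^d,m^2}$, and since both $\mu^\L$ and $\muc$ are centered Gaussian, pointwise convergence of covariances is equivalent to weak convergence in $\mathbb H$. For the left arrow I would invoke Proposition~\ref{infinite-measure}: fix $N$ large enough that $\kappa<(N-2)/(16dN)$, which is implied by $\kappa<1/(32d)$ whenever $N>2/(1-16d\kappa)<4$, so any $N\geq 4$ suffices. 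Then the periodic-extension sequence $\{\mu_{\Lambda_\L,N,\kappa}\}_{\L\in\mathbb N}$ is tight in $\mathcal Q$, every tight limit is an invariant measure for \eqref{the-new-model-Z}, and the invariant measure is unique. Hence the whole sequence converges weakly on $\mathcal Q$ to $\mu_{N,\kappa}$, and pushing forward by the continuous projection $\mathbb V^{(1)}\colon\mathcal Q\to\mathbb H$ yields $\mu^i_{\Lambda_\L,N,\kappa}\wcon\mu^i_{N,\kappa}$. The only nontrivial step is verifying the compatibility of the threshold constants $1/(32d)<\kappa_c$ and $\kappa<(N-2)/(16dN)$ for $N\geq 4$, both of which were handled above; beyond this, the corollary is an assembly of Theorems~\ref{W2MU} and \ref{cov-GFF}, Proposition~\ref{infinite-measure}, and the Gaussian-field convergence $\mu^\L\wcon\bm\mu$.
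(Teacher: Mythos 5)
Your proposal is correct and follows essentially the same route as the paper: each of the four arrows is verified via Theorem \ref{W2MU}, Theorem \ref{cov-GFF}, Proposition \ref{infinite-measure}, and the covariance convergence of $\mu^\L$ to $\bm{\mu}$ from \cite[Section 4.2]{aru2024}, after which commutativity is immediate. Your additional explicit checks — that $1/(32d)<\kappa_c$ so the subcritical case $\bm{\mu}=\muc$ applies, and that the uniqueness threshold $(N-2)/(16dN)$ in Proposition \ref{infinite-measure} is met once $N\ge 4$ — are points the paper leaves implicit, and they do not change the argument.
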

\begin{proof}
By Theorem \ref{W2MU}, for $0 < \kappa < 1/(32d)$, we have $\lim_{N \rightarrow \infty} \mu^i_{\Lambda_\L,N,\kappa} = \mu^\L$. Furthermore, according to \cite[Section $4.2$]{aru2024}, the correlation functions of the Gaussian field defined in Theorem \ref{Aru-cor-im} converge uniformly to those of $\bm{\mu}$ on every finite box that is properly contained in $\Lambda_\L$ as $\L$ becomes sufficiently large. Therefore, we obtain that $\lim_{\L \rightarrow \infty} \mu^\L= \bm{\mu}$.

By Proposition \ref{infinite-measure},  for $0<\kappa<1/(32d)$, we have $\lim_{\L\rightarrow \infty} \mu^i_{\Lambda_\L,N,\kappa} = \mu^i_{N,\kappa}$. In addition, by Theorem \ref{cov-GFF}, for $0<\kappa < 1/(32d)$, it follows that $\lim_{N\rightarrow \infty} \mu^i_{N,\kappa}=\bm{\mu}$.

Combining these results, we obtain that for $0 < \kappa < 1/(32d)$,
\begin{equation*}
    \begin{aligned}
         & \lim_{\L\rightarrow \infty}\lim_{N\rightarrow \infty} \mu^i_{\Lambda_\L,N,\kappa}=\lim_{N\rightarrow \infty}\lim_{\L\rightarrow \infty} \mu^i_{\Lambda_\L,N,\kappa}= \bm{\mu}.
    \end{aligned}
\end{equation*} 
\end{proof}

With the help of Theorem \ref{cov-GFF}, we can establish an analogue of Proposition \ref{mu0assum} for the infinite volume case.
\begin{proposition}\label{mu0assum-Z}
Suppose that $0 < \kappa < 1/(32d)$, let $\varphi \laweq \mu_{ N, \kappa}$, and set $\mu_0 = \bm{\mu}$ to be the Gaussian field defined in Theorem~\ref{mu-sm}. Then  $\varphi$ and $\mu_0$ satisfy Assumption~\ref{initial-value-Z}. In particular, the empirical measure of the system \eqref{the-new-model-Z} with initial data $\varphi$ converges in law to the unique martingale solution to the mean-field SDE \eqref{mean-field-limit-Z} with initial distribution $\mu_0$.
\end{proposition}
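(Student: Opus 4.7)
The strategy is to verify each of the three hypotheses of Assumption \ref{initial-value-Z} for the pair $(\varphi,\mu_0)=(\varphi,\bm{\mu})$, and then to conclude by invoking Theorem \ref{Main-Theorem-Z}. The symmetry in (1) is immediate: every periodic $\mu_{\Lambda_\L,N,\kappa}$ is symmetric in the $N$ components of the spin field (cf.\ \eqref{spin-O-N}), so any tight limit as $\L\to\infty$ produced by Proposition \ref{infinite-measure}, and in particular $\mu_{N,\kappa}$, inherits this symmetry.

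For the uniform moment bound (3), I would appeal to Lemma \ref{un-N-Z}: it suffices to find some $\p>2$ satisfying
\begin{equation*}
\kappa \;<\; \frac{\p}{8(a+1)d(\p-1)^{1-1/\p}+8d\p}.
\end{equation*}
The right-hand side is continuous in $\p$ on $(1,\infty)$, and as $\p\to 2^+$ it tends to $1/\bigl((4a+12)d\bigr)$. Since the weight $a>1$ in the definition $\mathbb{H}=\ell^2_a$ is at our disposal, choosing $a\in(1,5)$ makes $1/((4a+12)d)$ strictly larger than $1/(32d)>\kappa$; by continuity there is then a $\p>2$ for which the displayed inequality persists, so Lemma \ref{un-N-Z} yields $\sup_N\mathbb{E}[\|\varphi^1\|_\p^\p]<\infty$.

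For (2), the convergence of the empirical measure, I would use the chaos already encoded in Theorem \ref{cov-GFF}: for every fixed $k\in\mathbb{N}$ the $k$-marginal $\mu^{(k)}_{N,\kappa}$ converges to the product $\bm{\mu}^{\otimes k}$ in $\mathbf{W}_{2,\mathbb{H}^k}$, and in particular weakly on $\mathbb{H}^k$. Applying the classical equivalence between Kac chaos of a symmetric family on a Polish space and weak convergence in probability of its empirical measure (\cite[Proposition 2.2(i)]{Sznitman1991}) then gives $S^N_0 \wcon \bm{\mu}$ in probability on $\mathbb{H}$, where $S^N_0 := \frac{1}{N}\sum_{j=1}^N \delta_{\varphi^j}$. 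With Assumption \ref{initial-value-Z} now fully verified, Theorem \ref{Main-Theorem-Z} delivers the convergence in law of $S^N$ to the unique martingale solution of \eqref{mean-field-limit-Z} with initial law $\bm{\mu}$.

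The only mildly delicate point is the joint choice of the weight $a$: it must simultaneously make the limiting bound $1/((4a+12)d)$ exceed $\kappa$ (so that (3) can be verified for some $\p$ slightly above $2$) and be compatible with the $\ell^p_a$ framework used throughout Section \ref{sec4}. Any $a\in(1,5)$ satisfies both constraints under the hypothesis $\kappa<1/(32d)$, so no further technical obstacle arises.
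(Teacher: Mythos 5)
Your proposal is correct and follows essentially the same route as the paper: chaoticity of $\{\mu_{N,\kappa}\}$ from Theorem~\ref{cov-GFF} combined with the Sznitman equivalence gives Assumption~\ref{initial-value-Z}(1)--(2), Lemma~\ref{un-N-Z} gives (3), and Theorem~\ref{Main-Theorem-Z} concludes. The only difference is that you make explicit the choice of the weight $a\in(1,5)$ and of $\p$ slightly above $2$ (noting the limit $1/((4a+12)d)>1/(32d)$ of the bound in \eqref{p-mon-Z}), which the paper leaves implicit when it says the moment bound holds ``for some $p>2$, $a>1$.''
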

\begin{proof}
By Theorem \ref{cov-GFF}, the measures $\{\mu_{N,\kappa}\}_{N\in \mathbb{N}}$ are $\bm{\mu}$-chaotic.  Similar to the arguments in the proof of Proposition~\ref{mu0assum}, since $\varphi=(\varphi^1,\cdots,\varphi^N)\laweq \mu_{N,\kappa}$, we obtain that the empirical measure $\frac{1}{N}\sum^N_{i=1} \delta_{\varphi^i}$ converges weakly in probability to $\bm{\mu}$ as $N\rightarrow \infty$. Therefore, setting $\mu_0=\bm{\mu}$, $\varphi$ and $\mu_0$ satisfy Assumptions \ref{initial-value-Z} $(1)$ and $(2)$. By Lemma \ref{un-N-Z}, if $0<\kappa<1/(32d)$, it further follows that $\varphi$ satisfies Assumption \ref{initial-value-Z} $(3)$ for some $p>2$, $a>1$. We complete the proof by Theorem \ref{Main-Theorem-Z}.
\end{proof}

\subsection{Uniqueness of the stationary measure}
In this section, we establish the uniqueness of the stationary measure for the mean-field SDE \eqref{mean-field-limit-n-i} when $\kappa$ is small.  Recall the definitions of   $\mathscr{P}_{1}(\mathbb{R}^{\abs{\Lambda}})$ and $\mathscr{P}_{1}(\mathbb{H})$ in \eqref{Pone-R} and \eqref{Pone-H}, respectively.
\begin{theorem}\label{unique-in-m-c}
If $0 < \kappa < 1/(32d)$,  then there exists at most one stationary measure in $\mathscr{P}_{1}(\mathbb{R}^{\abs{\Lambda}})$  to the mean-field SDE \eqref{mean-field-limit}. 
\end{theorem}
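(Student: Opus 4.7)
My plan is to construct a stationary coupling of any two stationary measures $\mu_1, \mu_2 \in \mathscr{P}_1(\mathbb{R}^{\abs{\Lambda}})$, apply It\^o's formula to the squared difference of the coupled solutions, exploit stationarity to eliminate the time derivative, and then use the smallness of $\kappa$ together with the second-moment bounds inherited from $\mathscr{P}_1$ to force the difference to vanish. This mirrors the strategy of Theorem \ref{W2MU}, but without the $1/N$ remainder terms that arose there.

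First, I would argue that each $\mu_i$ supports a unique strong stationary solution. By Definition \ref{sta-F}, there exists a martingale solution $\mathbf{P}_i$ with $\mathbf{P}_i \circ \pi_t^{-1} = \mu_i$, and $\mathbf{P}_i$ automatically lies in $\mathscr{P}_{\O,\Psi}(\SSF)$. Combining Lemma \ref{fixed-mu-le}, Lemma \ref{not-fix-mu}, and the Yamada--Watanabe argument used in the proof of Theorem \ref{strong-solution} produces a unique probabilistically strong solution $\Psi^{(i)}$ with $\Psi^{(i)}(0)\laweq \mu_i$, whose path law is $\mathbf{P}_i$; in particular $\Psi^{(i)}(t)\laweq \mu_i$ for every $t$. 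On a common probability space carrying a single $\abs{\Lambda}$-dimensional Brownian motion $W$ and an arbitrary coupling $(\psi^{(1)},\psi^{(2)})$ of $(\mu_1,\mu_2)$ independent of $W$, I solve both equations using $W$ to obtain two strong stationary solutions $\Psi^{(1)},\Psi^{(2)}$ driven by the same noise.

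Next, I apply It\^o's formula to $\abs{\Psi^{(1)}_x - \Psi^{(2)}_x}^2$: since both SDEs are driven by the same $W$, the martingale part cancels exactly. Setting $E_x := \mathbb{E}[\abs{\Psi^{(1)}_x - \Psi^{(2)}_x}^2]$, which by stationarity is independent of $t$, the identity $\tfrac{d}{dt}E_x = 0$ reads
\begin{equation*}
E_x = 4\kappa\sum_{\N}\mathbb{E}\bigl[(\Psi^{(1)}_x - \Psi^{(2)}_x)(\Psi^{(1)}_y - \Psi^{(2)}_y)\bigr]
- 4\kappa\sum_{\N}\mathbb{E}\bigl[(\Psi^{(1)}_x - \Psi^{(2)}_x)\bigl(\mathbb{E}[\Psi^{(1)}_x\Psi^{(1)}_y]\Psi^{(1)}_x - \mathbb{E}[\Psi^{(2)}_x\Psi^{(2)}_y]\Psi^{(2)}_x\bigr)\bigr].
\end{equation*}
Decomposing the nonlinear difference exactly as in the proof of Lemma \ref{not-fix-mu} and using the a priori bounds $\abs{\mathbb{E}[\Psi^{(i)}_x\Psi^{(i)}_y]}\le 1$ and $\mathbb{E}[\abs{\Psi^{(i)}_x}^2]\le 1$ inherited from $\mu_i\in \mathscr{P}_1$, together with Young's and Cauchy--Schwarz inequalities, I expect an estimate of the form $E_x \le 24\kappa d\, E_x + 4\kappa\sum_{\N} E_y$. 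Summing over $x\in\Lambda$ and using $\sum_{x}\sum_{\N}E_y = 2d\sum_x E_x$ yields $(1 - 32\kappa d)\sum_x E_x \le 0$, so the hypothesis $\kappa < 1/(32d)$ forces $E_x \equiv 0$; hence $\Psi^{(1)} = \Psi^{(2)}$ almost surely and $\mu_1 = \mu_2$.

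The main obstacle is the book-keeping for the non-dissipative nonlinear term $\mathbb{E}[\Psi_x\Psi_y]\Psi_x$. The delicate cross contribution $(\mathbb{E}[\Psi^{(1)}_x\Psi^{(1)}_y] - \mathbb{E}[\Psi^{(2)}_x\Psi^{(2)}_y])\,\mathbb{E}[(\Psi^{(1)}_x - \Psi^{(2)}_x)\Psi^{(2)}_x]$ must be split via
\begin{equation*}
\mathbb{E}[\Psi^{(1)}_x\Psi^{(1)}_y] - \mathbb{E}[\Psi^{(2)}_x\Psi^{(2)}_y] = \mathbb{E}[(\Psi^{(1)}_x - \Psi^{(2)}_x)\Psi^{(1)}_y] + \mathbb{E}[\Psi^{(2)}_x(\Psi^{(1)}_y - \Psi^{(2)}_y)],
\end{equation*}
and then recombined tightly enough that the final prefactor is exactly $32\kappa d$, matching the hypothesis $\kappa<1/(32d)$; a cruder Young splitting would produce a strictly larger prefactor and destroy the sharp threshold. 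A minor preliminary point to verify carefully is that the strong solution starting from a stationary measure is itself stationary, which follows from pathwise uniqueness and the identity $\mathbf{P}_i\circ\pi_t^{-1}=\mu_i$.
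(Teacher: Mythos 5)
Your overall strategy (synchronous coupling of two stationary solutions, the decomposition from Lemma \ref{not-fix-mu}, and the sharp constant $32\kappa d$) matches the paper's estimates, but there is a genuine gap at the step where you set $E_x := \mathbb{E}[\,\abs{\Psi^{(1)}_x-\Psi^{(2)}_x}^2\,]$ and declare it independent of $t$ ``by stationarity.'' What you have constructed is only a \emph{marginally} stationary pair: each $\Psi^{(i)}(t)\laweq \mu_i$ for all $t$, but the joint law of $(\Psi^{(1)}(t),\Psi^{(2)}(t))$ under the same-noise coupling with an arbitrary initial coupling is not time-invariant, so the cross term $\mathbb{E}[\Psi^{(1)}_x(t)\Psi^{(2)}_x(t)]$ — and hence $E_x(t)$ — genuinely evolves in $t$. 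Consequently the identity $\tfrac{\d}{\d t}E_x=0$, on which your whole algebraic conclusion $(1-32\kappa d)\sum_x E_x\le 0$ rests, is unjustified. To make that line of argument work you would have to actually produce a \emph{jointly} stationary coupling of the pair dynamics (a Krylov--Bogolyubov-type construction as in the citation \cite{Shen2022} used for Theorem \ref{W2MU}; here it is available because the frozen coefficients $\int v_xv_y\,\d\mu_i(v)$ make the pair a time-homogeneous diffusion), but your proposal does not supply this step, and it is exactly the missing idea.

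The paper avoids the issue entirely: it keeps the time derivative, obtaining from the Lemma \ref{not-fix-mu} computation the differential inequality
\begin{equation*}
\frac{\d}{\d t}\,\mathbb{E}\left[\abs{\Psi^{(1)}_x-\Psi^{(2)}_x}^2\right](t)\le 4\kappa\sum_{y\sim x}\mathbb{E}\left[\abs{\Psi^{(1)}_y-\Psi^{(2)}_y}^2\right](t)+(24\kappa d-1)\,\mathbb{E}\left[\abs{\Psi^{(1)}_x-\Psi^{(2)}_x}^2\right](t),
\end{equation*}
then sums over $x$ and applies Gronwall to get $\mathbb{E}[\sum_x\abs{\Psi^{(1)}_x-\Psi^{(2)}_x}^2](t)\le e^{(32\kappa d-1)t}\,\mathbb{E}[\sum_x\abs{\psi^{(1)}_x-\psi^{(2)}_x}^2]\le 4\abs{\Lambda}\,e^{(32\kappa d-1)t}$, using only the bounds $\mathbb{E}[\abs{\psi^{(i)}_x}^2]\le 1$ from $\mathscr{P}_{\O}(\mathbb{R}^{\abs{\Lambda}})$. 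Since the law of $(\Psi^{(1)}(t),\Psi^{(2)}(t))$ is a coupling of $(\mu^{(1)},\mu^{(2)})$ at every $t$, this bounds $\mathbf{W}^2_{2,\mathbb{R}^{\abs{\Lambda}}}(\mu^{(1)},\mu^{(2)})$ by a quantity tending to $0$ as $t\to\infty$ because $32\kappa d-1<0$. If you replace your ``$\tfrac{\d}{\d t}E_x=0$'' step by this Gronwall-plus-$t\to\infty$ argument (or alternatively carry out the joint-stationarity construction you only allude to), the rest of your estimates go through as written.
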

\begin{proof}
Let $\mu\in \mathscr{P}_{1}(\mathbb{R}^{\abs{\Lambda}})$ be any stationary measure to the mean-field SDE \eqref{mean-field-limit}.  By Definition \ref{sta-F}, it is easy to see that the martingale solution $\mathbf{P}$ to the the mean-field SDE \eqref{mean-field-limit} with initial distribution $\mu$ belongs to $\mathscr{P}_{1,\Psi}(\mathbb{R}^{\abs{\Lambda}})$. Moreover, by the Yamada-Watanabe argument in the proof of Theorem \ref{strong-solution}, we obtain that if the initial data $\psi$ of the mean-field SDE \eqref{mean-field-limit} satisfies $\psi \laweq \mu$, then there exists a unique probabilistically strong solution to \eqref{mean-field-limit} starting from $\psi$.

Suppose that $\mu^{(1)},\mu^{(2)} \in \mathscr{P}_{1}(\mathbb{R}^{\abs{\Lambda}})$ are any two stationary measures to the mean-field SDE \eqref{mean-field-limit}.  
Let ${\psi}^{(1)} \overset{\text{law}}{=} \mu^{(1)}$ and ${\psi}^{(2)} \overset{\text{law}}{=} \mu^{(2)}$. The solutions to \eqref{mean-field-limit} with initial data ${\psi}^{(1)}$ and ${\psi}^{(2)}$ are denoted by ${\Psi}^{(1)}$ and ${\Psi}^{(2)}$, respectively.  Since $\mu^{(1)},\mu^{(2)} \in \mathscr{P}_{1}(\mathbb{R}^{\abs{\Lambda}})$ are two stationary measures to the mean-field SDE \eqref{mean-field-limit}, we have
\begin{equation}\label{bound-Psi}
    \begin{aligned}
         &\mathbb{E}\left[\abs{{\Psi}_x^{(i)}(t)}^2\right]\le 1,\quad \forall\, x\in \Lambda,\ t\in[0,T], \ i=1,2.
    \end{aligned}
\end{equation}
Following the proof of Lemma \ref{not-fix-mu}, it follows that for any $T>0$ and $0\le t\le T$,
\begin{equation}\label{stationary-unique}
    \begin{aligned}
         & \frac{\d}{\d t} \mathbb{E}\left[\abs{{\Psi}_x^{(1)}-{\Psi}_x^{(2)}}^2\right](t)\\
\le& 4\kappa\sum_{\N}\mathbb{E}\left[\abs{{\Psi}_y^{(1)}-{\Psi}_y^{(2)}}^2\right](t)+ \left(24\kappa d-1\right)\mathbb{E}\left[\abs{{\Psi}_x^{(1)}-{\Psi}_x^{(2)}}^2\right](t).
    \end{aligned}
\end{equation}    
By summing over $x$ and applying Gronwall's inequality, we obtain that for any $T>0$ and $0\le t\le T$,
\begin{equation}\label{Phi-exp}
    \begin{aligned}
         & \mathbb{E}\left[\sum_{x\in \Lambda}\abs{{\Psi}_x^{(1)}-{\Psi}_x^{(2)}}^2\right](t)\le \exp\big( \left( 32\kappa d-1 \right)t \big) \,\mathbb{E}\left[\sum_{x\in \Lambda}\abs{{\psi}_x^{(1)}-{\psi}_x^{(2)}}^2\right].
    \end{aligned}
\end{equation}
By invariance of $\mu^{(1)}$ and $\mu^{(2)}$, the definition of the Wasserstein distance, \eqref{Phi-exp}  and \eqref{bound-Psi}, we derive that for any $T>0$ and $0\le t\le T$,
\begin{equation*}
    \begin{aligned}
         & \mathbf{W}^2_{2,\mathbb{R}^{\abs{\Lambda}}}(\mu^{(1)},\mu^{(2)})\le \mathbb{E}\left[\sum_{x\in \Lambda}\abs{{\Psi}_x^{(1)}-{\Psi}_x^{(2)}}^2\right](t)\le 4\abs{\Lambda}\exp\big( \left( 32\kappa d-1 \right)t \big).
    \end{aligned}
\end{equation*}
Letting $t\rightarrow \infty$, we conclude that $\mu^{(1)}=\mu^{(2)}$.
\end{proof}

In the case of infinite volume,  we multiply both sides of \eqref{stationary-unique} by $\frac{1}{a^{|x|}}$, sum over $x$, and use the estimates $\frac{1}{a^{|x|}} \leq a\,\frac{1}{a^{|y|}}$ for $x \sim y$ and $\sum_{x\in \mathbb{Z}^d} \frac{1}{a^{\abs{x}}}<\infty$. Similar to the argument in the proof of Theorem~\ref{unique-in-m-c}, we have
\begin{theorem}\label{unique-in-m-c-2}
If $0 < \kappa < 1/(32d)$, then there exists at most one stationary measure in $\mathscr{P}_{1}(\mathbb{H})$ to the mean-field SDE \eqref{mean-field-limit-Z}.
\end{theorem}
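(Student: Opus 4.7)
The plan is to adapt the argument of Theorem \ref{unique-in-m-c} to the infinite volume setting, replacing sums over $\Lambda$ by weighted sums over $\mathbb{Z}^d$ with the weight $1/a^{|x|}$ used throughout Section \ref{sec4}. First I would take two stationary measures $\mu^{(1)}, \mu^{(2)} \in \mathscr{P}_{\O}(\mathbb{H})$ to \eqref{mean-field-limit-Z}. As noted at the end of Section \ref{sec4} (Lemma \ref{fixed-mu-le-Zd}, Lemma \ref{not-fix-mu-Zd}, and the Yamada-Watanabe argument for the infinite volume case), for any initial datum $\psi^{(i)}\laweq \mu^{(i)}$ there exists a unique probabilistically strong solution $\Psi^{(i)}$ to \eqref{mean-field-limit-Z}, whose law lies in $\mathscr{P}_{\O,\Psi}(\SSI)$; in particular $\mathbb{E}[|\Psi_x^{(i)}(t)|^2] \le 1$ uniformly in $x\in\mathbb{Z}^d$ and $t\in[0,T]$.

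Next I would couple $\Psi^{(1)}$ and $\Psi^{(2)}$ through the same Brownian motion $(W_x)_{x\in\mathbb{Z}^d}$ and apply It\^o's formula to $|\Psi_x^{(1)}-\Psi_x^{(2)}|^2$. Using Young's inequality on the linear diffusive term and the three-term decomposition for the nonlinear difference
\[
\mathbb{E}[\Psi_x^{(1)}\Psi_y^{(1)}]\Psi_x^{(1)} - \mathbb{E}[\Psi_x^{(2)}\Psi_y^{(2)}]\Psi_x^{(2)},
\]
together with the uniform bound $|\mathbb{E}[\Psi_x^{(i)}\Psi_y^{(i)}]|\le 1$ coming from $\mu^{(i)}\in \mathscr{P}_{\O}(\mathbb{H})$, I obtain the pointwise differential inequality
\[
\frac{\d}{\d t}\mathbb{E}\bigl[|\Psi_x^{(1)}-\Psi_x^{(2)}|^2\bigr](t) \le 4\kappa\sum_{\N}\mathbb{E}\bigl[|\Psi_y^{(1)}-\Psi_y^{(2)}|^2\bigr](t) + (24\kappa d-1)\mathbb{E}\bigl[|\Psi_x^{(1)}-\Psi_x^{(2)}|^2\bigr](t),
\]
exactly as in \eqref{stationary-unique}.

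Then, following the recipe indicated in the excerpt just before the theorem, I would multiply both sides by $1/a^{|x|}$, sum over $x\in\mathbb{Z}^d$, and use $1/a^{|x|}\le a\cdot 1/a^{|y|}$ whenever $x\sim y$ to absorb the neighbour sum into a single weighted $\ell^2_a$-norm. This produces
\[
\frac{\d}{\d t}\mathbb{E}\bigl[\|\Psi^{(1)}-\Psi^{(2)}\|_2^2\bigr](t) \le (32\kappa d-1)\,\mathbb{E}\bigl[\|\Psi^{(1)}-\Psi^{(2)}\|_2^2\bigr](t),
\]
and Gronwall's inequality gives exponential contraction with rate $32\kappa d - 1 < 0$ under the hypothesis $\kappa < 1/(32d)$. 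Invoking stationarity of $\mu^{(1)}$ and $\mu^{(2)}$ and the definition of $\mathbf{W}_{2,\mathbb{H}}$, together with the uniform bound $\mathbb{E}\|\Psi^{(i)}\|_2^2 \le \sum_x a^{-|x|}<\infty$, I let $t\to\infty$ to conclude $\mathbf{W}_{2,\mathbb{H}}(\mu^{(1)},\mu^{(2)})=0$ and hence $\mu^{(1)}=\mu^{(2)}$.

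The main subtlety, and the only place where the infinite volume case differs substantively from the finite volume one, lies in the weighted summation step: one must verify that every one of the four sub-terms arising from the nonlinearity can be controlled by a constant multiple of $|\Psi_x^{(1)}-\Psi_x^{(2)}|^2$ plus its neighbour contribution, without generating uncontrolled tails in $x$. The bound $a^{-|x|}\le a\, a^{-|y|}$ for $x\sim y$ is exactly what is needed, and the coefficient $32\kappa d$ (rather than $24\kappa d$) accounts for the additional factor $a$ times the one unit absorbed from the neighbour sum; crucially this numerical constant is still compatible with the assumption $\kappa<1/(32d)$, so no tightening of $\kappa$ is required. The remaining steps are routine analogues of their finite volume counterparts.
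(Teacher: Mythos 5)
Your proposal follows the paper's route essentially verbatim: couple two stationary solutions through the same noise, derive the pointwise differential inequality \eqref{stationary-unique} via It\^o's formula and the bound $\abs{\mathbb{E}[\Psi_x^{(i)}\Psi_y^{(i)}]}\le 1$, then multiply by $a^{-\abs{x}}$, sum over $x\in\mathbb{Z}^d$ using $a^{-\abs{x}}\le a\,a^{-\abs{y}}$ for $x\sim y$, and conclude by Gronwall's inequality, stationarity, and letting $t\to\infty$ — which is exactly the paper's two-line reduction of Theorem \ref{unique-in-m-c-2} to Theorem \ref{unique-in-m-c}. One bookkeeping point is off, though: absorbing the weighted neighbour sum yields the coefficient $8\kappa d\,a+24\kappa d-1$, not $32\kappa d-1$; your remark that the factor $a$ explains the jump from $24\kappa d$ to $32\kappa d$ misattributes it, since already in the finite-volume proof the coefficient $32\kappa d$ arises as $8\kappa d+24\kappa d$ with no factor $a$. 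For a fixed $a$ well above $1$ and $\kappa$ near $1/(32d)$ the rate $8\kappa d\,a+24\kappa d-1$ can be positive, so to retain the full range $\kappa<1/(32d)$ you should add that the weight parameter $a>1$ may be taken sufficiently close to $1$, which is harmless because any $\mu\in\mathscr{P}_{\O}(\mathbb{H})$ satisfies $\int\abs{v_x}^2\,\d\mu(v)\le 1$ uniformly in $x$ and hence has finite $\ell^2_{a'}$-second moment for every $a'>1$; the paper's own sketch glosses over this same point.
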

\begin{remark}
Examining the proofs of Theorems~\ref{unique-in-m-c} and~\ref{unique-in-m-c-2} shows that the bound $1$ in the definitions of the measure sets $\mathscr{P}_{\O}(\mathbb{R}^{\abs{\Lambda}})$ and $\mathscr{P}_{\O}(\mathbb{H})$ can be replaced by any constant $C \geq 1$, with the corresponding interval for the inverse temperature given by $0 < \kappa < 1/(8d + 24dC)$.
\end{remark}

\section*{Acknowledgements} 
The authors would like to thank  Scott Smith and Xicheng Zhang for their helpful comments on an earlier version of this paper. W.Y. gratefully acknowledges financial support from the NSFC (No.\,12401181). 
R.Z. is grateful to the financial supports of the National Key
R\&D Program of China (No.\,2022YFA1006300), the NSFC (No.\,12426205, No.\,12271030) and the financial supports by the Deutsche Forschungsgemeinschaft (DFG, German Research Foundation) - Project-ID 317210226-SFB 1283.


\begin{thebibliography}{SZZ23b}

\bibitem[AK20]{Albeverio2020}
S.~Albeverio and S.~Kusuoka.
\newblock The invariant measure and the flow associated to the {$\Phi^4_3$}-quantum field model.
\newblock \emph{Ann. Sc. Norm. Super. Pisa Cl. Sci. (5)}, \textbf{20}(4):1359--1427, 2020.

\bibitem[AK24]{aru2024}
J.~Aru and A.~Korzhenkova.
\newblock Limiting distributions of the spherical model and spin {$O(N)$} model: Appearance of {GFF}.
\newblock \emph{arXiv 2405.04501}, 2024.

\bibitem[BB19]{Bauerschmidt2019}
R.~Bauerschmidt and T.~Bodineau.
\newblock A very simple proof of the {LSI} for high temperature spin systems.
\newblock \emph{J. Funct. Anal.}, \textbf{276}(8):2582--2588, 2019.

\bibitem[CD18]{Carmona2018}
R.~Carmona and F.~Delarue.
\newblock \emph{Probabilistic theory of mean field games with applications {I}}, volume~83 of \emph{Probability Theory and Stochastic Modelling}.
\newblock Springer, Cham, 2018.
\newblock Mean field FBSDEs, control, and games.

\bibitem[CD22a]{Chaintron2022a}
L.-P.~Chaintron and A.~Diez.
\newblock Propagation of chaos: a review of models, methods and applications. {I}. {M}odels and methods.
\newblock \emph{Kinet. Relat. Models}, \textbf{15}(6):895--1015, 2022.

\bibitem[CD22b]{Chaintron2022b}
L.-P.~Chaintron and A.~Diez.
\newblock Propagation of chaos: a review of models, methods and applications. {II}. {A}pplications.
\newblock \emph{Kinet. Relat. Models}, \textbf{15}(6):1017--1173, 2022.

\bibitem[Col85]{Coleman1985}
S.~Coleman.
\newblock \emph{{Aspects of Symmetry: Selected Erice Lectures}}.
\newblock Cambridge University Press, 1985.

\bibitem[Dud02]{Dudley2002}
R.~M. Dudley.
\newblock \emph{Real analysis and probability}, volume~74 of \emph{Cambridge Studies in Advanced Mathematics}.
\newblock Cambridge University Press, Cambridge, 2002.
\newblock Revised reprint of the 1989 original.

\bibitem[GH21]{Gubinelli2021}
M.~Gubinelli and M.~Hofmanov\'{a}.
\newblock A {PDE} construction of the {E}uclidean {$\phi_3^4$} quantum field theory.
\newblock \emph{Comm. Math. Phys.}, \textbf{384}(1):1--75, 2021.

\bibitem[HK34]{Heller1934}
G.~Heller and H.~A. Kramers.
\newblock Ein klassisches modell des ferromagnetikums und seine nachtr{\"a}gliche quantisierung im gebiete tiefer temperaturen.
\newblock \emph{Ver. K. Ned. Akad. Wetensc.(Amsterdam)}, \textbf{37}:378--385, 1934.

\bibitem[HLL23]{hong2023}
W.~Hong, S.~Li, and W.~Liu.
\newblock {M}ckean-{V}lasov stochastic partial differential equations: existence, uniqueness and propagation of chaos.
\newblock \emph{arXiv 2306.15508}, 2023.

\bibitem[HS22]{Hairer2022}
M.~Hairer and R.~Steele.
\newblock The {$\Phi_3^4$} measure has sub-{G}aussian tails.
\newblock \emph{J. Stat. Phys.}, \textbf{186}(3):Paper No. 38, 25, 2022.

\bibitem[HW21]{Huang2021}
X.~Huang and F.-Y.~Wang.
\newblock Mc{K}ean-{V}lasov {SDE}s with drifts discontinuous under {W}asserstein distance.
\newblock \emph{Discrete Contin. Dyn. Syst.}, \textbf{41}(4):1667--1679, 2021.

\bibitem[IW89]{Ikeda1989}
N.~Ikeda and S.~Watanabe.
\newblock \emph{Stochastic differential equations and diffusion processes}, volume~24 of \emph{North-Holland Mathematical Library}.
\newblock North-Holland Publishing Co., Amsterdam; Kodansha, Ltd., Tokyo, second edition, 1989.

\bibitem[JW17]{Jabin2017}
P.-E.~Jabin and Z.~Wang.
\newblock Mean field limit for stochastic particle systems.
\newblock In \emph{Active particles. {V}ol. 1. {A}dvances in theory, models, and applications}, Model. Simul. Sci. Eng. Technol., pages 379--402. Birkh\"{a}user/Springer, Cham, 2017.

\bibitem[Kac56]{Kac1956}
M.~Kac.
\newblock Foundations of kinetic theory.
\newblock In \emph{Proceedings of the {T}hird {B}erkeley {S}ymposium on {M}athematical {S}tatistics and {P}robability, vol. {III}}, pages 171--197. Univ. California Press, Berkeley-Los Angeles, Calif., 1956.

\bibitem[Kal21]{Kallenberg2021}
O.~Kallenberg.
\newblock \emph{Foundations of modern probability}, volume~99 of \emph{Probability Theory and Stochastic Modelling}.
\newblock Springer, Cham, 2021.
\newblock Third edition.

\bibitem[Kec95]{Kechris1995}
A.~S. Kechris.
\newblock \emph{Classical descriptive set theory}, volume 156 of \emph{Graduate Texts in Mathematics}.
\newblock Springer-Verlag, New York, 1995.

\bibitem[Kup80]{Kupiainen1980c}
A.~Kupiainen.
\newblock On the {$1/n$} expansion.
\newblock \emph{Comm. Math. Phys.}, \textbf{73}(3):273--294, 1980.

\bibitem[Len20]{Lenz1920}
W.~Lenz.
\newblock Beitrag zum verst{\"a}ndnis der magnetischen eigenschaften in festen k{\"o}rpern.
\newblock \emph{Phys. Zeitschr}, \textbf{21}:613--615, 1920.

\bibitem[LR15]{liu2015}
W.~Liu and M.~R\"{o}ckner.
\newblock \emph{Stochastic partial differential equations: an introduction}.
\newblock Universitext. Springer, Cham, 2015.

\bibitem[McK67]{McKean1967}
H.~P. McKean.
\newblock Propagation of chaos for a class of non-linear parabolic equations.
\newblock In \emph{Stochastic {D}ifferential {E}quations ({L}ecture {S}eries in {D}ifferential {E}quations, {S}ession 7, {C}atholic {U}niv., 1967)}, pages 41--57. 1967.

\bibitem[McK73]{McKean1973}
H.~P. McKean.
\newblock Geometry of differential space.
\newblock \emph{Ann. Probab.}, \textbf{1}:197--206, 1973.

\bibitem[MZJ03]{Moshe2003}
M.~Moshe and J.~Zinn-Justin.
\newblock Quantum field theory in the large {$N$} limit: a review.
\newblock \emph{Phys. Rep.}, \textbf{385}(3-6):69--228, 2003.

\bibitem[PR14]{Pardoux2014}
E.~Pardoux and A.~R{\u{a}}{\c{s}}canu.
\newblock \emph{Stochastic differential equations, backward {SDE}s, partial differential equations}, volume~69 of \emph{Stochastic Modelling and Applied Probability}.
\newblock Springer, Cham, 2014.

\bibitem[RSZ08]{Rockner2008}
M.~R{\"o}ckner, B.~Schmuland, and X.~Zhang.
\newblock Yamada-{W}atanabe theorem for stochastic evolution equations in infinite dimensions.
\newblock \emph{Condens. Matter Phys.}, \textbf{11}(2(54)):247--259, 2008.

\bibitem[SSZ24]{Shen2024-0}
H.~Shen, S.~Smith, and R.~Zhu.
\newblock A new derivation of the finite {$N$} master loop equation for lattice {Y}ang-{M}ills.
\newblock \emph{Electron. J. Probab.}, \textbf{29}:Paper No. 29, 18, 2024.

\bibitem[SSZZ22]{Shen2022}
H.~Shen, S.~Smith, R.~Zhu, and X.~Zhu.
\newblock Large {$N$} limit of the {$O(N)$} linear sigma model via stochastic quantization.
\newblock \emph{Ann. Probab.}, \textbf{50}(1):131--202, 2022.

\bibitem[Sta68a]{Stanley1968}
H.~E. Stanley.
\newblock Dependence of critical properties on dimensionality of spins.
\newblock \emph{Phys. Rev. Lett.}, \textbf{20}:589--592, 1968.

\bibitem[Sta68b]{Stanley1968a}
H.~E. Stanley.
\newblock Spherical model as the limit of infinite spin dimensionality.
\newblock \emph{Phys. Rev.}, \textbf{176}:718--722, 1968.

\bibitem[Szn91]{Sznitman1991}
A.-S.~Sznitman.
\newblock Topics in propagation of chaos.
\newblock In \emph{\'{E}cole d'\'{E}t\'{e} de {P}robabilit\'{e}s de {S}aint-{F}lour {XIX}---1989}, volume 1464 of \emph{Lecture Notes in Math}, pages 165--251. Springer, Berlin, 1991.

\bibitem[SZZ22]{Shen2022a}
H.~Shen, R.~Zhu, and X.~Zhu.
\newblock Large {$N$} limit of the {$O(N)$} linear sigma model in 3{D}.
\newblock \emph{Comm. Math. Phys.}, \textbf{394}(3):953--1009, 2022.

\bibitem[SZZ23a]{Shen2023a}
H.~Shen, R.~Zhu, and X.~Zhu.
\newblock An {SPDE} approach to perturbation theory of {$\Phi_2^4$}: asymptoticity and short distance behavior.
\newblock \emph{Ann. Appl. Probab.}, \textbf{33}(4):2600--2642, 2023.

\bibitem[SZZ23b]{Shen2023}
H.~Shen, R.~Zhu, and X.~Zhu.
\newblock A stochastic analysis approach to lattice {Y}ang-{M}ills at strong coupling.
\newblock \emph{Comm. Math. Phys.}, \textbf{400}(2):805--851, 2023.

\bibitem[SZZ24]{Shen2024}
H.~Shen, R.~Zhu, and X.~Zhu.
\newblock Langevin dynamics of lattice {Yang-Mills-Higgs} and applications.
\newblock \emph{arXiv 2401.13299}, 2024.

\bibitem[SZZ25]{Shen2025}
H.~Shen, R.~Zhu, and X.~Zhu.
\newblock Large {$N$} limit and {$1/N$} expansion of invariant observables in {$O(N)$} linear {$\sigma$}-model via {SPDE}.
\newblock \emph{Probab. Theory Related Fields}, \textbf{191}(3-4):853--932, 2025.

\bibitem[Vil03]{Villani2003}
C.~Villani.
\newblock \emph{Topics in optimal transportation}, volume~58 of \emph{Graduate Studies in Mathematics}.
\newblock American Mathematical Society, Providence, RI, 2003.

\bibitem[Vil09]{Villani2009}
C.~Villani.
\newblock \emph{Optimal transport}, volume 338 of \emph{Grundlehren der mathematischen Wissenschaften [Fundamental Principles of Mathematical Sciences]}.
\newblock Springer-Verlag, Berlin, 2009.
\newblock Old and new.

\bibitem[VL66]{Vaks1966}
V.~G. Vaks and A.~I. Larkin.
\newblock On phase transitions of second order.
\newblock \emph{J. Exp. Theor. Phys.}, \textbf{22}:678--687, 1966.

\bibitem[Wil73]{Wilson1973}
K.~G. Wilson.
\newblock Quantum field-theory models in less than {$4$} dimensions.
\newblock \emph{Phys. Rev. D}, \textbf{7}:2911--2926, 1973.

\bibitem[Wit80]{Witten1980}
E.~Witten.
\newblock \emph{The {$1/N$} Expansion in Atomic and Particle Physics}, pages 403--419.
\newblock Springer US, Boston, MA, 1980.

\end{thebibliography}
\end{document}